\documentclass[11pt,reqno]{amsart}
\usepackage{amsmath, amsthm, amssymb,stmaryrd}
\usepackage{enumitem}

\usepackage{fullpage}
\usepackage[many]{tcolorbox}
\usepackage{xcolor}
\usepackage{wasysym}
\usepackage{url}
\usepackage{mathtools}
\usepackage{marvosym}
\usepackage[all,tips, 2cell]{xy}
\usepackage{amssymb}
\usepackage{mathrsfs}
\usepackage{array}
\usepackage{dsfont}
\usepackage{stackengine}
\newcommand\pesos{%
  \stackengine{-1.38ex}{P}{\stackengine{-1.2ex}{$-$}{$-$}{O}{c}{F}{F}{S}}{O}{c}{F}{T}{S}}

\usepackage{tikz}
\usetikzlibrary{cd}
\usetikzlibrary{calc}
\usetikzlibrary{decorations,decorations.pathreplacing,decorations.markings,decorations.pathmorphing}
\tikzset{squiggly/.style={decorate, decoration=snake}}

\tikzset{super thick/.style={line width=3pt}}
\tikzstyle{far>}=[decoration={markings, mark=at position 0.75 with {\arrow{>}}}, postaction={decorate}]
\tikzstyle{mid>}=[decoration={markings, mark=at position 0.55 with {\arrow{>}}}, postaction={decorate}]
\tikzstyle{mid<}=[decoration={markings, mark=at position 0.55 with {\arrow{<}}}, postaction={decorate}]
\tikzset{super thick/.style={line width=3pt}}
\tikzstyle{far>}=[decoration={markings, mark=at position 0.75 with {\arrow{>}}}, postaction={decorate}]
\tikzstyle{mid>}=[decoration={markings, mark=at position 0.55 with {\arrow{>}}}, postaction={decorate}]
\tikzstyle{mid<}=[decoration={markings, mark=at position 0.55 with {\arrow{<}}}, postaction={decorate}]
\tikzstyle{knot}=[preaction={super thick, white, draw}]
\tikzstyle{coupon}=[draw, very thick, rectangle, rounded corners=5pt]
\tikzset{Rightarrow/.style={double equal sign distance,>={Implies},->},
triplecd/.style={-,preaction={draw,Rightarrow}},
quadruplecd/.style={preaction={draw,Rightarrow,
shorten >=0pt
},
shorten >=1pt,
-,double,double
distance=0.2pt}}
\tikzset{
    tripleline/.style args={[#1] in [#2] in [#3]}{
        #1,preaction={preaction={draw,#3},draw,#2}
    }
}
\tikzstyle{triple}=[tripleline={[line width=.15mm,black] in
      [line width=.7mm,white] in
      [line width=1mm,black]}] 
\tikzset{
    quadrupleline/.style args={[#1] in [#2] in [#3] in [#4]}{
        #1,preaction={preaction={preaction={draw,#4},draw,#3}, draw,#2}
    }
}
\tikzstyle{quadruple}=[quadrupleline={[line width=.3mm,white] in
      [line width=.6mm,black] in
      [line width=1.2mm,white] in
      [line width=1.5mm,black]}]

\definecolor{violet}{RGB}{148,0,211}
\definecolor{DarkGreen}{RGB}{0,150,0}

\usepackage[pdftex,plainpages=false,hypertexnames=false,pdfpagelabels]{hyperref}
\definecolor{medium-blue}{rgb}{0,0,.8}
\hypersetup{colorlinks, linkcolor={purple}, citecolor={medium-blue}, urlcolor={medium-blue}}
\newcommand{\arxiv}[1]{\href{http://arxiv.org/abs/#1}{\tt arXiv:\nolinkurl{#1}}}
\newcommand{\arXiv}[1]{\href{http://arxiv.org/abs/#1}{\tt arXiv:\nolinkurl{#1}}}

\DeclareMathOperator{\adj}{adj}

\DeclareMathOperator{\br}{br}

\DeclareMathOperator{\dom}{dom}
\DeclareMathOperator{\End}{End}
\DeclareMathOperator{\faith}{faith}
\DeclareMathOperator{\ff}{ff}

\DeclareMathOperator{\ob}{ob}
\DeclareMathOperator{\Ar}{ar}

\DeclareMathOperator{\loc}{loc}

\DeclareMathOperator{\gen}{gen}
\DeclareMathOperator{\Hom}{Hom}
\DeclareMathOperator{\id}{id}

\DeclareMathOperator{\Inv}{Inv}

\DeclareMathOperator{\ndeg}{ndeg}
\DeclareMathOperator{\pt}{pt}

\newcommand{\rev}{\mathsf{rev}}
\newcommand{\mop}{\mathsf{mop}} 
\renewcommand{\mp}{\mathsf{mp}}

\DeclareMathOperator{\sHom}{sHom}
\DeclareMathOperator{\sss}{ss}

\DeclareMathOperator{\str}{str}

\DeclareMathOperator{\MF2C}{\mathbf{MF2C}}

\DeclareMathOperator{\BMF1C}{\mathbf{BMF1C}}
\DeclareMathOperator{\Mor}{\mathbf{Mor}}
\DeclareMathOperator{\tKar}{\mathbf{2Kar}}
\DeclareMathOperator{\Cat}{\mathbf{Cat}}

\DeclareMathOperator{\Map}{\mathsf{Map}}

\DeclareMathOperator{\MF2C}{\mathbf{MF2C}}
\DeclareMathOperator{\Bimod}{\mathbf{Bimod}}
\DeclareMathOperator{\Glob}{\mathrm{Glob}}

\renewcommand{\Im}{\operatorname{Im}}
\newcommand{\Z}{\mathbb{Z}}

\DeclareMathOperator{\SH}{SH}


\newcommand\mhyphen{{\text{-}\!}}



\newcommand{\Alg}{\mathbf{Alg}}
\newcommand{\Aut}{\mathcal{A}ut}

\newcommand{\Mod}{\mathbf{Mod}}

\newcommand{\Spec}{\mathsf{Spec}}

\newcommand{\Vect}{\mathbf{Vect}}
\newcommand{\sVect}{\mathbf{sVect}}
\newcommand{\Rep}{\mathbf{Rep}}
\newcommand{\op}{\mathsf{op}}
\newcommand{\cV}{\mathcal{V}}

\newcommand{\Witt}{\mathcal{W}itt}

\def\semicolon{;}
\def\applytolist#1{
    \expandafter\def\csname multi#1\endcsname##1{
        \def\multiack{##1}\ifx\multiack\semicolon
            \def\next{\relax}
        \else
            \csname #1\endcsname{##1}
            \def\next{\csname multi#1\endcsname}
        \fi
        \next}
    \csname multi#1\endcsname}

\def\calc#1{\expandafter\def\csname c#1\endcsname{{\mathcal #1}}}
\applytolist{calc}QWERTYUIOPLKJHGFDSAZXCVBNM;
\def\bbc#1{\expandafter\def\csname bb#1\endcsname{{\mathbb #1}}}
\applytolist{bbc}QWERTYUIOPLKJHGFDSAZXCVBNM;
\def\bfc#1{\expandafter\def\csname bf#1\endcsname{{\mathbf #1}}}
\applytolist{bfc}QWERTYUIOPLKJHGFDSAZXCVBNM;
\def\sfc#1{\expandafter\def\csname s#1\endcsname{{\sf #1}}}
\applytolist{sfc}QWERTYUIOPLKJHGFDSAZXCVBNM;
\def\fc#1{\expandafter\def\csname f#1\endcsname{{\mathfrak #1}}}
\applytolist{fc}QWERTYUIOPLKJHGFDSAZXCVBNM;
\def\rmc#1{\expandafter\def\csname rm#1\endcsname{{\mathrm #1}}}
\applytolist{rmc}QWERTYUIOPLKJHGFDSAZXCVBNM;
\def\scc#1{\expandafter\def\csname sc#1\endcsname{{\mathscr #1}}}
\applytolist{scc}QWERTYUIOPLKJHGFDSAZXCVBNM;
\def\scc#1{\expandafter\def\csname sc#1\endcsname{{\mathscr #1}}}
\applytolist{scc}QWERTYUIOPLKJHGFDSAZXCVBNM;

\numberwithin{equation}{section}

\theoremstyle{plain}
\newtheorem{thm}[equation]{Theorem}
\newtheorem*{thm*}{Theorem}
\newtheorem{cor}[equation]{Corollary}
\newtheorem{lem}[equation]{Lemma}
\newtheorem{prop}[equation]{Proposition}

\newtheorem*{claim*}{Claim}

\newtheorem{thmalpha}{Theorem}

\newtheorem{coralpha}[thmalpha]{Corollary}
\theoremstyle{definition}
\newtheorem{defn}[equation]{Definition}

\newtheorem*{trick*}{Trick}

\newtheorem*{convention*}{Convention}
\newtheorem{conjecture}[equation]{Conjecture}

\newtheorem{ex}[equation]{Example}

\newtheorem{rem}[equation]{Remark}

\newtheorem{warn}[equation]{Warning}

\definecolor{theo}{HTML}{00B4CE}
\definecolor{dmitri}{HTML}{AF72B0}
\definecolor{peter}{HTML}{FF5733}
\newcommand{\thib}[1]{\textcolor{green!50!black}{[[Thib: #1]]}}

\newcommand{\dave}[1]{\textcolor{orange}{[[Dave: #1]]}}

\newcommand{\peter}[1]{\textcolor{peter}{[[Peter: #1]]}}

\title{The Classification of Fusion 2-Categories}
\date{\today}
\begin{document}
\author{Thibault D\'ecoppet}\address{\textnormal{\url{decoppet@math.harvard.edu}}, Harvard University}
\author{Peter Huston}
\address{\textnormal{\url{peter.e.huston@vanderbilt.edu}}, Vanderbilt University}
\author{Theo Johnson-Freyd}
\address{\textnormal{\url{theojf@pitp.ca}}, Perimeter Institute for Theoretical Physics and Dalhousie University}
\author{Dmitri Nikshych}
\address{\textnormal{\url{dmitri.nikshych@unh.edu}}, University of New Hampshire}
\author{David Penneys}
\address{\textnormal{\url{penneys.2@osu.edu}}, The Ohio State University}
\author{Julia Plavnik}
\address{\textnormal{\url{jplavnik@iu.edu}}, Indiana University Bloomington and Vrije Universiteit Brussel}
\author{David Reutter}
\address{\textnormal{\url{david.reutter@uni-hamburg.de}}, Universit\"at Hamburg}
\author{Matthew Yu}
\address{\textnormal{\url{yum@maths.ox.ac.uk}}, Oxford University}

\begin{abstract}
We classify (multi)fusion 2-categories in terms of braided fusion categories and group cohomological data. This classification is homotopy coherent --- we provide an equivalence between the 3-groupoid of (multi)fusion 2-categories up to monoidal equivalences and a certain 3-groupoid of commuting squares of $\mathrm{B}\mathbb{Z}/2$-equivariant spaces. Rank finiteness and Ocneanu rigidity for fusion 2-categories are immediate corollaries of our classification.
\end{abstract}
\maketitle

\tableofcontents

\newpage 

\section{Introduction}

Fusion 2-categories, first introduced in \cite{douglas2018fusion}, are finite semisimple  2-categories with a monoidal structure with duals for objects and simple monoidal unit. They categorify the notion of fusion 1-category, and have found relevance in the study of TQFTs both from the cobordism hypothesis \cite{Decoppet2022;centers, Decoppet2023;dualizable} and state-sum constructions \cite{douglas2018fusion}, braided fusion 1-category theory \cite{JFR}, condensed matter physics \cite{Inamura:2023qzl,Delcamp:2023kew}, as well as high energy physics \cite{BBFP,BBSNT,Decoppet:2022dnz,Bullimore:2024khm}. 

One point of physical interest in fusion higher category theory is that we now understand that many relevant physical theories in higher dimensions generally enjoy a symmetry that is given by a higher category where the objects, morphisms, and higher morphisms are topological operators that implement non-invertible or categorical symmetries. 
Most prominently, systems in (1+1)-spacetime dimensions, such as the critical Ising model, do enjoy a fusion 1-category of symmetries \cite{KWdefect,PhysRevLett.121.177203,MR3543452,MR4737369,2410.08884}. 
As for fusion 2-categories, they describe the fusion of 2-spacetime-dimensional surface operators, which are prevalent for many quantum field theories, especially those in higher dimensions.
Studying categorical symmetries has given new insights on how to study the phases of a quantum field theory \cite{Bhardwaj:2024qiv}, and, in certain cases, how to actually classify the theories \cite{JFY:2021tbq}. 
A classification of fusion 2-categories thus has many applications for physics and improving our understanding of the general structure of symmetries.

The main result of this article is that a fusion 2-category is completely described by the braided fusion 1-category of endomorphisms of its unit together with certain group theoretical and cohomological data. This is in sharp contrast to the situation with fusion 1-categories: While it is sometimes possibly to classify special examples and classes of fusion categories \cite{ostrik2003module,etingof2004classification,natale2018classification},
there are many ``exotic'' examples constructed from quantum groups at roots of unity \cite{MR0696688,MR936086,MR1090432,MR1660937} and examples arising from the theory of subfactors \cite{MR1686551,MR1832764,MR3167494,MR3166042,
MR4598730,
MR4565376}. In particular, while not every fusion 1-category is ``group-theoretical'' in the sense of \cite[Section 8.8]{ENO}, our main result can be interpreted as the statetement that every fusion 2-category is  ``braided fusion 1-category plus group'' theoretical.

\subsection{The Classification of Fusion 2-Categories}\label{subsection:DataProfits}

For any fusion 2-category $\mathfrak{C}$, there is a braided fusion 1-category $\Omega\mathfrak{C}$ of endomorphisms of the monoidal unit. 
Its \emph{symmetric center} (also called M\"uger center) $\mathcal{Z}_2(\Omega \mathfrak{C})$ is a symmetric fusion 1-category.  

\begin{defn}
    A fusion 2-category $\mathfrak{C}$ is \textit{bosonic} if $\mathcal{Z}_{2}(\Omega\mathfrak{C})$ is Tannakian, i.e.\ is equivalent to the symmetric monoidal category $\Rep(H)$ of finite-dimensional representations of a finite group $H$.
\end{defn}

\begin{thmalpha}[All bosons]
\label{thmalpha:AllBosons}
Bosonic fusion $2$-categories are parameterised by the following data:
\begin{itemize}
    \item A nondegenerate braided fusion $1$-category $\cA$;
    \item An inclusion of finite groups $\iota:H\hookrightarrow G$;
    \item A monoidal functor  $\rho:H\to\mathcal{A}ut^{\br}(\cA)$, where $\mathcal{A}ut^{\br}(\mathcal{A})$ denotes the $1$-groupoid of braided autoequivalences of $\cA$;
    \item A class $\pi\in \mathrm{H}^4(\mathrm{B}G,\mathbb{C}^\times)$; and
    \item A homotopy between the anomaly of $\rho$ and $\pi|_H$.
\end{itemize}
\end{thmalpha}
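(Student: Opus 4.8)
The plan is to exhibit the five-tuple $(\mathcal{A},\iota,\rho,\pi,\text{homotopy})$ as a complete invariant of a bosonic fusion $2$-category, by producing mutually quasi-inverse assignments between the two $3$-groupoids and then promoting them to an equivalence of classifying spaces. I would separate the data that $\mathfrak{C}$ carries into two independent layers: the braided fusion $1$-category $\Omega\mathfrak{C}$ of loops, and the way $\mathfrak{C}$ is graded over its connected identity component. The first layer yields $(\mathcal{A},H,\rho)$, the second yields $(G,\pi)$, and the matching of the two is the final homotopy datum.

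For the loop layer, set $\mathcal{B}:=\Omega\mathfrak{C}$, a braided fusion $1$-category whose symmetric center is by hypothesis the Tannakian category $\mathcal{Z}_2(\mathcal{B})\simeq\Rep(H)$. Invoking the de-equivariantization theory of braided fusion categories (M\"uger, DGNO), condensing the canonical connected \'etale algebra $\mathcal{O}(H)\in\Rep(H)\subseteq\mathcal{B}$ produces a braided fusion category $\mathcal{A}:=\mathcal{B}_H$ carrying a braided $H$-action $\rho:H\to\mathcal{A}ut^{\br}(\mathcal{A})$, from which $\mathcal{B}$ is recovered as the equivariantization $\mathcal{A}^H$. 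That $\Rep(H)$ is the \emph{full} symmetric center is exactly the statement that $\mathcal{A}$ is nondegenerate. This step is essentially classical and supplies the first three pieces of data.

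For the grading layer, note that $\mathcal{B}$ only sees the identity component of $\mathfrak{C}$, so the remaining data must record how $\mathfrak{C}$ extends it. I would condense the Tannakian subcategory $\Rep(H)$ not merely inside $\Omega\mathfrak{C}$ but throughout $\mathfrak{C}$, obtaining a fusion $2$-category $\widetilde{\mathfrak{C}}$ whose loops are the nondegenerate $\mathcal{A}$. By the structure theory of fusion $2$-categories (graded extensions together with the theory of condensation of \'etale algebras), such a $2$-category is a faithfully $G$-graded extension of the canonical connected piece $\Sigma\mathcal{A}$: here $G$ is the resulting grading group, the subgroup $\iota:H\hookrightarrow G$ is the new grading created by the condensation, and the associativity constraint of the grading is classified by a class $\pi\in\mathrm{H}^4(\mathrm{B}G,\mathbb{C}^\times)$, in direct analogy with the $\mathrm{H}^3$ extension theory of graded fusion $1$-categories of Etingof--Nikshych--Ostrik, raised by one categorical degree. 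The action $\rho$ was recorded as a monoidal functor into the $1$-\emph{groupoid} $\mathcal{A}ut^{\br}(\mathcal{A})$; lifting it to a fully coherent action on $\Sigma\mathcal{A}$ requires climbing the Postnikov tower of the automorphism higher group of $\Sigma\mathcal{A}$ (whose top layer contributes $\mathbb{C}^\times$), and the final obstruction is the anomaly of $\rho$, a class in $\mathrm{H}^4(\mathrm{B}H,\mathbb{C}^\times)$. Compatibility of the two layers is precisely the requirement that this anomaly agree with $\pi|_H$, witnessed by a chosen homotopy; this is the fifth and last piece of data, and it is the categorified avatar of the vanishing-of-obstruction conditions in extension theory.

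Conversely, given the data I would reconstruct $\mathfrak{C}$ by reversing the two operations: use $\rho$ together with the trivialisation of its anomaly to build the $G$-graded extension of $\Sigma\mathcal{A}$ classified by $\pi$, and then gauge back the $H\subseteq G$ symmetry (the inverse condensation) to reinstate the Tannakian loops $\Rep(H)\subseteq\Omega\mathfrak{C}$. Checking that these two assignments are mutually quasi-inverse on objects is largely bookkeeping; the genuinely hard part, and the reason the result is stated as an equivalence of $3$-groupoids rather than a bijection on equivalence classes, is to make every construction natural and homotopy coherent. I expect the principal obstacle to be the clean identification of the relevant automorphism and mapping spaces --- assembling the autoequivalences of $\Sigma\mathcal{A}$, the braided autoequivalences of $\mathcal{A}$, and the group-cohomological data into a single fibration whose space of sections reproduces the five-tuple --- and verifying that the higher coherences (the associator $\pi$ and the homotopy) glue correctly. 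This is exactly where recasting everything in terms of commuting squares of $\mathrm{B}\mathbb{Z}/2$-equivariant spaces should carry the load, by trading the categorical coherence problem for a comparison of homotopy pullbacks.
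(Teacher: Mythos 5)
Your first layer coincides with the paper's own extraction: $\cA$, $H$, and $\rho$ are obtained from $\Omega\fC$ by de-equivariantizing along $\Rep(H)=\mathcal{Z}_2(\Omega\fC)$, and your reconstruction (form a $G$-graded extension of $\Sigma\cA$, build the algebra $\bigoplus_{h\in H}X_h\boxtimes\rho(h)^{-1}$ using the homotopy, take bimodules) is essentially the paper's \S\ref{subsection:reconstruction}, with the anomaly interpreted as in Remark~\ref{rem:ENOanomaly}. The problem lies in your second layer. You extract $(G,\pi)$ by gauging $\Rep(H)$ throughout $\fC$ and then invoking, as known ``structure theory,'' the claim that the gauged $2$-category $\widetilde{\fC}$ --- a fusion $2$-category whose loops are the nondegenerate $\cA$ --- is a faithfully $G$-graded extension of $\Sigma\cA$ classified by $\pi\in \mathrm{H}^4(\mathrm{B}G,\mathbb{C}^\times)$. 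But that claim is exactly the $H=1$ case of Theorem~\ref{thmalpha:AllBosons} (Example~\ref{ex:generalizedbosonicstronglyfusion}), and in the paper it appears only as a \emph{corollary} of the classification, as the generalization of \cite{JFY:2020ivj} from $\Omega\fC\simeq\Vect$ to arbitrary nondegenerate $\Omega\fC$. Using it as an input is circular unless you prove it independently; at minimum you would need a $2$-categorical M\"uger-type splitting $\widetilde{\fC}\simeq\Mod(\cA)\boxtimes\mathfrak{T}$ with $\mathfrak{T}$ strongly fusion before \cite{JFY:2020ivj} and the extension theory of \cite{decoppet:extension} can be applied, and no such splitting is supplied or cited. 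The paper avoids this entirely by defining $G$ intrinsically through the Drinfeld center, via the proved fact that $\Omega\mathcal{Z}(\fC)\to\mathcal{Z}_2(\Omega\fC)$ is faithful and dominant, so that $\Omega\mathcal{Z}(\fC)\simeq\Rep(G)$ and $\iota:H\hookrightarrow G$ is the induced inclusion; your proposal never mentions $\mathcal{Z}(\fC)$, which is the linchpin of the paper's construction of $\iota$.

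Second, even granting that structure theorem, your plan produces at best a bijection on equivalence classes, whereas ``parameterised by'' is made precise (Theorem~\ref{thm:Delphic}, Remarks~\ref{rem:vague} and~\ref{rem:subtlety}) as an equivalence of $3$-groupoids. The choices in your construction --- a basepoint of $\mathrm{B}H$, the section $h\mapsto X_h$ and the algebra witnessing the ``inverse condensation,'' the homotopy trivializing the anomaly --- are precisely where non-naturality enters: $H$ and $G$ are really groupoids of fiber functors, and monoidal equivalences of fusion $2$-categories preserve $H\subseteq G$ only up to conjugation. Your closing appeal to ``commuting squares of $\mathrm{B}\mathbb{Z}/2$-equivariant spaces'' names the paper's answer rather than supplying it: the actual mechanism consists of the Morita-theoretic factorization of the regular module (Proposition~\ref{prop:MF2Cpullback}), the left-fibration property of $\cZ_{2}$ on $2$-dominant, $0$-faithful morphisms (Theorem~\ref{thm:Mor2SMFCleftfibration}), the resulting functoriality of $\Witt(-)$ (Corollary~\ref{cor:Witt}), and the limit-commutation Theorem~\ref{thm:Wittlimits}, none of which has a counterpart in your outline. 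So your proposal is a correct sketch of the incoherent dictionary, but both the identification of $(G,\pi)$ and the homotopy coherence that the theorem asserts require machinery the proposal omits.
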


\begin{rem}\label{rem:vague}
The precise meaning of the words in Theorem~~\ref{thmalpha:AllBosons} and Theorem~\ref{thmalpha:EmergentFermions} below, and the sense in which this data ``parametrizes'' all fusion 2-categories is discussed in \S\ref{subsection:highlevelproof}.  For example, both $H$ and $G$ should really be considered connected $1$-groupoids instead of groups (they are the groupoids of fiber functors of $\cZ_{2}(\Omega \fC)$ and $\Omega \cZ(\fC)$, respectively; writing them as groups amounts to the unique, but up to non-unique isomorphism, choice of a  fiber functor). Accordingly,  the inclusion of groups $H\to G$ should really be considered a faithful functor between connected $1$-groupoids. In practice, this means that monoidal equivalences between fusion $2$-categories will induce isomorphisms $G \to G'$ so that after transporting the subgroup $H$ along this isomorphism to a subgroup of $G'$, it becomes \emph{conjugate} (but not necessarily equal) to the subgroup $H'$.  
\end{rem}
\noindent 
Such a list has appeared in the physics literature in the context of topological operators in (2+1)-dimensions \cite{Bullimore:2024khm}. It has also appeared in \cite{Bhardwaj:2024qiv} for the purpose of studying gapped $(2+1)$d phases, where the authors noticed that the category $\cA$ given in Theorem \ref{thmalpha:AllBosons} appears as non-minimal boundary conditions with $H$ global symmetry for the topological bulk theory in $(3+1)$d with gauge group $G$. The homotopy between the anomalies is also discussed therein.

A \emph{supergroup} $(H,z)$ is a group $H$ with a central element $z\in H$ of order $2$. There are analogous notions for higher categorical groups, introduced below.  
 We let $\Rep(H,z)$ denote the symmetric monoidal category with underlying monoidal category $\Rep(H)$ and braiding twisted by the action of $z$. 
Recall  from \cite{deligne2002} that any symmetric fusion category which is not Tannakian is \emph{super-Tannakian}, i.e.\ of the form $\Rep(H,z)$ for a finite supergroup $(H,z)$ with non-trivial $z$. The simplest example of such a super-Tannakian symmetric fusion 1-category is $\sVect$, the symmetric fusion 1-category of super-vector spaces, corresponding to the supergroup $(H,z) = (\mathbb{Z}/2, -1)$.

\begin{defn}
    A fusion 2-category $\mathfrak{C}$ is said to be \textit{fermionic} if $\mathcal{Z}_{2}(\Omega\mathfrak{C})$ is super-Tannakian.
\end{defn}

\begin{thmalpha}[Emergent Fermions]
\label{thmalpha:EmergentFermions}
Fermionic fusion 2-categories are parameterized by the following data:
\begin{itemize}
    \item An $\sVect$-nondegenerate braided fusion $1$-category $\cA$, i.e.\ a braided fusion 1-category $\cA$ equipped with a symmetric equivalence $\mathbf{sVect} \to \mathcal{Z}_{2}(\mathcal{A})$;
    \item An inclusion $\iota:(H,z)\hookrightarrow (G,z)$ of finite supergroups where $z$ is non-trivial;
    \item A monoidal functor of (higher) supergroups $\rho:(H,z)\to(\mathcal{A}ut^{\br}_{\sVect}(\cA),(-1)^f)$, where $\mathcal{A}ut^{\br}_{\sVect}(\cA)$ denotes the $2$-group of braided automorphisms $F$ of $\cA$ equipped with a monoidal natural isomorphism between the induced identifications of symmetric centers with $\sVect$, and ``superelement'' $(-1)^f$ (coherently) given by the identity functor equipped with the non-trivial natural isomorphism which acts by $(-1)$ on odd objects in $\sVect$;
    \item A class $\varpi$ in a certain torsor for the supercohomology group $\SH^{4}(\mathrm{B}(G,z))$; and
    \item A homotopy between the anomaly of $\rho$ and $\varpi|_{(H,z)}$.
\end{itemize}
\end{thmalpha}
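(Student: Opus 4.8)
The plan is to deduce Theorem~\ref{thmalpha:EmergentFermions} from the bosonic classification of Theorem~\ref{thmalpha:AllBosons} by carrying out the same extraction of invariants \emph{relative to} $\sVect$: one treats the transparent fermion as part of the ambient symmetric structure rather than as something to be gauged away. Both theorems should be two faces of a single assignment sending a fusion $2$-category $\fC$ to the commuting square of pointed $\mathrm{B}\mathbb{Z}/2$-equivariant spaces of the abstract, with the central element $z$ realized as the image of the generator of this $\mathbb{Z}/2$; the case distinction is then exactly whether the $\mathbb{Z}/2$ acts trivially (bosonic) or through $(-1)^f$ (fermionic). I would first record the three fundamental invariants of $\fC$ and show that in the fermionic case they assemble into supergroups and an $\sVect$-relative braided category.

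To extract the data, I would take $(H,z)$ to be the supergroup of fiber functors of the super-Tannakian symmetric center $\cZ_2(\Omega\fC)\simeq\Rep(H,z)$, and $(G,z)$ the supergroup presented by the symmetric fusion $1$-category $\Omega\cZ(\fC)$ of loops in the Drinfeld center, which is symmetric because $\cZ(\fC)$ is braided and is again super-Tannakian with the \emph{same} fermion, since the canonical fully faithful symmetric functor $\cZ_2(\Omega\fC)\to\Omega\cZ(\fC)$ preserves $z$; dualizing yields the faithful inclusion $\iota:(H,z)\hookrightarrow(G,z)$. I would obtain $\cA$ by de-equivariantizing $\Omega\fC$ along the maximal Tannakian subcategory $\Rep(H/\langle z\rangle)\subseteq\cZ_2(\Omega\fC)$: because the residual $\langle z\rangle$ cannot be condensed, the result is $\sVect$-nondegenerate, and its $H/\langle z\rangle$-crossed structure together with the fermion-parity action of $z$ assembles into the monoidal functor of $2$-groups $\rho:(H,z)\to(\Aut^{\br}_{\sVect}(\cA),(-1)^f)$. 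Finally, the associator and braiding data of $\cZ(\fC)$---which, since $\cA$ is $\sVect$-nondegenerate, decouple from $\cA$ and are controlled entirely by $(G,z)$---produce the supercohomology twist $\varpi$, while compatibility of the $\rho$-action on the boundary $\cA$ with this bulk twist furnishes the homotopy between the anomaly of $\rho$ and $\varpi|_{(H,z)}$.

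Conversely, given the listed data I would reconstruct $\fC$ as the $\sVect$-relative analogue of the bosonic construction: form the supergroup-graded extension of $2\Vect$ determined by $(G,z)$ and $\varpi$, impose the boundary condition specified by $\cA$ with its $(H,z)$-action $\rho$, and glue using the chosen homotopy, checking that the invariants of the output recover the input. To promote these mutually inverse assignments to the asserted equivalence of $3$-groupoids, I would verify naturality under monoidal equivalences and their $2$-isomorphisms, importing at each stage the coherence already established for Theorem~\ref{thmalpha:AllBosons} and adding the $\sVect$-linear coherences; the conjugacy phenomenon noted in Remark~\ref{rem:vague} should then fall out of formulating $(H,z)$ and $(G,z)$ as connected groupoids rather than bare supergroups.

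The hard part will be the supercohomological bookkeeping in the last two items. Whereas the bosonic twist lies in the honest group $\mathrm{H}^4(\mathrm{B}G,\mathbb{C}^\times)$, the fermionic $\varpi$ is only valued in a \emph{torsor} for $\SH^4(\mathrm{B}(G,z))$: the presence of $\sVect$ shifts the relevant differentials, so there is no canonical basepoint, and pinning down this torsor---and showing that the anomaly of $\rho$ and the restriction $\varpi|_{(H,z)}$ land in compatible torsors, so that a homotopy between them is even meaningful---is where I expect the real difficulty. Making $\fC\mapsto\varpi$ homotopy coherent rather than defined only up to isomorphism requires controlling the full Postnikov data of $\cZ(\fC)$ at the symmetric ($E_\infty$) level and reconciling $\rho$ with the superelement $(-1)^f$ coherently; this is precisely where I expect the $\mathrm{B}\mathbb{Z}/2$-equivariant reformulation to earn its keep, by repackaging fermion parity as equivariance and thereby converting the torsor ambiguity into a computation in honest equivariant (super)cohomology.
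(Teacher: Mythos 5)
Your list of invariants agrees with the paper's unpacking in \S\ref{subsection:DelphicUnfold}: $(H,z)$ from the fiber functors of $\cZ_{2}(\Omega\fC)$, $(G,z)$ from $\Omega\cZ(\fC)$, $\cA$ as the de-equivariantization of $\Omega\fC$ by $H_b=H/\langle z\rangle$, and $\varpi$ together with the filling homotopy as the remaining faces of a $\mathrm{B}\Z/2$-equivariant square; the equivariant repackaging you advocate is exactly the paper's notion of a Delphic square. However, one step of your extraction is wrong as stated: there is no canonical symmetric functor $\cZ_{2}(\Omega\fC)\to\Omega\cZ(\fC)$, fully faithful or otherwise. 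An object of $\cZ_{2}(\Omega\fC)$ is an endomorphism of the unit braiding trivially with other such endomorphisms; promoting it to an object of $\Omega\cZ(\fC)$ requires half-braidings against \emph{all} objects of $\fC$, which is genuine extra structure. The canonical functor goes the other way, $\Omega\cZ(\fC)\to\cZ_{2}(\Omega\fC)$, and the relevant property is that it is faithful and \emph{dominant} (Theorem~\ref{thm:Delphic}, Lemma~\ref{lem:technicalsymmetrictensorfunctor}); applying $\Spec$ to a faithful dominant functor $\Rep(G,z)\to\Rep(H,z)$ is what yields the inclusion $(H,z)\hookrightarrow(G,z)$. Your version is also internally inconsistent: a fully faithful symmetric functor $\Rep(H,z)\to\Rep(G,z)$ would dualize to a quotient $(G,z)\twoheadrightarrow(H,z)$, not an inclusion.

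The more serious gap is that the actual content of the theorem --- that extraction and reconstruction define inverse equivalences of $3$-groupoids --- is never proved in your plan. You propose to check ``mutually inverse assignments'' by hand and to import ``the coherence already established'' for Theorem~\ref{thmalpha:AllBosons}, but this is circular: in the paper Theorem~\ref{thmalpha:AllBosons} has no independent proof; both it and Theorem~\ref{thmalpha:EmergentFermions} are unpackings of one and the same equivalence (Theorems~\ref{thm:Delphic} and~\ref{thm:Wittsquare}), and proving \emph{that} equivalence is the technical heart of the paper: the unique factorization of the regular module $\fC_{\fC}$ in $\Mor^{\sss}$ through $\Mod(\Omega\fC)$ (Proposition~\ref{prop:MF2Cpullback}), the classification of the residual $2$-faithful, $0$-dominant $1$-morphism via the left fibration $\cZ_{2}:\Mor_2^{\sss,2\mhyphen\dom,0\mhyphen\faith}\to\mathbf{SMF1C}^{\dom,\faith}$ (Theorem~\ref{thm:Mor2SMFCleftfibration}, Corollary~\ref{cor:pullbackWittLeftFib}), and the fact that $\Witt(-)$ commutes with limits of finite $1$-groupoids (Theorem~\ref{thm:Wittlimits}), which is what converts Witt squares into equivariant (Delphic) squares. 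Nothing in your outline substitutes for this machinery; ``verify naturality under monoidal equivalences and their $2$-isomorphisms'' \emph{is} the homotopy-coherence problem this apparatus exists to solve, and it cannot be discharged by induction from the bosonic case --- the paper notes that even reducing the $z=1$ case of Theorem~\ref{thmalpha:EmergentFermions} to Theorem~\ref{thmalpha:AllBosons} is ``somewhat non-trivial.'' (A smaller point: your reconstruction should form the $G_b$-graded extension of $\Mod(\cA)$ classified by $\varpi$, not an extension of $\mathbf{2Vect}$, and then take bimodules over the algebra $\bigoplus_{h\in H_b} X_h\boxtimes\rho(h)^{-1}$, as in \S\ref{subsection:reconstruction}.)
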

\noindent 
While we have stated the cohomological data in the form of a torsor, this torsor can be trivialized by a choice of minimal nondegenerate extension for $\mathcal{A}$, which always exists thanks to the main result of \cite{JFR}.

As in Remark~\ref{rem:vague}, we give a more precise version of Theorem~\ref{thmalpha:EmergentFermions} below.

\begin{rem}
Allowing trivial $z$ in Theorem~\ref{thmalpha:EmergentFermions} (and leaving the rest of the data unchanged) in fact gives the classification in the general case, i.e.\ of arbitrary fusion $2$-categories. In particular, the $z=1$ case of Theorem~\ref{thmalpha:EmergentFermions} reduces (somewhat non-trivially) to Theorem~\ref{thmalpha:AllBosons}. See ~\S\ref{subsection:DelphicUnfold}.
\end{rem}
Given a fusion $2$-category $\mathfrak{C}$, we sketch how the data appearing in Theorem \ref{thmalpha:AllBosons} may be extracted, the fermionic case works analogously: 
\begin{itemize}
\item Let $\cZ(\mathfrak{C})$ denote the (2-categorical) Drinfeld center of $\mathfrak{C}$. We show  that the canonical symmetric monoidal functor $\Omega \cZ(\mathfrak{C}) \to \cZ_{2}(\Omega \mathfrak{C})$ is dominant. In particular, $\Omega \cZ(\mathfrak{C})$ is Tannakian if and only if $\cZ_{2}(\Omega \mathfrak{C})$ is and by \cite{deligne2002} the functor $\Omega \cZ(\mathfrak{C}) \to \cZ_{2}(\Omega \mathfrak{C})$ is equivalent to the restriction functor $\Rep(G) \to \Rep(H)$ along an inclusion of finite  groups $H \hookrightarrow G$. 
\item The braided fusion 1-category $\Omega \mathfrak{C}$ has symmetric center $\cZ_{2}(\Omega \mathfrak{C}) = \Rep(H)$ and hence is by the (de-)equivariantization theorem of \cite[Theorem 4.4]{DGNO2010braided} classified by the nondegenerate braided fusion category $\cA = \Omega \mathfrak{C} \boxtimes_{\cZ_{2}(\Omega \mathfrak{C})} \Vect$ together with a braided $H$-action (and $\Omega \mathfrak{C}$ can be recovered as the homotopy/categorical fixed points of that action on $\cA$).  Thus, the first three bullet points in Theorem~\ref{thmalpha:AllBosons} fully determine the braided fusion 1-category $\Omega \mathfrak{C}$ and the symmetric functor $\Omega \cZ(\mathfrak{C}) \to \cZ_{2}(\Omega \mathfrak{C})$. 
\item The last two bullet points in Theorem \ref{thmalpha:AllBosons} record more subtle cohomological information pertaining to the higher coherences of $\mathfrak{C}$. 
\end{itemize}

It is clarifying to understand where the most familiar examples of fusion 2-categories fit into the above classification theorems:

\begin{ex}
The case $H= G$ corresponds to the case of \emph{connected fusion 2-categories} $\mathfrak{C}$, i.e.\ those equivalent to $\Mod(\Omega \mathfrak{C})$, see \cite{douglas2018fusion}. Indeed, in this case the class $\pi$, resp. $\varpi$ is determined by $\rho$ through the homotopy and hence the last two bullet points are no data. The first three bullet points classify $\Omega \mathfrak{C}$ via de/equivariantization as outlined above. 
\end{ex}

\begin{ex}\label{ex:generalizedbosonicstronglyfusion}
     Let us consider the case when $H$ is the trivial group in Theorem \ref{thmalpha:AllBosons}. Then the only non-trivial data is a  nondegenerate braided fusion 1-category $\cA$, and a class $\pi$ in $H^4(G;\mathbb{C}^{\times})$. 
     The corresponding fusion 2-category is the 2-Deligne tensor product $\mathbf{2Vect}^{\pi}_{G} \boxtimes \mathbf{Mod}(\mathcal{A})$. 
\end{ex}

\begin{ex}
    The group-theoretical fusion 2-categories of \cite{DY;grouptheoretical} are recovered by taking $\mathcal{A}=\mathbf{Vect}$, with $H$ nontrivial, in Theorem \ref{thmalpha:AllBosons}. In particular, if $\cG$ is a categorical group, then we can consider the fusion 2-categories $\mathbf{2Rep}(\cG)$, of finite semisimple 2-representations of $\cG$ as introduced in \cite{Elgueta}, and $\mathbf{2Vect}_{\cG}$, of $\cG$-graded 2-vector spaces \cite{douglas2018fusion}. Let us write $H$ for the group of equivalence classes of objects of $\cG$, and $A$ for its abelian groups of morphisms, which inherit an action by $H$. We can therefore consider the semidirect product $\widehat{A}\rtimes H$ of the Pontrijagin dual of $A$ with $H$. Then the fusion 2-category $\mathbf{2Vect}_{\cG}$ corresponds to the inclusion $\widehat{A}\hookrightarrow \widehat{A}\rtimes H$, a class $\pi$ in $H^4(\widehat{A} \rtimes H;\mathbb{C}^{\times})$ derived from the Postnikov 3-cocycle for $\cG$ (see \cite[Equation 5]{DY;grouptheoretical} for an explicit formula at the level of cocycles), and a certain null-homotopy of the restriction $\pi|_{\widehat{A}}$ (in terms of the explicit cocycle-level formulas of \cite{DY;grouptheoretical}, the restriction is trivial on the nose and the null-homotopy is the one given by the trivial cochain).
     Likewise, the fusion 2-category $\mathbf{2Rep}(\cG)$ corresponds to the inclusion $H\hookrightarrow \widehat{A}\rtimes H$, with the same class $\pi$ and a certain null-homotopy of~$\pi|_H$.
\end{ex}

\subsection{Profits of the Classification}\label{section:profits}
The classification immediately leads to applications for the structure of fusion 2-categories. We obtain a rank finiteness result, whose decategorified version for arbitrary fusion 1-categories is currently not known, as well as an analogue of Ocneanu rigidity. In what follows, the \emph{rank} of a fusion 2-category $\mathfrak{C}$ is the product of the the number of simple objects of the braided fusion 1-category $\Omega\mathfrak{C}$ together with the number of connected components of $\mathfrak{C}$.

\begin{coralpha}\label{cor:rankfinite}{(Rank finiteness)}
Up to monoidal equivalence, there exists only finitely many fusion 2-categories of a given rank.
\end{coralpha}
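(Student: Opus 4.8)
The plan is to derive Corollary~\ref{cor:rankfinite} as a counting consequence of Theorem~\ref{thmalpha:EmergentFermions} (which subsumes the bosonic case). First I would fix the rank $r$ and observe that the rank is the product of the number $n$ of simple objects of $\Omega\mathfrak{C}$ with the number $c$ of connected components. Since $r = n\cdot c$, there are only finitely many factorizations, so it suffices to bound, for each fixed pair $(n,c)$, the number of fusion $2$-categories up to monoidal equivalence realizing that pair. The strategy is then to show that each of the five pieces of classifying data in Theorem~\ref{thmalpha:EmergentFermions} ranges over a finite set once $n$ and $c$ are fixed.

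The key steps, in order. (1) The braided fusion $1$-category $\cA$: note $\cA$ is obtained from $\Omega\mathfrak{C}$ by de-equivariantization, so its number of simple objects is controlled by $n$; more carefully, $\Omega\mathfrak{C}$ has exactly $n$ simple objects, and by the decategorified rank-finiteness of \emph{braided} fusion $1$-categories (Ocneanu--Etingof--Nikshych--Ostrik rank finiteness \cite{ENO}) there are only finitely many braided fusion categories with a bounded number of simples; hence finitely many choices of $\Omega\mathfrak{C}$, and correspondingly finitely many $\cA$ together with the data of $\cZ_2(\Omega\mathfrak{C})=\Rep(H,z)$. (2) The supergroup $(H,z)$: since $\cZ_2(\Omega\mathfrak{C})$ is a subcategory of the finite category $\Omega\mathfrak{C}$, the group $H$ has bounded order (indeed $\lvert H\rvert$ is bounded by the number of simples of the symmetric center, hence by $n$), so there are finitely many isomorphism types of $(H,z)$. (3) The inclusion $(H,z)\hookrightarrow(G,z)$: here $G$ is (the group of fiber functors of) $\Omega\cZ(\mathfrak{C})$, and the number of connected components $c$ of $\mathfrak{C}$ bounds $\lvert G\rvert$; once $\lvert G\rvert$ is bounded there are finitely many groups $G$, finitely many subgroups $H\le G$, and finitely many inclusions up to conjugacy. (4) The functor $\rho:(H,z)\to(\mathcal{A}ut^{\br}_{\sVect}(\cA),(-1)^f)$: with $\cA$ and $H$ both finite, the $2$-group $\mathcal{A}ut^{\br}_{\sVect}(\cA)$ has finitely many equivalence classes of objects (finitely many braided autoequivalences of a fixed fusion category, by \cite{ENO}) and all Hom-sets are finite, so there are finitely many monoidal functors $\rho$ up to equivalence. (5) The cohomological data $\varpi$ and the homotopy: the supercohomology group $\SH^4(\mathrm{B}(G,z))$ of a finite (super)group is finite, so its torsor has finitely many elements, and the set of homotopies between the two finite-order classes $\text{anomaly}(\rho)$ and $\varpi|_{(H,z)}$ is a torsor over a finite cohomology group, hence finite. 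Assembling (1)--(5) and recalling that monoidal equivalence classes correspond to isomorphism classes of this data (up to the conjugacy noted in Remark~\ref{rem:vague}), the total count is finite for each $(n,c)$, and summing over the finitely many factorizations of $r$ gives the corollary.

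The step I expect to be the main obstacle is making precise the claim that \emph{each piece of data takes finitely many values once the rank is fixed}, because the rank as defined only directly controls $n$ (the simples of $\Omega\mathfrak{C}$) and $c$ (the connected components), and one must verify that these two numbers genuinely bound all the auxiliary groups and categories. Concretely, the delicate points are: bounding $\lvert G\rvert$ in terms of $c$ (this requires knowing that the number of connected components equals, or at least controls, the order of the group $G$ of fiber functors of $\Omega\cZ(\mathfrak{C})$, via the classification), and invoking decategorified rank finiteness for braided fusion $1$-categories to bound the possible $\cA$ and $\Omega\mathfrak{C}$ from the bound on $n$. Once those two bounds are in place, the finiteness of $\mathcal{A}ut^{\br}_{\sVect}(\cA)$, of isomorphism classes of finite (super)groups of bounded order, of their subgroup inclusions up to conjugacy, and of the relevant (super)cohomology groups and their torsors and homotopy-sets, are all standard finiteness statements for finite groups and finite categories, and the homotopy-coherent bookkeeping only multiplies finitely many finite sets together.
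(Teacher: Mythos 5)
Your overall strategy is the same as the paper's: fix the rank, invoke Theorems~\ref{thmalpha:AllBosons} and~\ref{thmalpha:EmergentFermions} to reduce to counting the classifying data, and then bound each piece. Your steps (1), (2), (4), (5) essentially reproduce the paper's argument, modulo two small inaccuracies: rank finiteness for \emph{braided} fusion $1$-categories is the result the paper cites as \cite{JMNR2010rank}, not \cite{ENO} (indeed, as the paper stresses, the statement for arbitrary fusion $1$-categories is open); and in (2) the parenthetical claim that $|H|$ is bounded by the number of simples of $\cZ_{2}(\Omega\fC)$ is not an elementary fact (the rank of $\Rep(H,z)$ is the number of conjugacy classes, which does not obviously bound $|H|$) --- though the conclusion you want, finiteness of the possible $(H,z)$, does follow correctly from your step (1), since finitely many possible $\Omega\fC$ yield finitely many possible symmetric centers.

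The genuine gap is your step (3), which is exactly the step you flagged as the main obstacle. The claim ``the number of connected components $c$ of $\fC$ bounds $|G|$'' is false as stated: for connected fusion $2$-categories one has $H=G$, hence $c=1$, while $G$ ranges over all finite groups, so no function of $c$ alone can bound $|G|$. What is true, and what the paper actually proves, is that $c$ \emph{together with} $|H|$ bounds $|G|$. Concretely, by Corollary~\ref{cor:components} (established via Lemma~\ref{lem:doublecoset} and the reconstruction of \S\ref{subsection:reconstruction}), the connected components of $\fC$ are in bijection with the double cosets $H\backslash G/H$, so $c = |H\backslash G/H|$; and since each double coset $HgH$ has at most $|H|^2$ elements, one gets the elementary inequality $|G|\leq |H\backslash G/H|\cdot |H|^2 = c\cdot |H|^2$. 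With $|H|$ already bounded from step (2), this bounds $|G|$, and the rest of your counting goes through. Your proposal correctly identifies that the classification must make the components ``control'' $G$, but it neither states nor proves the double-coset identification and the counting inequality that constitute this control, and the substitute assertion you do make would fail.
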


\begin{coralpha}\label{cor:rigid}{(Ocneanu rigidity)}
Fusion 2-categories admit no non-trivial deformations.
\end{coralpha}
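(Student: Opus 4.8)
The plan is to derive Corollary~\ref{cor:rigid} as a formal consequence of the classification Theorems~\ref{thmalpha:AllBosons} and~\ref{thmalpha:EmergentFermions} together with the rank finiteness result of Corollary~\ref{cor:rankfinite}. The key observation is that a continuous deformation of a fusion $2$-category cannot change its rank: the rank is the product of the number of simple objects of $\Omega\fC$ with the number of connected components of $\fC$, both of which are discrete invariants. Hence any deformation stays within the (finite, by Corollary~\ref{cor:rankfinite}) set of monoidal-equivalence classes of fusion $2$-categories of a fixed rank. First I would make precise what ``deformation'' means in this context, presumably a formal one-parameter family $\fC_t$ of fusion $2$-categories, defined either in the sense of deformations of the underlying associator/structure data or as a path in the relevant moduli $3$-groupoid; I would fix this definition to match the decategorified statement of Ocneanu rigidity for fusion $1$-categories.

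The central strategy is then to show that each piece of the parameterizing data in the classification is itself rigid, i.e.\ admits no nontrivial infinitesimal deformations. I would proceed through the five bullet points of Theorem~\ref{thmalpha:EmergentFermions} (which subsumes the bosonic case by the remark following it, taking $z$ possibly trivial). The nondegenerate braided fusion $1$-category $\cA$ is rigid by the classical Ocneanu rigidity for fusion $1$-categories (Etingof--Nikshych--Ostrik), since braided and nondegeneracy data are detected by finitely many discrete tensor and braiding constraints that cannot vary continuously. The inclusion of finite supergroups $\iota:(H,z)\hookrightarrow(G,z)$ is manifestly discrete and hence admits no deformations. The monoidal functor $\rho$ lands in $\cAut^{\br}_{\sVect}(\cA)$, a discrete (higher) group since the braided autoequivalences of a fixed fusion $1$-category form a finite groupoid; a continuous family of such functors is therefore locally constant. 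Finally, the cohomological class $\varpi$ lives in a torsor over $\SH^4(\mathrm{B}(G,z))$ and the chosen homotopy trivializing the anomaly lives in a discrete set of homotopy classes, both of which are valued in the discrete group $\mathbb{C}^\times$-cohomology (equivalently, torsion groups), so they too are rigid.

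The main obstacle will be making the passage from ``each piece of data is discrete/rigid'' to ``the fusion $2$-category is rigid'' fully rigorous in the homotopy-coherent setting. Because the classification is an equivalence of $3$-groupoids rather than a bijection of sets, I must argue that a deformation of $\fC$ transports across the equivalence to a deformation of the classifying data, and that the assembly of the five rigid pieces into a single datum does not introduce new moduli through the higher coherences, in particular through the homotopy between the anomaly of $\rho$ and $\varpi|_{(H,z)}$. The cleanest route is probably to combine rank finiteness with a connectedness/discreteness argument: since there are only finitely many equivalence classes of a given rank and rank is deformation-invariant, the moduli ``space'' is a finite set of points (up to contractible ambiguity), so every deformation is trivial. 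I would therefore lead with the rank-finiteness reduction, which sidesteps a bullet-by-bullet infinitesimal analysis and delivers the rigidity statement directly; the bullet-by-bullet discreteness check then serves as an independent confirmation and as the mechanism proving Corollary~\ref{cor:rankfinite} itself.
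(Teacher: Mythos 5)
The paper's own treatment of this corollary is to record it as immediate from the classification (it appears in \S\ref{sec:proofsofprofits} with no further argument): Theorems~\ref{thmalpha:AllBosons} and~\ref{thmalpha:EmergentFermions} identify the $3$-groupoid of fusion $2$-categories with one assembled entirely from discrete or rigid data --- a (slightly) nondegenerate braided fusion $1$-category $\cA$ (rigid by classical Ocneanu rigidity of Etingof--Nikshych--Ostrik), an inclusion of finite (super)groups, a functor into the finite $2$-group $\Aut^{\br}_{\sVect}(\cA)$, and classes and homotopies valued in cohomology or supercohomology of finite groups, which are finite groups --- so a fusion $2$-category has no room to deform. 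This is exactly your ``bullet-by-bullet'' argument, which you relegate to the status of an independent confirmation; it is in fact the whole proof, and its only non-trivially-discrete input is the classical rigidity of $\cA$.

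The genuine gap is in the route you choose to lead with. ``Rank is a deformation invariant, there are finitely many equivalence classes of each rank, hence every deformation is trivial'' is not a valid implication: finiteness of the set of equivalence classes of a moduli problem does not preclude non-trivial infinitesimal or formal deformations, since the moduli can consist of finitely many points and still be non-reduced (compare $\mathbb{C}[\epsilon]/(\epsilon^2)$, which has a single point but a non-trivial tangent direction). Ocneanu rigidity is precisely the vanishing of such deformations; even the weaker observation that all fibers of a family lie in one equivalence class does not make the family trivial \emph{as a deformation}, and for formal deformations over Artinian bases the ``fibers'' away from the closed point do not exist at all. The classical picture shows the implication runs the other way: for fusion $1$-categories rigidity is proved first (via vanishing of Davydov--Yetter cohomology) and finiteness of categories with a fixed Grothendieck ring is deduced from it, while rank finiteness for fusion $1$-categories remains open even though rigidity is known --- a point the paper itself emphasizes. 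So Corollaries~\ref{cor:rankfinite} and~\ref{cor:rigid} are parallel consequences of the classification, not consequences of one another in the direction you propose. Drop the rank-finiteness reduction and promote your discreteness check of the five pieces of classifying data to the main (and complete) argument.
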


\begin{coralpha}\label{cor:components}
The connected components of a fusion 2-categories whose corresponding inclusion of finite (super)groups is $H\hookrightarrow G$ are in bijective correspondence with the double cosets $H\backslash G/H$. Moreover, this correspondence is compatible with the fusion rules.
\end{coralpha}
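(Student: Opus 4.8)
The plan is to upgrade the bijection ``components $\leftrightarrow$ double cosets'' into an isomorphism of based rings, since ``compatible with the fusion rules'' means precisely that the fusion product on components reproduces the double-coset Hecke ring $\mathbb{Z}[H\backslash G/H]$. First I would record the elementary structure on $\pi_0(\mathfrak{C})$. As $\mathfrak{C}$ is finite semisimple, $\pi_0(\mathfrak{C})$ is a finite set; as every object has a two-sided dual, the tensor product descends to a based ring on $\mathbb{Z}_{\geq 0}[\pi_0(\mathfrak{C})]$: if $f\colon X\to X'$ is a nonzero $1$-morphism then so is $f\otimes\mathrm{id}_Y$, so the multiset of components occurring among the simple summands of $X\otimes Y$ depends only on the components $[X],[Y]$, defining structure constants $N^{[Z]}_{[X],[Y]}\in\mathbb{Z}_{\geq 0}$, a unit $[I]$, and an involution induced by duality. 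The guiding observation, to be justified below, is that this based ring is a \emph{coarse} invariant: it cannot see the braided fusion $1$-category sitting inside each component nor the higher coherences, and so should depend only on $\iota\colon H\hookrightarrow G$, not on $\cA$, $\rho$, $\pi$, or the homotopy.

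Second, I would make this reduction precise and pass to a convenient model. Using the reconstruction underlying Theorem~\ref{thmalpha:AllBosons}, I expect to present $\mathfrak{C}$ as built from the $G$-graded fusion $2$-category $\mathbf{2Vect}_G^\pi\boxtimes\mathbf{Mod}(\cA)$ (this is $\mathfrak{C}$ itself in the extreme $H=1$ of Example~\ref{ex:generalizedbosonicstronglyfusion}) by an $H$-gauging prescribed by $\rho$ and the homotopy --- concretely, as the $2$-category of bimodules over the algebra determined by $(H,\rho)$. The point of this step is that tensoring with the \emph{connected} $2$-category $\mathbf{Mod}(\cA)$, and twisting the module structure by the fibrewise action $\rho$, enlarges the fusion $1$-category of endomorphisms within each component but changes neither the set of components nor the integer fusion multiplicities on $\pi_0$. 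This reduces the claim to the purely group-theoretical skeleton $\mathfrak{C}_0$ attached to $H\hookrightarrow G$ with trivial $\cA$, $\rho$, $\pi$.

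Third, for the skeleton the computation is the $2$-categorical lift of a classical $1$-categorical fact. By \cite{DY;grouptheoretical}, $\mathfrak{C}_0$ is the fusion $2$-category of $\mathbf{2Vect}_H$--$\mathbf{2Vect}_H$-bimodules internal to $\mathbf{2Vect}_G$, exactly as group-theoretical fusion $1$-categories are $\mathbf{Vect}_H$-bimodules in $\mathbf{Vect}_G$. The indecomposable such bimodules are supported on single double cosets, so $\pi_0(\mathfrak{C}_0)$ is in bijection with $H\backslash G/H$; the fusion of two components is their relative tensor product over $\mathbf{2Vect}_H$, whose simple summands and multiplicities are counted by the decomposition of $HgHg'H$ into double cosets, i.e.\ by the standard Mackey/Hecke bookkeeping governed by the intersections $gHg^{-1}\cap H$. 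Matching these with the structure constants of $\mathbb{Z}[H\backslash G/H]$, and the duality $[HgH]\mapsto[Hg^{-1}H]$ with the Hecke involution, yields the desired isomorphism of based rings.

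The main obstacle is twofold. The more serious half is turning the ``coarseness'' heuristic of the first two paragraphs into a rigorous, homotopy-coherent argument: a priori the twist by $\rho$, $\pi$, or the homotopy could obstruct a nonzero $1$-morphism or alter the number of simple summands in a tensor product, so I must verify that passing to $\pi_0$ genuinely annihilates this data. I expect to control this either by a Frobenius--Perron dimension comparison on $\pi_0$ or by exhibiting explicit nonzero $1$-morphisms between the relevant simple objects that survive the twist. The second half is extracting the bimodule model \emph{from} the classifying data uniformly enough to also cover Theorem~\ref{thmalpha:EmergentFermions}; here the key is that the central element $z$ and the $\sVect$-structure only decorate the interior of each component (through the super-structure on $\cA$ and on $\rho$) and leave the underlying group inclusion $H\hookrightarrow G$ untouched, so that the double-coset count, and the Hecke fusion, are identical in the fermionic case.
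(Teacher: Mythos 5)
Your opening step contains a genuine error: the integer structure constants $N^{[Z]}_{[X],[Y]}$ you posit are not well-defined, so there is no based ring $\mathbb{Z}_{\geq 0}[\pi_0(\mathfrak{C})]$, and the proposed isomorphism with the Hecke ring $\mathbb{Z}[H\backslash G/H]$ cannot even be formulated. Your justification (if $f\colon X\to X'$ is nonzero then so is $f\otimes\id_Y$) shows only that the \emph{set} of components supporting the simple summands of $X\Box Y$ depends just on $[X],[Y]$; it says nothing about multiplicities, and multiplicities genuinely vary within a component. Already in a connected fusion $2$-category $\Mod(\cA)$ the number of indecomposable summands of $\cM\boxtimes_{\cA}\cN$ depends on the choice of the simple objects $\cM,\cN$, not merely on their (unique) component. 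This is precisely what the Warning in \S\ref{sec:proofsofprofits} records: the number of simples in $\Hom_{\fC}(C\Box D,E)$ is not constant on components, so $\pi_0(\fC)$ carries only a \emph{possibilistic hypergroup} structure (which components appear, not with what multiplicity), and that is the only sense in which ``compatible with the fusion rules'' can hold. Your third step, matching multiplicities against Mackey/Hecke structure constants, inherits the same defect.

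Second, the reduction to the ``group-theoretical skeleton'' (trivial $\cA$, $\rho$, $\pi$) --- which you yourself flag as the serious obstacle and do not carry out --- is unnecessary, and avoiding it is exactly how the paper's proof works. The paper proves a single lemma (Lemma~\ref{lem:doublecoset}): for \emph{any} fusion $2$-category $\fD$ with $\pi_0(\fD)$ a group and \emph{any} strongly connected rigid algebra $A$ supported on a subgroup $H$, one has $\pi_0(\mathbf{Bimod}_{\fD}(A))\cong H\backslash \pi_0(\fD)/H$; the proof shows each $A\Box C\Box A$ is a simple bimodule and that two such admit a nonzero bimodule $1$-morphism exactly when the double cosets of $C$ and $D$ agree. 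The reconstruction of \S\ref{subsection:reconstruction} exhibits $\fC$ as $\mathbf{Bimod}_{\fD}(A)$ with $\fD=\mathbf{2Vect}_G^{\pi}\boxtimes\Mod(\cA)$ (or its fermionic graded-extension analogue) and $A=\bigoplus_{h\in H}X_h\boxtimes\rho(h)^{-1}$, and the lemma applies \emph{verbatim} to this twisted algebra, since its hypotheses concern only the support of $A$ --- no ``coarseness''/untwisting argument is ever needed. Compatibility with fusion then follows from the one-line computation $(A\Box D\Box A)\Box_A(A\Box E\Box A)\simeq A\Box(D\Box A\Box E)\Box A$, which identifies the possibilistic fusion on $\pi_0(\fC)$ with double-coset multiplication. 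To repair your proposal: drop the ring claim, state the conclusion as an isomorphism of possibilistic hypergroups, and run the double-coset argument directly in the twisted bimodule model rather than reducing to the untwisted skeleton.
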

\noindent
These corollaries are made precise and proved in \S\ref{sec:proofsofprofits}.

\subsection{The Main Theorem: Relating Fusion 2-Categories and Delphic Squares}\label{subsection:highlevelproof}

In this section, we state our main theorem whose unpacking results in Theorems  \ref{thmalpha:AllBosons} and \ref{thmalpha:EmergentFermions}.

We will freely use the language of $\infty$-categories (also know as $(\infty,1)$-categories) as developed in~\cite{highertopos, higheralgebra}. In particular, below we will treat the terms ``space'' (thought of as a homotopy type) and ``$\infty$-groupoid'' synonymously, and similarly the terms ``$n$-truncated space'' (i.e space with vanishing homotopy groups in degrees $>n$) and ``$n$-groupoid.''

Consider the classifying space $\mathrm{B} \Z/2$ equipped with its group structure induced from the further delooping $\mathrm{B}^2 \Z/2:= K(\Z/2, 2)$. Equivalently, this `$2$-group' may be thought of as the monoidal $1$-groupoid with a single object, automorphisms $\Z/2$ and (unique) monoidal structure induced by the abelian-ness of $\Z/2$. It is a well-known observation that this $2$-group is equivalent to the monoidal groupoid $\Aut^{\br}(\sVect)$ of braided automorphisms of $\sVect$ and braided monoidal natural isomorphisms.

\begin{defn}\label{def:superspaces}
    A \textit{superspace} is a space equipped with an action by $\Aut^{\br}(\sVect) \simeq \mathrm{B} \Z/2$. A \emph{map of superspaces} is a $\mathrm{B}\Z/2$-equivariant map. 
\end{defn}

Given a group $G$, a $\mathrm{B}\Z/2$-action on the classifying space $\mathrm{B}G$ (equivalently, the $1$-groupoid with one object and automorphisms $G$) amounts precisely to a central element $z\in Z(G)$ of order at most $2$, i.e.\ a supergroup structure in the sense of \S\ref{subsection:DataProfits}.

The reason superspaces feature so prominently in this work stems from the following consequence of a theorem of Deligne~\cite{deligne2002}: every symmetric multifusion $1$-category is completely determined by its $1$-groupoid $\Spec(\cE)$ of linear symmetric monoidal functors $\cE\to \sVect$.  This groupoid $\Spec(\cE)$ admits a canonical action by $\Aut^{\br}(\sVect)$ by postcomposition, and hence is a supergroupoid. Moreover, $\cE$ is fusion if and only if there is a unique --- up to nonunique isomorphism --- such fiber functor, i.e.\ if $\Spec(\cE)$ is connected. In this case $\Spec(\cE)$ is of the form $\mathrm{B}G$ for some finite group $G$ and the superspace structure amounts to the familiar supergroup structure on $G$. 

We briefly recall two important examples of \emph{higher} superspaces: 
Firstly, there is the $2$-groupoid $\mathrm{BMF1C}^{\ndeg}(\sVect)$ of $\sVect$-nondegenerate braided fusion categories (that is braided fusion categories equipped with an equivalence between their symmetric centers and $\sVect$), and compatible braided equivalences between them.
Secondly, there is the $4$-groupoid $\Witt(\sVect)$ of $\sVect$-nondegenerate braided fusion categories and $\sVect$-Witt equivalences, etc., between them.
The set of isomorphism classes of objects in this latter $4$-groupoid is the super-Witt group introduced in~\cite{DNO2013}.
Both carry an $\Aut^{\br}(\sVect)$-action  given by precomposing the identification of the symmetric centers with $\sVect$, making both into higher super-groupoids. 
Moreover, there is an $\Aut^{\br}(\sVect)$-equivariant map $[-]: \mathrm{BMF1C}^{\ndeg}(\sVect) \to  \Witt(\sVect)$ sending a $\sVect$-nondegenerate braided fusion category to the corresponding object in $\Witt(\sVect)$.

We are now ready to state our main theorem:

\begin{thmalpha}\label{thm:Delphic}
If $\mathfrak{C}$ is a multifusion $2$-category,
the canonical symmetric monoidal functor $ \Omega \cZ(\fC) \to \cZ_{2}(\Omega \fC) $ is faithful and dominant and hence corresponds by Deligne's theorem to a functor of super-1-groupoids $\iota: \Spec(\cZ_{2}\Omega \fC) \to \Spec(\Omega \cZ(\fC))$ which is essentially surjective on objects and faithful\footnote{In homotopical terms, $\iota$ is $(-1)$-connected, i.e.\ surjective on $\pi_0$, and $0$-truncated, i.e.\ injective on $\pi_1$ (and an isomorphism on all higher homotopy groups which vanish here).}.

Moreover, for any faithful and dominant symmetric monoidal functor $\cE \twoheadrightarrow \cF$ between symmetric multifusion $1$-categories, there is an equivalence of $3$-groupoids between the $3$-groupoid 
\[
\left\{
\parbox{10cm}{\rm Multifusion 2-categories $\mathfrak{C}$ equipped with an identification\\\centering $[\Omega\mathcal{Z}(\mathfrak{C})\rightarrow \mathcal{Z}_{2}(\Omega\mathfrak{C})]\cong [\mathcal{E}\twoheadrightarrow\mathcal{F}]$}
\right\}
\]
and the space\footnote{Formally, this space is the pullback \[\Map\left(\Spec(\cF), \mathrm{BMF1C}^{\ndeg}(\sVect)\right)^{\mathrm{B} \Z/2} \times_{\Map\left(\Spec(\cF),\Witt(\sVect)\right)^{\mathrm{B}\Z/2}} \Map\left(\Spec(\cE), \Witt(\sVect)\right)^{\mathrm{B}\Z/2},\] where $\Map(-,-)^{\mathrm{B}\Z/2}$ denote spaces of $\mathrm{B}\Z/2$-equivariant maps.} of commuting squares of (unpointed!)\ superspaces 
\begin{equation}\label{eq:delphic}
\begin{tikzcd}
\Spec(\cF)
\arrow[d,"\iota"']
\arrow[r, dashed]
&
{\mathrm{BMF1C}^{\ndeg}(\mathbf{sVect})}
\arrow[d,"{[\protect{-}]}"]
\\
\Spec(\cE)
\arrow[r, dashed, "\varpi"']
\arrow[ur,Rightarrow,dashed, shorten <= 1em, shorten >= 1em,"\simeq"]
&
\mathcal{W}itt(\mathbf{sVect})\,.
\end{tikzcd}
\end{equation}
(For better readability, dashed morphisms are part of the data of an object of this $3$-groupoid while solid morphisms are fixed.)
\end{thmalpha}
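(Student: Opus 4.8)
The plan is to prove both assertions by treating a multifusion $2$-category $\fC$ as a gapped boundary of its Drinfeld center and peeling off the invariants one homotopical layer at a time. For the preliminary claim, I would obtain the canonical functor by applying $\Omega$ to the forgetful monoidal functor $\cZ(\fC)\to\fC$: restricting to endomorphisms of the unit yields a braided monoidal functor $\Omega\cZ(\fC)\to\Omega\fC$ whose image is transparent (the half-braidings restrict to trivial braidings on the unit), so it corestricts to a symmetric monoidal functor $\Omega\cZ(\fC)\to\cZ_2(\Omega\fC)$. Faithfulness is essentially formal: a tensor functor between (multi)fusion $1$-categories sends nonzero objects to nonzero objects via dualizability, so no block is annihilated; this translates under Deligne's theorem \cite{deligne2002} into $\iota$ being surjective on $\pi_0$, i.e.\ essentially surjective on objects. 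Dominance is the substantive point, and I would prove it by showing every transparent object of $\Omega\fC$ lifts, at least up to direct summands, to a half-braided object of the center -- either by directly constructing a half-braiding from the monoidal structure or by analysing the induction adjoint to the forgetful functor -- so that the image of $\Omega\cZ(\fC)$ generates $\cZ_2(\Omega\fC)$ under summands; Deligne's theorem then reads this off as $\iota$ being faithful, i.e.\ $0$-truncated.

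For the main equivalence, the first step is a bulk--boundary reduction. Using the center and Morita theory for fusion $2$-categories \cite{Decoppet2022;centers,Decoppet2022;Morita,Decoppet2022;rigid}, I would identify the $3$-groupoid of multifusion $2$-categories $\fC$ with fixed boundary datum $[\cE\twoheadrightarrow\cF]$ with a $3$-groupoid of pairs: a nondegenerate braided fusion $2$-category $\cB=\cZ(\fC)$ together with a Lagrangian algebra realizing $\fC$ as a gapped boundary of $\cB$. The content of this reduction is that $\fC$ is recovered from $\cZ(\fC)$ together with its boundary, that Morita-equivalent multifusion $2$-categories share a center, and that $\Omega\cZ(\fC)=\cE$ records the symmetric $1$-category governing the bulk while $\cZ_2(\Omega\fC)=\cF$ records the boundary; the fixed solid map $\iota\colon\Spec(\cF)\to\Spec(\cE)$ is exactly the one produced above.

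Next, I would classify each of the two pieces $\mathrm{B}\Z/2$-equivariantly over the relevant supergroupoid. On the bulk side, I would show that a nondegenerate braided fusion $2$-category $\cB$ with $\Omega\cB\simeq\cE$ is equivalent to a $\mathrm{B}\Z/2$-equivariant map $\Spec(\cE)\to\Witt(\sVect)$: the connected components of $\cB$ are organised by $\Spec(\cE)$, over each component lives a $\sVect$-Witt class, and $\cB$ is reassembled from this local system of Witt classes by an equivariantization/gauging construction. This produces the bottom dashed edge $\varpi$. On the boundary side, I would show that a Lagrangian algebra amounts, via the theory of minimal nondegenerate extensions \cite{JFR} and (de-)equivariantization \cite{DGNO2010braided}, to a trivialization of the restricted Witt class by an honest $\sVect$-nondegenerate braided fusion $1$-category $\cA$ carrying the appropriate equivariant structure -- precisely a $\mathrm{B}\Z/2$-equivariant map $\Spec(\cF)\to\mathrm{BMF1C}^{\ndeg}(\sVect)$, the top dashed edge -- while the requirement that it trivialize $\varpi|_{\Spec(\cF)}$ is exactly the filling $2$-cell of the square. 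Assembling these identifications exhibits the moduli $3$-groupoid as the asserted homotopy pullback.

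The main obstacle is homotopy coherence. Each identification above must be promoted from a bijection on equivalence classes to an equivalence of full $3$-groupoids (equivalently, of the relevant $\infty$-groupoids of equivariant maps), tracking all higher isomorphisms, maintaining the $\mathrm{B}\Z/2$-equivariance throughout, and proving that the bulk--boundary, equivariantization, and minimal-extension constructions are natural in families. Concretely, the hardest inputs are the \emph{equivariant} classification of nondegenerate braided fusion $2$-categories by maps into $\Witt(\sVect)$ and the coherent identification of the space of gapped boundaries with the fiber of $[-]\colon\mathrm{BMF1C}^{\ndeg}(\sVect)\to\Witt(\sVect)$; establishing these at the level of spaces, rather than merely on objects, is where the real work lies, and is the step I expect to require the most care.
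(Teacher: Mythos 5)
Your outline has the same global shape as the paper's argument: both decompose a multifusion $2$-category into a ``bulk'' datum over $\cE$ (a Witt class, equivalently the Morita class/center of $\fC$), a ``boundary'' datum over $\cF$ (the braided category $\Omega\fC$), and a compatibility homotopy, and both then pass to superspaces by (de)equivariantization along Deligne's theorem. But two of your steps have genuine problems. First, your proof of faithfulness of $\Omega\cZ(\fC)\to\cZ_{2}(\Omega\fC)$ fails as stated: for \emph{multifusion} $1$-categories, tensor functors can annihilate nonzero dualizable objects. The projection $\cC_1\oplus\cC_2\to\cC_1$ onto a block is a symmetric tensor functor killing all of $\cC_2$; dualizability does not rule this out, because for $X$ in a multifusion category $X\otimes X^{*}$ need only contain a component unit $\mathbf{1}_{ii}$ rather than the whole unit object, so the usual fusion-category argument does not apply. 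In the paper, faithfulness and dominance are both obtained structurally (Lemma \ref{lem:technicalsymmetrictensorfunctor}): by \cite{Decoppet2022;centers}, $\fC$ is the Morita dual of $\Mod(\cB)$ with respect to $\Mod(\cC)$ where $\cB\to\cZ(\cC)$ is \emph{assumed} faithful (Remark \ref{prop:thibault}; this is exactly the condition making $\Mod(\cC)$ a Morita equivalence), and the canonical functor is then identified with the corestriction $\cE\twoheadrightarrow\Im(\cE)$ of that faithful functor, which is faithful and dominant by construction. Your dominance argument (lifting transparent objects to the center up to summands) is the right statement, but you give no actual argument for it; in the paper it falls out of the same identification.

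Second, for the main equivalence, the two steps you yourself flag as hardest are exactly the ones for which your route supplies no tools. (i) A homotopy-coherent, $3$-groupoid-level equivalence between multifusion $2$-categories with fixed $[\cE\twoheadrightarrow\cF]$ and pairs consisting of a nondegenerate braided fusion $2$-category and a Lagrangian algebra is not available in the literature, and making ``gapped boundaries'' coherent is no easier than the original classification problem. The paper circumvents this entirely: it factors the regular module $\fC_{\fC}$, viewed as a $1$-morphism $\mathbf{2Vect}\nrightarrow\fC$ in $\Mor^{\sss}$, through $\Mod(\Omega\fC)$ (Proposition \ref{prop:MF2Cpullback}), and then describes the residual $2$-faithful, $0$-dominant morphism via the left fibration $\cZ_{2}:\Mor_2^{\sss,2\mhyphen\dom,0\mhyphen\faith}\to\mathbf{SMF1C}^{\dom,\faith}$ (Theorem \ref{thm:Mor2SMFCleftfibration}, Corollary \ref{cor:pullbackWittLeftFib}); the whole apparatus of flagged Morita $4$-categories and dominance/faithfulness adjectives exists precisely so that these statements can be proven coherently, yielding Theorem \ref{thm:Wittsquare}. (ii) Your ``equivariant classification of nondegenerate braided fusion $2$-categories by maps into $\Witt(\sVect)$'' amounts to the statement that $\Witt(-)$ takes the limit decomposition of $\cE$ provided by Deligne's theorem to a limit of spaces, i.e.\ to Theorem \ref{thm:Wittlimits} --- which even the paper does not prove here, but defers to \cite{DHJFPPRNY2}. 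So your proposal correctly predicts the shape of the answer, but the one concrete argument it offers (faithfulness) is incorrect, and the two pivotal equivalences are asserted rather than proven.
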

We will henceforth refer to a square~\eqref{eq:delphic} as a \emph{Delphic square of type $\iota$.}

In the fusion case, where $\Spec(\Omega \cZ(\fC))$ and $\Spec(\cZ_{2}(\Omega \fC))$ are connected, the faithful functor $\iota$ amounts to our inclusion of supergroups $(H,z) \hookrightarrow (G,z)$ from Theorem~\ref{thmalpha:EmergentFermions}. Indeed, the parametrisation data given in Theorems \ref{thmalpha:AllBosons} and \ref{thmalpha:EmergentFermions} above arise from unpacking the contents of a Delphic square;\ the details  are given in \S\ref{subsection:DelphicUnfold}.

\begin{rem}\label{rem:subtlety}
Theorem~\ref{thm:Delphic} sheds light on a subtlety in the phrasing of Theorem~\ref{thmalpha:EmergentFermions}: 
Given a fusion $2$-category $\fC$, we may extract the braided fusion category $\Omega \fC$ with symmetric center $\cZ_{2}(\Omega \fC)$. Only after an (in Theorem~\ref{thmalpha:EmergentFermions}) implicit choice of a fiber functor $\cZ_{2}(\Omega \fC) \to \sVect$ (i.e.\ a basepoint of $\Spec(\cZ_{2}(\Omega \fC))$ may we extract an actual supergroup $(H,z)$ and an $\sVect$-nondegenerate braided fusion category $\cA$ as appears in Theorem~\ref{thmalpha:EmergentFermions}. Without making this arbitrary choice, we merely have a functor $\Spec(\cZ_{2}(\Omega \fC)) \to \mathrm{BF1C}^{\ndeg}(\sVect)$ from a connected but not pointed groupoid. 
\end{rem}

\subsection{From Witt squares to Delphic squares}
In order to prove Theorem \ref{thm:Delphic}, we first prove that the 3-groupoid of multifusion 2-categories is equivalent to the 3-groupoid of \emph{Witt squares}.

\begin{defn}
    Let $F:\cE\twoheadrightarrow\cF$ be a faithful dominant symmetric monoidal functor between two symmetric multifusion 1-categories. A \textit{Witt square}\footnote{Formally, given $\cE \twoheadrightarrow\cF$, the space of Witt squares of type $F$ is the pullback $\Witt(\cE) \times_{\Witt(\cF)} \mathrm{BMF1C}^{\ndeg}(\cF)$.} of type $F:\cE\twoheadrightarrow\cF$ is a commuting square of spaces of the form
    \begin{equation}
\label{eq:DmitriData}
\begin{tikzcd}
\pt
\arrow[r, dashed]
\arrow[d, dashed]
&
\mathrm{BMF1C}^{\ndeg}(\cF)
\arrow[d,"{[-]}"]
\\
\mathcal{W}itt(\cE)
\arrow[ur,dashed, Rightarrow,shorten <= 1em, shorten >= 1em,"\simeq"]
\arrow[r]
&
\mathcal{W}itt(\cF)\,.
\end{tikzcd}
\end{equation}
(Dashed morphisms are part of the data of a Witt square of type $F: \cE\twoheadrightarrow\cF $, while solid morphisms are given.)
\end{defn}
In \eqref{eq:DmitriData}, $\mathrm{BMF1C}^{\ndeg}(\cF)$ denotes the $2$-groupoid of $\cF$-nondegenerate braided multifusion 1-categories, that is, braided multifusion 1-categories equipped with an identification of their symmetric center with $\mathcal{F}$. On the other hand, $\Witt(\cF)$ denotes the $4$-groupoid of $\cF$-nondegenerate braided multifusion $1$-categories and $\cF$-Witt equivalences between them. The right vertical functor sends a category to its Witt class and the bottom horizontal functor is given by base change along $\cE \to \cF$. 

Thus, roughly speaking, the data of a Witt square amounts to a choice of $\cF$-nondegenerate braided multifusion $1$-category $\cD$ (up to braided equivalence!)\ together with the choice of an $\cE$-Witt class $[\cB]$ and an $\cF$-Witt equivalence between $[\cB \boxtimes_{\cE} \cF]$ and $[\cD]$.

A detailed discussion of data of this type may be found in \S\ref{subsection:Wittdata}. Furthermore, the precise construction of the bottom horizontal map in \eqref{eq:DmitriData} is given by the following result, which is proven in Corollary~\ref{cor:Witt}.

\begin{prop}\label{thm:Wittfunctor}
There is a functor $\mathcal{W}itt(-):\mathbf{SMF1C}^{\dom,\faith}\rightarrow \mathbf{Spaces}$ from  the $(2,1)$-category  $\mathbf{SMF1C}^{\dom,\faith}$ of symmetric multifusion 1-categories with dominant and faithful symmetric functors to  the $(\infty,1)$-category $\mathbf{Spaces}$ of spaces, sending a symmetric multifusion 1-category $\mathcal{E}$ to the associated Witt space $\mathcal{W}itt(\mathcal{E})$.
\end{prop}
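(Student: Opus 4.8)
The plan is to avoid building $\Witt(-)$ by hand---which would require specifying its value on objects, $1$-morphisms, and $2$-morphisms of $\SMF1C^{\dom,\faith}$ together with infinitely many higher coherences---and instead to exhibit it as the composite of two operations that are functorial for structural reasons: a base-change functor on module higher-categories, followed by passage to a Picard space. Functoriality is then inherited rather than verified cell by cell, and the only genuinely category-specific input is a center computation.

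First I would fix the symmetric monoidal higher (Morita) category $\mathbf{BrFus}$ whose objects are braided multifusion $1$-categories, whose tensor product is the Deligne product $\boxtimes$, whose $1$-morphisms are the bimodule $2$-categories (the ``domain walls'' computing Witt equivalences), and so on up to the $4$-categorical level. Since the Deligne product of symmetric categories is symmetric, a symmetric multifusion $1$-category $\mathcal{E}$ is a commutative ($E_\infty$) algebra object of $\mathbf{BrFus}$, with multiplication the tensor functor $\mathcal{E}\boxtimes\mathcal{E}\to\mathcal{E}$. The module higher-category $\Mod_{\mathcal{E}}:=\Mod_{\mathcal{E}}(\mathbf{BrFus})$ is then the symmetric monoidal higher category of braided multifusion $1$-categories equipped with a central functor from $\mathcal{E}$, with relative Deligne product $\boxtimes_{\mathcal{E}}$ as its tensor. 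A faithful dominant symmetric functor $F:\mathcal{E}\twoheadrightarrow\mathcal{F}$ is a map of $E_\infty$-algebras, and base change $-\boxtimes_{\mathcal{E}}\mathcal{F}\colon\Mod_{\mathcal{E}}\to\Mod_{\mathcal{F}}$ is a symmetric monoidal functor. The universal property of relative tensor products makes this assignment functorial in $F$---composites compose, $(-\boxtimes_{\mathcal{F}}\mathcal{G})\circ(-\boxtimes_{\mathcal{E}}\mathcal{F})\simeq -\boxtimes_{\mathcal{E}}\mathcal{G}$ coherently, and a symmetric natural isomorphism $F\simeq F'$ induces one of base-change functors---so $\mathcal{E}\mapsto\Mod_{\mathcal{E}}$ is a functor out of the $(2,1)$-category $\SMF1C^{\dom,\faith}$ valued in symmetric monoidal higher categories. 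Here I would lean on the general theory of modules over $E_\infty$-algebras to supply the higher coherences rather than construct them by hand.

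Second, I would identify the Witt space $\Witt(\mathcal{E})$ with the Picard space $\mathrm{Pic}(\Mod_{\mathcal{E}})$, the core of $\otimes$-invertible objects of $\Mod_{\mathcal{E}}$: an $\mathcal{E}$-nondegenerate braided multifusion category is precisely an invertible $\mathcal{E}$-module, an $\mathcal{E}$-Witt equivalence is precisely an invertible $1$-morphism between two such, and the higher cells match, so the two $4$-groupoids agree. Because any symmetric monoidal functor carries invertible objects to invertible objects and equivalences to equivalences, $\mathrm{Pic}$ is a functor from symmetric monoidal higher categories to $\mathbf{Spaces}$; hence the composite $\mathcal{E}\mapsto\mathrm{Pic}(\Mod_{\mathcal{E}})$ is automatically a functor $\Witt(-)\colon\SMF1C^{\dom,\faith}\to\mathbf{Spaces}$, and its value $\Witt(F)=[-\boxtimes_{\mathcal{E}}\mathcal{F}]$ on a morphism is exactly the base-change map serving as the bottom arrow of the Witt square \eqref{eq:DmitriData}.

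The main obstacle is the center computation underpinning this identification: one must show that base change along a \emph{dominant, faithful} functor preserves invertibility, i.e.\ that $\mathcal{B}\boxtimes_{\mathcal{E}}\mathcal{F}$ is $\mathcal{F}$-nondegenerate---$\mathcal{Z}_2(\mathcal{B}\boxtimes_{\mathcal{E}}\mathcal{F})\simeq\mathcal{F}$---whenever $\mathcal{B}$ is $\mathcal{E}$-nondegenerate. This is exactly where the hypotheses on the morphisms enter: faithfulness of $F$ keeps the relative symmetric center from growing past $\mathcal{F}$, while dominance keeps it from shrinking below $\mathcal{F}$, and neither conclusion holds for a general symmetric functor. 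Once invertibility is established at the level of objects, the rest upgrades formally, since the Picard space of a symmetric monoidal functor lands in the Picard space as soon as invertible objects are preserved. I would also need the relative dualizability statement identifying $\mathcal{E}$-nondegeneracy with invertibility in $\Mod_{\mathcal{E}}$, the relative analogue of the fact that a nondegenerate braided fusion category is invertible in $\mathbf{BrFus}$ (equivalently $\mathcal{Z}(\mathcal{B})\simeq\mathcal{B}\boxtimes\mathcal{B}^{\rev}$); together these two inputs are the technical heart, and everything else is inherited from the functoriality of $\Mod_{(-)}$ and $\mathrm{Pic}$.
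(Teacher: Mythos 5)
Your strategy---realize $\Witt(\cE)$ as the Picard space of a symmetric monoidal module category $\Mod_{\cE}$ and inherit functoriality from base change along maps of $E_\infty$-algebras---is genuinely different from the paper's. The paper never considers modules over $\cE$: it constructs by hand a functor $\cZ_{2}\colon \Mor_2^{\sss,2\mhyphen\dom,0\mhyphen\faith}\to\mathbf{SMF1C}^{\dom,\faith}$ (Proposition~\ref{prop:Mugerfunctor}, Lemma~\ref{prop:Mugerfunctoradjectives}), proves that this functor is a left fibration (Theorem~\ref{thm:Mor2SMFCleftfibration}, whose essential input is the equivalence of \cite[Corollary~3.24]{DMNO} between dominant faithful braided functors out of $\cB$ and dominant faithful symmetric functors out of $\cZ_{2}(\cB)$, combined with a 2-out-of-3 property of left fibrations), and then obtains $\Witt(-)$ by straightening, with $\Witt(\cE)$ defined as the fiber of $\cZ_{2}$ over $\cE$ (Corollary~\ref{cor:Witt}). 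Your outline is conceptually appealing, but as written it rests on machinery that is not available at the level of rigor this proposition demands.

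Concretely, there are two genuine gaps. First, your opening step presupposes a symmetric monoidal structure on the Morita $4$-category of braided multifusion $1$-categories, so that $\cE$ is an $E_\infty$-algebra and $\Mod_{\cE}$, $\boxtimes_{\cE}$, and base change are defined. The paper constructs $\Mor_2^{\sss}$ only as an $(\infty,1)$-category via flagged subcategories of $\Mor(\tKar)$, deliberately does not equip it with its symmetric monoidal structure, and explicitly states that the comparison with the symmetric monoidal $\mathbf{BrFus}$ of \cite{BJS} is expected but not shown. Moreover, $1$-morphisms of algebras inside a Morita category are central bimodule categories, not symmetric tensor functors, so even turning a morphism of $\mathbf{SMF1C}^{\dom,\faith}$ functorially into a map of $E_\infty$-algebras, and identifying $\cE$-module objects with braided categories carrying a central functor from $\cE$, are nontrivial steps of the same nature as Proposition~\ref{prop:MF2CViaMor}. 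Second, the identification $\Witt(\cE)\simeq \mathrm{Pic}(\Mod_{\cE})$, which you correctly flag as needed, is the object-level relative analogue of the invertibility theorem of \cite{BJS} ($\cB$ invertible iff nondegenerate); the paper proves only the $1$-morphism-level statement, Theorem~\ref{thm:invertibleIFFWittEq}, which is all its fiber definition of $\Witt(\cE)$ consumes, and invertibility of objects under $\boxtimes_{\cE}$ is proved nowhere. Finally, a misdiagnosis worth flagging: granted your Picard identification, base change along \emph{any} morphism of $E_\infty$-algebras preserves invertible objects for purely formal reasons, so dominance and faithfulness do not enter where you claim they do. In the paper these hypotheses are exactly what makes $\cZ_{2}$ a left fibration---contractibility of the space of lifts, via \cite[Corollary~3.24]{DMNO}---i.e., precisely what allows straightening to produce a $\mathbf{Spaces}$-valued functor whose fibers are the Witt spaces.
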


As a consequence of Deligne's theorem, any symmetric multifusion $1$-category $\cE$ can be reconstructed from the supergroupoid $\Spec(\cE)$. Explicitly, $\cE$ is equivalent to the symmetric monoidal multifusion category $\mathrm{Fun}(\Spec(\cE), \sVect)^{\mathrm{B}\Z/2}$ of $\mathrm{B}\Z/2$-equivariant functors from $\Spec(\cE)$ to $\sVect$. If $\Spec(\cE)$ is a connected supergroupoid, i.e.\ of the form $B(G,z)$ (for possibly trivial $z$), this prescription recovers the category $\Rep(G,z)$ of representations of $G$ with braiding twisted by $z$. 
Recall that a space $X$ with a $\mathrm{B}\Z/2$-action may equivalently be described by the homotopy quotient $X\sslash \mathrm{B}\Z/2$ with its map $X\sslash \mathrm{B}\Z/2 \to \mathrm{B}^2\Z/2$, see~\S\ref{subsection:supergroups}.  Homotopy theoretically, the symmetric multifusion category $\cE$ is then the limit of the functor $\Spec(\cE)\sslash (\mathrm{B}\Z/2) \to \mathrm{B}^2 \Z/2 \simeq \mathrm{B} \Aut^{\br}(\sVect) \hookrightarrow \mathbf{SMF1C}$. 
Thus, the connection between Delphic squares (involving only $\Witt(\sVect)$) and Witt squares (involving $\Witt(\cE)$ for arbitrary symmetric multifusion categories $\cE$) then manifestly becomes a statement about the functor $\Witt(-)$ preserving certain limits. We relegate this point to future work \cite{DHJFPPRNY2}.

\begin{thm}[\cite{DHJFPPRNY2}]\label{thm:Wittlimits}
The functor $\mathcal{W}itt(\text{--}):\mathbf{SMF1C}^{\dom,\faith}\rightarrow \mathbf{Spaces}$ commutes with (homotopy) limits of finite $1$-groupoids.\footnote{\label{foot:sufficientlyfaithful}In fact we can do slightly better:\ We can also consider (homotopy) limits of finite 2-groupoids whose action is \textit{sufficiently faithful}.}
\end{thm}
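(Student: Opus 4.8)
The plan is to prove this descent statement by reducing an arbitrary finite $1$-groupoid-indexed limit to two building blocks and then feeding them into (de-)equivariantization. Every finite $1$-groupoid $I$ is a disjoint union of connected components $\coprod_j \mathrm{B}G_j$, so a diagram $I\to\mathbf{SMF1C}^{\dom,\faith}$ amounts to a finite family of symmetric multifusion categories $\cE_j$, each carrying an action of the finite group $G_j$, and its limit is the product $\prod_j \cE_j^{hG_j}$ of homotopy fixed point categories. The functor $\Witt(-)$ sends products to products: a braided multifusion category nondegenerate over $\cE_1\times\cE_2$ splits, along the orthogonal central idempotents into which the unit of $\cE_1\times\cE_2$ decomposes, as a product of braided multifusion categories nondegenerate over $\cE_1$ and $\cE_2$, and the same splitting is inherited by $\Witt$-equivalences and all higher morphisms. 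Thus it remains to treat the single-group cotensor $\cE\mapsto\cE^{hG}$, i.e.\ to establish $\Witt(\cE^{hG})\simeq\Witt(\cE)^{hG}$, where $G$ acts on $\Witt(\cE)$ by functoriality. In the universal case $\cE=\sVect$ with supergroup $(G,z)$ this yields $\Witt(\Rep(G,z))\simeq\Map(\mathrm{B}(G,z),\Witt(\sVect))^{\mathrm{B}\Z/2}$, which is exactly the substitution converting a Witt square \eqref{eq:DmitriData} into a Delphic square \eqref{eq:delphic}.

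The engine for the identification $\Witt(\cE^{hG})\simeq\Witt(\cE)^{hG}$ is (de-)equivariantization. On objects this is the relative de-equivariantization theorem \cite[Theorem 4.4]{DGNO2010braided}, in the super setting its extension from \cite{DNO2013,JFR}: a braided multifusion category nondegenerate over $\cF=\cE^{hG}$ de-equivariantizes to one nondegenerate over $\cE$ equipped with a compatible braided $G$-action, and this is inverse to $G$-equivariantization $\cB\mapsto\cB^G$. First I would promote this equivalence of $1$-groupoids of objects to an equivalence of the full $4$-groupoids $\Witt(\cF)\simeq\Witt(\cE)^{hG}$: one checks that de-equivariantization takes $\cF$-Witt equivalences to $G$-equivariant $\cE$-Witt equivalences, and similarly for the higher $\Witt$-morphisms, assembling into an equivalence of $4$-groupoids that intertwines composition and the homotopy-fixed-point structure. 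The super decoration is carried along uniformly: working $\sVect$-linearly and over the supergroup $(G,z)$, the $\mathrm{B}\Z/2$-action on $\Spec(\cE)$ matches the canonical $\mathrm{B}\Z/2$-action on $\Witt(\sVect)$ given by re-identifying symmetric centers, so that the fixed-point formula holds $\mathrm{B}\Z/2$-equivariantly.

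I expect the main obstacle to be exactly this coherent upgrade: de-equivariantization is classically only an equivalence at the level of categories, and one must show it is natural in enough higher data to intertwine composition of $\Witt$-equivalences, the coherence isomorphisms witnessing the $G$-action, and the higher morphisms all at once, and moreover that the action it induces on $\Witt(\cE)$ is genuinely the one whose homotopy fixed points are being computed rather than merely agreeing on $\pi_0$. This is also where the strengthening noted in the theorem's footnote enters: to realize the presentation $\cE=\lim_{\Spec(\cE)\sslash\mathrm{B}\Z/2}\sVect$ one must pass from finite $1$-groupoids to the finite $2$-groupoid $\Spec(\cE)\sslash\mathrm{B}\Z/2$, and the hypothesis that the action be sufficiently faithful is what guarantees that the $2$-truncated $\mathrm{B}^2\Z/2$-part of the index does not obstruct descent. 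The full coherent verification is carried out in \cite{DHJFPPRNY2}.
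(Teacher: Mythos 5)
First, a structural point: this paper contains no proof of Theorem~\ref{thm:Wittlimits} to compare against. The theorem is stated with the citation \cite{DHJFPPRNY2}, and the paragraph preceding it explicitly says ``We relegate this point to future work \cite{DHJFPPRNY2}.'' So the only question is whether your proposal stands on its own. Its skeleton is the natural one and is consistent with how the theorem is used in the paper: splitting a finite $1$-groupoid into components, treating the discrete direction by decomposing a braided multifusion category with $\cZ_2(\cB)\simeq \cE_1\times\cE_2$ along the orthogonal central idempotents, and treating each $\mathrm{B}G$ by a fixed-point formula $\Witt(\cE^{hG})\simeq\Witt(\cE)^{hG}$ whose engine is (de-)equivariantization \cite[Theorem 4.4]{DGNO2010braided}; this matches the way \S\ref{section:F2Cparametrization} uses the theorem to produce $\Witt(\cF)\simeq \sHom(\Spec(\cF),\Witt(\sVect))$.

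However, as a proof it has a genuine gap, and it is exactly the one you flag yourself. The de-equivariantization theorem of \cite{DGNO2010braided} is a statement about objects (and, with work, braided equivalences); everything in Theorem~\ref{thm:Wittlimits} beyond that level \emph{is} the homotopy-coherent upgrade: that de-equivariantization extends to a $\mathrm{B}\Z/2$-equivariant equivalence of $4$-groupoids, compatible with composition of Witt equivalences, with all $2$-, $3$-, and $4$-morphisms, and with the identification of the $G$-action induced by functoriality (Proposition~\ref{thm:Wittfunctor}) with the action whose homotopy fixed points are being computed. You name this ``the main obstacle'' and then dispose of it by citing \cite{DHJFPPRNY2} --- the very reference the paper cites for the entire theorem. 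A proof whose essential step is outsourced to the source being proved is circular; what you have written is a correct account of why the statement is plausible and how it will be deployed, not a proof of it. Two further wrinkles: (i) in the product step, the projections $\cE_1\times\cE_2\to\cE_i$ are dominant but \emph{not} faithful, hence are not morphisms of $\mathbf{SMF1C}^{\dom,\faith}$, so there is no functoriality-supplied comparison map and ``$\Witt$ sends products to products'' must be both formulated and established by hand via the idempotent splitting; (ii) your account of the footnote is right in spirit, but note that for super-Tannakian $\cE$ with $z\neq 1$ the quotient $\Spec(\cE)\sslash\mathrm{B}\Z/2\simeq \mathrm{B}G_b$ is already a finite $1$-groupoid --- the $2$-groupoid strengthening is genuinely needed in the Tannakian case (e.g.\ to identify $\Witt(\sVect)^{h\mathrm{B}\Z/2}\simeq\Witt(\Vect)$, the analogue of Lemma~\ref{lem:sdegfixedpoint}), not uniformly.
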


\begin{cor}
For any faithful dominant symmetric tensor functor $F:\mathcal{E}\twoheadrightarrow\mathcal{F}$, there is an equivalence of spaces between the 3-groupoid of Witt squares of type $F:\mathcal{E}\twoheadrightarrow\mathcal{F}$ and the 3-groupoid of Delphic squares of type $\Spec(F):\Spec(\cF)\hookrightarrow \Spec(\cE)$.
\end{cor}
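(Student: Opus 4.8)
The plan is to exhibit both the space of Witt squares and the space of Delphic squares as homotopy pullbacks of cospans that are levelwise equivalent, so that the asserted equivalence falls out of the invariance of homotopy pullbacks under equivalences of cospans. By the footnotes to the two definitions, the Witt-square space of type $F$ is the pullback $\Witt(\cE)\times_{\Witt(\cF)}\mathrm{BMF1C}^{\ndeg}(\cF)$ of the cospan
\[
\Witt(\cE)\xrightarrow{\;-\boxtimes_{\cE}\cF\;}\Witt(\cF)\xleftarrow{\;[-]\;}\mathrm{BMF1C}^{\ndeg}(\cF),
\]
and the Delphic-square space of type $\iota=\Spec(F)$ is the pullback recorded in the footnote to Theorem~\ref{thm:Delphic}, whose cospan has corners $\Map(\Spec(\cE),\Witt(\sVect))^{\mathrm{B}\Z/2}$, base $\Map(\Spec(\cF),\Witt(\sVect))^{\mathrm{B}\Z/2}$, and $\Map(\Spec(\cF),\mathrm{BMF1C}^{\ndeg}(\sVect))^{\mathrm{B}\Z/2}$, with maps $\iota^{*}$ and post-composition by $[-]$. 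It therefore suffices to identify the three corners compatibly with the two structure maps.

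For the two $\Witt$-corners I would combine Deligne reconstruction with Theorem~\ref{thm:Wittlimits}. Recall that $\cE$ is the limit of the diagram $\Spec(\cE)\sslash\mathrm{B}\Z/2\to\mathrm{B}^{2}\Z/2\simeq\mathrm{B}\Aut^{\br}(\sVect)\hookrightarrow\mathbf{SMF1C}$ (the constant diagram with value $\sVect$, twisted by the $\mathrm{B}\Z/2$-action). Applying $\Witt(-)$ from Proposition~\ref{thm:Wittfunctor} and using that it commutes with this limit gives a natural equivalence $\Witt(\cE)\simeq\lim_{\Spec(\cE)\sslash\mathrm{B}\Z/2}\Witt(\sVect)\simeq\Map(\Spec(\cE),\Witt(\sVect))^{\mathrm{B}\Z/2}$, the final step being the standard identification of a limit over a homotopy quotient with an equivariant mapping space; likewise for $\cF$. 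Since the indexing groupoid $\Spec(\cE)\sslash\mathrm{B}\Z/2$ is genuinely a finite $2$-groupoid (the $\mathrm{B}\Z/2$-direction is not $1$-truncated), this uses the strengthening of Theorem~\ref{thm:Wittlimits} recorded in footnote~\ref{foot:sufficientlyfaithful}, whose faithfulness hypothesis holds because $\mathrm{B}\Z/2\simeq\Aut^{\br}(\sVect)$ acts faithfully on $\Witt(\sVect)$. Naturality in $\cE$, applied to $F\colon\cE\twoheadrightarrow\cF$, then identifies the base-change map $-\boxtimes_{\cE}\cF$ with the restriction $\iota^{*}$ along $\iota=\Spec(F)$.

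For the remaining corner I would run the same reconstruction for the $2$-groupoid-valued functor $\mathrm{BMF1C}^{\ndeg}(-)$, obtaining $\mathrm{BMF1C}^{\ndeg}(\cF)\simeq\Map(\Spec(\cF),\mathrm{BMF1C}^{\ndeg}(\sVect))^{\mathrm{B}\Z/2}$, and then check that the natural transformation $[-]\colon\mathrm{BMF1C}^{\ndeg}(-)\Rightarrow\Witt(-)$ intertwines, under these equivalences, the Witt-class map with post-composition by $[-]\colon\mathrm{BMF1C}^{\ndeg}(\sVect)\to\Witt(\sVect)$; this is automatic once the corner equivalences are produced as components of a natural transformation of limit-preserving functors. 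With all three corners and both maps matched, the induced map of homotopy pullbacks is an equivalence, proving the corollary.

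The hard part will be the limit-preservation inputs at the level of $2$-groupoid indexing: Theorem~\ref{thm:Wittlimits} (imported from the companion work) must be used in its refined ``sufficiently faithful finite $2$-groupoid'' form of footnote~\ref{foot:sufficientlyfaithful}, precisely because $\Spec(-)\sslash\mathrm{B}\Z/2$ is a $2$-type rather than a $1$-groupoid, and one needs the exactly parallel statement for $\mathrm{BMF1C}^{\ndeg}(-)$, which is not among the quoted results and would have to be established by the same methods (or via a relative Deligne reconstruction spreading the fixed symmetric center over $\Spec$). Once these reconstruction-as-limit identifications are in hand and shown to be natural, the passage to pullbacks is formal, since homotopy pullbacks are invariant under levelwise equivalence of cospans.
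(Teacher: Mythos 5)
Your proposal is correct and takes essentially the same route as the paper's own proof: identify the corners of both pullback squares via Deligne reconstruction combined with the limit-preservation of $\Witt(-)$ (Theorem~\ref{thm:Wittlimits}, in the finite-$2$-groupoid form of footnote~\ref{foot:sufficientlyfaithful}, whose necessity you rightly flag since $\Spec(-)\sslash\mathrm{B}\Z/2$ is a $2$-type), match the structure maps by naturality, and conclude by currying and formal invariance of pullbacks under levelwise equivalence of cospans. The one ingredient you mark as needing a separate argument --- the equivalence $\mathrm{BMF1C}^{\ndeg}(\cF)\simeq \sHom(\Spec(\cF),\mathrm{BMF1C}^{\ndeg}(\sVect))$ --- is in the paper obtained directly from the (de-)equivariantization theorem of \cite{DGNO2010braided}, so no new proof is required there.
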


In order to finish the proof of Theorem \ref{thm:Delphic}, it will therefore suffice to establish the following satetement.

\begin{thmalpha}
\label{thm:Wittsquare}
Let $\cE \twoheadrightarrow\mathcal{F}$ be a faithful and dominant symmetric functor. Then there is an equivalence of 3-groupoids
$$
\left\{
\parbox{5.4cm}{\rm Multifusion 2-categories $\mathfrak{C}$ with\\ $[\Omega\mathcal{Z}(\mathfrak{C})\rightarrow \mathcal{Z}_{2}(\Omega\mathfrak{C})]\cong [\mathcal{E}\twoheadrightarrow\mathcal{F}]$}
\right\}
\cong
\left\{
\parbox{2.6cm}{\rm Witt squares\\ of type $[\mathcal{E}\twoheadrightarrow\mathcal{F}]$}
\right\}.
$$ 
\end{thmalpha}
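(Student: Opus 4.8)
The plan is to route the equivalence through the Drinfeld center $\cZ(\fC)$, which simultaneously produces both corners of the Witt square. Given a multifusion $2$-category $\fC$ I would send it to the square whose top-right corner is the $\cF$-nondegenerate braided fusion $1$-category $\cD:=\Omega\fC$ (recall $\cF=\cZ_2(\Omega\fC)$), and whose bottom-left corner is a canonical Witt class $[\cB]\in\Witt(\cE)$ extracted from the nondegenerate braided fusion $2$-category $\cZ(\fC)$ (here $\cE=\Omega\cZ(\fC)$, and the canonical map $\cE\to\cF$, faithful and dominant by Theorem~\ref{thm:Delphic}, is the prescribed type of the square). The homotopy filling the square should be the canonical $\cF$-Witt equivalence $[\cB\boxtimes_\cE\cF]\simeq[\cD]$ expressing that restricting the center's Witt class along $\cE\to\cF$ recovers the boundary category $\Omega\fC$. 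Conceptually this forward functor records that $\fC$ is a gapped boundary of its center, with $\Omega\fC$ the braided fusion category living on that boundary; the class $[\cB]$ is precisely where the cohomological/anomaly data (the class $\pi$, resp.\ $\varpi$, of Theorems~\ref{thmalpha:AllBosons} and~\ref{thmalpha:EmergentFermions}) is stored.

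For the inverse I would reconstruct $\fC$ from a Witt square in two stages, using two structural engines. First, the bottom-left class $[\cB]\in\Witt(\cE)$ determines a nondegenerate braided fusion $2$-category $\cM$ with $\Omega\cM\simeq\cE$; this is the one-level-up analogue of the Witt group, namely a correspondence between nondegenerate braided fusion $2$-categories and Witt classes of nondegenerate braided fusion $1$-categories. Second, I would realise $\fC$ as a gapped boundary of $\cM$: the top-right datum $\cD$ together with the filling Witt equivalence pins down exactly which gapped boundary (equivalently, which Lagrangian algebra of $\cM$) to take, yielding $\fC$ with $\cZ(\fC)\simeq\cM$ and $\Omega\fC\simeq\cD$. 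Both engines --- the classification of nondegenerate braided fusion $2$-categories by Witt classes, and the reconstruction of a multifusion $2$-category from its center-with-canonical-boundary --- I would import from the established theory of centers, rigid and separable algebras, and Morita equivalence of fusion $2$-categories, using crucially that minimal nondegenerate extensions exist~\cite{JFR} in order to realise and trivialise the Witt classes at play.

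To upgrade these mutually inverse assignments from objects to an equivalence of $3$-groupoids, I would argue fibrewise over the pullback presentation $\Witt(\cE)\times_{\Witt(\cF)}\mathrm{BMF1C}^{\ndeg}(\cF)$. After fixing the bottom-left class $[\cB]$, hence the center $\cM$, the corresponding fibre of the space of Witt squares is the space of pairs consisting of an $\cF$-nondegenerate braided fusion $1$-category $\cD$ and an $\cF$-Witt equivalence $[\cB\boxtimes_\cE\cF]\simeq[\cD]$. I would identify this, coherently and compatibly with all higher morphisms, with the $3$-groupoid of gapped boundaries of $\cM$ inducing the prescribed map $\cE\to\cF$. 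Essential surjectivity (every Witt square arises from some $\fC$) and full faithfulness at each morphism level would then follow from the corresponding properties of the center-and-boundary reconstruction together with the functoriality of $\Witt(-)$ from Proposition~\ref{thm:Wittfunctor}.

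The hard part will be this last fibrewise identification carried out coherently: establishing that the $3$-groupoid of gapped boundaries of a fixed nondegenerate braided fusion $2$-category $\cM$ is equivalent to the $3$-groupoid of pairs $(\cD,\ [\cB\boxtimes_\cE\cF]\simeq[\cD])$, where the interplay between the $2$-categorical Lagrangian/condensation theory and the base-change map $\Witt(\cE)\to\Witt(\cF)$ must be controlled not only on objects but on $1$- and $2$-morphisms. The subtlety flagged in Remark~\ref{rem:subtlety} --- that $\Spec(\cF)$ is connected but not canonically pointed, so that a boundary only determines $\cD$ after an auxiliary choice --- enters here and must be handled equivariantly rather than by an arbitrary choice of fiber functor. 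I expect the object-level bijection to be comparatively formal given the structural inputs, and the verification that the constructions are inverse on higher morphisms to consume the bulk of the argument.
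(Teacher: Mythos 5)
Your object-level picture is correct, and it closely matches the \emph{incoherent} unpacking the paper itself records in \S\ref{subsection:Wittdata} (Remark \ref{prop:thibault} and Proposition \ref{thm:metoDelphic}): a multifusion 2-category is determined by its Morita class, the braided fusion 1-category $\Omega\fC$, and a Witt equivalence relating the two, and this is indeed bulk-boundary reconstruction in disguise. But there is a genuine gap, and it sits exactly where the content of the theorem lies. Your two ``engines'' --- the classification of nondegenerate braided fusion 2-categories by Witt classes, and the correspondence between fusion 2-categories with center $\cM$ and Lagrangian algebras/gapped boundaries --- are available from \cite{Decoppet2022;centers,DX:2023rlx,JFR} only as bijections between \emph{sets of equivalence classes}, i.e.\ at the level of $\pi_0$. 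The theorem asserts an equivalence of 3-groupoids, and your plan to ``import'' these engines and then ``argue fibrewise over $\Witt(\cE)$'' presupposes precisely the coherent structures that are missing: (i) the assignment sending $\fC$ to its Witt/Morita class, and the assignment $[\cB]\mapsto\cM=\cZ(\Mod(\cB))$, must be maps of \emph{spaces} (functorial on all higher morphisms, with all coherences) before ``fixing $[\cB]$ and taking homotopy fibers'' is even meaningful --- note that your forward map already needs this, since ``extracting'' $[\cB]$ from $\cZ(\fC)$ means coherently inverting a correspondence known only on $\pi_0$; and (ii) the fibrewise identification of gapped boundaries of $\cM$ with pairs $(\cD,\ \text{Witt equivalence})$ must itself be an equivalence of 3-groupoids, which is not easier than the original statement --- it essentially \emph{is} the original statement restricted to a fiber whose definition depends on (i). So the proposal reorganizes the difficulty rather than resolving it.

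It is instructive to contrast this with what the paper actually does: its coherent proof never passes through braided fusion 2-categories or Lagrangian algebras at all. Instead it works inside the Morita 4-category $\Mor^{\sss}\simeq\Mor_2^{\sss}$ constructed in \S\ref{section:preliminaries}, where coherence is built in from the start: a multifusion 2-category is the same datum as the pointed object $\mathds{1}\nrightarrow\fC$ of $\Mor^{\sss,0\mhyphen\dom}$ (Proposition \ref{prop:MF2CViaMor}); this 1-morphism factors (contractibly) uniquely through $\Mod(\Omega\fC)$, expressed as a pullback square of spaces (Proposition \ref{prop:MF2Cpullback}); and the remaining 2-faithful, 0-dominant leg is controlled by the left fibration $\cZ_{2}:\Mor_2^{\sss,2\mhyphen\dom,0\mhyphen\faith}\to\mathbf{SMF1C}^{\dom,\faith}$ of Theorem \ref{thm:Mor2SMFCleftfibration} (giving Corollary \ref{cor:pullbackWittLeftFib}). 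Pasting these pullbacks (Theorem \ref{thm:maintheorem}) and taking fibers at $\cE\twoheadrightarrow\cF$ yields the theorem, with all higher coherence coming for free from pullbacks of spaces and left fibrations. If you wanted to salvage your center-and-boundary route, you would first have to upgrade the $\pi_0$-level classification theorems to equivalences of higher groupoids --- and the natural way to do that is, in effect, to rebuild the paper's Morita-theoretic machinery.
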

In fact, in Theorem~\ref{thm:maintheorem} we make Theorem~\ref{thm:Wittsquare}  functorial in $(\cE \to \cF)$, and hence obtain a complete description of the $3$-groupoid of all multifusion $2$-categories and monoidal equivalences. 

The proof of this theorem will be the technical heart of this paper. 
The core of the proof, which can be found in~\S\ref{sec:proof}, can be outlined as follows:
\begin{itemize}
    \item The data of a multifusion $2$-category $\fC$ (up to monoidal equivalence) is equivalent to the data of the regular module $\fC_{\fC}$, seen as a $1$-morphism $\mathbf{2Vect} \nrightarrow \fC$ in the Morita category $\Mor^{\sss}$ of multifusion $2$-categories.
    \item  In Proposition~\ref{prop:MF2Cpullback}, we show that this $1$-morphism in $\Mor^{\sss}$ may be factored uniquely into the $1$-morphism $\Mod(\Omega\fC)_{\Mod(\Omega \fC)}$ --- whose data corresponds to the braided multifusion $1$-category $\Omega \fC$ --- followed by the $1$-morphism $_{\Mod(\Omega \fC)}\fC_{\fC}$. 
    \item That latter $1$-morphism is a $1$-morphism ${}_{\fD_1}\fM_{\fD_2}$ which has the property that the induced $2$-functor $\fD_1 \to \End_{\fD_2}(\fM)$ is fully faithful and such that $\fM$ is faithful as a $\fD_2$-module. 
    In Corollary~\ref{cor:pullbackWittLeftFib}, we prove that such $1$-morphisms are completely determined by the Morita class of their target $\fD_2$ together with a dominant, faithful symmetric functor out of $\cZ_{2}(\fD_2)$ into some symmetric multifusion category.
    \item Using the key result of~\cite{Decoppet2022;centers} that any multifusion $2$-category is Morita equivalent to a connected one, the data in the previous step can be recast in terms of Witt classes of braided multifusion $1$-categories and their interaction with symmetric functors, resulting in Theorem~\ref{thm:Wittsquare}.  
\end{itemize}

\subsection{Outline}  
In \S\ref{section:preliminaries}, we give the necessary background for the proofs of our main theorems. 
In particular, in \S\ref{subsection:highercat}, 
we review the higher categorical setup for flagged and enriched higher categories, fibrations, and (de)\-equivariantizations. 
In \S\ref{subsubsection:MorF2C}, we introduce the Morita 4-category of multifusion 2-categories $\Mor^{\sss}$ and, in \S\ref{subsubsection:Mor2ss}, we introduce the Morita 4-category of braided multifusion 1-categories $\Mor^{\sss}_2$. 
In \S\ref{subsection:adjectives}, we introduce and study the many technical classes of $1$-morphisms of the Morita 4-categories that will appear. 
In \S\ref{subsection:supergroups}, we review supergroups and superspaces.

Sections \ref{section:F2Cparametrization} and \ref{section:Wittsquares} describe the passage between each of the items in the boxes in Figure \ref{fig:layout}:
\begin{figure}[!ht]
\begin{tikzpicture}
    \tikzstyle{box} = [draw, rectangle, minimum width=2cm, minimum height=1cm, text centered]
    \node (box1) [box] at (0,0) {F2C data};
    \node (box2) [box] at (4,0) {Witt squares};
    \node (box3) [box] at (8,0) {Delphic squares};
     \node (box4) [box] at (12,0) {Theorems A and B};
    \draw[<->, shorten >=2mm, shorten <=2mm] (box1.east) -- node[above] {\$} (box2.west);
    \draw[<->, shorten >=2mm, shorten <=2mm] (box2.east) -- node[above] {\pesos} (box3.west);
    \draw[->, shorten >=2mm, shorten <=2mm] (box3.east) -- node[above] {\euro} (box4.west);
      \draw[->, shorten >=2mm, shorten <=2mm] (box4) edge[bend left=23] node[above]{\textsterling} (box1);
\end{tikzpicture}
\caption{This figure serves to display the different ways of presenting the data needed to define fusion 2-categories, and the fact that one can explicitly go between them.}
\label{fig:layout}
\end{figure}
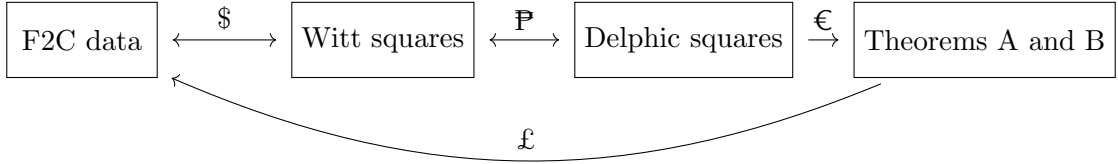

\begin{itemize}
    \item 
    The arrow labeled by (\$) is explained in \S\ref{section:F2Cparametrization} and gives the detailed proof of Theorem \ref{thm:Wittsquare}.
    \item 
    The arrow labeled by ($\pesos$) uses equivariantization to obtain Theorem \ref{thm:Delphic}. In particular, Proposition \ref{thm:Wittfunctor} is constructed in \S\ref{subsection:Wittfunctor} and the detailed unfolding of Witt data and Witt squares is given in  \S\ref{subsection:Wittdata} and  \S\ref{subsection:extractWittdata}.
    \item The arrow labeled by (\euro) \text{} represents the unpacking of the contents of Delphic squares and is presented in \S\ref{subsection:DelphicUnfold}.
    \item The arrow labeled by (\textsterling) \text{} uses the data of Theorems \ref{thmalpha:AllBosons} and \ref{thmalpha:EmergentFermions} to construct a fusion 2-category, and is explained in  \S\ref{subsection:reconstruction}.
\end{itemize}

\section{Preliminaries}\label{section:preliminaries}

\subsection{Glossary}\label{subsection:glossary}

Throughout, for definitiveness, we work over the field $\mathbb{C}$ of complex numbers. All our results remain valid over an arbitrary algebraically closed field of characteristic zero, and we will most often keep the field implicit in our notations. In particular, we will often suppress the word ``linear'' from expressions such as ``(braided) monoidal linear equivalence.'' 
Below, we include a glossary of the categories and groupoids which appear in this article.

\begin{itemize} 
\item We usually write categories in bold, to distinguish them from spaces and groupoids which we write in normal font. 
    \item $\Spec(\cE)$:\ the super 1-groupoid of fiber functors from the symmetric multifusion 1-category $\cE$ to super vector spaces.
    \item $\mathbf{BMF1C}$:\ the $(2,1)$-category of braided multifusion 1-categories and braided monoidal functors.
    \item $\mathbf{BMF1C}^{\adj}$:\ the $(2,1)$-category of braided multifusion 1-categories with ``adjectival'' 1-morphisms, where ``adjectival'' will be replaced by various technical adjectives.
    \item $\mathbf{SMF1C}^{\adj}$:\ the $(2,1)$-category of symmetric multifusion 1-categories and with ``adjectival'' 1-morphisms.
    \item $\mathbf{MF2C}$:\ the $(3,1)$-category of multifusion 2-categories and monoidal functors.
     \item $\mathrm{BMF1C}^{\ndeg}(\mathcal{E})$: the $2$-groupoid of $\mathcal{E}$-nondegenerate braided multifusion 1-categories, that is, braided multifusion 1-categories with symmetric center explicitly identified with the symmetric multifusion 1-category $\mathcal{E}$, and braided monoidal functors compatible with this identification.

    \item $\Mor^{\sss,\adj}$:\ the Morita 4-category of multifusion 2-categories with adjectival 1-morphisms.
    \item $\Mor^{\sss}_2$:\ the Morita 4-category of braided multifusion 1-categories.
    \item $\Mor^{\sss,\adj}_2$:\ the Morita 4-category of braided multifusion 1-categories, with adjectival 1-morphisms.
        \item $\mathrm{MF2C}(\cE\twoheadrightarrow\cF)$:\ the $3$-groupoid of multifusion 2-categories $\fC$ equipped with an identification of $\Omega\mathcal{Z}(\fC)\rightarrow\mathcal{Z}_{2}(\Omega\fC)$ with a given faithful dominant symmetric tensor functor $\cE\twoheadrightarrow\cF$.
    \item $\Mor^{\sss}$: the Morita 4-category of multifusion 2-categories.
    \item $\Witt(\cE)$:\ the $4$-groupoid of braided multifusion 1-categories with symmetric center identified with $\cE$ and $\cE$-Witt equivalences.
    \item $\mathbf{2Kar}$:\ the presentably symmetric monoidal $(3, 1)$-category of small $\mathbb{C}$-linear Karoubi complete 2-categories.
    \item $\ob(\mathcal{C})$:\ the space of objects in the $(\infty,1)$-category $\cC$.
    \item $\Ar(\cC)$:\ the space of arrows in the $(\infty,1)$-category $\cC$.
    \item $\mathrm{Glob}(\cC)$: the globular space associated to the $(\infty,1)$-category $\cC$.
    \item $\mathbf{Alg}_{E_n}(\cC)$:\ the $(\infty,1)$-category of $E_n$-algebras in the symmetric monoidal $(\infty,1)$-category $\cC$.
    \item $\mathbf{Bimod}_A(\mathcal{C})$:\ The (higher) category of bimodules for the $E_1$-algebra $A$ in the monoidal (higher) category $\mathcal{C}$.
\end{itemize}

\subsection{Categorical Background}\label{subsection:highercat}

We will freely use the language and theorems of $(\infty,1)$-categories as developed in~\cite{highertopos, higheralgebra}. 
Following standard usage, we will use the terms ``$\infty$-category" and ``$(\infty,1)$-category" synonymously. For notational simplicity, we will often  use the words ``unique,'' ``fiber'' and ``quotient'' to mean ``unique up to a contractible space of choices,'' ``homotopy fiber'' and ``homotopy quotient,'' etc., as these are the only interpretation of the words  that parse in the language of $\infty$-categories.

\subsubsection{Flagged Categories}\label{subsubection:flaggedcat}

As explained in the introduction, the Morita 4-categories $\mathbf{Mor}^{\sss}$ of multifusion 2-categories and $\mathbf{Mor}_2^{\sss}$ of braided multifusion 1-categories play essential roles in the proof of our main theorem. In the following subsections \S\ref{subsubsection:MorF2C} and \S\ref{subsubsection:Mor2ss}, we will define them as \textit{flagged} sub-4-categories. But in order to do so, we will crucially need to appeal to Proposition \ref{prop:belief} below, which is a technical result that can be used to define subcategories of a flagged $n$-category. To work up to and prove this propositon, we review the theory of flagged categories, and enriched $\infty$-categories. The enrichment will allow us to give a conceptual definition for higher flagged categories via induction.

In the remainder of this section, by ``sub object,'' we always mean a monic\footnote{A morphism $f:x\to y$ in an $\infty$-category $\cC$ is called \emph{monic} if for every object $a\in \cC$ the induced map of spaces $\Map_{\cC}(a, x) \to \Map_{\cC}(a,y)$ is a full subspace inclusion.} (aka a $(-1)$-truncated) 1-morphism in the appropriate $\infty$-category. For example, a subcategory is a monic functor in $\mathbf{Cat}_{\infty}$, the $\infty$-category of $\infty$-categories. Unpacked, a functor is monic when it is faithful and isofull, i.e.\ full on isomorphisms, or equivalently if it induces a full subspace inclusion (aka $(-1)$-truncated map) on spaces of objects and spaces of morphisms. We emphasize that this is not quite the notion of ``subcategory" used, for example, in \cite{maclane} (rather it corresponds to what is classically sometimes called a ``replete subcategory'' or an ``isomorphism-closed subcategory''). This change of usage is appropriate:\ a 1-category in the sense of \cite{maclane} comes with a set of objects, but this set cannot be extracted naturally when thinking of a 1-category as an object of $\mathbf{Cat}_{\infty}$, because equivalent 1-categories have different sets. Rather, a 1-category in the sense of \cite{maclane} is an example of a \emph{flagged 1-category}: a 1-category $\cC$ equipped with a 1-groupoid $\cC_0$, interpreted as ``the 1-groupoid of objects in $\cC$,'' and an essentially surjective functor $\cC_0 \to \cC$. Specifically, a 1-category in the sense of \cite{maclane} is a 1-category flagged by a set. A flagged 1-category is \emph{univalent} when the map $\cC_0 \to \cC$ is an equivalence on spaces of objects\footnote{A 1-category in the sense of \cite{maclane} is only univalent when it is \emph{gaunt}, that is, when the only isomorphisms are the identity morphisms.}, and the \emph{univalification} of a flagged 1-category $\cC_0 \to \cC$ is the result of replacing $\cC_0$ by the full 1-groupoid of objects in $\cC$. A subcategory in the sense of \cite{maclane} is an example of a sub flagged 1-category: a functor which is faithful and monic on 1-groupoids of objects. In particular, the next result follows from the discussion above.

\begin{lem}\label{lem:isofull}
   Let $\cD \hookrightarrow \cC$, be a sub flagged $1$-category of the flagged $1$-category $\cC$. Upon univalification
  of both $\cC$ and $\cD$, 
   $\cD \hookrightarrow \cC$ is a sub category of $\cC$ if and only if it is isofull.
\end{lem}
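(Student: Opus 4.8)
The plan is to reduce the statement to the characterization of monic functors recalled in the preceding discussion, namely that a functor of univalent $1$-categories is a subcategory (i.e.\ monic in $\mathbf{Cat}_\infty$) if and only if it is faithful and isofull. Univalification leaves the underlying univalent $1$-categories $\cC$ and $\cD$ unchanged, replacing only the flagging groupoids $\cC_0,\cD_0$ by the cores $\cC^{\simeq},\cD^{\simeq}$; in particular the underlying functor $F:\cD\to\cC$ is the given one, and it remains faithful, since faithfulness is a property of $F$ alone. Thus the content of the lemma is the equivalence, \emph{for a functor already known to be faithful}, between being a subcategory and being isofull.

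One implication is immediate: a subcategory is monic, hence by the recalled characterization faithful and isofull, so in particular isofull. For the converse I would unpack the ``full subspace inclusion on spaces of objects and of morphisms'' formulation of monicity and verify both conditions from faithfulness together with isofullness. On the groupoid of objects $\cD^{\simeq}\to\cC^{\simeq}$, faithfulness gives injectivity of each $\mathrm{Aut}_\cD(x)\to\mathrm{Aut}_\cC(Fx)$ while isofullness gives surjectivity, hence a bijection; moreover isofullness forces injectivity on isomorphism classes, since any isomorphism $Fx\cong Fy$ lifts to $\cD$, whence $x\cong y$. Together these say that $\cD^{\simeq}\to\cC^{\simeq}$ is $(-1)$-truncated. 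On the space of arrows $\Ar(\cD)\to\Ar(\cC)$, i.e.\ on $\mathrm{Fun}(\Delta^1,-)^{\simeq}$, I would run the analogous computation: an isomorphism of arrows $Ff\cong Fg$ in $\cC$ lifts componentwise by isofullness, and the lifted square commutes in $\cD$ by faithfulness, yielding injectivity on $\pi_0$, while the automorphism computation is identical to the object case. Hence $F$ is $(-1)$-truncated on objects and on arrows, i.e.\ a subcategory.

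The genuine work is this converse unpacking at the level of \emph{arrows}, where faithfulness is what promotes the separately lifted isomorphisms of source and target into an honest isomorphism of arrows; the object-level statement alone does not suffice. The role of univalification, which I would highlight, is precisely to guarantee that the flagged ``$1$-groupoid of objects'' $\cD_0$ coincides with the intrinsic core $\cD^{\simeq}$ (and likewise for $\cC$), so that the flagged hypotheses (faithful, monic on $1$-groupoids of objects) translate verbatim into the $\mathbf{Cat}_\infty$ conditions and isofullness is measured against the correct groupoid; against a non-univalent flagging these notions can diverge and the equivalence would break.
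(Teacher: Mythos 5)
Your proof is correct and takes essentially the same route as the paper, whose entire proof is that the lemma ``follows from the discussion above'': a sub flagged $1$-category is by definition faithful (and monic on object groupoids), univalification leaves the underlying functor and its faithfulness untouched, and a subcategory in $\mathbf{Cat}_\infty$ is exactly a faithful, isofull functor, so for the already-faithful functor $\cD \hookrightarrow \cC$ being a subcategory is equivalent to being isofull. The only difference is that you additionally verify by hand the equivalence between ``faithful and isofull'' and ``$(-1)$-truncated on the spaces of objects and of arrows,'' which the paper asserts without proof in its discussion of monomorphisms; this is a correct and harmless elaboration rather than a different argument.
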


We now review how this story is implemented for higher categories, essentially following~\cite{ayala2018flagged}.

\begin{defn}\label{def:flaggedcat}
    A \emph{preflagged $(\infty,p)$-category} is a chain
    $\cC_0 \to \cC_1 \to \dots \to \cC_p$
    of $(\infty,p)$-categories. A preflagged $(\infty,p)$-category is \emph{flagged} if for each $k$, $\cC_k$ is an $(\infty,k)$-category and the functor $\cC_k \to \cC_{k+1}$ is essentially surjective on $(\leq k)$-morphisms. 
\end{defn}

\begin{defn}
     A flagged $(\infty,p)$-category is \emph{univalent} if, in addition, the functor $\cC_k \to \cC_{k+1}$ is fully faithful on $(>k)$-isomorphisms for all $0\leq k < p$.
\end{defn}
A univalent flagged $(\infty,p)$-category is entirely determined by $\cC_p$, with each $\cC_k$ the maximal sub-$(\infty,k)$-category of $\cC_p$.  The full inclusion
\[ 
\{\text{univalent flagged categories}\} 
\hookrightarrow 
\{\text{flagged categories}\} 
\]
has a left adjoint called \emph{univalification}. Realizing flagged categories is in general simpler than realizing univalent ones, and there are many examples of higher categories that are naturally flagged, see e.g.\ \cite{ayala2018flagged,ferrer2024dagger}.

Any flagged $(\infty,n)$-category $\mathcal{C}$ supplies a unital globular space $\Glob(\cC)$ given by
\begin{equation}
    \begin{tikzcd}
       \mathfrak{X}:= \mathfrak{X}_0 \arrow[r]& \arrow[l,shift right,shift right, "s",swap  ] \arrow[l,shift left, shift left,"t"]\mathfrak{X}_1 \arrow[r] & \mathfrak{X}_2  \arrow[r] \arrow[l,shift right,shift right,"s",swap ] \arrow[l,shift left, shift left,"t"]&\ldots \arrow[r] \arrow[l,shift right,shift right,"s",swap ] \arrow[l,shift left, shift left,"t"] & \mathfrak{X}_n, \arrow[l,shift right,shift right,"s",swap] \arrow[l,shift left, shift left,"t"]
    \end{tikzcd}
\end{equation}
where $\mathfrak{X}_k$ denotes the space of $k$-morphism.
The maps $s$ and $t$ denote the source and target maps. Given two $k$-morphisms $m,m' \in \mathfrak{X}_k$ we say that they are \textit{composable} if $t(m)=s(m')$.

It is often possible, and often convenient, to recast $(\infty,n)$-categorical notions in terms of iterated enrichment. For example, an $(\infty,n)$-category is precisely an $\infty$-category enriched (in the sense of~\cite{gepner2015enriched}) in the $\infty$-category of $(\infty,n-1)$-categories~\cite{1312.3881}. We will do this style of recasting for flagged $(\infty,p)$-categories, recognizing them as flagged $\infty$-categories enriched in the $\infty$-category of flagged $(\infty,p-1)$-categories.

We very briefly sketch the relevant definitions of enriched $\infty$-category theory, as developed in~\cite{gepner2015enriched}, here following the equivalent approach of~\cite{hinich, heine}. Let us fix $\mathcal{V}$, a presentably symmetric monoidal $(\infty,1)$-category. We introduce the notion of a flagged $\cV$-enriched $\infty$-category.

\begin{defn}  
     Let $\fX_0$ be a space. 
     The $\infty$-category of \textit{$\cV$-graphs $\fX$ over $\fX_0$} is the $\infty$-category $\mathrm{Fun}(\fX_0 \times \fX_0, \cV)$. 
\end{defn}

The $(\infty,1)$-category of $\cV$-graphs over a fixed space $\fX_0$ admits a natural presentably monoidal structure (that is however not braided), see e.g.~\cite{hinich, heine}. 
For example, the monoidal product $\fX_1\boxtimes \fX_1'$ of two $\cV$-graphs $\fX_1$, $\fX_1'$ is given by the functor $\fX_1\boxtimes \fX_1^{'}: \fX_0\times\fX_0\rightarrow \mathcal{V}$ defined by
\[(s,t)  \mapsto \int^{x\in \fX_0} \fX_1(s,x) \otimes_\cV \fX_1(x,t) = \underset{x \in \fX_0}{\operatorname{colim}} \; \fX_1(s,x) \otimes_\cV \fX_1(x,t) \,, \]
where $(s,t)$ ranges over $\fX_0 \times \fX_0$. The identity object in the space of $\cV$-graphs over $\fX_0$ is the graph $(s,t)  \mapsto \underset{x \in \fX_0(s,t)}{\operatorname{colim}} \mathds{1}$ where $\mathds{1}$ is the unit object in $\cV$.

\begin{defn}
         A \textit{globular $\cV$-space} consists of a space $\fX_0$ and an $E_0$-algebra $\fX_1$ in the monoidal $\infty$-category of $\cV$-graphs over $\fX_0$. A \textit{$\mathcal{V}$-enriched flagged category} $\fX$ is a space $\fX_0$ (henceforth referred to as the \emph{space of objects} of $\fX$) and an $E_1$-algebra $\fX_1$ in the monoidal $\infty$-category of $\mathcal{V}$-graphs over $\fX_0$. For any $\mathcal{V}$-enriched flagged category $\fX$, we use $\mathrm{Glob}(\fX)$ to denote taking the underlying globular $\mathcal{V}$-space. 
\end{defn}

\begin{ex}
    Unfolding the definitions (and using the equivalence of (univalent) $(\infty,n)$-categories with (univalent) $\infty$-categories enriched in $(\infty,n-1)$-categories~\cite{1312.3881}), we find that $n$-level globular spaces can be identified with globular spaces enriched in $(n-1)$-level globular spaces, and that
    flagged $(\infty,n)$-categories can be identified with flagged $\infty$-categories enriched in flagged $(\infty,(n-1))$-categories.
\end{ex}

For any $\infty$-category, the forgetful functor $\Alg_{E_1}(\cC) \to \cC$ from the $\infty$-category of $E_1$-algebras in $\cC$ preserves and reflects monomorphisms; i.e.\ a morphism of $E_1$-algebras is a monomorphism exactly when its underlying morphism of objects is a monomorphism.
In particular, a morphism $f : \fX \to \fZ$ of flagged $\cV$-categories is a mono (i.e.\ gives a sub flagged $\mathcal{V}$-category) exactly when it is a mono on the spaces of objects and if for any two objects $x_1, x_2 \in \fX_0$, the induced morphism  $\Hom_{\fX}(x_1,x_2) \to \Hom_{\fZ}(fx_1, fx_2)$ is a monomorphism in $\cV$.

We first prove the following straightforward observation: 
Given a monomorphism $A\hookrightarrow B$ in a monoidal $\infty$-category $\cD$ and suppose that $B$ is equipped with an $E_1$-algebra structure. Then there is a (unique) compatible $E_1$-algebra structures on $A$ if and only if the multiplication $A\otimes A \to B \otimes B \to B$ factors through the subobject $A\hookrightarrow B$. 

To make this precise, let $A_2$ denote the operad representing a unital binary multiplication (without any associativity); this is part of the familiar $A_n$-filtration of the $E_1$-operad $E_0 = A_0 \to A_1 \to A_2 \to \ldots \to E_1$. Then the above observation amounts to the following: 

\begin{lem}
    For an $\infty$-category $\cD$, let $\mathbf{Ar}(\cD)^{\mathrm{mono}}$ denote the full subcategory of the arrow category $\mathbf{Ar}(\cD)$ on the monomorphisms. Then the following is a pullback square of spaces (where $\mathrm{Alg}_{\cO}(-)$ denotes the space of $\cO$-algebras for an operad $\cO$): 
    \[
    \begin{tikzcd}
    \mathrm{Alg}_{E_1}(\mathbf{Ar}(\cD)^{\mathrm{mono}}) \arrow[r] \arrow[d] & \arrow[d] \mathrm{Alg}_{E_1}(\cD) \\
   \mathrm{Alg}_{A_2}(\mathbf{Ar}(\cD)^{\mathrm{mono}}) \arrow[r]&\mathrm{Alg}_{A_2}(\cD)
    \end{tikzcd}
    \]
\end{lem}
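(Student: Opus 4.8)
The plan is to show the square is cartesian by reducing to an arity‑by‑arity analysis of endomorphism operads and invoking the Stasheff filtration $A_0 \to A_1 \to A_2 \to \cdots$ with $\operatorname{colim}_n A_n \simeq E_1$, which the excerpt has already set up. Throughout I read $\mathrm{Alg}_{\cO}(\mathbf{Ar}(\cD)^{\mathrm{mono}})$ as the full subspace of $\mathrm{Alg}_{\cO}(\mathbf{Ar}(\cD))$, for the pointwise monoidal structure on the arrow category, spanned by algebras whose underlying object is a monomorphism; this makes sense even though $\mathbf{Ar}(\cD)^{\mathrm{mono}}$ need not be closed under $\otimes$, since an algebra structure only constrains the single object carrying it. The horizontal maps are induced by evaluation at the target $\ev_t\colon \mathbf{Ar}(\cD) \to \cD$, $(A \hookrightarrow B)\mapsto B$, which is monoidal, and the vertical maps are restriction along $A_2 \hookrightarrow E_1$.

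The one fact about monos that drives everything is the following. For $X=(f\colon A\hookrightarrow B)$ in $\mathbf{Ar}(\cD)^{\mathrm{mono}}$, a morphism $X^{\otimes k}\to X$ is a commuting square, so
\[ \Map_{\mathbf{Ar}(\cD)}(X^{\otimes k}, X) \;\simeq\; \Map_{\cD}(A^{\otimes k}, A)\times_{\Map_{\cD}(A^{\otimes k}, B)}\Map_{\cD}(B^{\otimes k}, B), \]
and $\ev_t$ projects onto the last factor. Since $f$ is monic, $\Map_{\cD}(A^{\otimes k},A)\to \Map_{\cD}(A^{\otimes k},B)$ is a full subspace inclusion, and the projection above is its base change, hence itself a full subspace inclusion. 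Thus $\ev_t$ induces a map of endomorphism operads $\mathrm{End}_{\mathbf{Ar}(\cD)}(X)\to \mathrm{End}_{\cD}(B)$ that is $(-1)$‑truncated, i.e.\ an inclusion of connected components, in every arity. (With $\ev_s$ one would instead meet precomposition with $f^{\otimes k}$, which is not monic in general; this is why we must use the target.)

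Now reduce to a one‑step statement. From $E_1\simeq\operatorname{colim}_n A_n$ we get $\mathrm{Alg}_{E_1}(-)\simeq \lim_n \mathrm{Alg}_{A_n}(-)$, and since $A_2\hookrightarrow A_n$ only alters arities $\geq 3$, it suffices to prove that for every $n\geq 2$ the square relating $\mathrm{Alg}_{A_{n+1}}$ and $\mathrm{Alg}_{A_n}$ along $\ev_t$ is cartesian; pasting these and passing to the limit over the tower then yields the square of the lemma. Fiberwise over a fixed $A_n$‑algebra $\alpha$ on $X$ (with image $\ev_t\alpha$ on $B$), this one‑step square is cartesian exactly when the space of $A_{n+1}$‑extensions of $\alpha$ maps equivalently to the space of $A_{n+1}$‑extensions of $\ev_t\alpha$. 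Here I use that $A_n\hookrightarrow A_{n+1}$ is the pushout of planar operads attaching the top associahedral cell $\partial K_{n+1}\hookrightarrow K_{n+1}$ in arity $n+1$: for any $A_n$‑algebra $Y$, with boundary datum $b$ built from its lower multiplications and arity‑$(n+1)$ endomorphism space $Z_Y:=\Map(Y^{\otimes(n+1)}, Y)$, the space of $A_{n+1}$‑extensions is the relative mapping space $\mathrm{fib}_b\bigl(\Map(K_{n+1}, Z_Y)\to \Map(\partial K_{n+1}, Z_Y)\bigr)$. Applying this to $Y=X$ and $Y=B$, $\ev_t$ gives a full subspace inclusion $Z_X\hookrightarrow Z_B$ compatible with boundary data; since $K_{n+1}$ is contractible (hence connected) while $\partial K_{n+1}=S^{n-2}$ is nonempty for all $n\geq 2$, a map $K_{n+1}\to Z_B$ factors through $Z_X$ iff its restriction to $\partial K_{n+1}$ does. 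This says precisely that
\[ \Map(K_{n+1}, Z_X)\;\simeq\;\Map(K_{n+1}, Z_B)\times_{\Map(\partial K_{n+1}, Z_B)}\Map(\partial K_{n+1}, Z_X), \]
so the restriction square for $Z_X\hookrightarrow Z_B$ is cartesian, and taking fibers over $b$ gives the desired equivalence of extension spaces.

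I expect the main obstacle to be the bookkeeping at this last step: identifying the extension space with the relative mapping space via the associahedral cell structure (which I will cite from the theory of $A_\infty$‑operads rather than reprove) and verifying the connectivity input that makes the restriction square cartesian, including the boundary case $n=2$ where $\partial K_3=S^0$ is disconnected but still nonempty, so the argument survives. The remaining task is routine: assembling the levelwise one‑step equivalences into an equivalence of towers over the fixed base $A_2$‑algebra, so that the comparison map is an equivalence on limits; this is an induction on fiber sequences once each one‑step square is known to be cartesian.
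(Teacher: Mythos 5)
Your proposal is correct in substance, and its essential first step coincides with the paper's: both arguments rest on the observation that for a monomorphism $X=(A\hookrightarrow B)$, evaluation at the target induces $(-1)$-truncated maps $\Map_{\mathbf{Ar}(\cD)}(X^{\otimes k},X)\to\Map_{\cD}(B^{\otimes k},B)$ in every arity, obtained as a base change of the mono $\Map_\cD(A^{\otimes k},A)\to\Map_\cD(A^{\otimes k},B)$ (and your remark that one must use the target rather than the source correctly pins down which functor the paper's proof means by ``$\mathbf{Ar}(\cD)^{\mathrm{mono}}\to\cD$''). Where you diverge is in how this faithfulness is converted into the pullback square. The paper regards $\ev_t$ as a $0$-faithful map of operads and $A_2\to E_1$ as a $0$-surjective one (surjective on colors, $(-1)$-connected on multi-hom spaces), and then simply quotes the orthogonality of the ($0$-surjective, $0$-faithful) factorization system on the $\infty$-category of operads; the lemma is that orthogonality square verbatim. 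You instead prove the orthogonality by hand for this particular pair: induct up the $A_n$-tower, check each one-step square fiberwise over a fixed $A_n$-algebra, and use the cell attachment $\partial K_{n+1}\to K_{n+1}$ together with the fact that a full subspace inclusion $Z_X\subseteq Z_B$ detects factorization of maps out of a connected space along restriction to a nonempty subspace. Your route is more elementary and makes visible exactly why ``$(-1)$-connected on multi-hom spaces'' is the right hypothesis; the paper's route is shorter and, importantly, never invokes a cellular model of the operads involved.

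That last point is where your write-up needs a repair. The paper's filtration is the \emph{unital} one ($A_0=E_0$, and $A_2$ corepresents a unital binary multiplication), whereas the cell structure you cite --- $A_{n+1}$ obtained from $A_n$ by attaching a single associahedral cell $K_{n+1}$ along $\partial K_{n+1}$ --- is the cellular description of the non-unital Stasheff operads. Homotopy-unital $A_\infty$-structures require additional cells at each stage governing compatibility with the unit (e.g.\ the unital associahedra of Muro--Tonks), so the one-step pushout claim, as you state it, is false for the operads actually appearing in the lemma. This does not break your argument: every cell attached after $A_2$ is a homotopy between composites of already-existing operations, hence is connected with nonempty boundary, and your fiberwise argument applies to each such cell separately --- as you yourself note, only nonemptiness (not connectedness) of the boundary is used. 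But you should either cite the unital cell structure or phrase the inductive step in the robust form ``each $A_n\to A_{n+1}$ is a finite composite of attachments of connected cells along nonempty boundaries,'' which is the statement your connectivity argument actually consumes.
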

In words, an $E_1$-structure on a monomorphism $A\to B$ amounts to an $E_1$-structure on $B$ and a compatible $A_2$-structure on $A$. 
\begin{proof}
    The functor $\mathbf{Ar}(\cD)^{\mathrm{mono}} \to \cD$ is easily seen to be faithful, i.e.\ induces $(-1)$-truncated maps on mapping spaces. On the other hand, the map of operads $A_2 \to E_1$ is surjective on colors and $(-1)$-connected on multi-hom spaces, and is thus in the language of~\cite[Definition 7.5.1]{soergel} $0$-surjective. The lemma is then an immediate consequence of the orthogonality of the $0$-surjective, $0$-faithful factorization system on the $\infty$-category of operads~\cite[Proposition 7.5.3]{soergel}. 
\end{proof}

Applied to $E_1$-algebras in categories of $\cV$-graphs, this immediately implies the following general fact about $\cV$-enriched $\infty$-categories.

\begin{cor}\label{prop:enrichedV}
     Suppose that $\fZ = (\fZ_0, \fZ_1)$ is a $\mathcal{V}$-enriched flagged $\infty$-category, and that $f : \fX = (\fX_0,\fX_1) \hookrightarrow \Glob(\fZ)$ is a $\mathcal{V}$-enriched sub globular space. If, for any $a,b,c \in \fX_0$, the composition $\fX_1(a,b) \otimes \fX_1(b,c) \to \fZ_1(a,c)$, as depicted in the square below
\begin{equation}\label{eqn:ZfactorsthroughX}
    \begin{tikzcd}
        \fX_1(a,b) \otimes \fX_1(b,c) \arrow[r] \arrow[dotted]{d}& \fZ_1(a,b) \otimes \fZ_1(b,c) \arrow[d]\\
        \fX_1(a,c) \arrow[hookrightarrow]{r} &\fZ_1(a,c)\,,
    \end{tikzcd}
\end{equation}
factors through the subobject $\fX_1(a,c) \hookrightarrow \fZ_1(a,c)$, then $\fX = \Glob(\widetilde{\fX})$ for a unique sub flagged $\mathcal{V}$-enriched $\infty$-category $\widetilde{\fX}\subset \fZ$.
\end{cor}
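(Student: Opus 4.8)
The plan is to deduce Corollary~\ref{prop:enrichedV} from the preceding Lemma by applying it levelwise to the categories of $\cV$-graphs, and then assembling these levelwise statements into a single statement about $E_1$-algebras. First I would set up the ambient monoidal $\infty$-category correctly: with $\fX_0 \hookrightarrow \fZ_0$ the given monomorphism of spaces of objects, I want to work inside the monoidal $\infty$-category $\cD$ of $\cV$-graphs over $\fZ_0$, using that the monomorphism $\fX_1 \hookrightarrow \fZ_1$ (as $\cV$-graphs) is determined by the pointwise monomorphisms $\fX_1(a,b) \hookrightarrow \fZ_1(a,b)$. The essential point is that the monoidal product $\boxtimes$ of $\cV$-graphs is computed by a colimit (the coend formula in the excerpt), so for composable $\cV$-graphs the multiplication map $\fZ_1 \boxtimes \fZ_1 \to \fZ_1$ evaluated at a pair $(a,c)$ assembles precisely the hom-wise composition maps $\bigsqcup_{b} \fX_1(a,b)\otimes_\cV \fX_1(b,c) \to \fZ_1(a,c)$ of the square~\eqref{eqn:ZfactorsthroughX}.

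Next I would translate the hypothesis into the language of the Lemma. The hypothesis that each composite $\fX_1(a,b)\otimes \fX_1(b,c) \to \fZ_1(a,c)$ factors through $\fX_1(a,c) \hookrightarrow \fZ_1(a,c)$ says exactly that the multiplication $\fZ_1 \boxtimes \fZ_1 \to \fZ_1$, restricted along the subobject $\fX_1 \boxtimes \fX_1 \to \fZ_1 \boxtimes \fZ_1$, factors through the subobject $\fX_1 \hookrightarrow \fZ_1$ (here I use that a colimit of maps each factoring through a fixed subobject again factors through that subobject, together with compatibility of $\boxtimes$ with monomorphisms of graphs). Since the $E_1$-algebra structure on $\fZ_1$ equips it with an $A_2$-structure (a unital binary multiplication) by restriction along $A_2 \to E_1$, and the given $E_0$-algebra (unit) structure on $\fX_1$ coming from the sub globular space identifies the units, this factorization endows $\fX_1$ with a compatible $A_2$-algebra structure in the category of $\cV$-graphs over $\fX_0$. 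I would now invoke the Lemma directly: the factorization of the multiplication through the monomorphism $\fX_1 \hookrightarrow \fZ_1$ is precisely the data witnessing that the pair $(\fX_1 \hookrightarrow \fZ_1)$, viewed as an object of $\mathbf{Ar}(\cD)^{\mathrm{mono}}$, lifts the $E_1$-algebra $\fZ_1$ along the pullback square to a point of $\mathrm{Alg}_{E_1}(\mathbf{Ar}(\cD)^{\mathrm{mono}})$.

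The payoff of the pullback square is both existence and uniqueness: since the square is a pullback, the space of $E_1$-structures on the monomorphism $\fX_1 \hookrightarrow \fZ_1$ is computed as the fiber product of the space of $E_1$-structures on $\fZ_1$ (which is given, a single point worth of data once we fix $\fZ$) with the space of compatible $A_2$-structures on the arrow. The hypothesis exhibits a point in this fiber product, and contractibility of the relevant fibers (the $A_2$-structure is forced once the multiplication is known to factor, and the $E_1$-structure on $\fZ_1$ is fixed) yields that there is a \emph{unique} $E_1$-algebra structure on $\fX_1$ in $\cV$-graphs over $\fX_0$ compatible with the inclusion into $\fZ$. By definition this $E_1$-algebra $\widetilde{\fX} = (\fX_0, \fX_1)$ is a $\cV$-enriched flagged $\infty$-category, the inclusion $\widetilde{\fX} \hookrightarrow \fZ$ is a map of such (being a mono on objects and on homs), and $\Glob(\widetilde{\fX}) = \fX$ by construction.

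The main obstacle I anticipate is the bookkeeping in the second step: verifying cleanly that factorization of the multiplication as a map of $\cV$-graphs is \emph{equivalent} to the pointwise factorizations in~\eqref{eqn:ZfactorsthroughX}. This requires knowing that the monoidal product of $\cV$-graphs preserves the relevant monomorphisms (so that $\fX_1 \boxtimes \fX_1 \hookrightarrow \fZ_1 \boxtimes \fZ_1$ is itself the expected subobject), which in turn rests on the colimit formula for $\boxtimes$ interacting well with $(-1)$-truncated maps. Once this compatibility of $\boxtimes$ with monomorphisms is established, the rest is a formal application of the Lemma, and the uniqueness clause falls out of the pullback square for free.
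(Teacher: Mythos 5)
Your overall strategy --- apply the preceding Lemma (the pullback square relating $E_1$- and $A_2$-structures on monomorphisms) inside a category of $\cV$-graphs --- is exactly the paper's route: the paper treats the Corollary as an immediate application of that Lemma. However, there is a genuine gap in your setup, namely the choice of base space. You declare the ambient monoidal category to be $\cD=$ $\cV$-graphs over $\fZ_0$, but the structures you need only exist over $\fX_0$. If $\fX_1$ is regarded as a graph over $\fZ_0$ (extended by the initial object of $\cV$ outside $\fX_0\times\fX_0$), it admits \emph{no} unital $A_2$- or $E_1$-structure whenever $\fX_0\subsetneq\fZ_0$: the monoidal unit of graphs over $\fZ_0$ is $(s,t)\mapsto \operatorname{colim}_{\fZ_0(s,t)}\mathds{1}$, which is non-initial at every $(s,s)$ with $s\in\fZ_0$, so for general (non-pointed) $\cV$ there is no unit map from it into a graph that is initial outside $\fX_0\times\fX_0$. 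This is not a corner case: the Corollary is stated for arbitrary presentably symmetric monoidal $\cV$, the enriching categories in the paper's inductive application (flagged $(\infty,n-1)$-categories) are not pointed, and $\fX_0\subsetneq\fZ_0$ is precisely the situation of interest (e.g.\ $\ob(\mathbf{MF2C})\subsetneq\ob(\Alg_{E_1}(\tKar))$ when carving out $\Mor^{\sss}$). Your own write-up betrays the tension: you produce the $A_2$-structure ``in the category of $\cV$-graphs over $\fX_0$,'' but then feed the arrow $(\fX_1\hookrightarrow\fZ_1)$, which lives over $\fZ_0$, into the Lemma --- whereas the pullback square requires the $E_1$-algebra, the $A_2$-algebra, and the monomorphism all to live in one and the same $\cD$. (The same confusion appears in your colimit formula, where the multiplication of $\fZ_1$ over $\fZ_0$ ranges over all $b\in\fZ_0$ and involves $\fZ_1(a,b)\otimes\fZ_1(b,c)$, not the $\fX_1$-terms.)

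The fix is small but it is exactly the step you skipped, and it forces the opposite choice of base: restrict rather than extend. Since $\fX_0\hookrightarrow\fZ_0$ is a monomorphism of spaces (a full subspace inclusion), the restriction functor $f_0^*$ from graphs over $\fZ_0$ to graphs over $\fX_0$ is lax monoidal with invertible unit constraint, so $f_0^*\fZ_1$ is an $E_1$-algebra in $\cV$-graphs over $\fX_0$ --- the ``full subcategory of $\fZ$ on the objects of $\fX_0$.'' Now run your argument verbatim in $\cD=$ graphs over $\fX_0$ with the monomorphism $\fX_1\hookrightarrow f_0^*\fZ_1$: the colimit description of $\boxtimes$, the observation that a colimit of maps each factoring through a fixed subobject again factors through it, the unit supplied by the $E_0$-structure of the sub globular space, and the pullback square of the Lemma then yield the unique compatible $E_1$-structure on $\fX_1$, i.e.\ the unique sub flagged $\cV$-enriched category $\widetilde{\fX}\subseteq\fZ$ with $\Glob(\widetilde{\fX})=\fX$. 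With that one correction, your proof is the paper's proof, fleshed out.
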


We will apply the previous result to obtain an analogous statement for flagged $(\infty,n)$-categories --- this statement is regularly used, often implicitly, but we could not find a careful proof in the literature and hence have supplied one here. 

\begin{prop}\label{prop:belief}
    Let $\cC$ be a flagged $(\infty,n)$-category with underlying globular space $\Glob(\cC)$. Suppose that there exists a sub globular space $f:\mathfrak{X} \hookrightarrow \Glob(\cC)$ with the property that for every $k$ and for any composable pair of $k$-morphisms $x,x'\in \mathfrak{X}$, their composition is also in $\mathfrak{X}$.\footnote{When say that the ``composition is in $\fX$'' we mean it for all of the $k$ different directions of composition.} Then $\fX = \Glob(\mathcal{D})$ for a unique sub flagged $n$-category $\mathcal{D}\subseteq \mathcal{C}$.
 \end{prop}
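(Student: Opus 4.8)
The plan is to argue by induction on $n$, exploiting the recognition (recorded in the Example above) of flagged $(\infty,n)$-categories as flagged $\infty$-categories enriched in $\cV:=\{\text{flagged }(\infty,n-1)\text{-categories}\}$, and of $n$-level globular spaces as globular spaces enriched in $(n-1)$-level globular spaces. The base case $n=0$ is immediate: a flagged $(\infty,0)$-category is a space, $\Glob$ is the identity, the composition hypothesis is vacuous, and a sub globular space is exactly a monic inclusion of spaces, which is its own realization. For the inductive step I would feed everything into Corollary~\ref{prop:enrichedV} applied to $\cV$, viewing $\fZ=\cC$ as a $\cV$-enriched flagged $\infty$-category.

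The first move is to separate the closure hypothesis into its \emph{object direction} and its \emph{higher directions}. Writing $\fX_0\hookrightarrow\cC_0$ for the level-$0$ part of $f$, each pair $a,b\in\fX_0$ determines a sub globular space $\fX(a,b)\hookrightarrow\Glob(\Hom_\cC(a,b))$ of the hom flagged $(\infty,n-1)$-category. For a $k$-morphism of $\cC$ with $k\ge 1$, the $k-1$ composition directions other than the outermost one are precisely the $k-1$ composition directions of the corresponding $(k-1)$-morphism inside a fixed hom-category; hence closure of $\fX$ under these directions says exactly that each $\fX(a,b)$ is closed under all of its own compositions. The inductive hypothesis then produces, uniquely, sub flagged $(\infty,n-1)$-categories $\widetilde{\fX}(a,b)\subseteq\Hom_\cC(a,b)$ with $\Glob(\widetilde{\fX}(a,b))=\fX(a,b)$. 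Since a sub flagged $(\infty,n-1)$-category is by definition a monomorphism in $\cV$, these assemble into a $\cV$-enriched sub globular space $\widetilde{\fX}=(\fX_0,\widetilde{\fX}_1)\hookrightarrow\Glob(\cC)$; identities lie in $\fX$ (needed for the unit of the $A_2$-structure produced by Corollary~\ref{prop:enrichedV}) because $\fX$ is a sub of the \emph{unital} globular space $\Glob(\cC)$, equivalently one includes the nullary composites among the closure conditions.

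It remains to verify the hypothesis of Corollary~\ref{prop:enrichedV}: that for all $a,b,c\in\fX_0$ the enriched composition $\widetilde{\fX}_1(a,b)\otimes_\cV\widetilde{\fX}_1(b,c)\to\cC_1(a,c)$ factors through the subobject $\widetilde{\fX}_1(a,c)$ \emph{in} $\cV$ — and I expect this to be the main obstacle, since the remaining outermost closure hypothesis only supplies a factorization at the level of globular spaces. The key sub-claim I would isolate is: \emph{for flagged $(\infty,m)$-categories, a morphism $g\colon Y\to Z$ factors through a sub flagged category $W\hookrightarrow Z$ if and only if $\Glob(g)$ factors through $\Glob(W)$.} Granting it with $m=n-1$, the outermost (direction-$0$) closure is exactly the statement that the globular realization of the composition factors through $\fX(a,c)=\Glob(\widetilde{\fX}_1(a,c))$, so the required factorization in $\cV$ follows. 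To prove the sub-claim, note that since $W\hookrightarrow Z$ is a monomorphism, factoring is equivalent to the base change $Y\times_Z W\to Y$ being an equivalence. The functor $\Glob$ preserves this pullback, because it is built by iterating the forgetful functor $\Alg_{E_1}(-)\to(-)$ (which preserves limits and monomorphisms) together with evaluation of $\cV$-graphs; it therefore sends the base change to $\Glob(Y)\times_{\Glob(Z)}\Glob(W)$, which is all of $\Glob(Y)$ precisely when $\Glob(g)$ factors. Finally $\Glob$ is conservative on flagged categories — a functor inducing an equivalence on objects and on every hom globular space is, by the inductive hypothesis at level $m-1$, an equivalence on hom flagged $(\infty,m-1)$-categories, hence an equivalence — so the base change is an equivalence in $\cV$, giving the factorization.

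With the hypothesis verified, Corollary~\ref{prop:enrichedV} yields a unique sub flagged $\cV$-enriched $\infty$-category $\cD\subseteq\cC$ with $\Glob_\cV(\cD)=\widetilde{\fX}$; applying $\Glob$ once more to the hom-objects $\widetilde{\fX}_1(a,b)$ recovers the spaces $\fX(a,b)$, so that the full underlying globular space of $\cD$ reassembles to the original $\fX$. Uniqueness of $\cD$ combines the uniqueness clause of Corollary~\ref{prop:enrichedV} at the outermost level with the inductive uniqueness of each $\widetilde{\fX}(a,b)$, completing the induction.
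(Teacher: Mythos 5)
Your proposal is correct and follows essentially the same route as the paper's proof: induction on $n$ via the identification of flagged $(\infty,n)$-categories with flagged $\infty$-categories enriched in flagged $(\infty,n-1)$-categories, applying the inductive hypothesis to the hom-data and then Corollary~\ref{prop:enrichedV} at the outermost level. The only point where you go beyond the paper is your sub-claim that $\Glob$ detects factorization through sub flagged categories (via pullback-preservation and conservativity of $\Glob$): the paper's proof treats the passage from globular-level closure under the outermost composition to the $\cV$-level factorization hypothesis of Corollary~\ref{prop:enrichedV} as immediate, so your argument supplies a detail the paper leaves implicit.
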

 
\begin{proof}
    The globular space $\Glob(\cC)$ for a flagged $(\infty,n)$-category $\cC$ looks like 
    \begin{equation}
        \begin{tikzcd}
              \fZ_0 \arrow[r]& \arrow[l,shift right,shift right, "s",swap  ] \arrow[l,shift left, shift left,"t"] \Glob(\cC_{n-1})\,,
        \end{tikzcd}
    \end{equation} 
    where $\cC_{n-1}$ is some flagged $(\infty,(n-1))$-category. The base case of $n=1$ in the proposition was already established in Corollary \ref{prop:enrichedV}. We now apply the induction step: by assumption we give ourselves an inclusion $\fX \hookrightarrow \Glob(\cC)$ such that $\fX_0 \hookrightarrow \mathfrak{Z}_0$ and $\Glob(\widetilde{\mathcal{C}}_{n-1})\hookrightarrow f^*_0\Glob(\cC_{n-1})$, for some other flagged $(n-1)$-category $\widetilde{\cC}_{n-1}$. We are also given that compositions in $\Glob(\widetilde{\cC}_{n-1})$ lift to compositions in $\fX$, and therefore $\Glob(\widetilde{\mathcal{C}}_{n-1})$ enhances to a sub flagged category of $f^*_0\cC_{n-1}$. We can apply Corollary \ref{prop:enrichedV} to $\fX=(\fX_0,\Glob(\widetilde{\cC}_{n-1}))$ because any enriched flagged category is $E_1$ in the category of $\cV$-graphs over its space of objects $\fX_0$. Thus $\Glob(\widetilde{\cC}_{n-1})$ is $E_1$ in the category of $\cV$-graphs over $\fX_0$, and in particular is $E_0$.
    Therefore we get $\fX=\Glob(\widetilde{\cC})$ for a unique flagged $n$-category $\widetilde{\cC} \subseteq {\cC}$. This concludes the proof.
\end{proof}

The last proposition above guarantees that a sub globular space of an $n$-category defines a sub-$n$-category, and, in particular, it inherits its coherence data from the ambient $n$-category, provided that we can show that the morphisms of this sub globular space are closed under composition at all levels.

\subsubsection{Fibrations}
We recall some notions from \cite[\S2]{highertopos} that will appear in \S\ref{section:Wittsquares}. More precisely, we review the correspondence between sufficiently nice functors of $\infty$-categories $\mathcal{E}\rightarrow\mathcal{C}$ and functors $\mathcal{C}\rightarrow \mathbf{Spaces}$. Throughout, $\cC$ and $\cD$ denote two $\infty$-categories. Recall that $\ob(\cC)$ denotes the space of objects and $\mathrm{ar}(\cC)$ the space of morphisms in $\cC$.

\begin{defn}
A functor $p:\cC \to \cD$ is a \emph{left fibration} if the square
\[
\begin{tikzcd}
    \mathrm{ar}(\cC) \arrow[r, "s"] \arrow[d] & \ob(\cC) \arrow[d] \\
    \mathrm{ar}(\cD) \arrow[r, "s"'] & \ob(\cD) 
\end{tikzcd}
\]
is a pullback square, where both horizontal maps are taking the source of an arrow. 
\end{defn}
Intuitively, a left fibration is therefore a functor $p: \cC \to \cD$ such that for every arrow $g:d \to d'$ in $\cD$, object $c\in \cC$ and identification $p(c) \simeq d$, there exists a contractible space of arrows $f:c\to c'$ with $p(f) \simeq g$.

Let $\mathbf{LFib}(\cC)$ denote the full subcategory of the over-category $(\mathbf{Cat}_{\infty})_{/\cC}$ on the left fibrations. 
It follows from the definition that the fibers of a left fibration $p:\cC\rightarrow\cD$  are spaces. Moreover, this construction is functorial and therefore defines a functor $\cD\rightarrow\mathbf{Spaces}$. Heuristically, this construction can in fact be reversed via the Grothendieck construction. More precisely, it was established in \cite[Proposition 2.2.3.11]{highertopos} that
\[
\mathbf{LFib}(\cC)  \simeq \mathrm{Fun}(\cC, \mathbf{Spaces}).
\]
This equivalence is called the straightening/unstraightening equivalence .

Throughout we will repeatedly use the following lemma which follows immediately from the definition:
\begin{lem}
Given a pullback square of $\infty$-categories
\begin{equation}
\begin{tikzcd}
\widetilde{\mathcal{C}} \arrow[r] \arrow[d, "\widetilde{F}"'] & \mathcal{C} \arrow[d, "F"] \\
\widetilde{D} \arrow[r]                     & \mathcal{D}.          
\end{tikzcd}
\end{equation}
if $p: \cC \to \cD$ is a left (right) fibration, then so is $\widetilde{p}: \widetilde{\cC} \to \widetilde{\cD}.$
\end{lem}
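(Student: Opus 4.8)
The plan is to reduce the statement to two elementary facts: that the functors appearing in the definition of a left fibration, namely $\ob(-)\simeq \Map_{\mathbf{Cat}_{\infty}}(\Delta^0,-)$ and $\mathrm{ar}(-)\simeq \Map_{\mathbf{Cat}_{\infty}}(\Delta^1,-)$, are corepresentable and hence preserve limits; and that pullbacks of spaces satisfy the pasting law. Since the given square exhibits $\widetilde{\cC}\simeq \cC\times_{\cD}\widetilde{\cD}$ as a pullback in $\mathbf{Cat}_{\infty}$, applying the two limit-preserving functors $\ob(-)$ and $\mathrm{ar}(-)$ yields pullback squares of spaces, i.e.\ equivalences
\[
\ob(\widetilde{\cC})\simeq \ob(\cC)\times_{\ob(\cD)}\ob(\widetilde{\cD}),
\qquad
\mathrm{ar}(\widetilde{\cC})\simeq \mathrm{ar}(\cC)\times_{\mathrm{ar}(\cD)}\mathrm{ar}(\widetilde{\cD}).
\]
I would record these at the outset, noting that by naturality of the source map $s$ the induced square of source maps among $\widetilde{\cC},\widetilde{\cD},\cC,\cD$ commutes, so that all the structure maps below are the intended ones.

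With this setup, the hypothesis that $p$ is a left fibration becomes the identity $\mathrm{ar}(\cC)\simeq \ob(\cC)\times_{\ob(\cD)}\mathrm{ar}(\cD)$, and the conclusion to prove is the analogous identity $\mathrm{ar}(\widetilde{\cC})\simeq \ob(\widetilde{\cC})\times_{\ob(\widetilde{\cD})}\mathrm{ar}(\widetilde{\cD})$ for $\widetilde{p}$. The core step is a purely formal manipulation of iterated pullbacks. Substituting the left-fibration hypothesis for $\mathrm{ar}(\cC)$ into the second displayed equivalence and applying the pasting law gives
\[
\mathrm{ar}(\widetilde{\cC})\simeq \big(\ob(\cC)\times_{\ob(\cD)}\mathrm{ar}(\cD)\big)\times_{\mathrm{ar}(\cD)}\mathrm{ar}(\widetilde{\cD})\simeq \ob(\cC)\times_{\ob(\cD)}\mathrm{ar}(\widetilde{\cD}),
\]
where the map $\mathrm{ar}(\widetilde{\cD})\to \ob(\cD)$ is $s$ followed by $\ob(\widetilde{\cD})\to\ob(\cD)$ (equivalently, by naturality, $\mathrm{ar}(\widetilde{\cD})\to\mathrm{ar}(\cD)$ followed by $s$).

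Independently, substituting the first displayed equivalence into the target expression and again applying the pasting law gives
\[
\ob(\widetilde{\cC})\times_{\ob(\widetilde{\cD})}\mathrm{ar}(\widetilde{\cD})\simeq \big(\ob(\cC)\times_{\ob(\cD)}\ob(\widetilde{\cD})\big)\times_{\ob(\widetilde{\cD})}\mathrm{ar}(\widetilde{\cD})\simeq \ob(\cC)\times_{\ob(\cD)}\mathrm{ar}(\widetilde{\cD}),
\]
the very same object. Identifying the two computations yields the desired pullback square, proving $\widetilde{p}$ is a left fibration; the right-fibration case is verbatim with the target map $t$ in place of $s$.

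I expect no conceptual obstacle — this is exactly why the lemma can be said to follow immediately from the definition — and the only point requiring care is bookkeeping: checking that the two composite cospans over $\ob(\cD)$ genuinely agree, which is precisely where naturality of $s$ (resp.\ $t$) enters. One may also package the whole argument as a single instance of the cube lemma, applied to the commutative cube whose back face is the left-fibration square for $p$, whose two connecting faces are $\ob(-)$ and $\mathrm{ar}(-)$ applied to the given pullback square, and whose front face is the square for $\widetilde{p}$.
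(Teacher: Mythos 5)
Your proof is correct and is essentially the argument the paper intends: the paper states this lemma without proof, asserting that it ``follows immediately from the definition,'' and your verification --- corepresentability of $\ob(-)\simeq\Map(\Delta^0,-)$ and $\mathrm{ar}(-)\simeq\Map(\Delta^1,-)$ (hence preservation of pullbacks), combined with the pasting law and naturality of the source map to match the two cospans over $\ob(\mathcal{D})$ --- is precisely that immediate argument spelled out. The cube-lemma packaging you mention at the end is the cleanest way to record it, and nothing is missing.
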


\subsection{Multifusion 2-Categories}\label{subsection:MF2Creview}

Recall from~\cite{douglas2018fusion} that a finite semisimple $2$-category is a $\mathbb{C}$-linear $2$-category all of whose endomorphism categories are multifusion, which has $2$-categorical direct sums, and all of whose $2$-idempotents split. A multifusion $2$-category is a rigid monoidal finite semisimple $2$-category. 

We will now give a rigorous construction of the $(3,1)$-category of multifusion $2$-categories and monoidal $\mathbb{C}$-linear functors.

We call a $\mathbb{C}$-linear $1$-category \emph{Karoubian} if it has finite direct sums and if all idempotents split. We let $\mathbf{Kar}$ denote the presentable  $(2,1)$-category of Karoubian $1$-categories. Equipped with the Karoubi-completion-of-the-enriched-tensor product, $\mathbf{Kar}$ is in fact a presentably symmetric monoidal $(2,1)$-category, i.e.\ it has a symmetric monoidal structure $\boxtimes$ so that $\boxtimes$ preserves small colimits in both slots.

\begin{defn}We let $\mathbf{2Vect} \subseteq \mathbf{Kar}$ denote the full subcategory on the finite semisimple linear categories. 
\end{defn}
Note that finite semisimple linear categories are closed under the tensor product in $\mathbf{Kar}$ and hence $\mathbf{2Vect}$ in fact defines a symmetric monoidal subcategory. 

\begin{defn} We define the $(2,1)$-category  $\mathbf{BMF1C}$ as the full subcategory of $\mathbf{Alg}_{E_2}(\mathbf{2Vect})$ on the braided multifusion $1$-categories. Similarly, the $(2,1)$-category $\mathbf{SMF1C}$ is the full subcategory of $\mathbf{Alg}_{E_{\infty}}(\mathbf{2Vect})$ (and in fact a full subcategory of $\mathbf{BMF1C}$) on the symmetric multifusion $1$-categories. 
\end{defn}

We now move one categorical level higher. 

We say that a $\mathbf{Kar}$-enriched category is \emph{$2$-Karoubian} if it has $2$-categorical direct sums and if $2$-idempotents split.
We now construct the $(3,1)$-category $\mathbf{2Kar}$ of $2$-Karoubian linear $2$-categories and show that it is presentably symmetric monoidal. It was shown in \cite{gepner2015enriched} that if $\cV$ is a presentably symmetric monoidal $\infty$-category, then there is a presentably symmetric monoidal $\infty$-category $\Cat_\cV$ of $\cV$-enriched categories and $\cV$-enriched functors. Taking $\cV = \mathbf{Kar}$, the $(2,1)$-category of Karoubi complete $\mathbb{C}$-linear 1-categories, provides the presentably symmetric monoidal $(2,1)$-category $\mathbf{2LocKar} := \Cat_{\mathbf{Kar}}$.

\begin{defn}
\label{defn:2Karasalocalization}
    We let $\mathbf{2Kar}$ be the localization of  $\mathbf{2LocKar}$ along the following monomorphisms:\footnote{By \emph{walking} we mean that the structure is defined irrespective of any additional ambient structure i.e.\ in its free form.} 
    \begin{align}
    \{\text{The walking pair of objects}\}&\hookrightarrow \{\text{The walking 2-direct sum}\}
    \label{eq:WalkingPairOfObjects}
    \\
    \{\text{The walking 2-idempotent}\}&\hookrightarrow \{\text{The walking 2-retract}\}
    \label{eq:Walking2Idempotent}
    \end{align}
\end{defn}

We now argue that the presentably symmetric monoidal structure of $\mathbf{2LocKar}$ induces a presentably symmetric monoidal structure on $\mathbf{2Kar}$. In order to express this condition, we briefly recall some concepts from the general theory of localization, see~\cite[\S5.5.4]{highertopos}. Let us fix an $\infty$-category $\cV$, let $S$ be a subset of the $\infty$-category of arrows in $\mathcal{V}$.

 \begin{defn}\label{defn:Slocal}
 Let $S$ be a set of morphisms in a presentable $\infty$-category $\cV$. 
     An object $X\in \cV$ is \emph{$S$-local} if for all 1-morphisms $Y \to Z$ in $S$, $\Hom(Z,X) \rightarrow \Hom(Y,X)$ is an isomorphism of spaces.
    The \emph{localization of $\cV$ with respect to $S$}, denoted by $L_S(\cV)$, is the full sub-$\infty$-category of $\cV$  consisting of the $S$-local objects. We let $L_S: \cV \to L_S(\cV)$ be the localization functor that is the left adjoint\footnote{
    The localization functor $L_S$ exists by the adjoint functor theorem since the fully faithful inclusion $L_S(\cV) \hookrightarrow \cV$ preserves limits and $\kappa$-filtered colimits for some regular cardinal $\kappa$, since $S$ is a (small) set.} to the canonical inclusion $L_S(\cV)\hookrightarrow \cV$. We say that a morphism $f:X\to Y$ in $\cV$ is an \emph{$S$-equivalence} if $L_S(f)$ is an isomorphism in $L_S(\cV)$, or equivalently if for every $S$-local object $Z$, $\Map(Y,Z) \to \Map(X,Z) $ is an equivalence.
  \end{defn}

\begin{defn}\label{defn:idealistic} A set $S$ of morphisms in a presentably symmetric monoidal $\infty$-category $\cV$ is \emph{idealistic} if for every $S$-equivalence $f:X\to Y$, also $f \otimes Z: X\otimes Z \to Y \otimes Z$ is an $S$-equivalence for every $Z\in \cV$. 
\end{defn}

By~\cite[Proposition~5.5.4.15]{highertopos}, the collection of $S$-equivalences form the smallest strongly saturated  (\cite[Definition 5.5.4.5]{highertopos}) class of morphisms containing $S$. It follows that it suffices for Definition~\ref{defn:idealistic} to prove that $f\otimes Z$ is an $S$-equivalence for every $f\in S$ and $Z\in \cV$.  Denote the inner Hom in $\cV$, i.e.\ the right adjoint to the tensor product, by $\underline{\Hom}_\cV(-,-)$. By currying, one finds that $S$ is idealistic if and only if, whenever $C \in \cV$ and $X \in L_S(\cV)$, we have $\underline{\Hom}_\cV(C,X) \in L_S(\cV)$.

The following is a consequence of~\cite[Example 2.2.1.7 and Proposition 2.2.1.9 ]{higheralgebra}:
\begin{lem}
    Let $\cV$ be a presentably symmetric monoidal $\infty$-category and $S$ be an idealistic set of morphisms in $\cV$. Then there exists a presentably symmetric monoidal structure on $L_S: \cV \to L_S(\cV)$.
\end{lem}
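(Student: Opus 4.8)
The plan is to observe that the paper's notion of an \emph{idealistic} set $S$ is precisely Lurie's notion of a localization that is compatible with a symmetric monoidal structure, so that the cited results apply essentially verbatim, and then to add a short argument upgrading the resulting symmetric monoidal structure to a \emph{presentably} symmetric monoidal one. First I would record the identification: the compatibility hypothesis of \cite[Definition 2.2.1.6]{higheralgebra} asks exactly that $f\otimes \id_Z$ be an $S$-equivalence whenever $f$ is an $S$-equivalence and $Z\in\cV$, which is the definition of idealistic. By the reduction already noted in the text (the $S$-equivalences form the smallest strongly saturated class containing $S$, by \cite[Proposition 5.5.4.15]{highertopos}) it suffices to check this for $f\in S$; equivalently, via the currying reformulation recorded above, it is the statement that $\underline{\Hom}_\cV(C,X)$ is $S$-local whenever $X$ is, which is the closed-monoidal form of the same compatibility.

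With this identification in hand, I would invoke \cite[Example 2.2.1.7, Proposition 2.2.1.9]{higheralgebra}: compatibility produces a symmetric monoidal structure on the reflective subcategory $L_S(\cV)$ for which the localization functor $L_S$ refines to a symmetric monoidal functor. Concretely, the tensor product on $L_S(\cV)$ is $X\otimes_{L_S}Y := L_S(X\otimes_\cV Y)$ with unit $L_S(\mathds{1})$, and the universal property is that symmetric monoidal functors out of $L_S(\cV)$ correspond to symmetric monoidal functors out of $\cV$ inverting the morphisms of $S$.

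The remaining point, and the only one that genuinely uses the word ``presentably,'' is to check that this structure lands in $\mathbf{CAlg}(\mathrm{Pr}^L)$, i.e.\ that $L_S(\cV)$ is presentable and that $\otimes_{L_S}$ preserves colimits separately in each variable. Presentability is exactly the accessible-localization statement already used to construct $L_S$ (the footnote to Definition~\ref{defn:Slocal}): since $S$ is a small set, the $S$-local objects form a reflective subcategory whose inclusion preserves $\kappa$-filtered colimits for some regular $\kappa$, so $L_S(\cV)$ is presentable. For colimit-preservation I would argue directly: $L_S$ is a left adjoint, hence preserves colimits, and a colimit in $L_S(\cV)$ is computed by applying $L_S$ to the corresponding colimit in $\cV$. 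The idealistic property then yields $L_S\bigl(L_S(A)\otimes_\cV Y\bigr)\simeq L_S(A\otimes_\cV Y)$ for $A\in\cV$ and $Y\in L_S(\cV)$, because the unit $A\to L_S(A)$ is an $S$-equivalence and hence so is $A\otimes_\cV Y\to L_S(A)\otimes_\cV Y$. Combining these facts with the hypothesis that $\otimes_\cV$ preserves colimits in each variable shows that $\otimes_{L_S}$ does as well.

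I expect the main obstacle to be bookkeeping rather than any single deep step: one must carefully match the paper's self-contained ``idealistic'' formulation (including its internal-hom reformulation $\underline{\Hom}_\cV(C,X)\in L_S(\cV)$) against Lurie's compatibility hypothesis, and one must notice that \cite[Proposition 2.2.1.9]{higheralgebra} produces only a symmetric monoidal structure, not a priori a presentably symmetric monoidal one. The colimit-preservation of the localized tensor product is therefore not automatic from the citation and has to be verified by hand, invoking the idealistic condition a second time exactly as above.
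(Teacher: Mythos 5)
Your proposal is correct and takes essentially the same route as the paper, which proves this lemma simply by citing \cite[Example 2.2.1.7 and Proposition 2.2.1.9]{higheralgebra} after identifying ``idealistic'' with Lurie's compatibility condition for a localization with a symmetric monoidal structure. Your extra by-hand verification of presentability and of colimit-preservation for $X\otimes^{L_S(\cV)}Y\simeq L_S(X\otimes^{\cV}Y)$ only makes explicit what the paper leaves implicit in that citation.
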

In this case,  the tensor product in $L_S(\cV)$ of two $S$-local objects $X\otimes^{L_S(\cV)} Y$ is given by $L_S(X \otimes^{\cV} Y)$.

\begin{lem}\label{lem:2Kar}
    The $(3,1)$-category $\tKar$ is presentably symmetric monoidal, as is the localization functor $L_S : \mathbf{2LocKar} \to \tKar$.
\end{lem}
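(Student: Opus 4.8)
The plan is to apply the preceding lemma, which equips the localization $L_S$ of any presentably symmetric monoidal $\infty$-category at an \emph{idealistic} set $S$ of morphisms with a presentably symmetric monoidal structure. Since $\mathbf{2LocKar} = \Cat_{\mathbf{Kar}}$ is presentably symmetric monoidal by~\cite{gepner2015enriched}, and $\tKar = L_S(\mathbf{2LocKar})$ for $S$ the two-element set of monomorphisms~\eqref{eq:WalkingPairOfObjects} and~\eqref{eq:Walking2Idempotent} of Definition~\ref{defn:2Karasalocalization}, the whole statement reduces to checking that this particular $S$ is idealistic.

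For this I would use the currying reformulation recorded after Definition~\ref{defn:idealistic}: $S$ is idealistic if and only if, for every $C \in \mathbf{2LocKar}$ and every $S$-local object $X \in \tKar$, the internal hom $\underline{\Hom}_{\mathbf{2LocKar}}(C, X)$ is again $S$-local. First I would unwind $S$-locality: being local against~\eqref{eq:WalkingPairOfObjects} says precisely that an object admits $2$-categorical direct sums, and being local against~\eqref{eq:Walking2Idempotent} says precisely that its $2$-idempotents split, so the $S$-local objects are exactly the $2$-Karoubian $2$-categories, as intended. Under the identification of $\Cat_{\mathbf{Kar}}$-internal homs with $\mathbf{Kar}$-enriched functor categories, $\underline{\Hom}_{\mathbf{2LocKar}}(C, X) \simeq \mathrm{Fun}_{\mathbf{Kar}}(C, X)$, so the task becomes: given that $X$ has $2$-direct sums and split $2$-idempotents, show that $\mathrm{Fun}_{\mathbf{Kar}}(C, X)$ does as well.

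The crux of the argument is that both structures witnessing $S$-locality are \emph{absolute} $\mathbf{Kar}$-enriched colimits --- the $2$-categorical analogue of the classical fact that, for additively enriched categories, finite biproducts and idempotent splittings are exactly the absolute (Cauchy) colimits. I would argue that $2$-direct sums and $2$-idempotent splittings in $\mathrm{Fun}_{\mathbf{Kar}}(C, X)$ exist and are computed pointwise: the evaluation functors $\mathrm{ev}_c : \mathrm{Fun}_{\mathbf{Kar}}(C,X) \to X$ are jointly conservative, and because the relevant colimits are absolute they are both detected and created by the $\mathrm{ev}_c$. Concretely, one sets $(F \oplus G)(c) := F(c) \oplus G(c)$, and splits a $2$-idempotent pointwise; these assemble into genuine enriched functors because absolute colimits are preserved by every $\mathbf{Kar}$-enriched functor, in particular by the corepresentable hom-functors of $X$. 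To see that they have the correct universal property in $\mathrm{Fun}_{\mathbf{Kar}}(C,X)$, I would note that its enriched mapping objects are ends over $C$ of the hom-objects of $X$; since those hom-functors preserve the biproduct and the splitting, and ends commute with them, the pointwise constructions are genuinely the $2$-direct sum and the $2$-idempotent splitting in the functor category. Hence $\mathrm{Fun}_{\mathbf{Kar}}(C,X)$ is $2$-Karoubian, i.e.\ $S$-local.

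The main obstacle is precisely this inheritance step: justifying rigorously that $2$-direct sums and $2$-idempotent splittings are absolute, and hence computed pointwise in and preserved by the passage to enriched functor categories. Once this is in place, $S$ is idealistic, the preceding lemma applies verbatim, and both $\tKar$ and the localization functor $L_S : \mathbf{2LocKar} \to \tKar$ inherit the desired presentably symmetric monoidal structure, completing the proof.
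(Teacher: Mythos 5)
Your proposal is correct and follows essentially the same route as the paper: presentability comes for free from localizing a presentable $\infty$-category at a set of morphisms, and the substance is checking idealism of $S$ via the currying criterion, which both you and the paper reduce to the fact that the structures witnessing $2$-Karoubian-ness ($2$-direct sums and split $2$-idempotents) are (absolute) $\mathbf{Kar}$-weighted colimits computed pointwise in enriched functor categories. The only cosmetic difference is that the paper treats the two localizing maps separately --- handling direct sums via semiadditivity (citing \cite{mazel2021universal}) and idempotent splittings via \cite{Kammermeier} --- whereas you argue both cases uniformly through absoluteness.
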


\begin{proof}
Presentability is automatic as $\tKar$ is a localization of a presentable $\infty$-category, at a set of morphisms \cite[Section 5.5.4.2 (3)]{highertopos}. 
It therefore only remains to check that the 1-morphisms in Definition~\ref{defn:2Karasalocalization} are idealistic.

Recall from \cite{hopkins2013ambidexterity} that an $\infty$-category $\cC$ is called ($0$-)\emph{semiadditive} if it has both finite products and coproducts, and the unique map from the initial object $\emptyset$ to the terminal object $*$ is an isomorphism, and for any two objects $A,B$, the map $A \sqcup B \to A\times B$ given by the matrix
\[ \begin{array}{c||c|c}
 & A & B \\ \hline\hline &&\\[-9pt]
 A & \id_A & A \to * \overset\sim\leftarrow \emptyset \to B \\ \hline &&\\[-9pt]
 B & B \to * \overset\sim\leftarrow \emptyset \to A & \id_B
\end{array}\]
is an isomorphism. The localization of $\mathbf{2LocKar}$ at 
\eqref{eq:WalkingPairOfObjects}
consists precisely of those $\mathbf{Kar}$-enriched 2-categories that are semiadditive. This $\infty$-category is denoted by $\mathsf{Cat}(\mathbf{Kar})^{\mathsf{semiadd}}$ in \cite[Appendix A.5]{mazel2021universal}, to which we refer the reader for a more detailed discussion. Then since $\mathbf{Kar}$ has direct sums, a $\mathbf{Kar}$-enriched $\infty$-category is semiadditive as soon as it has finite coproducts. But, if $X$ has finite coproducts, then so does $\underline{\Hom}(C,X)$ for any $C$, since coproducts in functor categories are computed pointwise. Idealism of 
\eqref{eq:WalkingPairOfObjects}
follows.

A similar argument establishes idealism of the map 
\eqref{eq:Walking2Idempotent}.
Indeed, by \cite{Kammermeier}, splitting a 2-idempotent is an example of a $\mathbf{Kar}$-weighted colimit (which is moreover absolute). In other words, a $\mathbf{Kar}$-enriched $\infty$-category $X$ is local for 
\eqref{eq:Walking2Idempotent}
as soon as it has these weighted colimits. 
Thus, if $X$ is local for 
\eqref{eq:Walking2Idempotent},
then so is $\underline{\Hom}(C,X)$ for any $C$, since weighted colimits in enriched functor categories are computed pointwise. Further details about weighted colimits and 2-idempotents can be found in \cite{Kammermeier}.
\end{proof}

\begin{defn}
    Denote by $\mathbf{3Vect}$, the full sub-$(3,1)$-category of $\tKar$ on the finite semisimple 2-categories. 
\end{defn}
By \cite{decoppet2020;comparison}, $\mathbf{3Vect}$ is equivalent to the Morita 3-category of multifusion 1-categories studied in \cite{DSPS13}.

\begin{lem}
    The full subcategory $\mathbf{3Vect} \subset \mathbf{2Kar}$ is a symmetric monoidal subcategory.
\end{lem}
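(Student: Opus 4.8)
The plan is to verify the two conditions that make a full subcategory of a symmetric monoidal $\infty$-category into a symmetric monoidal subcategory: that it contains the monoidal unit, and that it is closed under the tensor product. Indeed, it is standard (see~\cite{higheralgebra}) that a full subcategory $\cD\subseteq\cC$ of a symmetric monoidal $\infty$-category which contains the unit and is stable under $\otimes$ carries a unique symmetric monoidal structure making the inclusion $\cD\hookrightarrow\cC$ symmetric monoidal; concretely, one takes the full subcategory of $\cC^{\otimes}$ lying over tuples of objects of $\cD$. Thus it suffices to check these two conditions for $\mathbf{3Vect}\subseteq\tKar$.

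For the unit, recall that the monoidal unit of $\mathbf{2LocKar}=\Cat_{\mathbf{Kar}}$ is the one-object $\mathbf{Kar}$-enriched category $B\mathbf{Vect}$ on the unit $\mathbf{Vect}$ of $\mathbf{Kar}$, and that the unit of the localization $\tKar=L_S(\mathbf{2LocKar})$ is $L_S(B\mathbf{Vect})$. Adjoining 2-direct sums and splitting 2-idempotents of $B\mathbf{Vect}$ produces exactly the 2-category $\mathbf{2Vect}$ of finite semisimple linear categories (equivalently $\Mod(\mathbf{Vect})$). Since $\mathbf{2Vect}$ is a finite semisimple (indeed fusion) 2-category, the unit of $\tKar$ lies in $\mathbf{3Vect}$.

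For closure under $\otimes$, let $\mathfrak{C},\mathfrak{D}\in\mathbf{3Vect}$. By the formula for the tensor product of $S$-local objects, $\mathfrak{C}\otimes^{\tKar}\mathfrak{D}\simeq L_S(\mathfrak{C}\boxtimes_{\mathbf{Kar}}\mathfrak{D})$, where $\boxtimes_{\mathbf{Kar}}$ denotes the enriched tensor product in $\Cat_{\mathbf{Kar}}$. The latter has object space $\ob(\mathfrak{C})\times\ob(\mathfrak{D})$ --- in particular finitely many equivalence classes of generating objects --- and hom-categories $\Hom((c,d),(c',d'))\simeq\End_{\mathfrak{C}}(c,c')\boxtimes_{\mathbf{Kar}}\End_{\mathfrak{D}}(d,d')$ (abusing $\End$ for general hom-categories). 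As $\mathfrak{C},\mathfrak{D}$ are finite semisimple these are finite semisimple, and since $\mathbf{2Vect}\subseteq\mathbf{Kar}$ is closed under $\boxtimes_{\mathbf{Kar}}$ (as recalled earlier in this section) so are their products. Applying $L_S$ --- which by Definition~\ref{defn:2Karasalocalization} freely adjoins 2-direct sums and splits 2-idempotents, i.e.\ performs the 2-Karoubi completion --- supplies the 2-direct sums and splittings demanded in the definition of a finite semisimple 2-category, and preserves finite semisimplicity of hom-categories, since the hom-categories of the completion are built from those of $\mathfrak{C}\boxtimes_{\mathbf{Kar}}\mathfrak{D}$ by finite direct sums and retracts in $\mathbf{Kar}$, under which $\mathbf{2Vect}\subseteq\mathbf{Kar}$ is closed. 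It remains only to see that the completion has finitely many simple objects.

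This finiteness is the one genuinely nontrivial point, and is the step I expect to be the main obstacle. Every simple object of $L_S(\mathfrak{C}\boxtimes_{\mathbf{Kar}}\mathfrak{D})$ arises by splitting a separable 2-idempotent (a condensation monad) on some generator $(c,d)$, i.e.\ from a separable algebra in the multifusion category $\End_{\mathfrak{C}}(c)\boxtimes\End_{\mathfrak{D}}(d)$, two such giving equivalent objects precisely when the algebras are Morita equivalent. The claim thus reduces, together with the finiteness of $\ob(\mathfrak{C})\times\ob(\mathfrak{D})$, to the finiteness of indecomposable separable algebras up to Morita equivalence over a fixed multifusion category --- equivalently, the finiteness of the number of indecomposable finite semisimple module categories. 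I would either invoke this finiteness directly, or, more cleanly, leverage the identification of $\mathbf{3Vect}$ with the Morita $3$-category of multifusion $1$-categories recalled above, under which $\otimes^{\tKar}$ restricts to the Deligne tensor product $\boxtimes$ of multifusion $1$-categories; as $\boxtimes$ of multifusion $1$-categories is again multifusion, closure --- and hence the statement --- follows.
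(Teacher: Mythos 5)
Your proposal is correct, and its skeleton is exactly the paper's: reduce the statement to (i) the unit $\mathbf{2Vect}$ of $\tKar$ being finite semisimple and (ii) closure of the tensor product on finite semisimple $2$-categories. The difference lies in how (ii) is handled. The paper simply cites \cite{decoppet2023:2Deligne} (the 2-Deligne tensor product, which is where the perfect-field/characteristic-zero hypothesis enters), whereas you sketch a proof of it, correctly isolating the genuinely nontrivial input: finiteness of the set of equivalence classes of simple objects in the Karoubi completion, which reduces to finiteness of Morita classes of indecomposable separable algebras --- equivalently, of indecomposable finite semisimple module categories --- over a multifusion $1$-category, a known consequence of Ocneanu rigidity \cite{ENO}. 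Since that finiteness theorem is available in characteristic zero, your first route goes through, so there is no gap; in effect you have unpacked the content of the paper's citation. One caution: your ``cleaner'' second route is circular as stated. Saying that $\otimes^{\tKar}$ ``restricts to'' the Deligne tensor product under the identification of $\mathbf{3Vect}$ with the Morita $3$-category of multifusion $1$-categories from \cite{decoppet2020;comparison} already presupposes either that $\otimes^{\tKar}$ carries $\mathbf{3Vect}\times\mathbf{3Vect}$ into $\mathbf{3Vect}$, or an independent identification of $\fC\otimes^{\tKar}\fD$ with the 2-Deligne product --- and the latter is again precisely the theorem of \cite{decoppet2023:2Deligne}. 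So the hands-on argument (or the citation) is the one to keep.
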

\begin{proof}
    It suffices to show that the unit object $\mathbf{2Vect}$ is finite semisimple --- it is --- and that the monoidal product closes --- which holds only given that we are working over a perfect field, see e.g.~\cite{decoppet2023:2Deligne}.
\end{proof}

\begin{defn}\label{defn:MF2C} We define the $(3,1)$-category $\mathbf{MF2C}$ of multifusion $2$-categories (and linear monoidal functors) as the full subcategory of $\Alg_{E_1}(\mathbf{3Vect})$ on the multifusion $2$-categories. 
\end{defn}

Finally, we construct the fully faithful functor $\Mod(-): \mathbf{BMF1C} \to \mathbf{MF2C}$ which sends a braided multifusion $1$-category $\cB$ to the multifusion $2$-category of finite semisimple $\cB$-module categories. 
In the language of this section, it arises from the fully faithful functor $\mathbf{Alg}_{{E}_1}(\mathbf{Kar})\hookrightarrow \mathbf{Alg}_{{E}_0}(\mathbf{2Kar}_{})$, which takes the one-object delooping and then localizes from $\mathbf{2LocKar}_{}$ to $\mathbf{2Kar}_{}$; one then considers the composition of fully faithful inclusions
$$\mathbf{BMF1C}\hookrightarrow\mathbf{Alg}_{{E}_2}(\mathbf{Kar})\simeq \mathbf{Alg}_{{E}_1}(\mathbf{Alg}_{{E}_1}(\mathbf{Kar}))\hookrightarrow \mathbf{Alg}_{{E}_1}(\mathbf{Alg}_{{E}_0}(\mathbf{2Kar}_{})) \simeq \mathbf{Alg}_{{E}_1}(\mathbf{2Kar}) 
. $$ 
It is simply an objectwise check that the image of this composition is contained in $\mathbf{MF2C}$.
Also see~\cite{decoppet2020;comparison}.

\subsection{The Morita Category of Multifusion 2-Categories}\label{subsubsection:MorF2C}

Intuitively, the \textit{Morita 4-category of multifusion 2-categories}, denoted henceforth $\Mor^{\sss}$, is the 4-category with the following globular set:\ objects are multifusion 2-categories, 1-morphisms are finite semisimple bimodule 2-categories, 2-morphisms are bimodule functors, 3-morphisms are bimodule natural transformations, and 4-morphisms are bimodule modifications. Although this 4-category does have deeper-categorical structure, such as a symmetric monoidal structure, for most of the paper we will solely use its underlying $(4,1)$-category --- in other words, we will use only the invertible $2$-, $3$-, and $4$- morphisms --- and as usual we will treat this $(4,1)$-category as an $(\infty,1)$-category. 

Various constructions of higher Morita categories have already appeared in the literature \cite{haugseng2017higher,JFS2017,higheralgebra}. Specifically, it follows from  \cite{haugseng2017higher,higheralgebra} that, associated to any presentably symmetric monoidal $\infty$-category $\cV$, there is a flagged $(\infty,2)$-category $\Mor(\cV)$ whose objects are $E_1$-algebras in $\cV$ and 1-morphisms are bimodules in $\cV$. For our present purposes, we will take $\mathcal{V}=\mathbf{2Kar}$, the $(3,1)$-category of small $\mathbb{C}$-linear Karoubi complete 2-categories, constructed in~\S\ref{subsubsection:MorF2C}.

We will now construct the $(4,2)$-category $\Mor(\mathbf{2Kar})$ following the approach in~\cite{higheralgebra}, outlined in detail in~\cite[\S4.4]{soergel}.
The Morita 4-category $\Mor^{\sss}$ will then be obtained from $\Mor(\mathbf{2Kar})$ by restricting to the finite semisimple part, as we will explain below.

Let $\mathbf{Mod}_{\cV}:= \mathbf{Mod}_{\cV}(\mathrm{Pr}^L)$ denote the $\infty$-category of presentable $\cV$-module $\infty$-categories and cocontinuous $\cV$-module functors. There is a canonical functor $\Alg_{E_1}(\cV) \rightarrow \Mod_\cV$ sending an $E_1$-algebra $A$ to the $\cV$-module-category $\Mod_A(\cV)$ of $A$-modules (see~\cite[Corollary~4.2.3.7]{higheralgebra}); we will at times drop the $\cV$ in our notation. We write $\Mod_\cV^{\gen}$ for its full image.

\begin{defn}
We let $\Mor(\tKar)$ be the flagged $(\infty,1)$-category defined by \begin{equation}\label{eq:flaggingMor1}
  \ob( \Alg_{E_1}(\cV)) \rightarrow \Mod_\cV^{\gen}\,,
\end{equation}
with functor sending an $E_1$-algebra $A$ to $\Mod_A(\cV)$.
\end{defn}

We note that $\Mod_{\cV}$ is presentably tensored over $\mathbf{Cat}_{\infty}$, hence enriched over $\mathbf{Cat}_{\infty}$, the $\infty$-category of $\infty$-categories. In particular, $\Mod_{\cV}$ enhances to an $(\infty,2)$-category, see e.g. \cite{ben2024naturality};\ a fact that will be employed in \S\ref{subsubsection:Mor2ss}. Now, by construction, there is a surjective-on-objects  functor $[-] : \Alg_{E_1}(\cV) \to \Mor(\cV)$ that sends an algebra to its Morita equivalence class. 
Further, by definition the 1-morphisms in $\Mor(\cV)$ between two Morita classes $[A]$ and $[B]$ are given by cocontinuous $\cV$-module functors $\Mod_A(\cV) \to \Mod_B(\cV)$. It is more common to think of 1-morphisms in higher Morita categories as being given by suitable bimodules. 

The next proposition, proven in \cite[Theorem~4.8.4.1, Remark~4.8.4.9]{higheralgebra} may be considered an $\infty$-categorical \emph{Eilenberg--Watts theorem} and matches these two perspectives together:

\begin{prop}[{\cite[Remark~4.8.4.9, Theorem~4.8.4.1]{higheralgebra}}]
   Let $\cV$ be a presentably symmetric monoidal $\infty$-category. 
     Given $A,B \in \Alg_{E_1}(\cV)$, there is an equivalence of $\infty$-categories
    \[ \Hom_{\Mor(\cV)}([A],[B]) \cong \Bimod_{A-B}(\cV), \]
    the $\infty$-category of $A$-$B$-bimodule objects in $\cV$.
    Moreover, composition of $1$-morphisms in $\Mor(\cV)$ is given by the relative tensor product of bimodules. 
\end{prop}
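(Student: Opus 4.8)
The statement is an instance of the $\infty$-categorical Eilenberg--Watts theorem, and the plan is to deduce it from the universal property of module categories, following~\cite[\S4.8]{higheralgebra}. Recall that by construction $\Hom_{\Mor(\cV)}([A],[B])$ is the $\infty$-category of cocontinuous $\cV$-linear functors $\Mod_A(\cV) \to \Mod_B(\cV)$. Here I adopt the convention $\Mod_A(\cV) = \RMod_A(\cV)$, so that the $A$-action is on the right and commutes with the residual left $\cV$-action, making $\RMod_A(\cV)$ an object of $\Mod_\cV$. With this setup, the task splits into two parts: identify this functor $\infty$-category with $\Bimod_{A-B}(\cV)$, and match composition of functors with the relative tensor product of bimodules.

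First I would establish the key universal property: for any presentable left $\cV$-module $\infty$-category $\cM$, evaluation at the free rank-one module $A \in \RMod_A(\cV)$ induces an equivalence
\[
\mathrm{Fun}^L_\cV(\RMod_A(\cV), \cM) \xrightarrow{\;\simeq\;} \mathbf{LMod}_A(\cM).
\]
The point is that $\RMod_A(\cV)$ is generated under colimits by the free module $A$, that $A$ carries a residual left $A$-action (coming from the $A$-$A$-bimodule $A$) which any $\cV$-linear functor transports, and that $\RMod_A(\cV)$ is the \emph{free} presentable left $\cV$-module on an object with a left $A$-action. Concretely this is proved by presenting $\RMod_A(\cV)$ via the two-sided bar construction and verifying that cocontinuous $\cV$-linear functors out of it are determined, up to a contractible space of choices, by the image of $A$ together with its left $A$-action; this is the content of~\cite[Theorem~4.8.4.1]{higheralgebra}.

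Granting this, I would specialise $\cM = \RMod_B(\cV)$, which is itself a presentable left $\cV$-module, to obtain
\[
\Hom_{\Mor(\cV)}([A],[B]) \simeq \mathbf{LMod}_A\bigl(\RMod_B(\cV)\bigr) \simeq \Bimod_{A-B}(\cV),
\]
where the last equivalence is the standard identification of a left $A$-module in right $B$-modules with an $A$-$B$-bimodule~\cite{higheralgebra}. For the composition claim, I would track this equivalence: a functor $F$ corresponds to the bimodule $F(A)$, and any cocontinuous $\cV$-linear functor $G : \RMod_B(\cV) \to \RMod_C(\cV)$ extending $B \mapsto N := G(B)$ is necessarily given on a right $B$-module $X$ by the relative tensor product $X \mapsto X \otimes_B N$. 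Hence $G \circ F$ corresponds to $G(F(A)) = F(A) \otimes_B N$, which is exactly the relative tensor product of the associated bimodules, matching the composition law of $\Mor(\cV)$.

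The main obstacle is entirely contained in the universal property of the first step: making rigorous, in the $\infty$-categorical setting, both the construction of $\RMod_A(\cV)$ and of the relative tensor product via the two-sided bar construction, and the corepresentability statement that free modules detect cocontinuous $\cV$-linear functors. Once this machinery of~\cite[\S4.8]{higheralgebra} is in place, the identification of Hom-categories and the matching of composition with the relative tensor product are formal consequences.
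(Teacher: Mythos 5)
Your proposal is correct and follows exactly the argument that the paper is invoking: the paper gives no independent proof of this proposition but simply cites \cite[Theorem~4.8.4.1, Remark~4.8.4.9]{higheralgebra}, and your three steps (the corepresentability of cocontinuous $\cV$-linear functors out of $\RMod_A(\cV)$ by the free module with its residual left $A$-action, the identification $\mathbf{LMod}_A(\RMod_B(\cV)) \simeq \Bimod_{A-B}(\cV)$, and the tracking of composition to the relative tensor product) are precisely the content of those cited results. There is nothing to add beyond noting that the paper treats this as imported machinery rather than something to be reproved.
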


\begin{cor}\label{cor:globMor1}
    The globular set of $\Mor(\tKar)$ is given by
    \begin{equation}
    \begin{tikzcd}
                \ob(\Alg_{E_1}(\tKar)) \arrow[r]& \ob(\mathbf{Bimod}(\tKar)). \arrow[l, shift right, shift right] \arrow[l, shift left,shift left]
    \end{tikzcd}
    \end{equation}
    The objects are the $E_1$-algebras in $\tKar$, and the 1-morphisms are bimodules. Further, composition of 1-morphisms is given by the balanced tensor product.
\end{cor}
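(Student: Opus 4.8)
The plan is to obtain this corollary as a direct specialization of the preceding proposition (the $\infty$-categorical Eilenberg--Watts theorem) to the case $\cV = \tKar$, unwinding the definition of the globular space of the flagged $(\infty,1)$-category $\Mor(\tKar)$ along the way. First I would invoke Lemma~\ref{lem:2Kar}, which guarantees that $\tKar$ is presentably symmetric monoidal; this is precisely the hypothesis under which the cited proposition applies, so no further verification is needed to bring it to bear.

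Next I would identify the two levels of the globular space. By the very definition of $\Mor(\tKar)$ as the flagged category $\ob(\Alg_{E_1}(\tKar)) \to \Mod_{\tKar}^{\gen}$, the space of objects $\fX_0$ of its globular space is the flagging, namely $\ob(\Alg_{E_1}(\tKar))$. For the space of $1$-morphisms $\fX_1$, I would compute the arrow space of $\Mod_{\tKar}^{\gen}$ pulled back along the flagging: a $1$-morphism with source $A$ and target $B$ is an object of the mapping $\infty$-category $\Hom_{\Mor(\tKar)}([A],[B])$, which the proposition identifies with $\Bimod_{A-B}(\tKar)$. Taking the total space over all pairs of objects then yields $\fX_1 \simeq \ob(\Bimod(\tKar))$, the space of all bimodule objects, with the source and target maps recording the left- and right-acting algebras of a bimodule. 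Finally, the composition law is read off from the ``moreover'' clause of the proposition, which states that composition of $1$-morphisms in $\Mor(\cV)$ is the relative tensor product of bimodules over the common algebra; this is exactly the balanced tensor product appearing in the statement.

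I do not expect a genuine obstacle here, as the content is entirely definitional once the proposition is in hand. The only point requiring care is the bookkeeping that identifies the arrow space of the flagged category $\Mor(\tKar)$ with $\ob(\Bimod(\tKar))$: this uses that the flagging $\ob(\Alg_{E_1}(\tKar)) \to \Mod_{\tKar}^{\gen}$ is essentially surjective by construction of the full image $\Mod_{\tKar}^{\gen}$ (so that every Morita class is represented by an algebra), together with the equivalence of mapping $\infty$-categories supplied by Eilenberg--Watts (so that each such morphism is represented by a bimodule).
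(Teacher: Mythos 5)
Your proposal is correct and follows exactly the route the paper intends: the corollary is stated as an immediate specialization of the $\infty$-categorical Eilenberg--Watts proposition to $\cV = \tKar$ (whose presentably symmetric monoidal structure is supplied by Lemma~\ref{lem:2Kar}), with the object space read off from the flagging $\ob(\Alg_{E_1}(\tKar)) \to \Mod_{\tKar}^{\gen}$ and the $1$-morphism space and composition read off from the equivalence $\Hom_{\Mor(\cV)}([A],[B]) \simeq \Bimod_{A\text{-}B}(\cV)$ and its ``moreover'' clause. The bookkeeping you flag at the end is precisely the only content, and you handle it correctly.
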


Finally, we define the finite semisimple Morita 4-category $\Mor^{\sss}$.

 We wish to use $\mathbf{3Vect} \subset \mathbf{2Kar}$ to extract a sub flagged $(\infty,2)$-category of $\Mor(\tKar)$, that contains only the objects and 1-morphisms whose underlying 2-categories are finite semisimple. But, in order to define a sub-globular set using the description of $\Mor(\tKar)$ given in Corollary \ref{cor:globMor1}, we need to check that composition closes at all levels. This last property is only satisfied for algebras satisfying an additional property.

Recall from Definition~\ref{defn:MF2C} that we write $\mathbf{MF2C}$ for the full $\infty$-subcategory of $\Alg_{E_1}(\mathbf{3Vect})$ on the multifusion $1$-categories (i.e.\ those algebra objects in $\mathbf{3Vect}$ that are \emph{rigid}, in the sense of having duals for objects). 

Moreover, let $\Bimod^{\sss}(\mathbf{3Vect})$ denote the full subcategory of $\Bimod(\mathbf{3Vect}) \hookrightarrow \Bimod(\tKar)$ on those bimodules ${}_{\fC}\fM_{\fD}$ for which $\fC$ and $\fD$ are multifusion. (Equivalently, $\Bimod^{\sss}(\mathbf{3Vect})$ is the full subcategory of $\Bimod(\tKar)$ on those bimodules ${}_{\fC}\fM_{\fD}$ for which $\fC$ and $\fD$ are multifusion and $\fM$ is finite semisimple.) 
Given two finite semisimple bimodule 2-categories $_{\fC}\fM_{\fD}$ and $_{\fD}\fN_{\fE}$ between multifusion 2-categories, their composition $\fM \boxtimes_\fD \fN$ is finite semisimple by \cite{Decoppet2023;dualizable}. Since the compositions close, then, by Proposition \ref{prop:belief}, this concludes the construction of $\Mor^{\sss}$ as a sub flagged $(\infty,2)$-category of $\Mor(\mathbf{2Kar})$.

\begin{defn}
   Let $\Mor^{\sss}$ be the sub flagged $\infty$-category of $\Mor(\tKar)$ with globular data given by
    \begin{equation}
    \begin{tikzcd}
                \ob(\mathbf{MF2C}) \arrow[r]& \ob(\mathbf{Bimod}^{\sss}(\mathbf{3Vect}))\,. \arrow[l, shift right, shift right] \arrow[l, shift left,shift left]
    \end{tikzcd}
    \end{equation}
    Its objects are the multifusion 2-categories, and its 1-morphisms are bimodules in $\mathbf{3Vect}$ between them, i.e.\ finite semisimple bimodule 2-categories between multifusion 2-categories. Composition of 1-morphisms is given by the relative tensor product.
\end{defn}

In order to avoid having to think about flagged higher categories, we will replace $\Mor^{\sss}$ by its univalifications as defined in \S\ref{subsubection:flaggedcat}. This does not a priori preserve the inclusion of $\Mor^{\sss}$ into $\mathbf{Mod}(\mathbf{2Kar})$. 
To guarantee this last property, it would suffice by Lemma \ref{lem:isofull} to show that $\Mor^{\sss} \rightarrow \Mor(\tKar)$ is isofull. 
The following conjecture, which is a categorification of \cite{Tillmann}, would guarantee this property. 

\begin{conjecture}[2-Tillmann]\label{conj:2till}
An object of the symmetric monoidal 3-category $\mathbf{2Kar}$ is $1$-dualizable if and only if it is a finite semisimple $2$-category. 
\end{conjecture}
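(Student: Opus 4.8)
The plan is to prove the two implications separately, with the reverse implication (finite semisimplicity $\Rightarrow$ $1$-dualizability) being the formal and comparatively routine part, and the forward implication ($1$-dualizability $\Rightarrow$ finite semisimplicity) being the genuinely hard, categorified analogue of \cite{Tillmann}. Throughout I would work with the underlying symmetric monoidal $(\infty,1)$-category of $\tKar$, for which $1$-dualizability of an object $\fC$ means the existence of a dual $\fC^\vee$ together with evaluation and coevaluation $1$-morphisms $\ev\colon \fC^\vee \boxtimes \fC \to \mathbf{2Vect}$ and $\coev\colon \mathbf{2Vect} \to \fC \boxtimes \fC^\vee$ whose two zigzag composites are equivalent to the respective identities.

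For the reverse implication, suppose $\fC$ is finite semisimple. By \cite{decoppet2020;comparison} (see also \cite{DSPS13}) the category $\mathbf{3Vect}$ of finite semisimple $2$-categories is the Morita $3$-category of multifusion $1$-categories, so that $\fC$ corresponds to a multifusion $1$-category $A$ with $\fC \simeq \Mod(A)$, and $\mathbf{3Vect}\subseteq \tKar$ is a symmetric monoidal subcategory. I would take $\fC^\vee$ to be the opposite ($1$-cell reversed) $2$-category, with $\ev$ given by the $\Hom$ $2$-functor $(\mathfrak{x},\mathfrak{y}) \mapsto \Hom_{\fC}(\mathfrak{x},\mathfrak{y})$ --- which lands in $\mathbf{2Vect}$ precisely because the $\Hom$-categories of a finite semisimple $2$-category are finite semisimple --- and $\coev$ given by a resolution of the identity assembled from the finitely many equivalence classes of simple objects; verifying the zigzag identities then reduces to the categorified completeness relation $\id_{\fC} \simeq \bigoplus_i \Hom_{\fC}(\mathfrak{c}_i,-) \boxtimes \mathfrak{c}_i$. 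More cleanly, one invokes the general fact that every object of a Morita category is $1$-dualizable with its opposite algebra as dual, so this direction is immediate from the identification of $\mathbf{3Vect}$ with a Morita category together with the dualizability results of \cite{Decoppet2023;dualizable}; since $\ev$, $\coev$, and all their coherences are then constructed inside the symmetric monoidal subcategory $\mathbf{3Vect}$, they witness $1$-dualizability in $\tKar$ as well.

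For the forward implication, I would categorify the coevaluation argument underlying \cite{Tillmann}. Assume $\fC\in\tKar$ is $1$-dualizable and set $\mathfrak{P} := \coev(\mathbf{Vect}) \in \fC\boxtimes\fC^\vee$. Because objects of the Karoubi-completed $2$-Deligne product are retracts of finite $2$-direct sums of external products $\mathfrak{a}\boxtimes\mathfrak{b}$, the object $\mathfrak{P}$ is built from finitely many pairs $(\mathfrak{a}_k,\mathfrak{b}_k)$, and the first zigzag identity supplies a natural equivalence $\mathfrak{x} \simeq \bigoplus_k \ev(\mathfrak{b}_k\boxtimes\mathfrak{x})\boxtimes\mathfrak{a}_k$ of $2$-functors in $\mathfrak{x}$. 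I would extract three consequences in turn: first, since $\ev$ takes values in $\mathbf{2Vect}$, each $\Hom$-category $\Hom_{\fC}(\mathfrak{x},\mathfrak{y})$ is a retract of a finite $2$-direct sum of finite semisimple categories, hence itself $1$-dualizable in $\mathbf{Kar}$ and so finite semisimple by \cite{Tillmann}; second, the completeness relation exhibits every object as a finite $2$-direct sum built from the finitely many $\mathfrak{a}_k$, forcing $\fC$ to have finitely many equivalence classes of simple objects; third, combined with the existence of $2$-direct sums and the splitting of $2$-idempotents in $\tKar$, these verify the Douglas--Reutter axioms \cite{douglas2018fusion} and identify $\fC$ as finite semisimple.

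The main obstacle will be the forward implication, and specifically making this zigzag analysis rigorous. Two difficulties stand out. First, $1$-dualizability in the $3$-category $\tKar$ only furnishes the zigzag identities up to a coherent web of invertible $2$- and $3$-morphisms, so the clean completeness relation above must be extracted by carefully unwinding these higher coherences rather than read off as a strict equality; the Karoubi completion, under which objects are only retracts of finite $2$-direct sums, further complicates controlling what ``finite'' means at the level of objects of $\fC\boxtimes\fC^\vee$. Second, and conceptually deeper, is ruling out the categorified analogue of non-semisimple smooth-and-proper objects: one must use the full symmetric monoidal (Deligne-type) structure on $\tKar$, and not merely a Morita/bimodule structure, to promote the finiteness and properness consequences of the zigzags to genuine semisimplicity of the $\Hom$-categories, which is exactly the point at which \cite{Tillmann} must enter decisively. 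A clean resolution may instead come from an inductive formulation --- deducing dualizability in $\mathbf{Cat}_{\cV}$ from Tillmann-type input at the level of $\cV=\mathbf{Kar}$ --- but then the localization from $\mathbf{2LocKar}$ to $\tKar$ and the behaviour of the $2$-Deligne product under this localization would need to be controlled for such an induction to go through.
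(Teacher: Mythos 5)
First, a point of orientation: the paper contains no proof of this statement for you to be compared against. It is stated as a conjecture, the authors explicitly write that they ``will not provide a proof for the 2-Tillmann conjecture here, but rather relegate this task to future work,'' and nothing in the paper depends on it (the one lemma conditional on it is likewise flagged as being included only for intuition). So your proposal must be judged as an attempt at what the authors treat as an open problem. Your reverse implication is essentially correct and is the routine half: every finite semisimple $2$-category is $\Mod(A)$ for a multifusion $1$-category $A$, the identification of $\mathbf{3Vect}$ with the Morita $3$-category of multifusion $1$-categories \cite{decoppet2020;comparison} makes every object $1$-dualizable there (with dual $\Mod(A^{\mop})$ and both zigzag morphisms built from $A$ itself as a bimodule), and since the paper proves that $\mathbf{3Vect}\subseteq\tKar$ is a symmetric monoidal subcategory, this duality data transports along the inclusion to witness $1$-dualizability in $\tKar$.

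The forward implication, however, has a genuine gap at exactly the step that makes this a conjecture rather than a theorem. Grant your (itself unproven, see below) claim that $\mathfrak{P}=\coev(\Vect)$ is a $2$-retract of $\bigoplus_k \mathfrak{a}_k\boxtimes\mathfrak{b}_k$. The zigzag then gives that every $\mathfrak{x}\in\fC$ is a $2$-retract of $\bigoplus_k \mathfrak{a}_k\otimes \ev(\mathfrak{b}_k\boxtimes\mathfrak{x})$, where $\otimes$ denotes the tensoring of $\fC$ over $\mathbf{2Vect}$. Passing to Hom-categories, and using $\Hom_{\fC}(V\otimes\mathfrak{a},\mathfrak{y})\simeq \mathrm{Fun}(V,\Hom_{\fC}(\mathfrak{a},\mathfrak{y}))$ for $V\in\mathbf{2Vect}$, what you actually obtain is that $\Hom_{\fC}(\mathfrak{x},\mathfrak{y})$ is a retract of a finite direct sum of \emph{copies of the categories} $\Hom_{\fC}(\mathfrak{a}_k,\mathfrak{y})$ --- which are again Hom-categories of $\fC$, not objects of $\mathbf{2Vect}$. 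Your claimed consequence, ``each Hom-category is a retract of a finite $2$-direct sum of finite semisimple categories, hence finite semisimple by \cite{Tillmann},'' therefore does not follow: the argument is circular, bounding Hom-categories of $\fC$ only by other Hom-categories of $\fC$. This is precisely the categorified shadow of the hard point in the decategorified theorem: there too, the coevaluation argument only exhibits the identity functor as a retract of a ``finite-rank'' functor, and genuinely more work is needed to convert this into finite-dimensionality and semisimplicity of Homs, i.e.\ to exclude non-semisimple smooth-and-proper objects. Your closing paragraph names this difficulty, but naming it is not resolving it. A secondary unproven step is your assertion that every object of the $\tKar$-tensor product $\fC\boxtimes\fC^{\vee}$ is a $2$-retract of a finite $2$-direct sum of pure tensors: with the paper's definition of $\boxtimes$ as a localization $L_S$ of the enriched tensor product on $\mathbf{2LocKar}$, this requires identifying $L_S$ with an explicit $2$-Karoubi (condensation) completion, and splitting of $2$-idempotents is a substantially richer operation than splitting of ordinary idempotents, so this needs an argument rather than an appeal to the $1$-categorical analogy.
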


\begin{lem}
    Assuming Conjecture \ref{conj:2till}, the inclusion of sub flagged $(\infty,2)$-categories $\Mor^{\sss} \rightarrow \Mor(\tKar)$ is isofull, and hence (by Lemma~\ref{lem:isofull}) remains a subcategory inclusion after univalification.
\end{lem}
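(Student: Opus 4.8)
The plan is to reduce the statement to a single concrete assertion about invertible bimodules and then feed that assertion to Conjecture~\ref{conj:2till}. First I would record that the inclusion $\Mor^{\sss}\to\Mor(\tKar)$ is \emph{full} on all $k$-morphisms for $k\geq 2$, since $\mathbf{Bimod}^{\sss}(\mathbf{3Vect})\subseteq \mathbf{Bimod}(\tKar)$ is by construction a full subcategory (we restricted only the objects and the $1$-morphisms, keeping all higher morphisms between retained $1$-morphisms). Hence the inclusion is automatically isofull above the level of $1$-morphisms, and the only possible failure of isofullness sits among invertible $1$-morphisms. Unwinding the meaning of an isomorphism of objects in the Morita category, and using the characterization of monics via isofullness in Lemma~\ref{lem:isofull}, isofullness of $\Mor^{\sss}\to\Mor(\tKar)$ is therefore \emph{equivalent} to the following claim: every invertible $1$-morphism of $\Mor(\tKar)$ whose source and target lie in $\mathbf{MF2C}$ already lies in $\Mor^{\sss}$. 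Concretely, every Morita equivalence ${}_{\fC}\fM_{\fD}$ of monoidal $2$-categories with $\fC,\fD\in\mathbf{MF2C}$ must have underlying $2$-category $\fM$ finite semisimple.

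To establish this reduced claim I would show that the underlying $2$-category of such an invertible bimodule is $1$-dualizable in $(\tKar,\boxtimes)$, and then invoke the nontrivial direction of Conjecture~\ref{conj:2till} to conclude that $\fM$ is finite semisimple, so that indeed $\fM\in\mathbf{Bimod}^{\sss}(\mathbf{3Vect})$ and the invertible $1$-morphism lies in $\Mor^{\sss}$. The dualizability input enters as follows. Since $\fM$ is invertible as a $1$-morphism of the $(\infty,2)$-category $\Mor(\tKar)$, it is in particular right adjointable, its inverse serving as both adjoints; by the standard description of adjunctions in higher Morita categories~\cite{haugseng2017higher}, an adjointable bimodule is dualizable as a one-sided (say right $\fD$-) module, with dual the internal hom $\underline{\Hom}_{\fD}(\fM,\fD)$ and with (co)evaluation morphisms that are maps of $\fD$-modules.

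The step I expect to be the main obstacle is the \emph{transfer} from this relative dualizability to genuine $1$-dualizability of the underlying object of $\fM$ in $\tKar$. Here I would use that the multifusion $2$-category $\fD$ is itself fully dualizable~\cite{Decoppet2022;rigid,Decoppet2023;dualizable}, which supplies the trace/Frobenius data needed to contract the relative (co)evaluations over $\fD$ into absolute (co)evaluations: feeding the unit $\mathbf{2Vect}\to\fM\boxtimes_{\fD}\underline{\Hom}_{\fD}(\fM,\fD)$ and the counit $\underline{\Hom}_{\fD}(\fM,\fD)\boxtimes\fM\to\fD$ through the dualizing data of $\fD$ should produce evaluation and coevaluation maps in $\tKar$ exhibiting $\underline{\Hom}_{\fD}(\fM,\fD)$ as an honest dual of $\fM$. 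Making this contraction precise (and checking the triangle identities for the resulting absolute duality from those of the relative one) is the only genuinely technical point; everything else is formal.

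Granting the transfer, the reduced claim follows, hence the inclusion $\Mor^{\sss}\to\Mor(\tKar)$ is isofull, and Lemma~\ref{lem:isofull} then yields that it remains a subcategory inclusion after univalification, as desired. I would close by remarking that the one direction of Conjecture~\ref{conj:2till} actually used is ``$1$-dualizable $\Rightarrow$ finite semisimple,'' while the converse direction is only needed to know a priori that $\fC$ and $\fD$ are dualizable, which is in any case guaranteed by the full dualizability of multifusion $2$-categories cited above.
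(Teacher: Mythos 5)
Your reduction and overall strategy coincide with the paper's: both arguments reduce isofullness to the claim that any invertible bimodule ${}_{\fC}\fM_{\fD}$ in $\Mor(\tKar)$ between multifusion 2-categories has finite semisimple underlying 2-category, establish that such an $\fM$ is $1$-dualizable in $\tKar$, and then invoke Conjecture~\ref{conj:2till}. The difference lies in how the $1$-dualizability is obtained, and it is exactly at the point you flag as ``the main obstacle'' that the paper's formulation dissolves the difficulty. Instead of extracting relative duality data $\underline{\Hom}_{\fD}(\fM,\fD)$ and contracting it by hand against dualizing data of $\fD$ (which, as you note, would require re-verifying triangle identities), the paper writes $\fM = \fC \boxtimes_{\fC} \fM \boxtimes_{\fD} \fD$, i.e.\ exhibits $\fM$ as the composite of three 1-morphisms $\mathds{1} \nrightarrow \fC \nrightarrow \fD \nrightarrow \mathds{1}$ in $\Mor(\tKar)$: the regular modules $\fC_{\fC}$ and ${}_{\fD}\fD$, which are adjunctible on both sides precisely because $\fC$ and $\fD$ are finite semisimple and hence dualizable objects of $\tKar$, and the invertible (hence adjunctible) $\fM$ in the middle. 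Since adjunctible 1-morphisms compose, and since an adjunctible endomorphism of the unit $\mathds{1}$ is exactly a dualizable object of $\Omega\Mor(\tKar) = \tKar$, the $1$-dualizability of $\fM$ is immediate: your ``contraction over $\fD$'' is nothing other than composition with ${}_{\fD}\fD$ (and similarly over $\fC$ with $\fC_{\fC}$), and the coherence checks you worry about are subsumed in the general fact that composites of adjunctibles are adjunctible. So your plan is sound and completable, but as written it leaves its one genuinely technical step open; rephrasing that step as composition in the Morita category, as the paper does, renders it formal.
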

\begin{proof}
    Let $_\mathfrak{C} \mathfrak{M}_\mathfrak{D}$ be an equivalence in $\Mor(\mathbf{2Kar})$ with $\mathfrak{C}$ and $\mathfrak{D}$ two multifusion 2-categories. It is enough to show that $\mathfrak{M} = \mathfrak{C} \boxtimes_\mathfrak{C} \mathfrak{M} \boxtimes_\mathfrak{D} \mathfrak{D}$ is finite semisimple. But, $\mathfrak{C}_{\fC}$ and $_{\fD}\mathfrak{D}$ are adjunctible (on both sides) 1-morphisms because $\fC$ and $\fD$ are finite semisimple and hence dualizable as objects of $\mathbf{2Kar}$. Thus, $\mathfrak{M}$ is 1-dualizable in $\Omega \Mor(\mathbf{2Kar}) = \mathbf{2Kar}$. The last conjecture above concludes the proof.
\end{proof}

We will not provide a proof for the 2-Tillman conjecture here, but rather relegate this task to future work. We wish to point out that we will not appeal to either this conjecture nor the subsequent lemma. 
These were merely included to help the reader's intuition.

\subsection{The Morita Category of Braided Multifusion 1-Categories}\label{subsubsection:Mor2ss}

We will now define a $4$-category $\Mor_2^{\sss}$ whose objects are  braided multifusion $1$-categories, whose $1$-morphisms $\cB_1 \nrightarrow \cB_2$ are $\cB_1$--$\cB_2$--central multifusion $1$-categories (i.e.\ multifusion $1$-categories $\cC$ equipped with a braided functor $\cB_1 \boxtimes \cB_2^{\rev} \to \cZ(\cC)$), whose $2$-morphisms are finite semisimple bimodules between them, compatible with the central structure, and whose $3$- and $4$-morphisms are compatible bimodule equivalences and natural isomorphisms. We will define this $4$-category in such a way that there manifestly is a functor $\Mor_2^{\sss} \rightarrow \Mor^{\sss}$.

Although we will only use the underlying $(4,1)$-category in the reminder of the paper, it will be convenient to construct $\Mor_2^{\sss}$ as a $(4,2)$-category. 
Recall from \S\ref{subsubsection:MorF2C} that $\Mor(\tKar)$ inherits by restriction a $\mathbf{Cat}_{\infty}$-enrichment, i.e.\ an $(\infty,2)$-categorical structure, from $\Mod_{\tKar}$. In this section, we will underline, writing e.g. $\underline{\Mor}(\tKar)$ and $\underline{\Mod}_{\tKar}$ --- to remember this $(\infty,2)$-categorical structure and to distinguish them from their underlying $(\infty,1)$-categories --- still written e.g. as $\Mor(\tKar)$ and $\Mod_{\tKar}$.

To build $\Mor_2$, we will equip the $(\infty,2)$-category $\underline{\Mor}(\tKar)$ --- which we will succinctly write as $\underline{\Mor}$ below --- with another finer flagging as a $2$-category. 
To this end, let $\Mod_{\tKar}^{\pt}$ denote\footnote{Formally, $\Mod_{\tKar}^{\pt}$ can be defined as follows: Let $\widehat{\Cat}$ denote the very large $\infty$-category of large $\infty$-categories and let $\widehat{\Cat}_{\infty}^{\mathrm{oplax}\mhyphen\pt} \to \widehat{\Cat}_{\infty}$ denote (the large version of) the unstraightening of the equivalence $(-)^{\op}: \widehat{\Cat} \to \widehat{\Cat}$, i.e.\ informally $\widehat{\Cat}_{\infty}^{\mathrm{oplax}\mhyphen\pt}$ is the $\infty$-category of pointed categories and functors $F:\cC \to \cD$ preserving the pointing laxly, i.e.\ so that there is a morphism $d \to F(c)$. Then$\Mod_{\tKar}^{\pt}$ is defined as the pullback $\Mod_{\tKar}\times_{\widehat{\Cat}_{\infty}} \widehat{\Cat}_{\infty}^{\pt}$.
} the $\infty$-category of presentable $\tKar$-module categories equipped with a preferred object, and cocontinuous module functors preserving this object only laxly, i.e.\ up to a possibly-noninvertible $2$-morphism (which is part of the data of the $1$-morphism). 
Recall that there is a fully faithful functor $\Alg_{E_1} (\tKar) \hookrightarrow (\Mod_{\tKar})_{\tKar/}$, where the latter overcategory is equivalent to the subcategory of $\Mod_{\tKar}^{\pt}$ on those functors which strongly preserve the pointing. 
We let $\Mod_{\tKar}^{\pt\textrm{--}\gen}$ denote the $(\infty,1)$-category obtained as the full image of $\Alg_{E_1} (\tKar) \hookrightarrow (\Mod_{\tKar})_{\tKar/} \to \Mod_{\tKar}^{\pt}$.

Now consider the flagged $(4,2)$-category $\underline{\widehat{\Mor}}$ with flagging
\begin{equation}
    \mathrm{ob}(\Alg_{E_1}(\mathbf{2Kar})) \rightarrow \Mod^{\pt\textrm{--}\gen}_{\mathbf{2Kar}}\rightarrow \underline{\Mod}^{\gen}_{\mathbf{2Kar}}\,,
 \end{equation}

This is a variant of Equation \eqref{eq:flaggingMor1}, where we have included an additional layer of flagging for 1-morphisms.\footnote{This is a pointed version of higher Morita categories as considered for instance in \cite[Section 8]{JFS2017} and \cite{Scheimbauer2015}. By pointed we mean that the bimodules are equipped with distinguished objects, which are to be preserved by higher morphisms.}

Unwinding this definition via the $\infty$-categorical Eilenberg-Watts theorem \cite[Remark~4.8.4.9]{higheralgebra} shows: The space of objects of this flagged $(4,2)$-category $\underline{\widehat{\Mor}}$ is the space $\ob(\Alg_{E_1}(\mathbf{2Kar}))$ of $E_1$-algebras in $\tKar$. The space of $1$-morphisms is the space $\ob(\Bimod_{E_0}(\tKar))$ of pointed bimodules(Here, $\Bimod_{E_0}(-)$ denotes the $\infty$-category of algebras over the operad $BM\otimes E_0$, corepresenting a pair of $E_1$-algebras with a pointed bimodule between them.). The $2$-morphisms are arrows between bimodules (ignoring the pointing).
 Employing the same procedure as that utilized in \S\ref{subsubsection:MorF2C}, we then restrict to the ``semisimple part" of $\widehat{\Mor}$ denoted by $\widehat{\Mor}^{\sss}$, that is we restrict the spaces of objects and $1$-morphisms to the full subspaces 
 \[
 \ob(\mathbf{MF2C}) \hookrightarrow \ob(\Alg_{E_1}(\mathbf{2Kar})) \hspace{1cm} \ob(\Bimod_{E_0}^{\sss}(\mathbf{3Vect})) \hookrightarrow \ob(\Bimod_{E_0}(\tKar)).
 \]

\noindent The canonical map $\underline{\Mor}^{\sss}\rightarrow \underline{\widehat{\Mor}}^{\sss} $ is not an equivalence of flagged $(4,2)$-categories. Nevertheless, it follows from the definitions that this map becomes an equivalence after univalification.

Let $\mathbf{Bimod}_{E_1}(\mathbf{Kar})$ denote the $\infty$-category of algebras over the operad $E_1 \otimes BM$, corepresenting a pair of $E_2$-algebras $\cB_1, \cB_2$, an $E_1$-algebra $\cC$ and a braided functor $\cB_1 \boxtimes \cB_2^{\rev} \to \cZ(\cC)$. Let $\mathbf{Bimod}_{E_1}^{\sss}(\mathbf{2Vect})$ denote the full subcategory where $\cB_1, \cB_2$ and $\cC$ are required to be multifusion. The fully faithful functor $\Alg_{E_1}(\mathbf{Kar}) \to \Alg_{E_0}(\tKar)$ induces a fully faithful functor $\mathbf{Bimod}_{E_1}(\mathbf{Kar}) \hookrightarrow \mathbf{Bimod}_{E_0}(\tKar)$ and hence a fully faithful functor $\mathbf{Bimod}^{\sss}_{E_1}(\mathbf{Kar}) \hookrightarrow \mathbf{Bimod}^{\sss}_{E_0}(\tKar)$

\begin{defn}
Using Proposition~\ref{prop:belief}, we define $\underline{\Mor}_2^{\sss}$ as the sub-flagged-$(\infty,2)$-category of $\underline{\widehat{\Mor}}^{\sss}$ with full subspace of objects $\ob(\mathbf{BMF1C}) \hookrightarrow \ob(\mathbf{MF2C})$ and  full subspace of $1$-morphisms $\ob(\mathbf{Bimod}^{\sss}_{E_1}(\mathbf{2Vect}))\hookrightarrow \ob( \mathbf{Bimod}^{\sss}_{E_0}(\mathbf{3Vect}))$ (and all $2$-morphisms). 
\end{defn}
 We claim that this is compatible with the composition of 1-morphisms, so that the above definition indeed defines a sub-$(\infty,2)$-category by Proposition~\ref{prop:belief}. To see this, let $\mathcal{B}_1$, $\mathcal{B}_2$, and $\cB_3$ be braided multifusion 1-categories, and let $\mathcal{C}_1:\mathcal{B}_1\nrightarrow\mathcal{B}_2$, and $\mathcal{C}_2:\mathcal{B}_2\nrightarrow\mathcal{B}_3$ be objects of $\mathbf{Bimod}_{E_1}^{\sss}(\mathbf{2Vect})$. The corresponding objects of $\mathbf{Bimod}^{\sss}_{E_0}(\mathbf{3Vect})$ are the pointed finite semisimple bimodule 2-categories $_{\mathbf{Mod}(\cB_1)}\mathbf{Mod}(\cC_1)_{\mathbf{Mod}(\cB_2)}$ and $_{\mathbf{Mod}(\cB_2)}\mathbf{Mod}(\cC_2)_{\mathbf{Mod}(\cB_3)}$. But, it follows from \cite[Example 2.3.2]{Decoppet2023;dualizable} that $$\mathbf{Mod}(\cC_1)\boxtimes_{\mathbf{Mod}(\cB_2)}\mathbf{Mod}(\cC_2)\simeq \mathbf{Mod}(\cC_1\boxtimes_{\cB_2}\cC_2),$$ so that the claim follows.

\begin{rem}
    Our construction of the Morita 4-category $\Mor^{\sss}_2$ is motivated by the fact that it admits a canonical functor to $\Mor^{\sss}$. In \cite{BJS}, using different methods, an a priori different Morita 4-category of braided multifusion 2-category denoted by $\mathbf{BrFus}$ was constructed. We expect, but will not show, that these two Morita 4-categories are equivalent.
\end{rem}

\begin{lem}\label{lem:oursketchyMorita4categories}
The inclusion of flagged $(4,2)$-categories $\underline{\Mor}_2^{\sss}\hookrightarrow \underline{\widehat{\Mor}}^{\sss}$ becomes an equivalence after univalification. In particular, after univalification, there is an equivalence of $(4,2)$-categories $\underline{\Mor}_2^{\sss}\simeq\underline{\Mor}^{\sss}$.
\end{lem}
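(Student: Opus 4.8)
The plan is to reduce the statement to an essential-surjectivity check, after recalling that univalification only sees the top-level $(\infty,2)$-category. As recalled above, a univalent flagged $(\infty,2)$-category is determined by its top piece $\cC_2$, with the lower flags recovered as maximal sub-$(\infty,k)$-categories; and univalification, being left adjoint to the inclusion of univalent flagged categories, sends a flagged $(\infty,2)$-category to the univalent one on its top-level $(\infty,2)$-category, and a map of flagged categories to the induced functor on top levels. Thus the first assertion is equivalent to the statement that the induced functor on top-level $(\infty,2)$-categories $\underline{\Mor}_2^{\sss}\to\underline{\widehat{\Mor}}^{\sss}$ is an equivalence. This is precisely the step at which the differing flagging data --- the pointings on $1$-morphisms recorded in $\Mod^{\pt\textrm{--}\gen}_{\mathbf{2Kar}}$, and the different spaces of objects --- are discarded, so that we compare only underlying $(\infty,2)$-categories.

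Next I would set up the criterion for the top-level functor $F$ to be an equivalence: $F$ must be essentially surjective on objects, and for all braided multifusion $\cB_1,\cB_2$ the induced hom-functor $\Hom_{\underline{\Mor}_2^{\sss}}(\cB_1,\cB_2)\to\Hom_{\underline{\widehat{\Mor}}^{\sss}}(\mathbf{Mod}(\cB_1),\mathbf{Mod}(\cB_2))$ must be an equivalence of $(\infty,1)$-categories. Since $\underline{\Mor}_2^{\sss}$ was built via Proposition~\ref{prop:belief} as a sub-$(\infty,2)$-category retaining \emph{all} $2$-morphisms (and higher), each hom-functor is a full subcategory inclusion, hence automatically fully faithful; it is therefore an equivalence exactly when it is essentially surjective. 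So the whole lemma reduces to two essential-surjectivity statements, at the level of objects and of $1$-morphisms.

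For objects, an arbitrary object of $\underline{\widehat{\Mor}}^{\sss}$ is a multifusion $2$-category $\fC$, and I must produce an invertible $1$-morphism --- i.e.\ a Morita equivalence, witnessed by a finite semisimple bimodule $2$-category, which indeed lies in $\underline{\widehat{\Mor}}^{\sss}$ --- to some $\mathbf{Mod}(\cB)$ with $\cB$ braided multifusion. This is exactly the key result of \cite{Decoppet2022;centers} that every multifusion $2$-category is Morita equivalent to a connected one, combined with the identification \cite{douglas2018fusion} of connected multifusion $2$-categories with those of the form $\mathbf{Mod}(\cB)$. For $1$-morphisms, a general object of $\Hom_{\underline{\widehat{\Mor}}^{\sss}}(\mathbf{Mod}(\cB_1),\mathbf{Mod}(\cB_2))$ is a finite semisimple $\mathbf{Mod}(\cB_1)$--$\mathbf{Mod}(\cB_2)$-bimodule $2$-category $\fM$, equivalently a left module $2$-category over $\mathbf{Mod}(\cB_1)\boxtimes\mathbf{Mod}(\cB_2)^{\mop}$. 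Using $\mathbf{Mod}(\cB_2)^{\mop}\simeq\mathbf{Mod}(\cB_2^{\rev})$ together with the multiplicativity of $\mathbf{Mod}(-)$ (the $\cB_2=\Vect$ case of the identity $\mathbf{Mod}(\cC_1)\boxtimes_{\mathbf{Mod}(\cB_2)}\mathbf{Mod}(\cC_2)\simeq\mathbf{Mod}(\cC_1\boxtimes_{\cB_2}\cC_2)$ of \cite{Decoppet2023;dualizable}), this is a left $\mathbf{Mod}(\cE)$-module $2$-category for the braided multifusion $1$-category $\cE=\cB_1\boxtimes\cB_2^{\rev}$. Invoking the classification of finite semisimple module $2$-categories over $\mathbf{Mod}(\cE)$ (decomposing $\fM$ into connected components to reduce to the connected case, which accounts for the multifusion rather than merely fusion generality), such an $\fM$ is equivalent to $\mathbf{Mod}(\cC)$ for a multifusion $1$-category $\cC$ equipped with a braided functor $\cE\to\cZ(\cC)$; unwinding, this is precisely a $\cB_1$--$\cB_2$-central multifusion $1$-category, i.e.\ an object of $\mathbf{Bimod}^{\sss}_{E_1}(\mathbf{2Vect})$. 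This yields essential surjectivity on $1$-morphisms and hence the top-level equivalence. The ``in particular'' then follows by composing with the equivalence $\underline{\Mor}^{\sss}\to\underline{\widehat{\Mor}}^{\sss}$ after univalification noted above.

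I expect the essential surjectivity on $1$-morphisms to be the main obstacle: identifying every finite semisimple bimodule $2$-category between connected $2$-categories with $\mathbf{Mod}(\cC)$ for a central multifusion $\cC$ rests on the full classification of module $2$-categories over $\mathbf{Mod}(\cE)$ and on carefully matching the resulting braided functor $\cE\to\cZ(\cC)$ with the $\cB_1$--$\cB_2$-central structure. The object-level statement, by contrast, is a direct appeal to \cite{Decoppet2022;centers}, and the reduction to top levels is a formal consequence of the structure of univalification.
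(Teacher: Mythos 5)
Your proof is correct and takes essentially the same route as the paper's: after the formal reduction (univalification retains the top-level $(\infty,2)$-category, and full-on-2-morphisms inclusions make the hom-functors automatically fully faithful), everything comes down to essential surjectivity on objects — every multifusion 2-category is Morita equivalent to a connected one, i.e.\ to some $\Mod(\cB)$, by \cite{Decoppet2022;centers} — and on 1-morphisms, where your classification-of-module-2-categories argument is precisely the content of \cite[Lemma 2.2.6]{decoppet2021finite} that the paper cites. The paper's proof is simply terser, leaving the univalification formalities and the reduction of bimodules to modules over $\Mod(\cB_1\boxtimes\cB_2^{\rev})$ implicit.
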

\begin{proof}
The last part follows from the fact that, after univalification, the map of flagged $(4,2)$-categories $\underline{\Mor}^{\sss}\hookrightarrow \underline{\widehat{\Mor}}^{\sss}$ becomes an equivalence. It therefore only remains to argue that the inclusion $\underline{\Mor}_2^{\sss}\hookrightarrow \underline{\widehat{\Mor}}^{\sss}$ becomes an equivalence after univalification. At the level of objects, this follows from \cite[Theorem 4.2.2]{Decoppet2022;centers}, and, at the level of 1-morphisms, this follows from \cite[Lemma 2.2.6]{decoppet2021finite}.
\end{proof}

For the rest of the paper, we will only consider the univalification
\[\Mor_2^{\sss} \simeq \Mor^{\sss}
\]of the underlying $(\infty,1)$-categories constructed here, hence removing the underline.

\subsection{Adjectives for 1-Morphisms }\label{subsection:adjectives}

The purpose of this section is to set up and study various properties of 1-morphisms in the Morita 4-categories that were introduced in \S\ref{subsubsection:MorF2C} and \S\ref{subsubsection:Mor2ss}. This will be especially relevant in the construction of the functor $\Witt(-)$ in  \S\ref{subsection:Wittfunctor}.

\subsubsection{Adjectives for Braided and Multifusion 1-Categories}\label{subsub:BF1C}

We introduce various properties for 1-morphisms in the Morita 4-category of braided multifusion 1-categories $\mathbf{Mor}^{\sss}_2$. Given two braided multifusion 1-categories $\mathcal{B}_1$ and $\mathcal{B}_2$, we write $\mathcal{C}:\mathcal{B}_1\nrightarrow\mathcal{B}_2$ for (a representative of) a 1-morphism in $\mathbf{Mor}^{\sss}_2$, that is, $\mathcal{C}$ is a multifusion 1-category equipped with a braided tensor functor $F:\mathcal{B}_1\boxtimes\mathcal{B}_2^{\rev}\rightarrow\mathcal{Z}(\mathcal{C})$. We also say that $\mathcal{C}$ is a  $\mathcal{B}_1\boxtimes\mathcal{B}_2^{\rev}$-central multifusion 1-category. We will write $F_1:\mathcal{B}_1\rightarrow\mathcal{Z}(\mathcal{C})$ and $F_2:\mathcal{B}_2^{\rev}\rightarrow\mathcal{Z}(\mathcal{C})$ for the braided tensor functors induced by $F$. Moreover, we use $\mathcal{Z}(\mathcal{C},\mathcal{B}_1)$ to denote the centralizer of the image of $\mathcal{B}_1$ under $F_1$ in $\mathcal{Z}(\mathcal{C})$, and we make the obvious analogous definition for $\mathcal{Z}(\mathcal{C},\mathcal{B}_2^{\rev})$.

\begin{defn}
\label{defn:domBF1}
 Let $\mathcal{C}:\mathcal{B}_1\nrightarrow\mathcal{B}_2$ be a 1-morphism in the Morita 4-category of braided multifusion 1-categories. 

\begin{enumerate}[label=(\alph*)]
    \item \label{defn:domBF1:0dom} We say that $\mathcal{C}:\mathcal{B}_1\nrightarrow\mathcal{B}_2$ is \textit{0-dominant} if $\mathcal{B}_2^{\rev}\rightarrow \mathcal{Z}(\mathcal{C}, \cB_1)$ is faithful.
    \item \label{defn:domBF1:1dom} We say that $\mathcal{C}:\mathcal{B}_1\nrightarrow\mathcal{B}_2$ is \textit{1-dominant} if $\mathcal{B}_2^{\rev}\rightarrow \mathcal{Z}(\mathcal{C},\mathcal{B}_1)$ is fully faithful.
    \item \label{defn:domBF1:2dom} We say that $\mathcal{C}:\mathcal{B}_1\nrightarrow\mathcal{B}_2$ is \textit{2-dominant} if $\mathcal{B}_2^{\rev}\rightarrow \mathcal{Z}(\mathcal{C},\mathcal{B}_1)$ is an equivalence.
\end{enumerate}
\end{defn}
Since $\mathcal{Z}(\mathcal{C}, \cB_1) \hookrightarrow \mathcal{Z}(\mathcal{C})$ is fully faithful,
\ref{defn:domBF1:0dom} and \ref{defn:domBF1:1dom} are equivalent to the assertion that the braided functor $\cB_2^{\rev} \to \cZ(\cC)$ is faithful, resp. fully faithful.

\begin{defn}\label{defn:faithBF1}
Let $\mathcal{C}:\mathcal{B}_1\nrightarrow\mathcal{B}_2$ be a 1-morphism in the Morita 4-category of braided multifusion 1-categories.

\begin{enumerate}[label=(\alph*)]
    \item We say that $\mathcal{C}:\mathcal{B}_1\nrightarrow\mathcal{B}_2$ is \emph{0-faithful} if $\mathcal{B}_1\rightarrow \mathcal{Z}(\mathcal{C}, \cB_2^{\rev})$ is faithful.
    \item We say that $\mathcal{C}:\mathcal{B}_1\nrightarrow\mathcal{B}_2$ is \emph{1-faithful} if $\mathcal{B}_1\rightarrow \mathcal{Z}(\mathcal{C},\mathcal{B}_2^{\rev})$ is fully faithful.
    \item We say that $\mathcal{C}:\mathcal{B}_1\nrightarrow\mathcal{B}_2$ is \emph{2-faithful} if $\mathcal{B}_1\rightarrow \mathcal{Z}(\mathcal{C},\mathcal{B}_2^{\rev})$ is an equivalence.
\end{enumerate}
\end{defn}

As stated, the above definitions refer to the underlying braided multifusion categories $\cB_1$ and $\cB_2$ and are thus not obviously well-defined conditions on a $1$-morphism in the (univalent) $4$-category $\Mor_2^{\sss}$. Namely, they might a priori depend on the flagging as in \S\ref{subsubsection:Mor2ss}, i.e.\ on the choice of braided multifusion 1-category representing a given object. So as to remedy this issue, we will argue in Theorem \ref{thm:invertibleIFFWittEq} that every invertible 1-morphism in $\Mor_2^{\sss}$ is both $2$-faithful and $2$-dominant. In particular, the well-definedness of the above faithfulness and dominance conditions will then follow from establishing that they are closed under composition. This shows in addition that these conditions yield subcategories of the Morita $4$-category of braided multifusion $1$-categories. This will be used subsequently to establish corresponding results for 1-morphisms in the Morita 4-category of multifusion $2$-categories.

\begin{prop} \label{prop:nDomComposesMor2}
Let $\cB_1$, $\cB_2$, and $\cB_3$ be braided multifusion $1$-categories and let $\cC_1:\cB_1\nrightarrow\cB_2$, and $\cC_2:\cB_2\nrightarrow\cB_3$ be two $n$-faithful, resp.\ $n$-dominant, 1-morphisms for some $n\in\{0,1,2\}$. Then the composite $\cC_1\boxtimes_{\cB_2}\cC_2:\cB_1\nrightarrow\cB_3$ is $n$-faithful, resp.\ $n$-dominant. 
\end{prop}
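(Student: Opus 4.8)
The plan is to reduce the faithfulness half of the statement to the dominance half, and then to prove that a composite of two $n$-dominant $1$-morphisms is $n$-dominant by separating the contributions of the two factors.

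\emph{Reduction to dominance.} The Morita $4$-category $\Mor_2^{\sss}$ carries a reversal involution sending a $1$-morphism $\cC:\cB_1\nrightarrow\cB_2$, with central structure $\cB_1\boxtimes\cB_2^{\rev}\to\cZ(\cC)$, to the $1$-morphism $\cC^{\rev}:\cB_2\nrightarrow\cB_1$ obtained by reversing $\cC$ and swapping the two tensorands, whose central structure is $\cB_2\boxtimes\cB_1^{\rev}\to\cZ(\cC)^{\rev}=\cZ(\cC^{\rev})$. Since reversing a braided functor leaves its underlying functor unchanged, $\cZ(\cC^{\rev},\cB_2)=\cZ(\cC,\cB_2^{\rev})^{\rev}$, and one checks directly from Definitions~\ref{defn:domBF1} and~\ref{defn:faithBF1} that $\cC$ is $n$-faithful if and only if $\cC^{\rev}$ is $n$-dominant. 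As $(\cC_1\boxtimes_{\cB_2}\cC_2)^{\rev}\simeq\cC_2^{\rev}\boxtimes_{\cB_2}\cC_1^{\rev}$, it therefore suffices to prove the dominance half of the proposition.

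\emph{Separating the two factors.} Write $\cD=\cC_1\boxtimes_{\cB_2}\cC_2$, so that $\cD:\cB_1\nrightarrow\cB_3$. I would exhibit its dominance functor $\cB_3^{\rev}\to\cZ(\cD,\cB_1)$ as a composite
\[
\cB_3^{\rev}\xrightarrow{\ g\ }\cZ(\cC_2,\cB_2)\xrightarrow{\ \Phi\ }\cZ(\cD,\cB_1).
\]
Here $g$ is the dominance functor of $\cC_2$: the image of $\cB_3^{\rev}$ under $\cB_2\boxtimes\cB_3^{\rev}\to\cZ(\cC_2)$ centralizes the image of $\cB_2$, because the two tensorands centralize one another and braided functors preserve mutual centralizers, so $g$ indeed lands in $\cZ(\cC_2,\cB_2)$. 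The functor $\Phi$ is induced by $z\mapsto\mathbf 1_{\cC_1}\boxtimes z$; this is a well-defined braided functor into $\cZ(\cD)$ precisely because an object centralizing $\cB_2$ may be transported across the $\cB_2$-balancing defining $\boxtimes_{\cB_2}$, and its image centralizes $\cB_1$ since $\cB_1$ enters $\cZ(\cD)$ through the $\cC_1$-tensorand. By inspection of how the $\cB_3^{\rev}$-central structure of $\cD$ is assembled, the composite $\Phi\circ g$ is exactly the dominance functor of $\cD$.

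\emph{The two halves and the main obstacle.} The functor $g$ is faithful, fully faithful, or an equivalence exactly when $\cC_2$ is $0$-, $1$-, or $2$-dominant. The crux is the complementary claim for $\Phi$: \emph{$\Phi$ is faithful (resp.\ fully faithful, resp.\ an equivalence) whenever $\cC_1$ is $0$- (resp.\ $1$-, resp.\ $2$-) dominant.} The heuristic is that the failure of $\Phi$ to be an equivalence is governed entirely by how far the $\cB_2$-action on the $\cC_1$-side is from being ``centrally free,'' i.e.\ by the functor $\cB_2^{\rev}\to\cZ(\cC_1,\cB_1)$: when this is an equivalence the $\cB_2$-balancing cancels $\cC_1$ completely and $\Phi$ becomes an equivalence, while faithfulness and fullness degrade in lockstep with the corresponding property of that functor. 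I would establish this by computing the $\cB_1$-centralizer of $\cZ(\cD)$ through the module-category model of the relative tensor product, using the compatibility $\Mod(\cC_1)\boxtimes_{\Mod(\cB_2)}\Mod(\cC_2)\simeq\Mod(\cC_1\boxtimes_{\cB_2}\cC_2)$ of~\cite[Example~2.3.2]{Decoppet2023;dualizable} together with the center computations of~\cite{Decoppet2022;centers}. This centralizer-of-relative-tensor-product analysis is the genuine content of the proof; everything else is formal. Granting it, since faithful, fully faithful, and equivalence functors are each closed under composition, the composite $\Phi\circ g$ inherits the common level $n$, proving that $\cD$ is $n$-dominant. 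The clean ``separation of variables'' into $g$ (depending only on $\cC_2$) and $\Phi$ (controlled only by $\cC_1$) is precisely what lets the three cases $n\in\{0,1,2\}$ run uniformly, with no need for an explicit closed formula for $\cZ(\cD,\cB_1)$.
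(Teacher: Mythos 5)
Your overall architecture is reasonable, and its middle case is in fact the paper's own argument: for $n=1$ the paper proves Lemma~\ref{lem:0or1DomComposesMor2} by constructing exactly your functor $\Phi$ (there called $J$), as the free-module functor $\cZ(\cC_2,\cB_2)\to\Mod_{\widetilde{K}}(\cZ(\cC_1\boxtimes\cC_2))$ over the \'etale algebra $\widetilde K$ coming from the canonical algebra $K\in\cB_2^{\rev}\boxtimes\cB_2$, showing it is fully faithful (via $1$-dominance of $\cC_1$, which forces the maximal trivial subalgebra of $\widetilde K$ to be trivial) and that its image consists of local modules. The reduction of faithfulness to dominance by reversal also matches the paper. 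The gap is that your \emph{crux} --- the claim that the level of $\Phi$ is governed by the dominance level of $\cC_1$ alone --- is precisely where the entire content of the proposition lives, and you explicitly defer it (``I would establish this by computing\dots''). It is not a routine verification, for two concrete reasons.

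First, for $n=0$ your heuristic fails in the multifusion setting: the free-module functor $z\mapsto\widetilde K\otimes z$ is \emph{not} automatically faithful, because the unit map $\mathbf{1}\to\widetilde K$ need not be monic once some summand of $\cB_2$ acts by zero. For instance, if $\cB_2=\Vect\oplus\Vect$ with $\cC_1$ and $\cC_2$ supported on different summands, then $\widetilde K=0$ and $\cD=\cC_1\boxtimes_{\cB_2}\cC_2=0$; ruling this out is exactly the block-decomposition bookkeeping of Remark~\ref{rem:mor2Decomp}, which the paper performs and your proposal does not. Second, for $n=2$ your claim that $\Phi$ is an equivalence whenever $\cC_1$ is $2$-dominant, \emph{for arbitrary} $\cC_2$, unwinds to the identity $\Mod^{\loc}_{\widetilde K}(\cB_2^{\rev}\boxtimes\cZ(\cC_2))\simeq\cZ(\cC_2,\cB_2)$ for an arbitrary (possibly non-fully-faithful) braided functor $\cB_2\to\cZ(\cC_2)$. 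This is strictly stronger than anything the paper establishes: the paper never proves such a one-sided statement, but instead characterizes $2$-dominance as ``factorizable and extensive'' (Lemma~\ref{lem:fact+ext=2dom}, which itself rests on the invertibility theorem, Theorem~\ref{thm:invertibleIFFWittEq}) and proves closure of each property under composition, with the extensiveness computation using $2$-dominance of \emph{both} $\cC_1$ and $\cC_2$ in an intertwined way (factorizability of $\cC_2$ is invoked at the last step to identify $\Im(F_2|_{\cE_2})$ with $\cE_3$). Your stronger separated claim may well be true, but nothing in the proposal proves it, and establishing it requires at least the same local-module and double-centralizer technology that it is meant to replace. As written, the proposal is a plausible strategy with its central lemma missing, not a proof.
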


Before turning to the proof, we make some general observations about decomposition of $1$-morphisms in $\Mor_2^{ss}$ which will prove helpful in understanding the several dominance and faithfulness conditions. 
\begin{rem}\label{rem:mor2Decomp}
Suppose that $\cC:\cB_1\nrightarrow\cB_2$ is a morphism in $\Mor_2^{\sss}$.
Each braided multifusion $1$-category $\cB_i$ is a direct sum of braided fusion categories, so we can write
\[\cB_1\cong\left(\bigoplus\limits_{j=1}^n\cB_{1,j}\right)\oplus\cB_{1,0},\qquad\qquad\cB_2\cong\left(\bigoplus\limits_{k=1}^m\cB_{2,k}\right)\oplus\cB_{2,0}\]
where each $\cB_{1,j}$ and $\cB_{2,k}$ for $j,k\neq 0$ is fusion and has a nonzero action on $\cC$, and where $\cB_{1,0}$ and $\cB_{2,0}$ act as zero on $\cC$.

If $\cD$ is an indecomposable multifusion summand of $\cC$, then there is a unique $j$ such that $\cB_{2,j}$ has a nonzero action on $\cD$.
To see this, observe that for every distinct $j$ and $\ell$, we have $0\cong 1_{\cB_{2,j}}1_{\cB_{2,\ell}}$, so at least one of the two summands of $\cB_2$ must act as zero on $\cD$.
Since the actions of $\cB_1$ and $\cB_2$ on $\cC$ are unital,  
the multifusion category $\cC$ can be written as a direct sum of indecomposable multifusion categories
 \[\cC\cong\bigoplus\limits_{\ell=1}^p\cC_{\ell}\]
 and that for each $\ell$ there is $j(\ell)$ and $k(\ell)$
 such that $\cC_\ell:\cB_{1,k(\ell)}\nrightarrow\cB_{2,j(\ell)}$ is a nonzero indecomposable morphism in $\Mor_2^{ss}$ with fusion source and target.
 Moreover, since each $\cC_{\ell}$ is indecomposable multifusion, $\cC_\ell$ is Morita equivalent to a fusion $\cB_{1,k(\ell)}-\cB_{2,j(\ell)}$ bimodule category, meaning that up to equivalence of 2-morphisms in $\Mor_2^{\sss}$, we may always assume that each $\cC_\ell$ is fusion as well.

 From the definitions of $0$-faithful and $0$-dominant, we can see that $\cC:\cB_1\nrightarrow\cB_2$ is $0$-faithful if and only if $\cB_{1,0}\cong 0$ and $0$-dominant if and only if $\cB_{2,0}\cong 0$.
 From the definitions of $n$-faithful and $n$-dominant for $n\in\{1,2\}$, we can see that $\cC:\cB_1\nrightarrow\cB_2$ is $n$-dominant ($n$-faithful) if and only if $\cC$ is $0$-dominant ($0$-faithful) and each $\cC_{\ell}:\cB_{1,k(\ell)}\to\cB_{2,j(\ell)}$ is $n$-dominant ($n$-faithful).
\end{rem}

We will prove Proposition~\ref{prop:nDomComposesMor2} in two stages:\ We prove the result in the cases $n=0$ and $n=1$ now, and defer the case $n=2$ until we have established an equivalent characterization of $2$-dominance. 

\begin{lem} \label{lem:0or1DomComposesMor2}
Let $\cB_1$, $\cB_2$, and $\cB_3$ be braided multifusion $1$-categories and let $\cC_1:\cB_1\nrightarrow\cB_2$, and $\cC_2:\cB_2\nrightarrow\cB_3$ be two $n$-faithful, resp.\ $n$-dominant, 1-morphisms where $n\in\{0,1\}$. Then the composite $\cC_1\boxtimes_{\cB_2}\cC_2:\cB_1\nrightarrow\cB_3$ is $n$-faithful, resp.\ $n$-dominant.
\end{lem}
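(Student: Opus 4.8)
The plan is to reduce everything to the case of fusion source and target via the decomposition of Remark~\ref{rem:mor2Decomp}, to dispatch the $0$-dominant case by a non-vanishing argument, and to treat the $1$-dominant case by factoring the composite central functor through a comparison functor on centralizers. Throughout I will use the observation recorded after Definition~\ref{defn:domBF1} that, for $n\in\{0,1\}$, the morphism $\cC:\cB_1\nrightarrow\cB_2$ is $n$-dominant exactly when the braided functor $\cB_2^\rev\to\cZ(\cC)$ is faithful (resp.\ fully faithful), and the mirror statement for faithfulness. Since the two assertions of the lemma are interchanged by the left–right symmetry swapping the roles of $\cB_1$ and $\cB_3$ (fixing $\cB_2$), it suffices to treat dominance throughout.

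First I would decompose $\cC_1$ and $\cC_2$ into indecomposable morphisms between fusion summands as in Remark~\ref{rem:mor2Decomp}. Because distinct fusion summands $\cB_{2,j},\cB_{2,j'}$ of $\cB_2$ have orthogonal units, a piece of $\cC_1$ whose $\cB_2$-action factors through $\cB_{2,j}$ and a piece of $\cC_2$ whose $\cB_2$-action factors through $\cB_{2,j'}$ have vanishing relative tensor product unless $j=j'$; hence $\cC_1\boxtimes_{\cB_2}\cC_2$ is the direct sum of the matched composites $\cC_{1,\alpha}\boxtimes_{\cB_{2,j}}\cC_{2,\beta}$. By Remark~\ref{rem:mor2Decomp}, it then suffices to check $0$-dominance over all summands together with $n$-dominance of each individual composite piece between fusion categories.

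Next I would handle the $0$-dominant case, where by Remark~\ref{rem:mor2Decomp} the task is to show that every nonzero fusion summand $\cB_{3,k}$ acts nontrivially on the composite. Since $\cC_2$ is $0$-dominant, some indecomposable piece $\cC_{2,\beta}:\cB_{2,j}\nrightarrow\cB_{3,k}$ carries a nontrivial $\cB_{3,k}$-action, and its source $\cB_{2,j}$ is a nonzero summand of $\cB_2$; since $\cC_1$ is $0$-dominant, some piece $\cC_{1,\alpha}:\cB_{1,i}\nrightarrow\cB_{2,j}$ has target $\cB_{2,j}$. The relative tensor product $\cC_{1,\alpha}\boxtimes_{\cB_{2,j}}\cC_{2,\beta}$ of nonzero module categories over the fusion category $\cB_{2,j}$ is nonzero (realizing $\cC_{2,\beta}$ as modules over an algebra in $\cB_{2,j}$ exhibits it as a nonzero category of modules internal to $\cC_{1,\alpha}$), and $\cB_{3,k}$ still acts nontrivially through the $\cC_{2,\beta}$-factor. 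This establishes $0$-dominance of the composite.

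Finally, for the $1$-dominant case I would use that $\cB_{3,k}^\rev$ and $\cB_{2,j}$ centralize one another inside $\cZ(\cC_{2,\beta})$, so the image of $\cB_{3,k}^\rev$ lands in $\cZ(\cC_{2,\beta},\cB_{2,j})$, and that objects of this centralizer are exactly those that commute with the balancing and hence act on $\cC_{1,\alpha}\boxtimes_{\cB_{2,j}}\cC_{2,\beta}$ with an inherited half-braiding. Thus the composite central functor factors as
\[
\cB_{3,k}^\rev \longrightarrow \cZ(\cC_{2,\beta},\cB_{2,j}) \xrightarrow{\ \Phi\ } \cZ\bigl(\cC_{1,\alpha}\boxtimes_{\cB_{2,j}}\cC_{2,\beta}\bigr),
\]
where the first arrow is fully faithful precisely because $\cC_{2,\beta}$ is $1$-dominant. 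The remaining task, and the main obstacle, is to show that $\Phi$ is fully faithful on this image. The degenerate example $\cC_2=\cB_2$ (the identity morphism), where $\Phi$ reduces to the central functor of $\cC_1$ itself, shows that this genuinely requires the \emph{full} strength of $1$-dominance of $\cC_{1,\alpha}$ and not merely its $0$-dominance. I expect to prove full faithfulness of $\Phi$ by controlling the Drinfeld center of the relative tensor product: using that $\cC_{1,\alpha}$ is $1$-dominant as a $\cB_{2,j}$-module, one checks through the $\Hom(\mathbf{1},-)$ criterion for full faithfulness of tensor functors between fusion categories that distinct simples of $\cB_{3,k}^\rev$ remain non-isomorphic and simple after balancing against $\cC_{1,\alpha}$. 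Granting this, the composite is $1$-dominant, and the $1$-faithful statement again follows by the symmetry interchanging $\cB_1$ and $\cB_3$.
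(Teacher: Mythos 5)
Your overall route coincides with the paper's: the same reduction to the fusion case via Remark~\ref{rem:mor2Decomp}, essentially the same index-matching argument for $n=0$, and for $n=1$ the same factorization of $\cB_3^{\rev}\to\cZ(\cC_1\boxtimes_{\cB_2}\cC_2)$ through $\cZ(\cC_2,\cB_2)$, with full faithfulness of the first arrow coming from $1$-dominance of $\cC_2$. You also correctly locate the crux --- full faithfulness of $\Phi:\cZ(\cC_2,\cB_2)\to\cZ(\cC_1\boxtimes_{\cB_2}\cC_2)$ --- and correctly observe, via your $\cC_2=\cB_2$ sanity check, that it must consume the full strength of $1$-dominance of $\cC_1$.

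But that step is exactly where your proposal stops being a proof: ``I expect to prove full faithfulness of $\Phi$ \dots\ Granting this'' is a statement of intent, and nothing in your sketch provides a handle on the category $\cZ(\cC_1\boxtimes_{\cB_2}\cC_2)$ with which your $\Hom(\mathbf{1},-)$ check could actually be carried out. The paper closes this gap with \'etale-algebra technology: let $K$ be the canonical \'etale algebra in $\cB_2^{\rev}\boxtimes\cB_2$ and $\widetilde K$ its image in $\cZ(\cC_1)\boxtimes\cZ(\cC_2)$; then $\cC_1\boxtimes_{\cB_2}\cC_2\simeq\Mod_{\widetilde K}(\cC_1\boxtimes\cC_2)$ and, crucially, $\cZ(\cC_1\boxtimes_{\cB_2}\cC_2)\simeq\Mod^{\loc}_{\widetilde K}(\cZ(\cC_1)\boxtimes\cZ(\cC_2))$ by \cite[Remark~2.8]{DNO2013} and \cite[Theorem~3.20]{DMNO}. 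One then uses $1$-dominance of $\cC_1$ exactly once and precisely: since $\cB_2^{\rev}\to\cZ(\cC_1)$ is fully faithful, the maximal subalgebra of $\widetilde K$ contained in $\cZ(\cC_2)$ is trivial, which is what makes the free-module functor $\cZ(\cC_2)\to\Mod_{\widetilde K}(\cZ(\cC_1)\boxtimes\cZ(\cC_2))$ fully faithful. Finally --- a point entirely absent from your sketch --- one must check that the image of $\cZ(\cC_2,\cB_2)$ consists of \emph{local} $\widetilde K$-modules, which holds because $\widetilde K$ lies in the image of $\cB_2^{\rev}\boxtimes\cB_2$, which $\cZ(\cC_2,\cB_2)$ centralizes; without this, your functor lands only in $\Mod_{\widetilde K}$, not in the center of the relative tensor product. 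If you did carry out your $\Hom(\mathbf{1},-)$ computation, it would reproduce exactly this mechanism: $\Hom_{\widetilde K}(\widetilde K\otimes Z,\,\widetilde K\otimes Z')$ collapses to $\Hom(Z,Z')$ because full faithfulness of $\cB_2^{\rev}\to\cZ(\cC_1)$ kills every summand of $\widetilde K$ except the unit. So the gap is not that your approach would fail, but that the decisive step is asserted rather than proved, and the tool needed to prove it (the local-module description of the center of a relative tensor product) never appears in your proposal.
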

\begin{proof}
We give the proofs for $n$-dominance, and note that the proofs for $n$-faithfulness are dual, since $\cC:\cA\nrightarrow\cB$ is $n$-dominant if and only if $\cC^{\mp}:\cB\nrightarrow\cA$ is $n$-faithful.

First, we consider $n=0$.
Because our actions are unital, it is clear that the composite of nonzero $1$-morphisms with fusion source and target is nonzero.
Decomposing as in Remark~\ref{rem:mor2Decomp}, let  $\cB_2\cong\oplus_k\cB_{1,k}$, $\cB_3\cong\oplus_j\cB_{3,j}$, $\cC_2\cong\oplus_\ell\cC_{2,\ell}$, and $\cC_1\cong\oplus_m\cC_{1,m}$.
To show that $\cC_1\boxtimes_{\cB_2}\cC_2:\cB_1\nrightarrow\cB_3$ is $2$-dominant, it suffices to show that the action of each summand $\cB_j$ on $\cC_1\boxtimes_{\cB_2}\cC_3$ is nonzero.

Thus, as a multifusion category, we have
\[
\cC_1\boxtimes_{\cB_2}\cC_2\cong\bigoplus\limits_j\bigoplus\limits_{\ell:j(\ell)=j}\bigoplus\limits_{m:k(m)=k(\ell)}\cC_{1,m}\boxtimes_{\cB_{2,k(\ell)}}\cC_{2,\ell}
\]
Since each $\cC_{1,m}$ and $\cC_{2,\ell}$ are indecomposable multifusion, $\cB_{2,k(\ell)}$ is fusion, and $\cB_{2,k(\ell)}$ acts has a nonzero action on both whenever $k(\ell)=k(m)$, each relative tensor product $\cC_{1,m}\boxtimes\cB_{2,k(\ell)}\cC_{2,\ell}$ in the direct sum is nonzero.
Moreover, if $j(\ell)=j$, then $\cB_{3,j}$ has a nonzero action on the indecomposable $\cC_{2,\ell}$, and hence on the nonzero $\cC_{1,m}\boxtimes_{\cB_{2,k(\ell)}}\cC_{2,\ell}$.
Therefore, to show that $\cB_{3,j}$ has a nonzero action on $\cC_1\boxtimes_{\cB_2}\cC_3$, it suffices to check that there is always some $\ell$ with $j(\ell)=j$ and some $m$ with $k(m)=k(\ell)$.
The former follows from the fact that $\cC_2:\cB_2\nrightarrow\cB_3$ is $0$-dominant, while the latter follows from the fact that $\cC_1:\cB_1\nrightarrow\cB_2$ is $0$-dominant.

Let us now consider the case $n=1$.
By Remark \ref{rem:mor2Decomp}, now that the case $n=0$ has been proven, we may assume without loss of generality that all the multifusion 1-categories $\cB_i$ and $\cC_i$ are fusion.

By definition, we are given braided functors $\cB_2^{\rev}\to \cZ(\cC_1)$ and $\cB_3^{\rev}\to \cZ(\cC_2)$ that are fully faithful, and we wish to show that the canonical functor $\cB_3^{\rev}\to \cZ(\cC_1\boxtimes_{\cB_2}\cC_2)$ is again fully faithful.
It suffices to see that the canonical functor $\cZ(\cC_2,\cB_2)\to\cZ(\cC_1\boxtimes_{\cB_2}\cC_2)$ is also fully faithful, since $\cB_3^{\rev}$ includes fully faithfully into the former.

Let $K$ denote the canonical \'etale algebra in $\cB_2^{\rev}\boxtimes\cB_2$ corresponding to the monoidal functor $\cB_2^{\rev}\boxtimes\cB_2\rightarrow \cB_2$.
Said differently, the multifusion 1-category of $K$-modules in $\cB_2^{\rev}\boxtimes\cB_2\rightarrow \cB_2$ is identified with $\cB_2$.
Since $\cC_1:\cB_1\nrightarrow\cB_2$ is $1$-dominant, the monoidal functor $\cB_2^{\rev}\to\cZ(\cC_1)$ is fully faithful, so that the maximal subalgebra of $\widetilde{K}$ in $\cZ(\cC_2)$ is $\mathbf{1}$.
Let also $\widetilde{K}$ denote the image of $K$ in $\cZ(\cC_1\boxtimes\cC_2)$. Then, by \cite[Remark~2.8]{DNO2013} and \cite[Theorem~3.20]{DMNO}, we have $\cC_1\boxtimes_{\cB_2}\cC_2\cong \Mod_{\widetilde{K}}(\cC_1\boxtimes\cC_2)$ , and $\cZ(\cC_1\boxtimes_{\cB_2}\cC_2)\cong \Mod^{\loc}_{_{\widetilde{K}}}(\cZ(\cC_1)\boxtimes \cZ(\cC_2))$ (see also \cite[\S2.3]{MR4498161}). 
Now, there is a canonical functor $J:\cZ(\cC_2,\cB_2)\to \Mod_{\widetilde{K}}(\cZ(\cC_1\boxtimes\cC_2))$ given by
\[\cZ(\cC_2,\cB_2)\to\cZ(\cC_2)\to\cZ(\cC_1\boxtimes\cC_2)\to\Mod_{\widetilde{K}}(\cZ(\cC_1\boxtimes\cC_2))\]
The functor $\cZ(\cC_2)\to\Mod_{\widetilde{K}}(\cZ(\cC_1\boxtimes\cC_2))$ is fully faithful because the maximal subalgebra of $\widetilde{K}$ in $\cZ(\cC_2)$ is trivial, so the composite functor $J$ is also fully faithful.
Meanwhile, since $\widetilde{K}$ is in $\cB_2^{\rev}\boxtimes F(\cB_2)\subseteq \cZ(\cC_1)\boxtimes \cZ(\cC_2)$, every module in the image of $J$ is local.
Thus, the canonical braided tensor functor $\cZ(\cC_2,\cB_2)\to\Mod_{\widetilde{K}}^{\loc}(\cZ(\cC_1\boxtimes\cC_2))$ is fully faithful, as desired, verifying the $1$-dominance of $\cC_1\boxtimes_{\cB_2}\cC_2:\cB_1\nrightarrow\cB_3$.
\qedhere
\end{proof}

We now turn to the characterization of invertible $1$-morphisms in $\Mor_2^{\sss}$.
First, we will prove in Theorem~\ref{thm:invertibleIFFWittEq} that  the invertible morphisms are just Witt equivalences over a symmetric multifusion category in the sense of \cite{DNO2013}, characterizing invertibility in terms of centralizers.
We then build on this to characterize invertibility in terms of faithfulness and dominance conditions in Lemma~\ref{lem:invertible2faith2dom}.

The following basic facts about centralizers will prove helpful.
\begin{lem}
 Let $\cB$ be a braided multifusion $1$-category and $\cC$ be a $\cB$ central multifusion $1$-category.
 Then $\cZ(\cC,\cZ(\cC,\cB))\cong\Im(\cB)$, where $\Im(\cB)$ denotes the image of $\cB$ in $\cZ(\cC)$.
 \label{lem:BF1CDoubleCommutant}
\end{lem}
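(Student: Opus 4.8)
The plan is to identify this statement with Müger's double-centralizer theorem. Recall that the $\cB$-central structure on $\cC$ is encoded by a braided tensor functor $F\colon \cB\to\cZ(\cC)$, and that by definition $\cZ(\cC,\cB)$ is the Müger centralizer, taken inside the Drinfeld center $\cZ(\cC)$, of the image $\Im(\cB):=\Im(F)$. Since $\cZ(\cC,\cB)$ is itself a full braided subcategory of $\cZ(\cC)$, the outer centralizer $\cZ(\cC,\cZ(\cC,\cB))$ is simply the Müger centralizer of $\cZ(\cC,\cB)$ in $\cZ(\cC)$; thus the claim is precisely the identity $\Im(\cB)''=\Im(\cB)$, where $(-)'$ denotes the Müger centralizer inside $\cZ(\cC)$.

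There are two inputs. First, I would verify that $\Im(\cB)$ is a braided multifusion subcategory of $\cZ(\cC)$: working in the semisimple setting, $\Im(F)$ is the full subcategory on those objects occurring as direct summands of $F(X)$ for $X\in\cB$, and this is closed under tensor product, duals, direct sums, and retracts (and contains $1_{\cZ(\cC)}=F(1_\cB)$), hence is a fusion subcategory. Second---and this is the key point---$\cZ(\cC)$ is a \emph{nondegenerate} braided multifusion category. For $\cC$ indecomposable multifusion this is classical (e.g.\ \cite{DGNO2010braided, DMNO}), and the general case follows by writing $\cC\cong\bigoplus_i\cC_i$ as a direct sum of indecomposable multifusion categories, whence $\cZ(\cC)\cong\bigoplus_i\cZ(\cC_i)$ is an orthogonal direct sum of nondegenerate braided fusion categories.

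Granting these, I would invoke Müger's double-centralizer theorem in the form of \cite[Theorem 3.10, Corollary 3.11]{DGNO2010braided}: in a nondegenerate braided fusion category every fusion subcategory $\cD$ satisfies $\cD''=\cD$. Applying this with $\cD=\Im(\cB)$ yields $\cZ(\cC,\cZ(\cC,\cB))=\Im(\cB)''=\Im(\cB)$, as desired. In the decomposable case one first observes that a fusion subcategory $\cD\subseteq\bigoplus_i\cZ(\cC_i)$ splits as $\cD\cong\bigoplus_i(\cD\cap\cZ(\cC_i))$ and that the Müger centralizer is computed factorwise (objects in distinct orthogonal summands centralize one another trivially), so the fusion statement applies on each factor.

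The only genuine obstacle is bookkeeping in the multifusion (possibly decomposable) case:\ checking that nondegeneracy of $\cZ(\cC)$, the splitting of a fusion subcategory along the orthogonal factors of $\cZ(\cC)$, and the factorwise computation of centralizers are all compatible, so that the fusion-category double-centralizer theorem can be applied componentwise. Everything else is a direct application of known results.
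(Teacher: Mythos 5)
Your proposal is correct and follows exactly the paper's own argument: the paper's proof is the one-line observation that the claim ``follows immediately from \cite[Corollary~3.11]{DGNO2010braided} and the fact that $\cZ(\cC)$ is nondegenerate,'' which is precisely your invocation of M\"uger's double-centralizer theorem applied to the fusion subcategory $\Im(\cB)$ of the nondegenerate category $\cZ(\cC)$. The extra bookkeeping you carry out (that $\Im(\cB)$ is indeed a multifusion subcategory, and the componentwise reduction in the decomposable multifusion case) is detail the paper leaves implicit, not a different route.
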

\begin{proof}
 This follows immediately from \cite[Corollary~3.11]{DGNO2010braided} and the fact that $\cZ(\cC)$ is nondegenerate.
\end{proof}
\begin{cor}
 Let $\cB$ be a braided multifusion $1$-category and $\cC$ be a $\cB$ central multifusion $1$-category.
 Then $\cZ_2(\cZ(\cC,\cB))\cong\cZ_2(\Im(B))$.
 \label{cor:centerOfCentralizers}
\end{cor}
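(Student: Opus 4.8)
The plan is to compute both symmetric centers as intersections of fusion subcategories sitting inside the nondegenerate braided multifusion category $\cZ(\cC)$, and then to observe that these two intersections literally coincide.

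First I would record the general principle that, for any braided (multi)fusion subcategory $\cD \subseteq \cZ(\cC)$, the symmetric center may be computed inside the ambient category as $\cZ_2(\cD) \cong \cD \cap \cZ(\cC,\cD)$, where $\cZ(\cC,\cD)$ denotes the M\"uger centralizer of $\cD$ in $\cZ(\cC)$. This is immediate from the definitions: since $\cD$ inherits its braiding from $\cZ(\cC)$, an object of $\cD$ lies in $\cZ_2(\cD)$ exactly when its double braiding with every object of $\cD$ is trivial, i.e.\ exactly when it also lies in the centralizer $\cZ(\cC,\cD)$. This uses only that the inclusion $\cD \hookrightarrow \cZ(\cC)$ is a faithful braided embedding.

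Next I would apply this principle twice. Taking $\cD = \cZ(\cC,\cB)$ and invoking Lemma~\ref{lem:BF1CDoubleCommutant}, whose conclusion is $\cZ(\cC,\cZ(\cC,\cB)) \cong \Im(\cB)$, yields $\cZ_2(\cZ(\cC,\cB)) \cong \cZ(\cC,\cB) \cap \Im(\cB)$. Taking instead $\cD = \Im(\cB)$, and using that the centralizer of $\Im(\cB)$ in $\cZ(\cC)$ is by definition $\cZ(\cC,\cB)$ (the centralizer depending only on the image of $\cB$), yields $\cZ_2(\Im(\cB)) \cong \Im(\cB) \cap \cZ(\cC,\cB)$. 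The two right-hand sides are the same full fusion subcategory of $\cZ(\cC)$, so the asserted equivalence follows canonically.

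The only point requiring genuine care --- and the closest thing to an obstacle --- is confirming that the ``compute the M\"uger center inside the ambient nondegenerate category'' formula, together with the manipulation of intersections of fusion subcategories, remains valid in the multifusion rather than strictly fusion setting. This is standard once one uses the nondegeneracy of $\cZ(\cC)$ already exploited in the proof of Lemma~\ref{lem:BF1CDoubleCommutant}, so no serious difficulty is expected.
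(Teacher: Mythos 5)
Your proof is correct and is essentially identical to the paper's: both arguments compute each symmetric center as an intersection inside the nondegenerate category $\cZ(\cC)$, namely $\cZ_2(\Im(\cB)) \cong \Im(\cB)\cap\cZ(\cC,\cB)$ and $\cZ_2(\cZ(\cC,\cB)) \cong \cZ(\cC,\cB)\cap\cZ(\cC,\cZ(\cC,\cB))$, and then invoke Lemma~\ref{lem:BF1CDoubleCommutant} to identify $\cZ(\cC,\cZ(\cC,\cB))$ with $\Im(\cB)$, so that both sides are the same intersection of two subcategories of $\cZ(\cC)$.
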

\begin{proof}
 Since $\cZ_2(\Im(\cB))\subseteq\Im(\cB)\subseteq\cZ(\cC)$, we have $\cZ_2(\Im(\cB))\cong \Im(\cB)\cap\cZ(\cC,\cB)$.
 Similarly, $\cZ_2(\cZ(\cC,\cB))\cong\cZ(\cC,\cB)\cap \cZ(\cC,\cZ(\cC,\cB))$.
 Applying Lemma~\ref{lem:BF1CDoubleCommutant}, we see that both symmetric centers are the intersection of the same two subcategories of $\cZ(\cC)$.
\end{proof}
We also recall the following definitions from \cite{DNO2013}.
\begin{defn}
 \label{def:nondegOver}
 If $\cE$ is a symmetric multifusion $1$-category, an \textit{$\cE$-nondegenerate braided multifusion $1$-category} is a braided multifusion $1$-category $\cB$ with a choice of equivalence $\cE\to\cZ_2(\cB)$.
 
 If $\cB_1$ and $\cB_2$ are two $\cE$-nondegenerate braided multifusion $1$-categories, then $\cB_1\boxtimes_{\cE}\cB_2$ denotes the relative tensor product using the chosen maps $\cE\cong\cZ_2(\cB_j)\to\cB_j$.
\end{defn}
\begin{defn}
 \label{def:WittEqOver}
 If $\cB_1$ and $\cB_2$ are $\cE$-nondegenerate braided multifusion categories, an \textit{$\cE$-Witt equivalence} from $\cB_1$ to $\cB_2$ is a multifusion category $\cC$ together with a functor $\cB_1\boxtimes\cB_2^{\rev}\to \cZ(\cC)$ which factors through $\cB_1\boxtimes_{\cE}\cB_2^{\rev}$ such that $\cZ(\cC,\cB_1\boxtimes\cB_2^{\rev})\cong\cZ(\cC,\cB_1\boxtimes_{\cE}\cB_2^{\rev})\cong\cE$.
\end{defn}
\begin{rem}
 \label{rem:defWittEqOverCorrect}
 By Lemma~\ref{lem:BF1CDoubleCommutant}, $\cZ(\cC,\cE)\cong\cB_1\boxtimes_{\cE}\cB_2^{\rev}$, so Definition~\ref{def:WittEqOver} agrees with \cite[Def.~5.1]{DNO2013}.
\end{rem}

The next result generalizes \cite[Theorem~2.18]{MR4498161}.
\begin{thm}
 \label{thm:invertibleIFFWittEq}
Suppose $\cC:\cB_1\nrightarrow\cB_2$ is a 1-morphism in $\Mor_2^{\sss}$.
The following are equivalent.
\begin{enumerate}
\item $\cC$ is invertible.
\item $\cZ_2(\cB_1)\cong\cZ_2(\cB_2)=:\cE$ and
$\cC$ is an $\cE$-Witt equivalence.
\item $\cC$ is 2-faithful and 2-dominant.
\end{enumerate}

\end{thm}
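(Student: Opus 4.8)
The plan is to prove the cycle of implications by first isolating the purely braided-categorical equivalence $(2)\Leftrightarrow(3)$ inside the nondegenerate center $\cZ(\cC)$, and then relating both conditions to invertibility $(1)$ through the dual $1$-morphism $\cC^{\mp}$. Throughout I would use that $\cZ(\cC)$ is nondegenerate (being the center of a multifusion category) and that, since the central structure comes from a braided functor $\cB_1\boxtimes\cB_2^{\rev}\to\cZ(\cC)$, the subcategories $\Im(\cB_1)$ and $\Im(\cB_2^{\rev})$ centralize one another. By Remark~\ref{rem:mor2Decomp} I may reduce to the case where $\cB_1,\cB_2$ and $\cC$ are fusion, so that fusion-subcategory dimension counting in $\cZ(\cC)$ is available.

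For $(3)\Rightarrow(2)$: 2-faithfulness and 2-dominance say $\cB_1\xrightarrow{\sim}\cZ(\cC,\cB_2^{\rev})$ and $\cB_2^{\rev}\xrightarrow{\sim}\cZ(\cC,\cB_1)$; in particular $F_1,F_2$ are fully faithful, and $\Im(\cB_1),\Im(\cB_2^{\rev})$ are mutual centralizers in $\cZ(\cC)$. Corollary~\ref{cor:centerOfCentralizers} then yields $\cZ_2(\cB_1)\cong\cZ_2(\Im(\cB_1))\cong\Im(\cB_1)\cap\Im(\cB_2^{\rev})\cong\cZ_2(\cB_2^{\rev})\cong\cZ_2(\cB_2)=:\cE$, and the same intersection computes $\cZ(\cC,\cB_1\boxtimes\cB_2^{\rev})\cong\cE$; since the two embeddings of $\cE$ into $\cZ(\cC)$ then agree, the central functor factors through $\cB_1\boxtimes_\cE\cB_2^{\rev}$, exhibiting $\cC$ as an $\cE$-Witt equivalence. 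Conversely, for $(2)\Rightarrow(3)$ I would use Remark~\ref{rem:defWittEqOverCorrect}, giving $\cZ(\cC,\cE)\cong\cB_1\boxtimes_\cE\cB_2^{\rev}$, together with the containments $\Im(\cB_2^{\rev})\subseteq\cZ(\cC,\cB_1)\subseteq\cZ(\cC,\cE)$ and the standard fact that inside $\cB_1\boxtimes_\cE\cB_2^{\rev}$ the centralizer of the $\cE$-nondegenerate factor $\cB_1$ is exactly $\cB_2^{\rev}$, to conclude $\cZ(\cC,\cB_1)\cong\Im(\cB_2^{\rev})$ (and symmetrically), i.e.\ 2-dominance and 2-faithfulness.

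For the link with invertibility I would use that $\Mor_2^{\sss}$ has duals with $\cC^{\vee}\cong\cC^{\mp}$, so that $\cC$ is invertible iff its inverse is $\cC^{\mp}$, iff $\cC\boxtimes_{\cB_2}\cC^{\mp}\cong\id_{\cB_1}$ and $\cC^{\mp}\boxtimes_{\cB_1}\cC\cong\id_{\cB_2}$. These composites are analyzed exactly as in the proof of Lemma~\ref{lem:0or1DomComposesMor2}: writing $\widetilde K$ for the image in $\cZ(\cC)\boxtimes\cZ(\cC^{\mp})$ of the canonical \'etale algebra of $\cB_2^{\rev}\boxtimes\cB_2$, one has $\cC\boxtimes_{\cB_2}\cC^{\mp}\cong\Mod_{\widetilde K}(\cC\boxtimes\cC^{\mp})$ and $\cZ(\cC\boxtimes_{\cB_2}\cC^{\mp})\cong\Mod^{\loc}_{\widetilde K}(\cZ(\cC)\boxtimes\cZ(\cC^{\mp}))$. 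Granting $(2)\Leftrightarrow(3)$, the $\cE$-Witt hypothesis makes $\widetilde K$ precisely the condensable algebra whose local modules collapse this center onto $\cZ(\id_{\cB_1})$, yielding $(2)\Rightarrow(1)$; this is the incarnation in our setting of the group structure on the $\cE$-Witt group of \cite{DNO2013}. For $(1)\Rightarrow(3)$ I would run the identification backwards: invertibility forces $\cC\boxtimes_{\cB_2}\cC^{\mp}\cong\id_{\cB_1}$, hence $\FPdim(\cC)^2=\FPdim(\cB_1)\FPdim(\cB_2)$, and, after checking that $F_1,F_2$ are fully faithful, the always-valid inclusion $\Im(\cB_2^{\rev})\subseteq\cZ(\cC,\cB_1)$ becomes an equality via the dimension identity $\FPdim(\Im(\cB_1))\FPdim(\cZ(\cC,\cB_1))=\FPdim(\cZ(\cC))$ in the nondegenerate $\cZ(\cC)$.

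I expect the genuine obstacle to be precisely this last step $(1)\Rightarrow(3)$ (equivalently, showing that $\cC\boxtimes_{\cB_2}\cC^{\mp}\cong\id_{\cB_1}$ forces the centralizer conditions and not merely the numerical identity): one must promote the $\FPdim$ equality to an equivalence of subcategories and, more delicately, establish full faithfulness of $F_1$ and $F_2$ (ruling out zero summands via $0$-dominance/$0$-faithfulness of an invertible morphism, and proper faithfulness via the dimension comparison) while controlling the algebra $\widetilde K$ and its local modules over a \emph{general} symmetric base $\cE$ and in the multifusion rather than fusion setting. This is exactly the point at which the argument must genuinely generalize \cite[Theorem~2.18]{MR4498161} beyond $\cE=\Vect$, combining the reduction to fusion blocks of Remark~\ref{rem:mor2Decomp} with the center-of-relative-tensor-product computations of \cite{DNO2013,DMNO,MR4498161}.
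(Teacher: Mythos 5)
Your $(2)\Leftrightarrow(3)$ arguments are correct and essentially coincide with the paper's proof of $(2)\Rightarrow(3)$ (the paper proves only that direction, closing the cycle $(1)\Rightarrow(2)\Rightarrow(3)\Rightarrow(1)$ instead). But both of your implications involving invertibility have genuine gaps. For $(1)\Rightarrow(3)$, your $\FPdim$ strategy reduces the whole implication to full faithfulness of $F_1$ and $F_2$, which you explicitly leave open: the identity $\FPdim(\Im(F_1))\cdot\FPdim(\cZ(\cC,\cB_1))=\FPdim(\cZ(\cC))$ pins down $\cZ(\cC,\cB_1)$ only once $\FPdim(\Im(F_i))=\FPdim(\cB_i)$ is known for \emph{both} $i$; otherwise the inequalities run the wrong way ($\FPdim(\cZ(\cC,\cB_1))\geq\FPdim(\cB_2)\geq\FPdim(\Im(F_2))$ tells you nothing), and nothing forces $\Im(F_2)=\cZ(\cC,\cB_1)$. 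So this implication is not proved. The paper's proof of $(1)\Rightarrow(2)$ avoids dimensions and full faithfulness altogether: from $\cC\boxtimes_{\cB_2}\cC^{\mp}\cong\cB_1$ it computes $\cZ(\cC\boxtimes_{\cB_2}\cC^{\mp},\cB_1\boxtimes\cB_1^{\rev})\cong\cZ_2(\cB_1)$, identifies this centralizer, by construction of the relative tensor product, with the analogous centralizer for $\cC^{\mp}\boxtimes_{\cB_1}\cC$ to obtain $\cZ_2(\cB_1)\cong\cZ_2(\cB_2)=:\cE$, and then collapses $\cZ(\cC,\cB_1\boxtimes\cB_2^{\rev})$ onto $\cE$ using the factorization results of \cite{DNO2013} (Prop.~4.7 and Cor.~4.4).

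The second gap, which you do not flag, is $(2)\Rightarrow(1)$. Your condensation argument with $\widetilde{K}$ can at best show that $\cZ(\cC\boxtimes_{\cB_2}\cC^{\mp})$ agrees with $\cZ(\cB_1)$ compatibly with the central functors. But equivalence of $1$-morphisms in $\Mor_2^{\sss}$ means an invertible $2$-morphism, i.e.\ a Morita equivalence of the underlying multifusion categories compatible with the central structures; an identification of centers does not hand you such a bimodule without a further nontrivial bridge (from center identifications to central Morita equivalences), which you neither state nor prove. The idea you are missing --- and the engine of the paper's $(3)\Rightarrow(1)$ --- is that the candidate invertible $2$-morphisms are already in hand: by \cite{GwilliamScheimbauer}, the unit and counit of the adjunction $\cC\dashv\cC^{\mp}$ are $\cC$ itself viewed as a bimodule, and $\End_{\cC\boxtimes_{\cB_2}\cC^{\mp}}(\cC)\cong\cZ(\cC,\cB_2)$, so $\cC$ realizes a Morita equivalence $\cC\boxtimes_{\cB_2}\cC^{\mp}\simeq_{ME}\cZ(\cC,\cB_2^{\rev})$. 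Condition (3) says exactly that $\cZ(\cC,\cB_2^{\rev})\cong\cB_1$ (and dually $\cZ(\cC,\cB_1^{\rev})\cong\cB_2$), so the unit and counit are equivalences and $\cC$ is invertible --- a two-line argument that needs no local-module analysis over a general base $\cE$.
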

\begin{proof}

Let $\cC:\cB_1\nrightarrow\cB_2$ be a 1-morphism in $\Mor_2^{\sss}$. Recall that $\cZ(\cC^{\mp})\simeq \cZ(\cC)^{\rev}$, and we can equip $\cC^{\mp}$ with the braided tensor functor $\cB_2\boxtimes\cB_1^{\rev}\rightarrow \cZ(\cC)^{\rev}$. There is an adjunction between these two 1-morphisms, in which the unit and counit are both versions of ``$\cC$ as a $\cC\boxtimes_{\cB_1 \boxtimes \cB_2^\rev} \cC^{\mp}$-module'' \cite[Proposition 4.14]{GwilliamScheimbauer}\footnote{More precisely, since \cite{GwilliamScheimbauer} uses a different model of Morita higher categories than we do, one cannot simply quote their result, but one can quote their proof: by \cite{MR3415698}, it suffices to check that such an adjunction exists in the homotopy bicategory $\operatorname{ho}_2 \Mor_2^{\sss}$, and it is a routine exercise to unpack therein the graphical arguments of \cite{GwilliamScheimbauer}.}.
Now, it follows from general categorical nonsense that $\cC$ is invertible if and only if the unit and counit of this adjunction are equivalences.

\underline{(1)$\Rightarrow$(2):}
 Suppose $\cC:\cB_1\nrightarrow\cB_2$ is invertible.
 We claim $\cZ_2(\cB_1)\cong\cZ_2(\cB_2)$.
 Invertibility implies that $\cC\boxtimes_{\cB_2}\cC^{\mp}\cong\cB_1$ as $1$-morphisms $\cB_1\nrightarrow\cB_1$, so that
\[
\cZ(\cC\boxtimes_{\cB_2}\cC^{\mp},\cB_1\boxtimes\cB_1^{\rev})\cong\cZ(\cB_1,\cB_1\boxtimes\cB_1^{\rev})\cong\cZ_2(\cB_1)
\]
Similarly,
$\cZ(\cC^{\mp}\boxtimes_{\cB_1}\cC,\cB_2\boxtimes\cB_2^{\rev})\cong\cZ_2(\cB_2)$.
But by construction of the relative tensor product, 
\[\cZ(\cC\boxtimes_{\cB_2}\cC^{\mp},\cB_1\boxtimes\cB_1^{\rev})\cong\cZ(\cC\boxtimes\cC^{\mp},\cB_1\boxtimes\cB_2^{\rev}\boxtimes\cB_2\boxtimes\cB_2^{\rev})\cong\cZ(\cC^{\mp}\boxtimes_{\cB_1}\cC,\cB_2\boxtimes\cB_2^{\rev})\]
giving a canonical equivalence
$\cZ_2(\cB_1)\cong\cZ_2(\cB_2)=:\cE$.

We may now assume that the maps $\cE\to\cB_1\to\cZ(\cC)$ and $\cE\to\cB_2\to\cZ(\cC)$ are equivalent.
The canonical dominant tensor functor $\cC\boxtimes\cC^{\mp}\to\cC\boxtimes_{\cB_2}\cC^{\mp}$ factors through $\cC\boxtimes_{\cE}\cC^{\mp}$, and the image of $\cB_2\boxtimes\cB_2^{\rev}$ in $\cZ(\cC\boxtimes_{\cE}\cC^{\mp})$ is canonically equivalent to $\cB_2\boxtimes_{\cE}\cB_2^{\rev}$.
Together with invertibility of $\cC$, this implies that
\[
 \cE \cong \cZ(\cC\boxtimes_{\cB_2}\cC^{\mp},\cB_1\boxtimes\cB_1^{\rev})
 \cong 
 \cZ(\cC\boxtimes_{\cE}\cC^{\mp},(\cB_1\boxtimes\cB_1^{\rev})\boxtimes(\cB_2^{\rev}\boxtimes_\cE\cB_2))
 \]
 Since the action of $\cB_1\boxtimes\cB_2\to\cZ(\cC)$ factors through $\cB_1\boxtimes_{\cE}\cB_2^{\rev}$, the images of $\cE$ in all four tensorands are identified, so we may rewrite
 \[\cE \cong \cZ(\cC\boxtimes_{\cE}\cC^{\mp},\cB_1\boxtimes_{\cE}\cB_2^{\rev}\boxtimes_{\cE}\cB_2\boxtimes_{\cE}\cB_1^{\rev})\]
 By taking centralizers on both sides of \cite[Prop.~4.7]{DNO2013}, we factorize over $\cE$ to obtain
 \[\cE \cong \cZ(\cC,\cB_1\boxtimes_{\cE}\cB_2^{\rev})\boxtimes_\cE\cZ(\cC^{\mp},\cB_2\boxtimes_{\cE}\cB_1^{\rev})
 \cong \cZ(\cC,\cB_1\boxtimes\cB_2^{\rev})\boxtimes_{\cE}\cZ(\cC,\cB_1\boxtimes\cB_2^{\rev})^{\rev}
 \]
 By \cite[Corollary~4.4]{DNO2013}, we simply have
 $\cE \cong \cZ(\cZ(\cC,\cB_1\boxtimes\cB_2^{\rev}),\cE)$.
 Finally, since $\cZ(\cC,\cB_1\boxtimes\cB_2^{\rev})$ is $\cE$-nondegenerate, we may simplify to
 $\cE \cong \cZ(\cC,\cB_1\boxtimes\cB_2^{\rev})$.
 This shows that $\cC$ is indeed an $\cE$-Witt equivalence.
 
\underline{(2)$\Rightarrow$(3):}
Suppose $\cC:\cB_1\nrightarrow\cB_2$ is an $\cE$-Witt equivalence.
Then $\cZ(\cC,\cE)\cong\cB_1\boxtimes_{\cE}\cB_2^{\rev}$, so $\cZ(\cC,\cB_1)\cong\cB_2^{\rev}$, meaning $\cC$ is $2$-dominant, and $\cZ(\cC,\cB_2^{\rev})\cong\cB_1$, so $\cC$ is $2$-faithful.

\underline{(3)$\Rightarrow$(1):}
By the discussion before (1)$\Rightarrow$(2),
if $\cC$ is invertible, its inverse is $\cC^{\rm mp}: \cB_2\to \cB_1$.
Observe that $\cC$ carries a left action of $\cC\boxtimes_{\cB_2} \cC^{\rm mp}$, since the action of $\cB_2$ on $\cC$ is central.
By construction, $\End_{\cC\boxtimes_{\cB_2}\cC^{\rm mp}}(\cC)\cong\cZ(\cC,\cB_2)$.
Hence $\cC$ is a $\cB_1$-$\cB_1$ Morita equivalence
$\cC\boxtimes_{\cB_2} \cC^{\rm mp} \simeq_{ME} \cZ(\cC,\cB_2^{\rev})$,
and 2-faithfulness is equivalent to $\cZ(\cC,\cB_2^{\rev})\cong \cB_1$.

Dualizing the above argument, 
we have
$\cC^{\rm mp}\boxtimes_{\cB_1} \cC \simeq_{ME}\cZ(\cC,\cB_1^{\rev})\cong \cB_2$,
and we are finished.
\qedhere
\end{proof}

 Taking Theorem~\ref{thm:invertibleIFFWittEq} and Remark~\ref{rem:defWittEqOverCorrect} together, we get the following result.
\begin{cor}\label{rem:davidStatement1}
Let $\cB$ be a braided multifusion category and $\cC$ a multifusion category with a fully faithful braided functor $\cB \to \cZ(\cC)$.
Then, the braided functor $\cB \boxtimes \cZ(\cC, \cB) \to \cZ(\cC)$ defines an invertible $1$-morphism $\cC: \cB \nrightarrow \cZ(\cC, \cB)^{\rev}$ in $\Mor_2^{\sss}$. In particular, $\cZ(\cC, \cB)$ has the same symmetric center as $\cB$. 
 Conversely, any invertible $1$-morphism in $\Mor_s^{\sss}$ out of $\cB$ is equivalent to one of this form. 
%
%
%
%
%
\end{cor}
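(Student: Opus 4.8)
The plan is to deduce everything from Theorem~\ref{thm:invertibleIFFWittEq}, which already characterizes invertible $1$-morphisms in $\Mor_2^{\sss}$ as exactly those that are simultaneously $2$-faithful and $2$-dominant, so the work reduces to constructing the claimed $1$-morphism and checking those two conditions. First I would build the $1$-morphism: since $\cZ(\cC,\cB)$ is by definition the centralizer of $\Im(\cB)$ inside the nondegenerate braided category $\cZ(\cC)$, the two subcategories $\Im(\cB)$ and $\cZ(\cC,\cB)$ centralize each other (centralizing is a symmetric relation, as it is the vanishing of the double braiding), so the multiplication map assembles into a single braided tensor functor $\cB\boxtimes\cZ(\cC,\cB)\to\cZ(\cC)$. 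This is precisely the datum of a $1$-morphism $\cC:\cB\nrightarrow\cZ(\cC,\cB)^{\rev}$ in $\Mor_2^{\sss}$, since here $\cB_1=\cB$ and $\cB_2^{\rev}=\cZ(\cC,\cB)$.

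Next I would verify condition (3) of Theorem~\ref{thm:invertibleIFFWittEq}. For $2$-dominance, the functor $\cB_2^{\rev}\to\cZ(\cC,\cB_1)$ is, by the very construction above, the identity inclusion $\cZ(\cC,\cB)\to\cZ(\cC,\cB)$, hence an equivalence. For $2$-faithfulness, the relevant functor is $\cB_1\to\cZ(\cC,\cB_2^{\rev})=\cZ(\cC,\cZ(\cC,\cB))$; by the double-commutant identity of Lemma~\ref{lem:BF1CDoubleCommutant} the target is equivalent to $\Im(\cB)$, and the hypothesis that $\cB\to\cZ(\cC)$ is fully faithful makes the corestriction $\cB\to\Im(\cB)$ an equivalence. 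Thus $\cC$ is both $2$-faithful and $2$-dominant, so Theorem~\ref{thm:invertibleIFFWittEq} yields invertibility. The ``same symmetric center'' assertion then follows either from implication (1)$\Rightarrow$(2) of that theorem, or more directly from Corollary~\ref{cor:centerOfCentralizers} combined with full faithfulness, giving $\cZ_2(\cZ(\cC,\cB))\cong\cZ_2(\Im(\cB))\cong\cZ_2(\cB)$.

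For the converse, I would take an arbitrary invertible $1$-morphism $\cC:\cB\nrightarrow\cB_2$ and run Theorem~\ref{thm:invertibleIFFWittEq} backwards: invertibility forces $2$-faithfulness and $2$-dominance. The $2$-faithfulness condition says $\cB\to\cZ(\cC,\cB_2^{\rev})$ is an equivalence, and since $\cZ(\cC,\cB_2^{\rev})\hookrightarrow\cZ(\cC)$ is fully faithful this shows that the structural functor $\cB\to\cZ(\cC)$ is itself fully faithful, so the ambient data is of the type treated in the first half. The $2$-dominance condition says $\cB_2^{\rev}\to\cZ(\cC,\cB)$ is an equivalence, i.e.\ $\cB_2\simeq\cZ(\cC,\cB)^{\rev}$, and under this identification the given central structure $\cB\boxtimes\cB_2^{\rev}\to\cZ(\cC)$ is identified with the multiplication functor $\cB\boxtimes\cZ(\cC,\cB)\to\cZ(\cC)$ constructed above. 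Hence every invertible $1$-morphism out of $\cB$ is equivalent to one of the stated form.

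All of these steps are short, and the only one carrying genuine mathematical content is the $2$-faithfulness computation, where the double-commutant Lemma~\ref{lem:BF1CDoubleCommutant} (itself resting on nondegeneracy of $\cZ(\cC)$ via \cite[Corollary~3.11]{DGNO2010braided}) does the real work. I expect the main obstacle to be purely bookkeeping rather than conceptual:\ keeping track of which reversal $(-)^{\rev}$ and which of the two induced functors $F_1,F_2$ is in play at each stage, and confirming that the abstract $1$-morphism data really matches the concrete multiplication functor $\cB\boxtimes\cZ(\cC,\cB)\to\cZ(\cC)$ on the nose rather than merely up to some unspecified equivalence.
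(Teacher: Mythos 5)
Your proof is correct and takes essentially the same route as the paper, which derives this corollary in one line from Theorem~\ref{thm:invertibleIFFWittEq} together with Remark~\ref{rem:defWittEqOverCorrect}, the latter resting on the very double-commutant Lemma~\ref{lem:BF1CDoubleCommutant} that you invoke. The only cosmetic difference is that you unpack invertibility via criterion (3) ($2$-faithfulness plus $2$-dominance) rather than criterion (2) (the $\cE$-Witt-equivalence characterization); both are faces of the same three-way equivalence, and your bookkeeping of $F_1$, $F_2$, and the reversals is accurate.
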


We continue by introducing a number of other properties of 1-morphisms in the Morita 4-category that refine the above dominance conditions. Their importance will be seen later when they are used to characterize 2-dominant 1-morphisms and furthermore in the constructions of \S\ref{section:Wittsquares} used to define relevant subcategories of $\Mor^{\sss}_2$. 
In doing so, we will repeatedly consider the symmetric centers of the braided multifusion 1-categories under consideration. We therefore introduce the following abridged notation:

\begin{convention*}
    For a braided multifusion 1-category labeled $\cB_i$, we let $\cE_i:=\mathcal{Z}_{2}(\mathcal{B}_i)$ denote its symmetric center,
    and we write
    $\iota_i:\mathcal{E}_i\hookrightarrow \mathcal{B}_i$ 
    for the canonical inclusion.
\end{convention*}

In particular, let us record that, if $\cC:\cB_1\nrightarrow\cB_2$ is 1-dominant, so that the braided tensor functor $F_2:\mathcal{B}_2^{\rev}\rightarrow\cZ(\cC)$ is fully faithful, then we have that $\mathcal{E}_2$ is identified with $\Im(F_2|_{\mathcal{E}_2})$, its (dominant) image under $F_2$.

\begin{defn}
A 1-morphism $\cC:\cB_1\nrightarrow \cB_2$ in $\Mor_2^{\sss}$ is \emph{extensive} if it is 1-dominant and $\cZ(\cC,\cB_1\boxtimes\cB_2^{\rev})=\Im(F_2|_{\mathcal{E}_2})$.
\end{defn}

It follows from the definition that $\Im(F_1|_{\mathcal{E}_1})\subset \Im(F_2|_{\mathcal{E}_2})$ in $\cZ(\cC)$ for any extensive 1-morphism $\cC:\cB_1\nrightarrow \cB_2$. In particular, using that $\cE_2 := \cZ_{2}(\cB_2)$, one can extract a strong symmetric monoidal functor $\cE_1\rightarrow \cE_2$ from any extensive 1-morphism. For our purposes, it will be useful to single out and give an alternative characterization of the 1-morphisms satisfying this last property.

\begin{defn}\label{defn:stronglydominant}
A 1-morphism $\cC:\cB_1\nrightarrow\cB_2$ is \emph{strongly 1-dominant} if it is 1-dominant and $\Im(F_1|_{\mathcal{E}_1})\subset \Im(F_2|_{\mathcal{E}_2})$ in $\cZ(\cC)$.
\end{defn}

\begin{rem}
Every extensive 1-morphism $\cC:\cB_1\nrightarrow \cB_2$ is strongly 1-dominant, but the converse does not hold.
For example, given any non-trivial fusion 1-category $\cC$, the 1-morphism $\cC:\mathbf{Vect}\nrightarrow\mathbf{Vect}$ is strongly 1-dominant but not extensive.
\label{rem:extImpliesS1Dom}
\end{rem}

To a 1-morphism $\cC: \cB_1 \nrightarrow \cB_2$, we associate a symmetric lax monoidal functor $\mathcal{Z}_{2}(\mathcal{C}):\mathcal{E}_1\rightarrow \mathcal{E}_2$: 
\begin{equation}\label{eq:Muegerfunctor1morphisms}\cZ_{2}(\mathcal{C}):=\iota_2^*F_2^*F_1\iota_1:\cE_1\to \cE_2,\end{equation} where $F_2^*$ and $\iota_2^*$ are the right adjoints of $F_2$ and $\iota_2$, and are therefore symmetric lax monoidal.

We wish to understand when such a symmetric lax monoidal functor as~\eqref{eq:Muegerfunctor1morphisms} is strongly monoidal. In order to do so, we will use the following technical lemma.


\begin{lem}\label{lem:anonymous}
Let $\iota_{\cC}: \cC \hookrightarrow \cD$ be a fully faithful monoidal functor between multifusion $1$-categories, and $\cE\subseteq\cD$ the inclusion of a full multifusion sub-1-category.
Let $\iota_{\cC}^*: \cD \to \cC$ denote the (lax monoidal) right adjoint of $\iota_{\cC}$. 
Then$\iota_{\cC}^*|_{\cE}: \cE \to \cC$ is strongly monoidal if and only if the functor $\cE \subseteq\cD$ is contained in the full image of $\iota_{\cC}:\cC \hookrightarrow \cD$. 
\end{lem}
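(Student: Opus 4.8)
The plan is to exploit that, since $\cC$ and $\cD$ are multifusion (hence $\mathbb{C}$-linear, semisimple, with finitely many simples), the right adjoint $\iota_\cC^*$ exists and is tightly controlled by the full faithfulness of $\iota_\cC$. First I would record three structural facts. Because $\iota_\cC$ is fully faithful, its unit $\eta\colon \id_\cC \to \iota_\cC^*\iota_\cC$ is an isomorphism, and the counit $\epsilon_d\colon \iota_\cC\iota_\cC^*(d)\to d$ is an isomorphism precisely when $d$ lies in the essential image $\Im(\iota_\cC)$. Moreover, for a \emph{simple} object $X$ of $\cD$, I claim $\iota_\cC^*(X)=0$ whenever $X\notin\Im(\iota_\cC)$: indeed $\Hom_\cC(c,\iota_\cC^*X)\cong\Hom_\cD(\iota_\cC c,X)$ is nonzero only if $X$ is a summand of some $\iota_\cC c$, and since $\cC$ is semisimple every simple summand of $\iota_\cC c$ is of the form $\iota_\cC(\text{simple})$ and hence lies in $\Im(\iota_\cC)$; taking $c=\iota_\cC^*X$ then forces $\iota_\cC^*X=0$. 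This reduces everything to bookkeeping with simple objects.

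For the ``if'' direction, assume $\cE\subseteq\Im(\iota_\cC)$. Given $e,e'\in\cE$, the lax structure morphism $\mu_{e,e'}\colon \iota_\cC^*(e)\otimes\iota_\cC^*(e')\to\iota_\cC^*(e\otimes e')$ is by definition the adjunct $\iota_\cC^*(g)\circ\eta$ of the composite
\[ g\colon \iota_\cC\big(\iota_\cC^*(e)\otimes\iota_\cC^*(e')\big)\xrightarrow{\ \sim\ } \iota_\cC\iota_\cC^*(e)\otimes\iota_\cC\iota_\cC^*(e')\xrightarrow{\ \epsilon_e\otimes\epsilon_{e'}\ } e\otimes e', \]
where the first map is the (strong) monoidal structure of $\iota_\cC$. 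Since $e,e'\in\Im(\iota_\cC)$ the counits $\epsilon_e,\epsilon_{e'}$ are isomorphisms, so $g$ is an isomorphism; as $\eta$ is always an isomorphism here, $\mu_{e,e'}$ is an isomorphism. Hence $\iota_\cC^*|_\cE$ is strong monoidal. Note this direction only uses $\cE\subseteq\Im(\iota_\cC)$, not that $\cE$ is monoidally closed.

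The ``only if'' direction is the crux, and I would prove its contrapositive. Suppose $\cE\not\subseteq\Im(\iota_\cC)$; by semisimplicity there is a simple $X\in\Irr(\cE)$ with $X\notin\Im(\iota_\cC)$, so by the vanishing fact $\iota_\cC^*(X)=0$ and therefore $\iota_\cC^*(X)\otimes\iota_\cC^*(X^\vee)=0$. On the other hand, $\cE$ is multifusion hence rigid, so choosing a dual $X^\vee\in\cE$ the object $X\otimes X^\vee$ contains a simple summand $\mathbf{1}_i$ of the unit $\mathbf{1}_\cD$ (the image of the coevaluation, which splits in characteristic zero since $X$ lives in a single component). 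Since $\mathbf{1}_\cD=\iota_\cC(\mathbf{1}_\cC)$, every simple summand of $\mathbf{1}_\cD$ lies in $\Im(\iota_\cC)$, so $\iota_\cC^*(\mathbf{1}_i)\neq 0$ and hence $\iota_\cC^*(X\otimes X^\vee)\neq 0$. Thus the lax map $\mu_{X,X^\vee}\colon 0\to \iota_\cC^*(X\otimes X^\vee)$ cannot be an isomorphism, so $\iota_\cC^*|_\cE$ is not strong monoidal.

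The main obstacle I anticipate is the bookkeeping in the multifusion (rather than fusion) case: confirming that $\mathbf{1}_\cD=\iota_\cC(\mathbf{1}_\cC)$ genuinely captures the unit, that every simple summand of $\mathbf{1}_\cD$ lands in $\Im(\iota_\cC)$, and that for a simple $X$ concentrated in a single component $\cE_{ii}$ the coevaluation really exhibits a nonzero unit summand inside $X\otimes X^\vee$. These are all standard rigidity-and-semisimplicity facts in characteristic zero, but they are precisely where care is required.
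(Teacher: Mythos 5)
Your proof is correct and follows essentially the same route as the paper's: identify $\iota_{\cC}^*$ as the projection that kills simples outside the image of $\cC$, note that the ``if'' direction is then immediate, and use rigidity (a simple unit summand sitting inside $X\otimes X^\vee$) to show that a simple object of $\cE$ outside the image would make the lax constraint a map from $0$ to a nonzero object. The only differences are presentational --- you run the ``only if'' direction as a contrapositive and via unit/counit bookkeeping rather than the explicit decomposition $\cD \simeq \cC \oplus \cR$, and you are in fact slightly more careful than the paper about decomposing the multifusion unit into its simple summands $\mathbf{1}_i$.
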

\begin{proof}
Since $\cC\subset \cD$ is full and $\cC,\cD$ are semisimple, we have a direct sum decomposition $\cD \simeq \cC \oplus \cR$ as 1-categories with $\mathbf{1}_\cD=\mathbf{1}_\cC\in\cC$, and observe that $\iota_{\cC}^*: \cD \simeq \cC \oplus \cR \to \cC$ is the corresponding projection. 
In particular, if $\cE\subset\cC$, then $\iota^*_\cC|_\cE: \cE\to \cC$ is the inclusion $\cE\hookrightarrow \cC$, which is strong monoidal.

Conversely, suppose that $\iota^*_\cC|_\cE$ is strong monoidal. 
Let $E\in\cE$ be a simple object. It is also simple in $\cD$ given that $\cE$ is a full subcategory.
Since $\cE$ is rigid, we have a direct summand $\mathbf{1}_\cC=\iota^*_\cC(\mathbf{1}_\cD)\subset \iota^*_\cC(E^*\otimes E)\cong \iota^*_\cC(E)^*\otimes \iota^*_\cC(E)$,
and thus $\iota^*_\cC(E)\neq 0$. This forces $\iota^*_\cC(E)\in\cC$, as $\iota_{\cC}^*$ is the projection onto $\cC$. 
\end{proof}

\begin{lem}\label{lem:characterizationstrongly1dominant}
Let $\cC: \cB_1 \nrightarrow \cB_2$ be a $1$-dominant $1$-morphism. If $\cC$ is strongly $1$-dominant then $\cZ_{2}(\cC): \cE_1\to \cE_2$ is strongly monoidal.
\end{lem}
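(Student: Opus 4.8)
The plan is to read the composite $\cZ_{2}(\cC)=\iota_2^* F_2^* F_1 \iota_1$ from the inside out and to recognize that strong $1$-dominance is precisely the hypothesis needed to invoke Lemma~\ref{lem:anonymous} for the fully faithful functor $F_2$. Since $\cC$ is $1$-dominant, $F_2\colon\cB_2^{\rev}\to\cZ(\cC)$ is fully faithful, so Lemma~\ref{lem:anonymous} applies with $\iota_{\cC}=F_2$, $\cD=\cZ(\cC)$, and the role of $\cE$ played by the full image $\Im(F_1|_{\cE_1})$, which is a full multifusion subcategory of $\cZ(\cC)$ since it is the full image of the monoidal functor $F_1\iota_1$ out of a multifusion category. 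By the definition of strong $1$-dominance (Definition~\ref{defn:stronglydominant}) we have $\Im(F_1|_{\cE_1})\subseteq\Im(F_2|_{\cE_2})\subseteq\Im(F_2)$, i.e.\ $\Im(F_1|_{\cE_1})$ is contained in the full image of $F_2$. Thus Lemma~\ref{lem:anonymous} guarantees that the restriction $F_2^*|_{\Im(F_1|_{\cE_1})}$ is strongly monoidal. As $F_1\iota_1$ corestricts to a strongly monoidal functor $\cE_1\to\Im(F_1|_{\cE_1})$, I would conclude that $G:=F_2^* F_1 \iota_1\colon\cE_1\to\cB_2^{\rev}$ is a composite of strongly monoidal functors, hence strongly monoidal.

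The second step is to dispose of the outer adjoint $\iota_2^*$ by showing that $G$ already lands in $\cE_2\subseteq\cB_2^{\rev}$. Every object of $\Im(F_1|_{\cE_1})\subseteq\Im(F_2|_{\cE_2})$ is a summand of some $F_2(c)$ with $c\in\cE_2$, and the (additive) right adjoint $F_2^*$ sends it to a summand of $F_2^*F_2(c)\cong c$; since $\cE_2$ is a full, hence summand-closed, subcategory, this forces $G(\cE_1)\subseteq\cE_2$. Consequently $\iota_2^*$ plays no essential role: because $\iota_2\colon\cE_2\hookrightarrow\cB_2$ is fully faithful monoidal, we have $\iota_2^*\iota_2\cong\id_{\cE_2}$ as monoidal functors (equivalently, Lemma~\ref{lem:anonymous} applied to $\iota_2$ with $\cE=\cE_2$ shows $\iota_2^*|_{\cE_2}$ is strongly monoidal), so $\cZ_{2}(\cC)=\iota_2^* G$ is again a composite of strongly monoidal functors and therefore strongly monoidal.

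The only genuinely load-bearing point, and the place I would argue most carefully, is the interplay between the containment hypothesis and the lemma: strong $1$-dominance is engineered to be exactly the statement that $\Im(F_1|_{\cE_1})$ sits inside the full image of $F_2$, which is what powers the single nontrivial application of Lemma~\ref{lem:anonymous}. The surrounding bookkeeping---that $\Im(F_1|_{\cE_1})$ is a bona fide full multifusion subcategory, that right adjoints preserve the direct-summand relation in this semisimple setting, and that the lax comparison maps of the successive functors compose to isomorphisms---is routine but worth stating cleanly, since the whole argument is really the observation that the two adjoints $F_2^*$ and $\iota_2^*$ both restrict to strong monoidal functors on the subcategories through which $F_1\iota_1$ actually factors.
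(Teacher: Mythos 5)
Your proof is correct, and it follows the same basic plan as the paper's: analyze the composite $\cZ_{2}(\cC)=\iota_2^*F_2^*F_1\iota_1$ and control the two right adjoints via Lemma~\ref{lem:anonymous}. There is, however, a genuine difference in where the hypothesis of strong $1$-dominance enters, and your placement is the right one. The paper's proof asserts a chain of equivalences ``$\cC$ is $1$-dominant iff $F_2$ is fully faithful iff $F_2^*$ is monoidal iff $F_2^*F_1\iota_1$ is strong monoidal,'' and then invokes strong $1$-dominance only to handle the outer functor $\iota_2^*$. That chain is not correct as stated: for the $1$-dominant (but not strongly $1$-dominant) morphism $\cC=\Rep(G):\Rep(G)\nrightarrow\Vect$, the functor $F_2^*F_1\iota_1$ is the invariants functor $a\mapsto a^G$, which is lax but not strong monoidal. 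Your argument instead uses the containment $\Im(F_1|_{\cE_1})\subseteq\Im(F_2|_{\cE_2})\subseteq\Im(F_2)$ --- i.e.\ strong $1$-dominance --- exactly where it is needed, namely to apply Lemma~\ref{lem:anonymous} to the fully faithful functor $F_2$ and conclude that $F_2^*$ is strong monoidal on $\Im(F_1|_{\cE_1})$; and your second step, that $F_2^*F_1\iota_1$ then already lands in $\cE_2$ (each object being a summand of $F_2^*F_2(c)\cong c$ with $c\in\cE_2$, and $\cE_2$ being closed under summands), renders the $\iota_2^*$ step trivial, which is the same content the paper extracts from the containment $F_1(\cZ_{2}(\cB_1))\subseteq F_2(\cZ_{2}(\cB_2))$. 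So your write-up proves the lemma along the paper's intended route while repairing the one sloppy step in the paper's own proof.
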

\begin{proof}
We analyze the map 
 \[\cZ_{2}(F):=\iota_2^*F_2^*F_1\iota_1:\cZ_{2}(\cB_1)\to \cZ_{2}(\cB_2)\]
 where $F_1:\cB_1\to \cZ(\cC)$ and $F_2:\cB_2^{\rev}\to \cZ(\cC)$ are the action functors and $\iota_i:\cZ_{2}(\cB_i)\to\cB_i$ are the inclusions.
 The $1$-morphism $F$ is $1$-dominant if and only if $F_2$ is fully faithful, if and only if $F_2^*$ is monoidal, if and only if $F_2^*F_1\iota_1$ is strong monoidal. By assumption $F$ is 1-dominant and thus  $F_2^*F_1\iota_1$ is strong monoidal.  $\cZ_{2}(F)$ is strong monoidal if  $\iota_2^*|_{\Im(F_2^*F_1\iota_1)}$ is strong monoidal, which is true if  $F_1(\cZ_{2}(\cB_1))\subseteq F_2(\cZ_{2}(\cB_2))$. But this is the case since by assumption $F$ is also strongly $1$-dominant, so we conclude.
\end{proof}

In the next two results, we show that $\cZ_2$ is functorial on strongly $1$-dominant $1$-morphisms.
We first show that strongly 1-dominant 1-morphisms compose. In fact, we will prove a stronger statement in Proposition \ref{prop:Mugerfunctor} below.
\begin{lem}\label{lem:s1DomIFFZ2Monoidal}
The composite of two strongly $1$-dominant $1$-morphisms $\cC_1:\cB_1\nrightarrow\cB_2$ and $\cC_2:\cB_2\nrightarrow\cB_3$ is strongly $1$-dominant.
\end{lem}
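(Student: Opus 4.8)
The plan is to leverage that $1$-dominance of the composite $\cG:=\cC_1\boxtimes_{\cB_2}\cC_2$ is already guaranteed by Lemma~\ref{lem:0or1DomComposesMor2}, so that only the image condition of Definition~\ref{defn:stronglydominant} remains to be verified. Following the proof of Lemma~\ref{lem:0or1DomComposesMor2}, I would first reduce to the case where $\cB_1,\cB_2,\cB_3,\cC_1,\cC_2$ are all fusion: the summand of $\cB_1$ acting as zero on $\cC_1$ contributes nothing to $\Im(F_1|_{\cE_1})$ (its symmetric center is sent to $0$), so it imposes no extra condition, and the decomposition of Remark~\ref{rem:mor2Decomp} then lets me work one indecomposable summand at a time.

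Write $F^{(1)}_1:\cB_1\to\cZ(\cC_1)$, $F^{(1)}_2:\cB_2^{\rev}\to\cZ(\cC_1)$ and $F^{(2)}_2:\cB_2\to\cZ(\cC_2)$, $F^{(2)}_3:\cB_3^{\rev}\to\cZ(\cC_2)$ for the four action functors, and let $G_1:\cB_1\to\cZ(\cG)$, $G_3:\cB_3^{\rev}\to\cZ(\cG)$ be the induced actions on $\cG$. As in the proof of Lemma~\ref{lem:0or1DomComposesMor2}, I would use the identification $\cZ(\cG)\cong\Mod^{\loc}_{\widetilde{K}}(\cZ(\cC_1)\boxtimes\cZ(\cC_2))$, where $\widetilde{K}$ is the image under $F^{(1)}_2\boxtimes F^{(2)}_2$ of the canonical \'etale algebra $K\in\cB_2^{\rev}\boxtimes\cB_2$. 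Under this identification, $G_1$ sends $X$ to the free $\widetilde{K}$-module on $F^{(1)}_1(X)\boxtimes\mathbf 1$ and $G_3$ sends $Y$ to the free $\widetilde{K}$-module on $\mathbf 1\boxtimes F^{(2)}_3(Y)$; both are local because $\cB_1$, resp.\ $\cB_3^{\rev}$, centralizes the image of $\cB_2$ out of which $\widetilde{K}$ is built.

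The key step is the \emph{balancing relation}: for $W\in\cB_2$ the free $\widetilde{K}$-modules on $F^{(1)}_2(W)\boxtimes\mathbf 1$ and on $\mathbf 1\boxtimes F^{(2)}_2(W)$ are canonically isomorphic, which is precisely the statement that the right $\cB_2$-action on $\cC_1$ and the left $\cB_2$-action on $\cC_2$ are glued in $\cG$. Restricted to the symmetric subcategory $\cE_2=\cZ_2(\cB_2)=\cZ_2(\cB_2^{\rev})$ the distinction between the $\cB_2$- and $\cB_2^{\rev}$-actions disappears, keeping the bookkeeping clean. Now take $E\in\cE_1$. Since $\cC_1$ is $1$-dominant, $F^{(1)}_2|_{\cE_2}$ is fully faithful with image exactly $\cE_2$, so strong $1$-dominance of $\cC_1$ yields $F^{(1)}_1(E)\cong F^{(1)}_2(E')$ for some $E'\in\cE_2$. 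Applying the balancing relation and then strong $1$-dominance of $\cC_2$, which gives $F^{(2)}_2(E')\cong F^{(2)}_3(E'')$ for some $E''\in\cE_3$, I obtain
\[
G_1(E)\;\cong\;\widetilde{K}\otimes\bigl(F^{(1)}_2(E')\boxtimes\mathbf 1\bigr)\;\cong\;\widetilde{K}\otimes\bigl(\mathbf 1\boxtimes F^{(2)}_2(E')\bigr)\;\cong\;\widetilde{K}\otimes\bigl(\mathbf 1\boxtimes F^{(2)}_3(E'')\bigr)\;=\;G_3(E'').
\]
Hence $\Im(G_1|_{\cE_1})\subseteq\Im(G_3|_{\cE_3})$, which together with $1$-dominance shows that $\cG$ is strongly $1$-dominant.

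I expect the main obstacle to be the careful justification of the balancing isomorphism and of the free-module formulas for $G_1$ and $G_3$, that is, verifying that the $\cB_1$- and $\cB_3^{\rev}$-central structures on $\cG$ really are transported to these free modules under the identification $\cZ(\cG)\cong\Mod^{\loc}_{\widetilde{K}}(\cZ(\cC_1)\boxtimes\cZ(\cC_2))$, and that all modules in sight are local. The reduction to the fusion case needs only a short check that the zero-acting summands of the $\cB_i$ do not affect the image condition, which is routine.
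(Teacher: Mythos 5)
Your proof is correct and takes essentially the same route as the paper's: $1$-dominance of the composite from Lemma~\ref{lem:0or1DomComposesMor2}, followed by the chain $\Im(\cE_1)\subseteq\Im(\cE_2)\subseteq\Im(\cE_3)$ inside $\cZ(\cC_1\boxtimes_{\cB_2}\cC_2)$, where your ``balancing relation'' is precisely the paper's assertion that the map $\cZ(\cC_1)\boxtimes\cZ(\cC_2)\to\cZ(\cC_1\boxtimes_{\cB_2}\cC_2)$ identifies the two images of $\cE_2$. The only differences are presentational: the paper states that identification without unfolding the \'etale-algebra/local-module justification you supply, and it skips the reduction to the fusion case, since the image argument works verbatim for multifusion categories.
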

\begin{proof}
 By Lemma~\ref{lem:0or1DomComposesMor2}, the composite of strongly $1$-dominant $1$-morphisms is $1$-dominant.
 It remains to show that the image of $\cE_1$ in $\cZ(\cC_1\boxtimes_{\cB_2}\cC_2)$ is contained in the image of $\cE_3$.
 Since $\cC_1$ is strongly $1$-dominant, $\Im(\cE_1)\subseteq\Im(\cE_2)$ in $\cZ(\cC_1)$, and since $\cC_2$ is strongly $1$-dominant, $\Im(\cE_2)\subseteq\Im(\cE_3)\subseteq \cZ(\cC_2)$.
 The map $\cZ(\cC_1)\boxtimes\cZ(\cC_2)\cong\cZ(\cC_1\boxtimes\cC_2)\to\cZ(\cC_1\boxtimes_{\cB_2}\cC_2)$ identifies the images of $\cE_2$ in each factor, so within $\cZ(\cC_1\boxtimes_{\cB_2}\cC_2)$, we have $\Im(\cE_1)\subseteq\Im(\cE_2)\subseteq\Im(\cE_3)$, completing the proof.
\end{proof}

Now we check the functoriality of $\cZ_2$.
\begin{lem}\label{lem:symmetriccompose}
Given strongly $1$-dominant $1$-morphisms $\cC_1:\cB_1\nrightarrow\cB_2$ and $\cC_2:\cB_2\nrightarrow\cB_3$ the composite of the associated symmetric monoidal functors $\cZ_{2}(\cC_1)$ and $\cZ_{2}(\cC_2)$ defined in~\eqref{eq:Muegerfunctor1morphisms} agrees with the symmetric monoidal functor $\cZ_{2}(\cC_1\boxtimes_{\cB_2} \cC_2)$ associated to their composite $1$-morphism. 
\end{lem}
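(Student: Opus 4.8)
The plan is to reduce the statement to a \emph{universal characterization} of the functor attached to a strongly $1$-dominant $1$-morphism, and then to check that both $\cZ_{2}(\cC_1\boxtimes_{\cB_2}\cC_2)$ and the composite $\cZ_{2}(\cC_2)\circ\cZ_{2}(\cC_1)$ satisfy it. Throughout I write $F^1_1,F^1_2$ for the components $\cB_1\to\cZ(\cC_1)$ and $\cB_2^{\rev}\to\cZ(\cC_1)$ of the central functor of $\cC_1$, similarly $F^2_2,F^2_3$ for $\cC_2$, and $G_1,G_3$ for the components of the composite $\cC:=\cC_1\boxtimes_{\cB_2}\cC_2$.

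First I would prove a refinement of Lemma~\ref{lem:characterizationstrongly1dominant}: for a strongly $1$-dominant $\cC:\cB_1\nrightarrow\cB_2$, the functor $\cZ_{2}(\cC):\cE_1\to\cE_2$ is the unique strong symmetric monoidal functor $W$ admitting a symmetric monoidal natural isomorphism $F_2|_{\cE_2}\circ W\cong F_1|_{\cE_1}$ of functors $\cE_1\to\cZ(\cC)$. For existence, $1$-dominance makes $F_2$ fully faithful, so $F_2^*$ restricts to a strong monoidal inverse of $F_2$ on $\Im(F_2)$; strong $1$-dominance gives $\Im(F_1|_{\cE_1})\subseteq\Im(F_2|_{\cE_2})$, so $F_2^*F_1\iota_1$ takes values in the copy of $\cE_2$ inside $\cB_2^{\rev}$ and is unaffected by the projection $\iota_2\iota_2^*$. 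Unwinding the definition \eqref{eq:Muegerfunctor1morphisms} then gives $F_2|_{\cE_2}\circ\cZ_{2}(\cC)\cong F_2F_2^*F_1\iota_1\cong F_1|_{\cE_1}$. Uniqueness follows because a fully faithful symmetric monoidal functor $F_2|_{\cE_2}$ induces a fully faithful functor on categories of symmetric monoidal functors into $\cZ(\cC)$, so the space of $W$ with the displayed property is empty or contractible.

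Next I would recall from the proof of Lemma~\ref{lem:0or1DomComposesMor2} the description of $\cC$ via the canonical étale algebra: with $\widetilde K$ the image in $\cZ(\cC_1\boxtimes\cC_2)\cong\cZ(\cC_1)\boxtimes\cZ(\cC_2)$ of the canonical algebra of $\cB_2^{\rev}\boxtimes\cB_2$, one has $\cZ(\cC)\cong\Mod^{\loc}_{\widetilde K}(\cZ(\cC_1)\boxtimes\cZ(\cC_2))$ with braided functor $\Phi:\cZ(\cC_1)\boxtimes\cZ(\cC_2)\to\cZ(\cC)$, and the composite's central actions are $G_1=\Phi\circ(F^1_1\boxtimes\mathbf 1)$ and $G_3=\Phi\circ(\mathbf 1\boxtimes F^2_3)$. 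The essential input is that the relative tensor product \emph{balances} the two $\cB_2$-actions: for $Y\in\cB_2$ there is a natural monoidal isomorphism $\beta_Y:\Phi(F^1_2(Y)\boxtimes\mathbf 1)\cong\Phi(\mathbf 1\boxtimes F^2_2(Y))$, symmetric once restricted to $\cE_2$; this is encoded in the $\widetilde K$-module structure, and I would cite the balancing in \cite{DNO2013,DMNO,MR4498161}. The proof is then a chase on $X\in\cE_1$: the characterization applied to $\cC_1$ gives $F^1_1(X)\cong F^1_2(\cZ_{2}(\cC_1)X)$, applying $\Phi$ and $\beta$ gives $G_1(X)\cong\Phi(\mathbf 1\boxtimes F^2_2(\cZ_{2}(\cC_1)X))$, and the characterization applied to $\cC_2$ at $\cZ_{2}(\cC_1)X\in\cE_2$ gives $F^2_2(\cZ_{2}(\cC_1)X)\cong F^2_3(\cZ_{2}(\cC_2)\cZ_{2}(\cC_1)X)$, so $G_1(X)\cong G_3(\cZ_{2}(\cC_2)\cZ_{2}(\cC_1)X)$. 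Each step is a symmetric monoidal natural isomorphism, yielding $G_1|_{\cE_1}\cong G_3|_{\cE_3}\circ(\cZ_{2}(\cC_2)\circ\cZ_{2}(\cC_1))$; since $\cC$ is strongly $1$-dominant by Lemma~\ref{lem:s1DomIFFZ2Monoidal}, its characterization forces $\cZ_{2}(\cC)\cong\cZ_{2}(\cC_2)\circ\cZ_{2}(\cC_1)$ as symmetric monoidal functors.

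I expect the main obstacle to be producing the balancing isomorphism $\beta$ as a genuinely \emph{monoidal} and natural isomorphism, symmetric on $\cE_2$, rather than a mere objectwise isomorphism, since this is precisely what promotes the object-level chase to an equality of symmetric monoidal functors. Once the universal characterization and this balancing are in place, the remainder is formal.
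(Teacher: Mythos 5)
Your proposal is correct, and it reaches the conclusion by a genuinely different route than the paper, although both share the same underlying model: the composite $\cC_1\boxtimes_{\cB_2}\cC_2$ is realized via the canonical \'etale algebra $K$ in $\cB_2^{\rev}\boxtimes\cB_2$, with center $\Mod^{\loc}_{\widetilde{K}}(\cZ(\cC_1)\boxtimes\cZ(\cC_2))$, and both arguments invoke strong $1$-dominance at the end to dispose of $\iota_2\iota_2^*$. The paper's core mechanism is computational: for $X\in\cB_1$, $Z\in\cB_3$ it evaluates $\Hom_{\cZ(\cD)}(H_1(X),H_2(Z))$ by a chain of free-module and duality adjunctions, concludes via Yoneda that $H_2^*H_1\cong G_2^*G_1F_2^*F_1$ as \emph{lax} tensor functors $\cB_1\to\cB_3$, and only then restricts to symmetric centers. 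You instead avoid the lax adjoint calculus entirely: you upgrade Lemma~\ref{lem:characterizationstrongly1dominant} to a uniqueness statement (a strong symmetric functor $W$ with $F_2|_{\cE_2}\circ W\cong F_1|_{\cE_1}$ is unique up to unique monoidal isomorphism, because postcomposition with the fully faithful $F_2|_{\cE_2}$ is fully faithful on monoidal functor categories), and then check that $\cZ_{2}(\cC_2)\circ\cZ_{2}(\cC_1)$ satisfies this property for the composite via the balancing isomorphism. What the paper's route buys is self-containedness (only adjunctions and the explicit form of $\widetilde{K}$ are used); what yours buys is that the formula \eqref{eq:Muegerfunctor1morphisms} is touched exactly once, in the existence half of the characterization, after which functoriality is a formal consequence of a universal property — arguably a cleaner explanation of \emph{why} the lemma is true.

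Two points should be tightened, though neither is a gap. First, your $\Phi$ does not exist as a functor from all of $\cZ(\cC_1)\boxtimes\cZ(\cC_2)$ into local modules, since free modules over $\widetilde{K}$ need not be local; this is harmless because every object you feed it centralizes $\widetilde{K}$ — the images of $F^1_1$ and $F^2_3$ do so by centrality of the bimodule structures, and the images of $F^1_2$, $F^2_2$ do so precisely when restricted to $\cE_2$ (transparency), which is the only place you apply the balancing $\beta$ — but this should be said explicitly, since otherwise the claim that $\beta$ is an isomorphism \emph{in $\cZ(\cC)$} does not parse. Second, the balancing you defer to the literature is indeed standard: under $\Mod_{K}(\cB_2^{\rev}\boxtimes\cB_2)\simeq\cB_2$ both free-module functors $Y\mapsto K\otimes(Y\boxtimes\mathbf{1})$ and $Y\mapsto K\otimes(\mathbf{1}\boxtimes Y)$ are identified, monoidally and naturally, with the multiplication functor $\cB_2^{\rev}\boxtimes\cB_2\to\cB_2$ restricted to the two factors, which gives $\beta$ with all the coherence you need; transporting this along $F^1_2\boxtimes F^2_2$ to $\widetilde{K}$ is immediate.
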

\begin{proof}
 We write  $F:\mathcal{B}_1\boxtimes\mathcal{B}_2^{\rev}\rightarrow\mathcal{Z}(\cC_1)$, $G:\mathcal{B}_2\boxtimes\mathcal{B}_3^{\rev}\rightarrow\mathcal{Z}(\cC_2)$ for the two braided tensor functors providing $\cC_1$ and $\cC_2$ with their central structures. We use $F_1$, $F_2$ to denote the two restrictions of $F$ as above, and $G_1$, $G_2$ to denote the two restrictions of $G$. 
Let $K$ be the canonical commutative separable algebra in $\mathcal{B}_2^{\rev}\boxtimes\mathcal{B}_2$, i.e.\ $\mathbf{Mod}_{K}(\mathcal{B}_2^{\rev}\boxtimes\mathcal{B}_2)\simeq\mathcal{B}_2$ as multifusion 1-categories. Let $\tilde{K}$ be the image of $K$ in $\mathcal{Z}(\mathcal{C}_1)\boxtimes \mathcal{Z}(\mathcal{C}_2)$. Thenwe have $$\mathbf{Mod}^{\loc}_{\tilde{K}}
(\mathcal{Z}(\mathcal{C}_1)\boxtimes \mathcal{Z}(\mathcal{C}_2))\simeq \mathcal{Z}(\mathbf{Mod}^{\loc}_{\tilde{K}}(\mathcal{C}_1\boxtimes \mathcal{C}_2))$$ as braided multifusion 1-categories by \cite[Theorem 3.20]{DNO2013}. The multifusion 1-category $\mathcal{D}:=\mathbf{Mod}^{\loc}_{\tilde{K}}(\mathcal{C}_1\boxtimes \mathcal{C}_2)$ equipped with the canonical braided functor $H:\mathcal{B}_1\boxtimes\mathcal{B}_3^{\rev}\rightarrow\mathcal{Z}(\mathcal{D})$ given by $X\boxtimes Z\mapsto \tilde{K}\otimes (F_1(X)\boxtimes G_2(Z))$ represents the composite of $\cC_1: \cB_1 \nrightarrow \cB_2$ and $\cC_2: \cB_2 \nrightarrow \cB_3$ in $\mathbf{Mor}_2^{ss}$. We write $H_1$, $H_2$ for the two restrictions of $H$. Then, given any objects $X$ in $\cB_1$ and $Z$ in $\cB_3$, we have natural isomorphisms
\begin{eqnarray*}
    \Hom_{\cZ(\cD)}(H_1(X),\, H_2(Z)) 
&\cong& \Hom_{\tilde{K}} (\tilde{K} \otimes ( F_1(X) \boxtimes \mathbf{1}),\, \tilde{K}\otimes (\mathbf{1} \boxtimes G_2(Z))) \\
&\cong & \Hom_{\cZ(\cC_1)\boxtimes \cZ(\cC_2)} ( F_1(X) \boxtimes \mathbf{1},\,  \tilde{K}\otimes (\mathbf{1} \boxtimes G_2(Z))) \\
&\cong& \Hom_{\cZ(\cC_1)\boxtimes \cZ(\cC_2)}  ( F_1(X) \boxtimes G_2(Z^*),\, \tilde{K}) \\
&\cong& \Hom_{\cZ(\cC_1)\boxtimes \cZ(\cC_2)}  ( F_1(X) \boxtimes G_2(Z^*),\, (F_2\boxtimes G_1)(K)) \\
&\cong& \Hom_{\cB_2^{\rev}\boxtimes \cB_2} ( F_2^*F_1(X) \boxtimes G_1^*G_2(Z^*),\, K) \\
&\cong& \Hom_{K} ((F_2^*F_1(X) \boxtimes \mathbf{1})\otimes K,\,(\mathbf{1}\boxtimes G_1^*G_2(Z))\otimes K)\\
&\cong& \Hom_{\cB_2} ( F_2^*F_1(X),\, G_1^*G_2(Z)).
\end{eqnarray*}
By Yoneda's lemma this shows that $$H_2^* H_1 \cong G_2^*G_1F_2^* F_1$$ as lax tensor functors $\cB_1\to \cB_3$. 
It then follows that 
$$
\cZ_{2}(\cC_2)  \cZ_{2}(\cC_1) =
\iota_{3}^* G_3^* G_2 \iota_{2} \iota_{2}^* F_2^* F_1 \iota_{1} 
= \iota_{3}^* G_3^* G_2  F_2^* F_1 \iota_{1} \cong 
\iota_3^* H_2^* H_1 \iota_1 = \cZ_{2}(\cD).
$$
is an isomorphism of symmetric lax monoidal functors,
where we used that the strong $1$-dominance implies that
$F_2^*F_1$ maps $\cE_1$ to $\cE_2$ and that $\iota_{2} \iota_{2}^*$
is the identity on $\cE_2$.
\end{proof}

\begin{defn}
    A 1-morphism $\cC:\cB_1 \nrightarrow \cB_2$ in $\Mor_2^{\sss}$ is \textit{factorizable} if it is $1$-dominant and $\Im(F_1|_{\mathcal{E}_1}) = \Im(F_2|_{\mathcal{E}_2})$ in $\cZ(\cC)$.
\end{defn}
\begin{rem}\label{rem:factorizable1morphisms}
Every factorizable 1-morphisms is strongly 1-dominant.  The term factorizable is motivated by the fact that, given $\cC:\cB_1 \nrightarrow \cB_2$ a factorizable 1-morphism, the canonical map $\cB_2^{\rev}\to\Im(F)\subseteq\cZ(\cC)$ (which is an inclusion by $1$-dominance) is the inclusion of a tensor factor, {i.e.} $\Im(F)\cong \Im(F_1)\boxtimes_{\cE_2}\cB_2^{\rev}$, which follows from \cite[Prop.~{4.3}]{DNO2013}.
\end{rem}

\begin{lem}
 \label{lem:factorizableComposes}
 The composite of two factorizable 1-morphisms is factorizable.
\end{lem}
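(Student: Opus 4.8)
The plan is to reduce factorizability to a \emph{dominance} condition on the symmetric-center functor $\cZ_{2}(-)$, and then to feed this into the functoriality of $\cZ_{2}$ on strongly $1$-dominant $1$-morphisms that has already been established. Write $\cC_1:\cB_1\nrightarrow\cB_2$ and $\cC_2:\cB_2\nrightarrow\cB_3$ for the two factorizable $1$-morphisms, with action functors $F_1,F_2$ and $G_1,G_2$ respectively, and set $\cD:=\cC_1\boxtimes_{\cB_2}\cC_2$.

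The key preliminary observation I would record is a reformulation: a strongly $1$-dominant $1$-morphism $\cC:\cB_1\nrightarrow\cB_2$ is factorizable if and only if the (strong, by Lemma~\ref{lem:characterizationstrongly1dominant}) symmetric monoidal functor $\cZ_{2}(\cC):\cE_1\to\cE_2$ is dominant. Indeed, $1$-dominance makes $F_2$ fully faithful, so $F_2^*$ restricts to the inverse equivalence on $\Im(F_2)$ and identifies $\Im(F_2|_{\cE_2})$ with $\cE_2$; under strong $1$-dominance we have $\Im(F_1|_{\cE_1})\subseteq\Im(F_2|_{\cE_2})$, and since $\iota_2^*$ is the identity on the subcategory $\cE_2$, the functor $\cZ_{2}(\cC)=\iota_2^*F_2^*F_1\iota_1$ simply carries $\cE_1$ onto the full subcategory $\Im(F_1|_{\cE_1})\subseteq\cE_2$. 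Hence the image of $\cZ_{2}(\cC)$ is all of $\cE_2$ precisely when $\Im(F_1|_{\cE_1})=\Im(F_2|_{\cE_2})$, which is exactly the definition of factorizability.

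With this in hand the argument is short. By Remark~\ref{rem:factorizable1morphisms} both $\cC_1$ and $\cC_2$ are strongly $1$-dominant, so by Lemma~\ref{lem:s1DomIFFZ2Monoidal} the composite $\cD$ is strongly $1$-dominant (in particular $1$-dominant). By the reformulation, $\cZ_{2}(\cC_1):\cE_1\to\cE_2$ and $\cZ_{2}(\cC_2):\cE_2\to\cE_3$ are both dominant. A composite of dominant functors between semisimple categories is again dominant: every simple object of $\cE_3$ is a summand of some $\cZ_{2}(\cC_2)(y)$ with $y\in\cE_2$, and $y$ is a summand of some $\cZ_{2}(\cC_1)(x)$ with $x\in\cE_1$, so since $\cZ_{2}(\cC_2)$ preserves direct summands, that simple is a summand of $\cZ_{2}(\cC_2)\cZ_{2}(\cC_1)(x)$. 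By Lemma~\ref{lem:symmetriccompose} we have $\cZ_{2}(\cD)\cong\cZ_{2}(\cC_2)\circ\cZ_{2}(\cC_1)$, which is therefore dominant, and applying the reformulation once more to the strongly $1$-dominant $\cD$ shows that $\cD$ is factorizable.

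The only genuine content lies in the reformulation of the second paragraph; the main point to be careful about is the bookkeeping with the adjoints $F_2^*$ and $\iota_2^*$ and the identification $\Im(F_2|_{\cE_2})\cong\cE_2$, ensuring that under strong $1$-dominance the image of $\cZ_{2}(\cC)$ is exactly $\Im(F_1|_{\cE_1})$ and nothing larger. Once that is pinned down everything else is formal, reusing the composition statement for $\cZ_{2}$ from Lemma~\ref{lem:symmetriccompose} together with the triviality that dominance of additive functors is closed under composition; in particular, no further manipulation of the étale algebra $\tilde{K}$ or the relative tensor product is required.
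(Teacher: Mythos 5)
Your proof is correct, but it takes a different route from the paper's. The paper argues directly inside the Drinfeld center of the composite: after invoking Lemma~\ref{lem:s1DomIFFZ2Monoidal} for strong $1$-dominance of $\cC_1\boxtimes_{\cB_2}\cC_2$, it notes that the images of $\cE_2$ coming from the two factors are identified in $\cZ(\cC_1\boxtimes_{\cB_2}\cC_2)$, and then chains the factorizability equalities $\Im(\cE_1)=\Im(\cE_2)$ in $\cZ(\cC_1)$ and $\Im(\cE_2)=\Im(\cE_3)$ in $\cZ(\cC_2)$ to get $\Im(\cE_1)=\Im(\cE_3)$ in the composite's center --- a short image chase with no appeal to the functor $\cZ_{2}$. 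You instead first establish the characterization ``strongly $1$-dominant $\cC$ is factorizable iff $\cZ_{2}(\cC)$ is dominant,'' which is precisely Lemma~\ref{lem:mondom} of the paper (stated and proved there \emph{after} this lemma, independently of it, so there is no circularity and your re-derivation is sound), and then conclude via the functoriality statement $\cZ_{2}(\cC_1\boxtimes_{\cB_2}\cC_2)\cong\cZ_{2}(\cC_2)\circ\cZ_{2}(\cC_1)$ of Lemma~\ref{lem:symmetriccompose} plus the elementary fact that dominant functors between semisimple categories compose. Your bookkeeping with $F_2^*$ and $\iota_2^*$ (using $F_2^*F_2\cong\id$ and $\iota_2^*\iota_2\cong\id$ for fully faithful functors) is exactly right and is the only real content needed. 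What your approach buys is modularity: once $\cZ_{2}$ is a functor, closure of factorizability under composition becomes formal, and you get Lemma~\ref{lem:mondom} as a byproduct. The cost is a genuine dependence on Lemma~\ref{lem:symmetriccompose}, whose proof runs through the \'etale-algebra/local-module computation, whereas the paper's direct image chase avoids that machinery entirely.
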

\begin{proof}
Let $\cC_1:\cB_1\nrightarrow\cB_2$ and $\cC_2:\cB_2\nrightarrow\cB_3$ be two factorizable 1-morphisms. By Lemma~\ref{lem:s1DomIFFZ2Monoidal}, the composite of two strongly $1$-dominant morphisms is strongly $1$-dominant, so $\cC_1\boxtimes_{\cB_2}\cC_2:\cB_1\nrightarrow\cB_3$ is strongly $1$-dominant. By construction, the images of $\cE_2$ under both $\mathcal{B}_2^{\rev}\rightarrow \cZ(\cC_1)$ and $\mathcal{B}_2\rightarrow \cZ(\cC_2)$ are identified in $\cZ(\cC_1\boxtimes_{\cB_2}\cC_2)$. But, due to the fact that we have assumed that both $1$-morphisms are strongly $1$-dominant, the image of $\cE_2$ under $\mathcal{B}_2^{\rev}\rightarrow \cZ(\cC_1)$ coincides with the image of $\cE_1$ under $\mathcal{B}_1\rightarrow \cZ(\cC_1)$ and the image of $\cE_3$ under $\mathcal{B}_3^{\rev}\rightarrow \cZ(\cC_2)$ coincides with the image of $\cE_2$ under $\mathcal{B}_2\rightarrow \cZ(\cC_2)$. This concludes the proof.
\end{proof}

\begin{lem}\label{lem:mondom}
A $1$-morphism $\cC:\cB_1\nrightarrow\cB_2$ is factorizable if and only if $\cZ_{2}(\cC):\cZ_{2}(\cB_1)\to \cZ_{2}(\cB_2)$ is strong monoidal and dominant.
\end{lem}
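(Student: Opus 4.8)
The plan is to prove both implications by leveraging the one-directional Lemma~\ref{lem:characterizationstrongly1dominant} (strong $1$-dominance $\Rightarrow$ $\cZ_2(\cC)$ strong monoidal) together with Lemma~\ref{lem:anonymous}, extracting the two missing ingredients — full faithfulness of $F_2$ and equality of images — from strong monoidality and dominance respectively. For the forward direction, suppose $\cC$ is factorizable. By Remark~\ref{rem:factorizable1morphisms} it is strongly $1$-dominant, so Lemma~\ref{lem:characterizationstrongly1dominant} already gives that $\cZ_2(\cC)$ is strong monoidal, and it remains to check dominance. For a simple $S\in\cE_2$, factorizability gives $F_2(S)\in\Im(F_2|_{\cE_2})=\Im(F_1|_{\cE_1})$, so $F_2(S)$ is a summand of $F_1(E)$ for some $E\in\cE_1$; since $\cC$ is $1$-dominant, $F_2$ is fully faithful and $S\cong F_2^*F_2(S)$ is a summand of $F_2^*F_1(E)$, whence (as $\iota_2^*$ is the identity on $\cE_2$) $S$ is a summand of $\iota_2^*F_2^*F_1(E)=\cZ_2(\cC)(E)$. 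Thus $\cZ_2(\cC)$ is dominant.

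For the reverse direction I would first recover $1$-dominance. Strong monoidality forces the unit map $\mathbf 1\to\cZ_2(\cC)(\mathbf 1)$ to be an isomorphism, and by~\eqref{eq:Muegerfunctor1morphisms} we have $\cZ_2(\cC)(\mathbf 1)=\iota_2^*F_2^*(\mathbf 1_{\cZ(\cC)})$. The object $A:=F_2^*(\mathbf 1)$ is the canonical connected étale algebra attached to the braided functor $F_2$, and the projection-formula identification $\Im(F_2)\cong(\cB_2^{\rev})_A$ shows that, since $\Im(F_2)$ inherits a braiding from the nondegenerate category $\cZ(\cC)$, all $A$-modules must be local, so $A\in\cZ_2(\cB_2^{\rev})=\cE_2$ (this is where the étale-algebra input of \cite{DNO2013,DMNO} enters). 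Consequently $\iota_2^*A=A$, so $A\cong\mathbf 1$, which is exactly full faithfulness of $F_2$, i.e.\ $1$-dominance of $\cC$.

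Given $1$-dominance, the functor $G:=F_2^*F_1\iota_1:\cE_1\to\cB_2^{\rev}$ is strong monoidal by the chain of equivalences in the proof of Lemma~\ref{lem:characterizationstrongly1dominant}. Since $\cZ_2(\cC)=\iota_2^*G$ and $G$ are both strong monoidal, the restriction $\iota_2^*|_{\Im(G)}$ is strong monoidal, so Lemma~\ref{lem:anonymous} gives $\Im(G)=F_2^*F_1(\cE_1)\subseteq\cE_2$; applying the same lemma to $F_2^*|_{\Im(F_1|_{\cE_1})}$ gives $\Im(F_1|_{\cE_1})\subseteq\Im(F_2)$. Combining these, $\Im(F_1|_{\cE_1})=F_2(\Im G)\subseteq\Im(F_2|_{\cE_2})$, i.e.\ $\cC$ is strongly $1$-dominant. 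The reverse inclusion then comes from dominance: every simple $S\in\cE_2$ is a summand of some $\cZ_2(\cC)(E)=F_2^*F_1(E)$, hence $F_2(S)$ is a summand of $F_2F_2^*F_1(E)\cong F_1(E)$, the counit being an isomorphism on $\Im(F_2)\supseteq F_1(\cE_1)$. Therefore $\Im(F_2|_{\cE_2})\subseteq\Im(F_1|_{\cE_1})$, the two images coincide, and $\cC$ is factorizable.

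The main obstacle is the recovery of $1$-dominance in the reverse direction: the identity $\cZ_2(\cC)(\mathbf 1)\cong\mathbf 1$ only constrains the $\cE_2$-component of $A=F_2^*(\mathbf 1)$, so one genuinely needs that $A$ already lies in $\cE_2$ before concluding $A\cong\mathbf 1$; this is the step resting on the local-module/étale-algebra machinery of \cite{DNO2013,DMNO}. Once $1$-dominance is in hand, the remaining work is careful bookkeeping of strong monoidality through the two projections $F_2^*$ and $\iota_2^*$ via Lemma~\ref{lem:anonymous}, which I expect to be routine.
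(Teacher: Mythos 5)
Your proof is correct, and it is in fact more complete than the paper's own argument, so it is worth recording the difference. The forward direction coincides with the paper's: factorizable $\Rightarrow$ strongly $1$-dominant (Remark~\ref{rem:factorizable1morphisms}) $\Rightarrow$ $\cZ_{2}(\cC)$ strong monoidal (Lemma~\ref{lem:characterizationstrongly1dominant}), and then, since $1$-dominance identifies $\cE_2$ with $\Im(F_2|_{\cE_2})$ inside $\cZ(\cC)$, equality of images translates into dominance of $\cZ_{2}(\cC)$. The divergence is in the reverse direction: the paper's proof is two sentences beginning ``As $\cC$ is strongly $1$-dominant\dots'', i.e.\ it treats strong $1$-dominance as a standing hypothesis in both directions and never explains why strong monoidality of the lax functor $\iota_2^*F_2^*F_1\iota_1$ of \eqref{eq:Muegerfunctor1morphisms} forces $\cC$ to be $1$-dominant at all; read literally, it only establishes ``factorizable $\Leftrightarrow$ $\cZ_{2}(\cC)$ dominant'' within the class of strongly $1$-dominant morphisms, which is the setting in which $\cZ_{2}(-)$ is actually used as a functor (Proposition~\ref{prop:Mugerfunctor}). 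You identified exactly this point and supplied the missing argument: strong unitality gives $\iota_2^*F_2^*(\mathbf{1})\cong \mathbf{1}$, and since the canonical algebra $A=F_2^*(\mathbf{1})$ of a braided functor is transparent, hence already lies in $\cE_2$, this forces $A\cong\mathbf{1}$, i.e.\ full faithfulness of $F_2$; you then recover strong $1$-dominance by running Lemma~\ref{lem:characterizationstrongly1dominant} backwards through Lemma~\ref{lem:anonymous}, and conclude with the counit argument. The trade-off: the paper's reading is the minimal one its later lemmas need, while yours proves the ``if and only if'' as literally stated for an arbitrary $1$-morphism, at the price of importing \'etale-algebra input from \cite{DMNO,DNO2013}. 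One polish point: your justification of $A\in\cE_2$ (``all $A$-modules must be local'') is a roundabout phrasing of the fact that the canonical algebra of a braided functor is transparent; that fact has a short self-contained proof --- two applications of naturality of the braiding against the counit $F_2F_2^*(\mathbf{1})\to\mathbf{1}$ show that post-composition with the double braiding $c_{A,X}c_{X,A}$ is the identity on $\Hom(Y,X\otimes A)\cong\Hom(F_2Y,F_2X)$ for all $Y$, whence $c_{A,X}c_{X,A}=\id$ by Yoneda --- and stating it that way would make your step independent of the module-category machinery.
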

\begin{proof}
     As $\cC:\cB_1\nrightarrow\cB_2$ is strongly $1$-dominant, we have a strongly symmetric monoidal functor $\cZ_{2}(\cC):\cZ_{2}(\cB_1)\to \cZ_{2}(\cB_2)$.
     Since $\cZ_{2}(\cB_2)$ is equivalent to its image in $\cZ(\cC)$,
     factorizability is equivalent to requiring that $\cZ_{2}(\cC)$ is dominant.
\end{proof}

We now explore the relation between factorizability, extensiveness, and 2-dominance. 

\begin{lem}\label{lem:fact+ext=2dom}
A $1$-morphism is factorizable and extensive if and only if it is $2$-dominant.
\end{lem}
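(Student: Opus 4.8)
The plan is to carry out the whole argument inside the nondegenerate braided category $\cZ:=\cZ(\cC)$, trading the three adjectives for statements about the mutually-centralizing fusion subcategories $\cB_i':=\Im(F_i)$ and $\cE_i':=\Im(F_i|_{\cE_i})$, and then applying Müger's double-centralizer theorem together with the multiplicativity $\dim\cA\cdot\dim\cZ(\cZ,\cA)=\dim\cZ$ from \cite{DGNO2010braided} (in the forms recorded in Lemma~\ref{lem:BF1CDoubleCommutant} and Corollary~\ref{cor:centerOfCentralizers}). Since all three conditions entail $1$-dominance, $F_2$ is fully faithful, so $\cE_2'=\cZ_2(\cB_2')$; moreover $\cB_1'$ and $\cB_2'$ centralize each other in $\cZ$, giving $\cB_2'\subseteq\cZ(\cC,\cB_1)$ and $\cB_1'\subseteq\cZ(\cC,\cB_2^{\rev})$.

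For the implication \emph{factorizable and extensive $\Rightarrow$ $2$-dominant}, I would run a dimension count. Factorizability gives, via Remark~\ref{rem:factorizable1morphisms}, a decomposition $\Im(F)\cong\cB_1'\boxtimes_{\cE_2'}\cB_2'$, whence $\dim\Im(F)=\dim\cB_1'\cdot\dim\cB_2'/\dim\cE_2'$. On the other hand $\cZ(\cC,\cB_1\boxtimes\cB_2^{\rev})=\cZ(\cZ,\Im F)$, which extensiveness identifies with $\cE_2'$; the dimension formula $\dim\Im(F)\cdot\dim\cZ(\cZ,\Im F)=\dim\cZ$ then forces $\dim\Im(F)=\dim\cZ/\dim\cE_2'$. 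Comparing the two expressions yields $\dim\cB_1'\cdot\dim\cB_2'=\dim\cZ$, so $\dim\cB_2'=\dim\cZ/\dim\cB_1'=\dim\cZ(\cC,\cB_1)$. As $\cB_2'\subseteq\cZ(\cC,\cB_1)$ is a fusion subcategory of equal dimension, the inclusion is an equality, and since $F_2$ is fully faithful this is precisely the statement that $\cB_2^{\rev}\to\cZ(\cC,\cB_1)$ is an equivalence, i.e.\ $\cC$ is $2$-dominant.

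For the converse, assume $\cC$ is $2$-dominant, so $\cZ(\cC,\cB_1)=\cB_2'$ and hence, by Lemma~\ref{lem:BF1CDoubleCommutant}, $\cB_1'=\cZ(\cC,\cZ(\cC,\cB_1))=\cZ(\cC,\cB_2^{\rev})$. Extensiveness is then immediate: $\cZ(\cC,\cB_1\boxtimes\cB_2^{\rev})=\cZ(\cC,\cB_1)\cap\cZ(\cC,\cB_2^{\rev})=\cB_2'\cap\cB_1'=\cZ_2(\cB_2')=\cE_2'$. It remains to prove factorizability, i.e.\ $\cE_1'=\cE_2'$. By Lemma~\ref{lem:mondom} this amounts to showing the induced symmetric functor $\cZ_{2}(\cC):\cE_1\to\cE_2$ (which is already strong monoidal, since $2$-dominance implies strong $1$-dominance via Lemma~\ref{lem:characterizationstrongly1dominant}) is \emph{dominant}. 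The inclusion $\cE_1'\subseteq\cZ_2(\cB_1')=\cZ_2(\cZ(\cC,\cB_2^{\rev}))=\cZ_2(\cB_2')=\cE_2'$ (using Corollary~\ref{cor:centerOfCentralizers} and faithfulness of $F_2$) is the easy, ``strongly $1$-dominant'' half; the content is the reverse inclusion $\cZ_2(\cB_1')\subseteq\cE_1'$.

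The reverse inclusion is the \textbf{main obstacle}: it is the assertion that the dominant braided tensor functor $F_1:\cB_1\to\cB_1'$ carries the symmetric center of its domain \emph{onto} the symmetric center of its image, $\cZ_2(\cB_1')=F_1(\cZ_2(\cB_1))=\cE_1'$. This is the one step that is not formal centralizer combinatorics inside $\cZ$, since $\dim\cE_1'$ depends on the kernel of $F_1$ rather than on $\cB_1',\cB_2'$ alone. I would settle it with the structural input that a dominant braided tensor functor is a de-equivariantization by a Tannakian subcategory $\Rep(G)\subseteq\cZ_2(\cB_1)$, together with the compatibility $\cZ_2\big((\cB_1)_G\big)\cong(\cZ_2\cB_1)_G$ of de-equivariantization with the symmetric center, both available from the theory of \cite{DGNO2010braided}; this gives $\cZ_2(\cB_1')=(\cZ_2\cB_1)_G=F_1(\cE_1)=\cE_1'$, completing the proof that $\cE_1'=\cE_2'$ and hence that $\cC$ is factorizable.
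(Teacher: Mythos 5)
Your proof has the same skeleton as the paper's: both arguments work inside the nondegenerate category $\cZ(\cC)$ (after the reduction to the fusion case, which the paper performs explicitly via $1$-dominance, and which your appeal to $\dim\cZ(\cC)$ and M\"uger's dimension theorem tacitly requires), both settle ``$2$-dominant $\Rightarrow$ extensive'' by the identical centralizer computation, and your dimension count for ``factorizable $+$ extensive $\Rightarrow$ $2$-dominant'' is an equivalent variant of the paper's argument, which instead factors $\cZ(\cC,\cB_1)\cong\cB_2^{\rev}\boxtimes_{\cE_2}\cZ(\cC,\cB_1\boxtimes\cB_2^{\rev})$ via Remark~\ref{rem:factorizable1morphisms} and then applies extensiveness. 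You have also correctly isolated the unique non-formal input: that the dominant braided functor $F_1\colon\cB_1\to\Im(F_1)$ carries $\cZ_2(\cB_1)$ \emph{onto} $\cZ_2(\Im(F_1))$. The paper disposes of exactly this point by citation: ``$\cZ_{2}(\Im(\cB_1))\cong\Im(\cE_1)$, which holds by inspection (see also \cite[Cor.~3.24]{DMNO}).''

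The gap is in your proposed proof of that input. The structural claim that every dominant braided tensor functor out of $\cB_1$ is a de-equivariantization by a Tannakian subcategory $\Rep(G)\subseteq\cZ_2(\cB_1)$ is false. What is true is that such functors correspond to connected \'etale algebras $A$ in $\cZ_2(\cB_1)$; these have the form $\mathrm{Fun}(G/H)$ for a possibly non-normal subgroup $H$, so the functor is in general only a \emph{partial} de-equivariantization, $\Im(F_1)\simeq\bigl((\cB_1)_{G}\bigr)^{H}$, i.e.\ a full de-equivariantization followed by an equivariantization. Concretely, the restriction functor $\Rep(S_3)\to\Rep(\Z/2)$ to a non-normal subgroup is dominant and braided, but no de-equivariantization of $\Rep(S_3)$ by a Tannakian subcategory has Frobenius--Perron dimension $2$. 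Moreover, the lemma must cover fermionic inputs, where $\cZ_2(\cB_1)=\Rep(G,z)$ is super-Tannakian; there $A$ lies in the maximal Tannakian subcategory $\Rep(G_b)$, $G_b=G/z$, but $F_1$ is again only of the form $\bigl((\cB_1)_{G_b}\bigr)^{H_b}$, and the compatibility $\cZ_2\bigl((\cB_1)_G\bigr)\cong(\cZ_2\cB_1)_G$ you invoke addresses only the full de-equivariantization stage. The repair is either to cite the fact outright, as the paper does, or to run the honest two-stage argument: identify $\Im(F_1)$ with $\bigl((\cB_1)_{G_b}\bigr)^{H_b}$ and use that \emph{both} de-equivariantization \emph{and} equivariantization commute with taking symmetric centers, both of which can be extracted from \cite{DGNO2010braided}. (A minor misattribution along the way: Lemma~\ref{lem:characterizationstrongly1dominant} deduces strong monoidality of $\cZ_{2}(\cC)$ from strong $1$-dominance; it does not assert that $2$-dominance implies strong $1$-dominance --- but the inclusion $\Im(F_1|_{\cE_1})\subseteq\Im(F_2|_{\cE_2})$ you need at that point is the one you prove directly, so nothing is lost there.)
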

\begin{proof}

We begin by noting that the conditions of $2$-dominance, factorizability, and extensiveness all imply $1$-dominance, so that it is enough to prove the result in the case when all the multifusion 1-categories under consideration are in fact fusion.

We next show that $2$-dominance implies extensiveness. By definition, a $1$-morphism $\cC:\cB_1\nrightarrow\cB_2$ is $2$-dominant if and only if the braided functor $F_2:\cB_2^{\rev}\to \cZ(\cC,\cB_1)$ is an equivalence. Thus, if $\cC:\cB_1\nrightarrow\cB_2$ is $2$-dominant, then $\cZ(\cC,\cB_1\boxtimes\cB_2^{\rev})$ is the centralizer of $\cB_2$ in itself, i.e.\ $\cE_2$, establishing that the $1$-morphism under consideration is indeed extensive. It also follows from the fact that $\cZ_{2}(\Im(\cB_1))\cong\Im(\cE_1)$, which holds by inspection (see also \cite[Cor.~3.24]{DMNO}), that a 2-dominant 1-morphism is factorizable.

To finish the proof, it is enough to show that a $1$-morphism $\cC:\cB_1\nrightarrow\cB_2$ that is both extensive and factorizable is $2$-dominant. It follows from factorizability (see Remark \ref{rem:factorizable1morphisms}) that there is an equivalence $\cZ(\cC,\cB_1)\cong\cB_2^{\rev}\boxtimes_{\cE_2}\cZ(\cC,\cB_1\boxtimes\cB_2^{\rev})$. Extensiveness implies that the latter factor is just $\cE_2$, so that $\cZ(\cC,\cB_1)\cong\cB_2^{\rev}$, showing that the $1$-morphism $\cC:\cB_1\nrightarrow\cB_2$ is $2$-dominant.
This concludes the proof.
\end{proof}

With the result of the previous lemma at our disposal, we are now in a position to prove that the composite of two $2$-dominant, resp.~$2$-faithful, $1$-morphisms is again $2$-dominant, resp. $2$-faithful.

\begin{lem}
    Let $\mathcal{C}_1:\mathcal{B}_1\nrightarrow\mathcal{B}_2$ and $\mathcal{C}_2:\mathcal{B}_2\nrightarrow\mathcal{B}_3$ be two 2-dominant, resp.\ $2$-faithful, 1-morphisms, then the composite $\cC_1\boxtimes_{\cB_2}\cC_2:\cB_1\nrightarrow\cB_3$ is $2$-faithful, resp.\ $2$-dominant.
\end{lem}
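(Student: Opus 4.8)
The plan is to use the characterisation of $2$-dominance from Lemma~\ref{lem:fact+ext=2dom} as a guide, but ultimately to verify $2$-dominance of the composite directly. First I would reduce the $2$-faithful case to the $2$-dominant one: since $\cC\colon\cB_1\nrightarrow\cB_2$ is $2$-dominant precisely when $\cC^{\mp}\colon\cB_2\nrightarrow\cB_1$ is $2$-faithful, and $(\cC_1\boxtimes_{\cB_2}\cC_2)^{\mp}\simeq\cC_2^{\mp}\boxtimes_{\cB_2}\cC_1^{\mp}$, the $2$-faithful statement follows formally once the $2$-dominant one is known. So assume $\cC_1\colon\cB_1\nrightarrow\cB_2$ and $\cC_2\colon\cB_2\nrightarrow\cB_3$ are $2$-dominant, write $\cD:=\cC_1\boxtimes_{\cB_2}\cC_2$, and recall that by definition $2$-dominance of $\cD$ means the central functor $\cB_3^{\rev}\to\cZ(\cD,\cB_1)$ is an equivalence. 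As $2$-dominance implies $1$-dominance and the latter composes (Lemma~\ref{lem:0or1DomComposesMor2}), $\cD$ is $1$-dominant; moreover, exactly as in the proof of Lemma~\ref{lem:fact+ext=2dom}, Remark~\ref{rem:mor2Decomp} lets us assume $\cB_1,\cB_2,\cB_3$ and $\cC_1,\cC_2$ are all fusion.

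The core of the argument is a computation of $\cZ(\cD,\cB_1)$. By Lemma~\ref{lem:symmetriccompose} (ultimately \cite[Theorem~3.20]{DNO2013}) there is a braided equivalence $\cZ(\cD)\cong\mathbf{Mod}^{\loc}_{\widetilde K}(\cZ(\cC_1)\boxtimes\cZ(\cC_2))$, where $\widetilde K$ is the image of the canonical \'etale algebra $K\in\cB_2^{\rev}\boxtimes\cB_2$, and $\cB_1$ acts through $F_1\colon\cB_1\to\cZ(\cC_1)$ on free modules. Since the defining functor $\cB_1\boxtimes\cB_2^{\rev}\to\cZ(\cC_1)$ is braided, the images of $\cB_1$ and $\cB_2^{\rev}$ centralise one another, so the first tensorand of $\widetilde K$ lies in $\cZ(\cC_1,\cB_1)$; because centralising a free module is detected on its underlying object, centralising $\cB_1$ commutes with passing to local $\widetilde K$-modules, giving
\[
\cZ(\cD,\cB_1)\;\cong\;\mathbf{Mod}^{\loc}_{\widetilde K}\bigl(\cZ(\cC_1,\cB_1)\boxtimes\cZ(\cC_2)\bigr)\;\cong\;\mathbf{Mod}^{\loc}_{\widetilde K}\bigl(\cB_2^{\rev}\boxtimes\cZ(\cC_2)\bigr),
\]
where the second identification uses $2$-dominance of $\cC_1$, namely $\cZ(\cC_1,\cB_1)\cong\cB_2^{\rev}$.

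It then remains to identify $\mathbf{Mod}^{\loc}_{\widetilde K}(\cB_2^{\rev}\boxtimes\cZ(\cC_2))$. Here $\widetilde K$ is the image of the canonical algebra of $\cB_2$ under $\mathrm{id}\boxtimes G_1$, with $G_1\colon\cB_2\to\cZ(\cC_2)$ the braided action; condensing this algebra in the nondegenerate braided category $\cB_2^{\rev}\boxtimes\cZ(\cC_2)$ glues $\cB_2^{\rev}$ to the copy $G_1(\cB_2)$ and recovers the M\"uger centraliser, i.e.\ $\mathbf{Mod}^{\loc}_{\widetilde K}(\cB_2^{\rev}\boxtimes\cZ(\cC_2))\cong\cZ(\cZ(\cC_2),G_1(\cB_2))=\cZ(\cC_2,\cB_2)$, which by $2$-dominance of $\cC_2$ is $\cB_3^{\rev}$. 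Tracing the canonical functor shows this equivalence is induced by $\cB_3^{\rev}\to\cZ(\cD,\cB_1)$, establishing $2$-dominance of $\cD$. The main obstacle is precisely this last condensation step: one must justify cleanly both that centralisation commutes with the passage to local $\widetilde K$-modules and that local modules over the pushed-forward canonical algebra compute the centraliser $\cZ(\cC_2,\cB_2)$; both are avatars of the \'etale-algebra calculus of \cite{DNO2013,DGNO2010braided} (cf.\ \cite[Theorem~2.18]{MR4498161}) already used in Lemma~\ref{lem:0or1DomComposesMor2}. An alternative, more structural route---combining Lemma~\ref{lem:factorizableComposes} with Lemma~\ref{lem:fact+ext=2dom} to reduce to the compositionality of extensiveness---runs into the complication that the relevant joint centraliser $\cZ(\cD,\cB_1\boxtimes\cB_3^{\rev})$ is in general a degenerate subcategory, to which the condensation identity does not directly apply; this is why I would prove $2$-dominance head-on instead.
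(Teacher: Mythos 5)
Your proposal is correct in substance, and at the decisive step it takes a genuinely different route from the paper, so let me compare. After the same reduction to the fusion case and the same duality trick for the $2$-faithful statement, the paper does \emph{not} verify $2$-dominance of $\cD:=\cC_1\boxtimes_{\cB_2}\cC_2$ head-on. Instead it invokes Lemma~\ref{lem:fact+ext=2dom} ($2$-dominant $\Leftrightarrow$ factorizable $+$ extensive), gets factorizability of $\cD$ from Lemma~\ref{lem:factorizableComposes}, and then checks extensiveness by computing the \emph{joint} centralizer $\cZ(\cD,\cB_1\boxtimes\cB_3^{\rev})$: it commutes the centralizer past local modules (exactly your Step~2, applied to $\cB_1\boxtimes\cB_3^{\rev}$ instead of $\cB_1$), and then computes $\Mod^{\loc}_{\widetilde{K}}$ inside the small ambient category $\cB_2^{\rev}\boxtimes\Im(G_2)$, where the answer is $\Im(G_2|_{\cE_2})$, identified with $\cE_3$ using factorizability and $2$-dominance of $\cC_2$. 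Your stated reason for avoiding this route is a strawman: the paper never applies a nondegenerate condensation formula to the (indeed degenerate) joint centralizer; it only uses the ``maximal subalgebra of $\widetilde{K}$ contained in $\cB_2^{\rev}$ is trivial'' argument, and its route does go through. Conversely, your route computes $\cZ(\cD,\cB_1)$ directly, trading the factorizable/extensive apparatus for a harder condensation statement, your Step~3.

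Two remarks on Step~3, which is the real content of your argument. First, your justification for it contains an error: $\cB_2^{\rev}\boxtimes\cZ(\cC_2)$ is \emph{not} nondegenerate --- its M\"uger centre is $\cE_2\boxtimes\Vect$ --- so the condensation calculus for nondegenerate categories does not apply as stated. Second, the claim is nonetheless true, and provable with the paper's own toolkit, so the gap is fillable. Write $G_2:\cB_2\to\cZ(\cC_2)$ for the action of $\cB_2$, so $\widetilde{K}$ is the image of the canonical algebra of $\cB_2$ under $\id\boxtimes G_2$. Since the maximal subalgebra of $\widetilde{K}$ contained in $\mathbf{1}\boxtimes\cZ(\cC_2)$ is trivial, the free-module functor $m\mapsto\widetilde{K}\otimes(\mathbf{1}\boxtimes m)$ is fully faithful, and it is essentially surjective onto all of $\Mod_{\widetilde{K}}(\cB_2^{\rev}\boxtimes\cZ(\cC_2))$ because any free module on $b\boxtimes m$ slides to the free module on $\mathbf{1}\boxtimes (G_2(b)\otimes m)$; hence every simple $\widetilde{K}$-module is free on some simple $\mathbf{1}\boxtimes m$. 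Moreover, for a commutative unital algebra $A$, a free module $A\otimes x$ is local if and only if the monodromy $\mathrm{mon}_{A,x}$ is the identity: commutativity reduces the locality equation to $(\mu\otimes\id_x)\circ(\id_A\otimes\mathrm{mon}_{A,x})=\mu\otimes\id_x$, and precomposing with the unit of $A$ in the first tensor factor forces $\mathrm{mon}_{A,x}=\id$. Since $\mathrm{mon}_{\widetilde{K},\mathbf{1}\boxtimes m}$ is trivial exactly when $m$ centralizes $\Im(G_2)$, the simple local modules are precisely the free modules on $\mathbf{1}\boxtimes\cZ(\cC_2,\cB_2)$, so $\Mod^{\loc}_{\widetilde{K}}(\cB_2^{\rev}\boxtimes\cZ(\cC_2))\cong\cZ(\cC_2,\cB_2)\cong\cB_3^{\rev}$, compatibly with the canonical functor from $\cB_3^{\rev}$, as you asserted. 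With this lemma supplied (your Step~2 being the same commutation move the paper itself uses), your proof is complete; what it buys is a direct verification of $2$-dominance bypassing Lemmas~\ref{lem:factorizableComposes} and~\ref{lem:fact+ext=2dom}, while the paper's version keeps every local-module computation elementary at the cost of that detour.
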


\begin{proof}
    Suppose that $\cC_1:\cB_1\nrightarrow\cB_2$ and $\cC_2:\cB_2\nrightarrow\cB_3$ are $2$-dominant $1$-morphisms between braided multifusion $1$-categories, or equivalently by Lemma~\ref{lem:fact+ext=2dom}, that they are extensive and factorizable. As before, since $2$-dominant morphisms are $1$-dominant, we may reduce to the case where each $\cB_i$ and $\cC_j$ is fusion, rather than multifusion.

By Lemma~\ref{lem:fact+ext=2dom}, in order to show that $\cC_1\boxtimes_{\cB_2}\cC_2:\cB_1\nrightarrow\cB_3$ is $2$-dominant, it suffices to show that it is also extensive and factorizable. The composite is factorizable by Lemma~\ref{lem:factorizableComposes}, so that we only need to verify extensiveness. Observe that
\[\cZ(\cC_1\boxtimes\cC_2,\cB_1\boxtimes\cB_3^{\rev})\cong \cZ(\cC_1,\cB_1)\boxtimes \cZ(\cC_2,\cB_3^{\rev}).\] Moreover, by $2$-dominance, we have $\cZ(\cC_1,\cB_1)\cong\cB_2^{\rev}$ and $\cZ(\cC_2,\cB_2)\cong\cB_3^{\rev}$.
By Lemma~\ref{lem:BF1CDoubleCommutant}, we find $\cZ(\cC_2,\cB_3^{\rev})\cong\cZ(\cC_2,\cZ(\cC_2,\cB_2))\cong\Im(F_2)$.
As a consequence, we have \[\cZ(\cC_1\boxtimes\cC_2,\cB_1\boxtimes\cB_3^{\rev}) \cong\cB_2^{\rev}\boxtimes\Im(\cB_2).\]
The image $\widetilde{K}$ of the canonical étale algebra $K$ in $\cB_2^{\rev}\boxtimes\cB_2$ introduced in the proof of Lemma \ref{lem:0or1DomComposesMor2} centralizes $\cB_1\boxtimes\cB_3^{\rev}$ inside $\cZ(\cC_1\boxtimes\cC_2)$. We therefore have that the centralizer of $\cB_1\boxtimes\cB_3^{\rev}$ inside $\Mod_{\widetilde{K}}^{\loc}(\cZ(\cC_1\boxtimes\cC_2))$ is equivalent to the category of local modules in the centralizer:
\begin{align*}
\cZ(\cC_1\boxtimes_{\cB_2}\cC_2,\cB_1\boxtimes\cB_3^{\rev}) &\cong \Mod_{\widetilde{K}}^{\loc}(\cZ(\cC_1\boxtimes\cC_2,\cB_1\boxtimes\cB_3^{\rev}))
\\&\cong \Mod_{\widetilde{K}}^{\loc}(\cB_2^{\rev}\boxtimes\Im(F_2))\,.
\end{align*}
Now, recall that
\[\Mod_{\widetilde{K}}(\cB_2^{\rev}\boxtimes\Im(F_2))\cong\cB_2^{\rev}\boxtimes_{\cB_2}\Im(F_2)\cong\Im(F_2).\]
Furthemore, as the maximal subalgebra of $\widetilde{K}$ contained in $\cB_2^{\rev}$ is trivial, local modules of $\widetilde{K}$ are all in the image of $\cE_2\subseteq\cB_2^{\rev}$ under the free $\widetilde{K}$-module functor, which yields
\[\cZ(\cC_1\boxtimes_{\cB_2}\cC_2,\cB_1\boxtimes\cB_3^{\rev})\cong\Mod_{\widetilde{K}}^{\loc}(\cB_2^{\rev}\boxtimes\Im(F_2))\cong\Im(F_2|_{\mathcal{E}_2}).\]

Let $G:\mathcal{B}_2\boxtimes\mathcal{B}_3^{\rev}\rightarrow\mathcal{Z}(\cC_2)$, and $G_i$ for $i=2,3$ denotes its restriction.
Finally, since $\cC_2:\cB_2\nrightarrow\cB_3$ is factorizable, we have $\Im(F_2|_{\mathcal{E}_2})\cong\Im(G_3|_{\mathcal{E}_3})$. But, $\cC_2:\cB_2\nrightarrow\cB_3$ is $2$-dominant, so that $\Im(G_3)\cong\cB_3$ and, in particular, $\Im(G_3|_{\mathcal{E}_3})\cong\cE_3$. Putting the above discussion together, we find that \[\cZ(\cC_1\boxtimes_{\cB_2}\cC_2,\cB_1\boxtimes\cB_3^{\rev}) \cong\cE_3,\] which shows that the composite $\cC_1\boxtimes_{\cB_2}\cC_2:\cB_1\nrightarrow\cB_3$ is extensive, concluding the proof.
    
 The proof that the composite of $2$-faithful morphisms is $2$-faithful is entirely dual.
\end{proof}

\subsubsection{Adjectives for Multifusion 2-Categories}\label{subsub}

We now move to 
multifusion 2-categories, and give the corresponding definitions of dominance and faithfulness for 1-morphisms in the Morita 4-category $\mathbf{Mor}^{\sss}$.

\begin{defn}\label{def:dominanceMF2C}
Let $\mathfrak{C}$ and $\mathfrak{D}$ be two multifusion 2-categories, and let $\mathfrak{M}:\mathfrak{C}\nrightarrow\mathfrak{D}$ be 
a 1-morphism in the Morita 4-category $\Mor^{\sss}$.

\begin{enumerate}[label=(\alph*)]
    \item The $1$-morphism $\mathfrak{M}$ is \textit{0-dominant} if $\mathfrak{M}$ is faithful as a right $\mathfrak{D}$-module 2-category, i.e.\ the canonical 2-functor $\mathfrak{D}^{\mop}\rightarrow\mathbf{End}_{\fC}(\mathfrak{M})$ is faithful on 2-morphisms.\footnote{This is also equivalent to asking that the canonical 2-functor $\mathfrak{D}^{\mop}\rightarrow\mathbf{End}(\mathfrak{M})$ be faithful on 2-morphisms.}
    \item The $1$-morphism $\mathfrak{M}$ is \textit{1-dominant} if the canonical 2-functor $\mathfrak{D}^{\mop}\rightarrow\mathbf{End}_{\mathfrak{C}}(\mathfrak{M})$ is faithful as a 2-functor, i.e.\ it is fully faithful on 2-morphisms.
    \item The $1$-morphism $\mathfrak{M}$ is \textit{2-dominant} if the canonical 2-functor $\mathfrak{D}^{\mop}\rightarrow\mathbf{End}_{\mathfrak{C}}(\mathfrak{M})$ is fully faithful as a 2-functor, i.e.\ it induces equivalences on $\Hom$-1-categories.
\end{enumerate}
\end{defn}

\begin{defn}\label{defn:faithfulMF2C}
Let $\mathfrak{C}$ and $\mathfrak{D}$ be two multifusion 2-categories, and let $\mathfrak{M}:\mathfrak{C}\nrightarrow\mathfrak{D}$ be a 1-morphism in the Morita 4-category $\Mor^{\sss}$.

\begin{enumerate}[label=(\alph*)]
    \item The 1-morphism $\mathfrak{M}$ is \textit{0-faithful} if $\mathfrak{M}$ is faithful as a left $\mathfrak{C}$-module 2-category, i.e.\ the canonical 2-functor $\mathfrak{C}\rightarrow\mathbf{End}_{\cD}(\mathfrak{M})$ is faithful on 2-morphisms.\footnote{This is also equivalent to asking that the canonical 2-functor $\mathfrak{C}\rightarrow\mathbf{End}(\mathfrak{M})$ be faithful on 2-morphisms.}
    \item $\mathfrak{M}$ is \textit{1-faithful} if the canonical 2-functor $\mathfrak{C}\rightarrow\mathbf{End}_{\mathfrak{D}}(\mathfrak{M})$ is faithful as a 2-functor, i.e.\ it is fully faithful on 2-morphisms.
    \item  $\mathfrak{M}$ is \textit{2-faithful} if the canonical 2-functor $\mathfrak{C}\rightarrow\mathbf{End}_{\mathfrak{D}}(\mathfrak{M})$ is fully faithful as a 2-functor, i.e.\ it induces equivalences on $\Hom$-1-categories.
\end{enumerate}
\end{defn}

As stated, the above definitions refer to the underlying  multifusion 2-categories $\fC$ and $\fD$ and are thus not obviously well-defined conditions on a $1$-morphism in the (univalent) $4$-category $\Mor^{\sss}$. As a counterpart to Theorem \ref{thm:invertibleIFFWittEq}, we also have the following lemma, which follows from \cite[Theorem 5.4.3]{Decoppet2022;Morita}.

\begin{lem}\label{lem:invertible2faith2dom}
    Any invertible 1-morphism in $\Mor^{\sss}$ is both 2-faithful and 2-dominant.
\end{lem}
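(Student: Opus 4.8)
The plan is to follow the same strategy as in the proof of Theorem~\ref{thm:invertibleIFFWittEq}, transported one categorical level up. Given an invertible $1$-morphism $\fM : \fC \nrightarrow \fD$ in $\Mor^{\sss}$, I would first observe that, because $\fC$ and $\fD$ are finite semisimple and hence dualizable objects of $\mathbf{2Kar}$, the $1$-morphism $\fM$ admits $\fM^{\mop} : \fD \nrightarrow \fC$ as an adjoint in the homotopy bicategory $\operatorname{ho}_2\Mor^{\sss}$; exactly as in the braided case, this is the unpacking of \cite[Proposition~4.14]{GwilliamScheimbauer} via \cite{MR3415698}. Since an equivalence is automatically an adjoint equivalence whose inverse coincides with its adjoint, invertibility of $\fM$ is equivalent to the unit $\fC \xrightarrow{\sim} \fM \boxtimes_\fD \fM^{\mop}$ and the counit $\fM^{\mop} \boxtimes_\fC \fM \xrightarrow{\sim} \fD$ of this adjunction being equivalences of bimodule $2$-categories.

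Next I would identify the two relevant endomorphism $2$-categories with these self-tensor products. Concretely, since $\fM$ is a dualizable (indeed invertible) right $\fD$-module $2$-category, its internal endomorphism multifusion $2$-category $\mathbf{End}_\fD(\fM)$ is canonically equivalent to $\fM \boxtimes_\fD \fM^{\mop}$, in such a way that the canonical left-action $2$-functor $\fC \to \mathbf{End}_\fD(\fM)$ from Definition~\ref{def:dominanceMF2C} is identified with the unit $\fC \to \fM \boxtimes_\fD \fM^{\mop}$. This identification is precisely the Morita-theoretic content of \cite[Theorem~5.4.3]{Decoppet2022;Morita}. Dually, $\mathbf{End}_\fC(\fM) \simeq \fM^{\mop} \boxtimes_\fC \fM$, with the canonical $2$-functor $\fD^{\mop} \to \mathbf{End}_\fC(\fM)$ of Definition~\ref{defn:faithfulMF2C} identified with (the $\mop$ of) the counit.

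Combining the two steps finishes the argument: invertibility makes the unit an equivalence, so $\fC \to \mathbf{End}_\fD(\fM)$ is an equivalence of multifusion $2$-categories, and in particular fully faithful as a $2$-functor, which is exactly $2$-faithfulness; likewise, the counit being an equivalence makes $\fD^{\mop} \to \mathbf{End}_\fC(\fM)$ an equivalence, giving $2$-dominance. The main obstacle is the middle step --- establishing that the internal endomorphism $2$-category of $\fM$ viewed as a $\fD$-module is computed by the relative tensor product $\fM \boxtimes_\fD \fM^{\mop}$, compatibly with the canonical $\fC$-action --- but this is precisely the input supplied by \cite[Theorem~5.4.3]{Decoppet2022;Morita}, so that the remaining work reduces to formal bookkeeping of the adjunction data.
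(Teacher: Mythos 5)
Your proposal is correct and takes essentially the same route as the paper: the paper's entire proof is the citation of \cite[Theorem 5.4.3]{Decoppet2022;Morita}, and your argument is precisely an unfolding of how the lemma follows from that theorem, namely identifying $\mathbf{End}_{\fD}(\fM)\simeq \fM\boxtimes_{\fD}\fM^{\mop}$ and $\mathbf{End}_{\fC}(\fM)\simeq \fM^{\mop}\boxtimes_{\fC}\fM$ compatibly with the canonical action $2$-functors, so that invertibility makes those functors equivalences and in particular fully faithful. One cosmetic slip: your cross-references to the two definitions are swapped ($\fC\to\mathbf{End}_{\fD}(\fM)$ is the functor of Definition~\ref{defn:faithfulMF2C}, and $\fD^{\mop}\to\mathbf{End}_{\fC}(\fM)$ that of Definition~\ref{def:dominanceMF2C}), though your final matching of which equivalence yields $2$-faithfulness versus $2$-dominance is stated correctly.
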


As in the previous subsection, the well-definedness of the above faithfulness and dominance conditions will now follow once we have proven that they are closed under composition. But, we have proven in \S\ref{subsub:BF1C} that $n$-dominant and $n$-faithful 1-morphisms in $\Mor_2^{\sss}$ compose. We now make use of these results to establish the corresponding statements about 1-morphisms of $\Mor^{\sss}$, and note that this also allows us to define subcategories $\Mor^{\sss}$.

\begin{prop}\label{prop:dominancethesame}
    A $1$-morphism $\cC: \cB_1 \nrightarrow \cB_2$ in $\Mor_2^{\sss}$ is $n$-faithful/$n$-dominant in the sense of Definitions~\ref{defn:domBF1} and~\ref{defn:faithBF1} if and only if the corresponding $1$-morphism $\Mod(\cC): \Mod(\cB_1) \nrightarrow \Mod(\cB_2)$ in $\Mor^{\sss}$ is $n$-faithful/$n$-dominant in the sense of Definitions~\ref{def:dominanceMF2C} and~\ref{defn:faithfulMF2C}. 
\end{prop}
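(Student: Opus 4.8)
The plan is to reduce both families of conditions to statements about the braided structure functors $F_1\colon\cB_1\to\cZ(\cC,\cB_2^{\rev})$ and $F_2\colon\cB_2^{\rev}\to\cZ(\cC,\cB_1)$ of Definitions~\ref{defn:domBF1} and~\ref{defn:faithBF1}, by \emph{looping} the canonical monoidal $2$-functors through which Definitions~\ref{def:dominanceMF2C} and~\ref{defn:faithfulMF2C} are phrased. I would treat the dominance conditions, which are controlled by the right action $2$-functor $\Mod(\cB_2)^{\mop}\to\mathbf{End}_{\Mod(\cB_1)}(\Mod(\cC))$; the faithfulness conditions are entirely dual, obtained by exchanging the roles of $\cB_1$ and $\cB_2^{\rev}$ and using the left action $2$-functor $\Mod(\cB_1)\to\mathbf{End}_{\Mod(\cB_2)}(\Mod(\cC))$. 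All three dominance conditions entail $0$-dominance, so as in Lemma~\ref{lem:0or1DomComposesMor2} it suffices to treat the fusion case, the bookkeeping of the zero summands governing $0$-dominance being identical on both sides by Remark~\ref{rem:mor2Decomp}.

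The key input is an identification of the target of the action $2$-functor. Using the computation of (relative) centers of module $2$-categories from \cite{decoppet2021finite, Decoppet2022;centers}, the monoidal $2$-category $\mathbf{End}_{\Mod(\cB_1)}(\Mod(\cC))$ of $\Mod(\cB_1)$-module endofunctors of $\Mod(\cC)$ is equivalent, as a connected fusion $2$-category, to $\Mod(\cZ(\cC,\cB_1))$; in particular $\Omega\,\mathbf{End}_{\Mod(\cB_1)}(\Mod(\cC))\simeq\cZ(\cC,\cB_1)$, the looping being the natural transformations of the identity compatible with the $\cB_1$-action, i.e.\ exactly the centralizer of $\cB_1$ in $\cZ(\cC)$. (When $\cB_1=\mathbf{Vect}$ this recovers $\Omega\,\mathbf{End}(\Mod(\cC))\simeq\cZ(\cC)$.) Under this identification the canonical right action $2$-functor $\Mod(\cB_2)^{\mop}\simeq\Mod(\cB_2^{\rev})\to\Mod(\cZ(\cC,\cB_1))$ is a monoidal $2$-functor between objects in the image of the fully faithful functor $\Mod(-)\colon\mathbf{BMF1C}\to\mathbf{MF2C}$ of \S\ref{subsection:MF2Creview}, and its looping is $F_2$. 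Full faithfulness of $\Mod(-)$ then forces the action $2$-functor to be $\Mod(F_2)$.

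It therefore remains to prove, for a braided tensor functor $f\colon\cA\to\cA'$ of braided fusion categories, that $\Mod(f)$ is faithful on $2$-morphisms (resp.\ fully faithful on $2$-morphisms, resp.\ an equivalence on all $\Hom$-$1$-categories) if and only if $f$ is faithful (resp.\ fully faithful, resp.\ an equivalence). In each case the ``only if'' direction is immediate: restricting $\Mod(f)$ to the $\Hom$-$1$-category of the monoidal unit (the regular module) recovers $f$, since $\Omega\Mod(f)=f$. The ``if'' direction is where the real work lies: one must propagate the property from the unit to every $\Hom$-$1$-category $\Hom_{\Mod(\cA)}(\cM,\cN)$, using that $\Mod(\cA)$ is connected and rigid, so that $\Hom(\cM,\cN)\simeq\Hom(\mathbf{1},\cM^{\vee}\boxtimes_{\cA}\cN)$, together with the delooping description of connected fusion $2$-categories as images of $\Mod(-)$, which shows that a functor $\Mod(f)$ with $f$ faithful/fully faithful/an equivalence is correspondingly faithful on $2$-morphisms/fully faithful on $2$-morphisms/fully faithful as a $2$-functor.

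I expect the main obstacle to be precisely this delooping step: checking that the conditions detected on the unit $\Hom$-category propagate to all $\Hom$-$1$-categories of $\Mod(\cA)$, and, relatedly, pinning down the equivalence $\mathbf{End}_{\Mod(\cB_1)}(\Mod(\cC))\simeq\Mod(\cZ(\cC,\cB_1))$ so that it is genuinely compatible with the canonical action (rather than an abstract equivalence of $2$-categories). Once these structural identifications are in place, the matching of the three dominance levels with faithfulness, full faithfulness, and (full faithfulness plus essential surjectivity of) $F_2$ is formal, and the faithfulness statements follow by the dual argument applied to $F_1$ and $\mathbf{End}_{\Mod(\cB_2)}(\Mod(\cC))\simeq\Mod(\cZ(\cC,\cB_2^{\rev}))$.
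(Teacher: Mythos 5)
Your proposal is correct and follows essentially the same route as the paper: both rest on the identification $\Omega\,\mathbf{End}_{\Mod(\cB_1)}(\Mod(\cC))\simeq\cZ(\cC,\cB_1)^{\rev}$ from \cite[Lemma 3.2.1]{Decoppet2022;centers} (giving the ``only if'' direction by looping) together with the connectivity principle that a monoidal $2$-functor out of a connected multifusion $2$-category $\Mod(\cB)$ is faithful on $2$-morphisms/fully faithful on $2$-morphisms/fully faithful precisely when the induced braided functor on loops is faithful/fully faithful/an equivalence. The paper states this connectivity step for maps into an arbitrary multifusion $2$-category $\fD$, whereas you route it through the stronger identification $\mathbf{End}_{\Mod(\cB_1)}(\Mod(\cC))\simeq\Mod(\cZ(\cC,\cB_1))$ and full faithfulness of $\Mod(-)$, but this is a cosmetic reorganization of the same argument.
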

\begin{proof}
Recall from \cite[Lemma 3.2.1]{Decoppet2022;centers} that $\Omega\mathbf{End}_{\mathbf{Mod}(\cB_1)}(\mathbf{Mod}(\cC))\simeq \mathcal{Z}(\mathcal{C},\cB_1)^{\rev}$. Thus, if $\Mod(\cC)$ is $n$-dominant as a $1$-morphism in $\Mor^{\sss}$, then so is $\cC$ as a $1$-morphism in $\Mor_2^{\sss}$. The converse follows since by connectivity a monoidal $2$-functor $\Mod(\cB_1) \to \fD$ into a multifusion $2$-category $\fD$ is faithful on $2$-morphisms/fully faithful on $2$-morphisms/fully faithful  if and only if the corresponding braided functor $\cB_1 \to \Omega \fD$ is faithful/fully faithful/an equivalence.
The proof for $n$-faithfulness works dually. 
\end{proof}

Throughout this paper, we will (often implicitly) use this result. 

\begin{thm}\label{thm:nDomComposesMor1}
Let $\fC_1$, $\fC_2$, and $\fC_3$ be multifusion $2$-categories, and let $\fM_1:\fC_1\nrightarrow\fC_2$ and $\fM_2:\fC_2\nrightarrow\fC_3$
be two $n$-dominant, resp\ $n$-faithful, $1$-morphisms for some $n\in\{0,1,2\}$. Then the composite $\fM_1\boxtimes_{\fC_2}\fM_2:\fC_1\nrightarrow\fC_3$ is $n$-dominant, resp.\ $n$-faithful.
\end{thm}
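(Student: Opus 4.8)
The plan is to reduce the statement to the already-established composition result in $\Mor_2^{\sss}$ (Proposition~\ref{prop:nDomComposesMor2}) by replacing the $\fC_i$ with connected multifusion $2$-categories. Throughout I would treat only the $n$-dominant case; the $n$-faithful case follows formally by passing to opposite $1$-morphisms, since $\fM$ is $n$-dominant if and only if $\fM^{\mp}$ is $n$-faithful (Definitions~\ref{def:dominanceMF2C} and~\ref{defn:faithfulMF2C} are interchanged by swapping the left and right module structures) and $(\fM_1\boxtimes_{\fC_2}\fM_2)^{\mp}\simeq\fM_2^{\mp}\boxtimes_{\fC_2}\fM_1^{\mp}$.

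First I would dispose of the \emph{connected base case}, i.e.\ the case $\fC_i=\Mod(\cB_i)$ for braided multifusion $1$-categories $\cB_i$. Here every $1$-morphism $\Mod(\cB_i)\nrightarrow\Mod(\cB_{i+1})$ is of the form $\Mod(\cC_i)$ for a $\cB_i$-$\cB_{i+1}$-central multifusion $1$-category $\cC_i$, as used in the construction of $\Mor_2^{\sss}$ via \cite[Lemma 2.2.6]{decoppet2021finite}. By Proposition~\ref{prop:dominancethesame}, $\fM_i=\Mod(\cC_i)$ is $n$-dominant exactly when $\cC_i$ is $n$-dominant in $\Mor_2^{\sss}$; Proposition~\ref{prop:nDomComposesMor2} then makes $\cC_1\boxtimes_{\cB_2}\cC_2$ $n$-dominant, and since $\Mod(\cC_1)\boxtimes_{\Mod(\cB_2)}\Mod(\cC_2)\simeq\Mod(\cC_1\boxtimes_{\cB_2}\cC_2)$ by \cite[Example 2.3.2]{Decoppet2023;dualizable}, a second application of Proposition~\ref{prop:dominancethesame} yields that the composite is $n$-dominant.

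To pass from the connected case to the general one, I would prove a \emph{conjugation-invariance} lemma: for invertible $1$-morphisms $\fP:\fC'\nrightarrow\fC$ and $\fQ:\fD\nrightarrow\fD'$, a $1$-morphism $\fM:\fC\nrightarrow\fD$ is $n$-dominant if and only if $\fP\boxtimes_\fC\fM\boxtimes_\fD\fQ$ is. Pre-composition is the easy half: since $\fP$ is invertible, $\fP^{-1}\boxtimes_{\fC'}(-)$ induces an equivalence $\mathbf{End}_{\fC'}(\fP\boxtimes_\fC\fM)\simeq\mathbf{End}_\fC(\fM)$ of monoidal $2$-categories leaving the right $\fD$-action untouched, so the condition on $\fD^{\mop}\to\mathbf{End}_{(-)}(-)$ is unaffected. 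Post-composition is the crux: tensoring with the invertible bimodule $\fQ$ and its inverse gives mutually inverse equivalences $\mathbf{End}_\fC(\fM)\simeq\mathbf{End}_\fC(\fM\boxtimes_\fD\fQ)$, while the right action $\fD'^{\mop}\to\mathbf{End}_\fC(\fM\boxtimes_\fD\fQ)$ factors as $\fD'^{\mop}\xrightarrow{\ \simeq\ }\mathbf{End}_\fD(\fQ)\xrightarrow{\ \fM\boxtimes_\fD-\ }\mathbf{End}_\fC(\fM\boxtimes_\fD\fQ)$, the first arrow being an equivalence precisely because $\fQ$ is $2$-dominant (Lemma~\ref{lem:invertible2faith2dom}). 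Post-composing $\fM\boxtimes_\fD-$ with the equivalence $(-)\boxtimes_{\fD'}\fQ^{-1}$ yields, up to the equivalence $\mathbf{End}_\fD(\fQ)\simeq\mathbf{End}_\fD(\fD)=\fD^{\mop}$, the original action $2$-functor $\fD^{\mop}\to\mathbf{End}_\fC(\fM)$ of $\fM$; hence the $n$-dominance conditions for $\fM$ and for $\fM\boxtimes_\fD\fQ$ coincide.

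With these in hand the general case is immediate: using \cite{Decoppet2022;centers}, I would choose invertible $1$-morphisms $\fP_i:\Mod(\cB_i)\nrightarrow\fC_i$ exhibiting each $\fC_i$ as Morita equivalent to a connected $2$-category, and set $\fM_i':=\fP_i\boxtimes_{\fC_i}\fM_i\boxtimes_{\fC_{i+1}}\fP_{i+1}^{-1}$. By conjugation invariance each $\fM_i'$ is $n$-dominant, so the base case makes $\fM_1'\boxtimes_{\Mod(\cB_2)}\fM_2'$ $n$-dominant; since $\fP_2^{-1}\boxtimes_{\Mod(\cB_2)}\fP_2\simeq\id_{\fC_2}$, this composite is $\fP_1\boxtimes_{\fC_1}(\fM_1\boxtimes_{\fC_2}\fM_2)\boxtimes_{\fC_3}\fP_3^{-1}$, and conjugation invariance once more transports $n$-dominance back to $\fM_1\boxtimes_{\fC_2}\fM_2$. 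I expect the post-composition half of the conjugation-invariance lemma to be the main obstacle, as it is the only place where the asymmetry between the (changing) $\fD$-side and the defining module structure must be carefully reconciled, and where $2$-dominance of invertibles via Lemma~\ref{lem:invertible2faith2dom} is essential.
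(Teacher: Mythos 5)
Your overall scaffolding coincides with the paper's proof: reduce to the connected case $\fC_i=\Mod(\cB_i)$ by conjugating with Morita equivalences (via \cite{Decoppet2022;centers}), then conclude from Propositions~\ref{prop:dominancethesame} and~\ref{prop:nDomComposesMor2} together with $\Mod(\cC_1)\boxtimes_{\Mod(\cB_2)}\Mod(\cC_2)\simeq\Mod(\cC_1\boxtimes_{\cB_2}\cC_2)$; and your pre-composition step is exactly the paper's, via \cite[Proposition 3.3.1]{Decoppet2023;dualizable}. However, the post-composition half of your conjugation-invariance lemma --- which you rightly call the crux --- contains a genuine gap. The transport functor $(-)\boxtimes_{\fD'}\fQ^{-1}$ is not defined on $\mathbf{End}_\fC(\fM\boxtimes_\fD\fQ)$: objects there are merely \emph{left} $\fC$-module endofunctors, and forming $G\boxtimes_{\fD'}\id_{\fQ^{-1}}$ requires $G$ to be a \emph{right} $\fD'$-module functor, which it need not be (the same objection applies to your map $\mathbf{End}_\fD(\fQ)\to\mathbf{End}_\fD(\fD)$, since right multiplication functors are not right $\fD'$-linear). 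This is not a fixable technicality, because the equivalence $\mathbf{End}_\fC(\fM)\simeq\mathbf{End}_\fC(\fM\boxtimes_\fD\fQ)$ you assert is false in general: take $\fC=\fD=\fM=\mathbf{2Vect}$, let $\cA$ be a nontrivial fusion $1$-category, and let $\fQ=\Mod(\cA)=\Mod_{\cA}(\mathbf{2Vect})$ be the invertible bimodule from $\mathbf{2Vect}$ to its Morita dual $\fD'\simeq\Bimod_{\mathbf{2Vect}}(\cA)$ \cite{Decoppet2022;Morita}. Then $\mathbf{End}_{\mathbf{2Vect}}(\fM)\simeq\mathbf{2Vect}$, while $\mathbf{End}_{\mathbf{2Vect}}(\fM\boxtimes_\fD\fQ)\simeq\Bimod_{\mathbf{2Vect}}(\cA)^{\mop}$ has loop category $\cZ(\cA)\not\simeq\Vect$, so the two are inequivalent as $2$-categories. (A smaller point: the equivalence ${\fD'}^{\mop}\simeq\mathbf{End}_\fD(\fQ)$ for invertible $\fQ$ is \cite[Theorem 5.4.3]{Decoppet2022;Morita}; Lemma~\ref{lem:invertible2faith2dom} yields only full faithfulness, not essential surjectivity.)

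What survives of your factorization is that the action $2$-functor of $\fM\boxtimes_\fD\fQ$ is ${\fD'}^{\mop}\simeq\mathbf{End}_\fD(\fQ)\xrightarrow{\fM\boxtimes_\fD-}\mathbf{End}_\fC(\fM\boxtimes_\fD\fQ)$, so the real task is to show that this second arrow has the relevant property exactly when $\fD^{\mop}\to\mathbf{End}_\fC(\fM)$ does; since the two targets are inequivalent, no conjugation-by-equivalences argument can accomplish this. The paper spends its effort at precisely this point, using algebra technology: writing $\fM\simeq\Mod_A(\fC)$ and $\fQ\simeq\Mod_B(\fD)$ for separable algebras $A$ and $B$, it identifies $\mathbf{End}_\fC(\fM)\simeq\Bimod_{\fC}(A)^{\mop}$ and $\mathbf{End}_\fC(\fM\boxtimes_\fD\fQ)\simeq\Bimod_{F(B)}(\fC)^{\mop}$, identifies the action $2$-functor of the composite with $\Bimod_B(F)$ (where $F$ is the action $2$-functor of $\fM$), and then checks that each of the three properties passes from $F$ to $\Bimod_B(F)$: first between free $B$-bimodules, then in general because $\Bimod_B(\fD)$ is the Cauchy completion of its free bimodules and all three properties survive Cauchy completion. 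Some such categorified Morita/idempotent-transfer argument is what your proposal is missing; the rest of your outline is sound.
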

\begin{proof}
As before, we only consider the case of dominant 1-morphisms, the case of faithful 1-morphisms being entirely dual.
Our strategy will be as follows:\ We begin by showing that each dominance property is preserved by composition with Morita equivalences. Then, as every fusion $2$-category is Morita equivalent to a connected one \cite{Decoppet2022;centers}, we can reduce ourselves to the case where each $\fC_i$ is connected, {i.e.} $\fC_i=\mathbf{Mod}(\cB_i)$, where $\cB_i$ is a braided multifusion $1$-category.
Then,  the finite semisimple bimodule 2-categories $\fM_1:\fC_1\nrightarrow\fC_2$ and $\fM_2:\fC_2\nrightarrow\fC_3$ must be of the form $\mathbf{Mod}(\mathcal{C}_1)$ and $\mathbf{Mod}(\mathcal{C}_2)$, where $\mathcal{C}_1:\mathcal{B}_1\nrightarrow\mathcal{B}_2$ and $\mathcal{C}_2:\mathcal{B}_2\nrightarrow\mathcal{B}_3$ are 1-morphisms in $\mathbf{Mor}_2^{\sss}$. By Proposition~\ref{prop:dominancethesame}, the result then follows readily from Proposition \ref{prop:nDomComposesMor2}.

We claim that the condition of being $n$-dominant for $\fM:\fC\nrightarrow\fD$ is invariant upon tensoring with a Morita invertible bimodule 2-category $\fL:\fB\nrightarrow\fC$ on the left. Namely, the canonical monoidal 2-functor
\[\mathbf{End}_{\fC}(\fM)\rightarrow\mathbf{End}_{\fB}(\fL\boxtimes_{\fC}\fM)\]
is an equivalence by \cite[Proposition 3.3.1]{Decoppet2023;dualizable} (see also \cite[Theorem 5.4.3]{Decoppet2022;Morita}).
Now, let $\fN:\fD\nrightarrow\fE$ be an invertible bimodule 2-category. Said differently, there is a separable algebra $B$ in $\mathfrak{D}$, an equivalence $\mathfrak{N}\simeq \mathbf{Mod}_{B}(\mathfrak{D})$ of left $\mathfrak{D}$-module 2-categories, and an equivalence $\mathfrak{E}\simeq \mathbf{Bimod}_{\mathfrak{D}}(B)$ of multifusion 2-categories \cite{Decoppet2022;Morita}. There is also a separable algebra $A$ in $\mathfrak{C}$ such that the equivalence $\mathfrak{M}\simeq \mathbf{Mod}_{A}(\mathfrak{C})$ is an equivalence of left $\mathfrak{C}$-module 2-categories \cite{decoppet2021finite, Decoppet2022;centers}. Let $F:\mathfrak{D}^{\mop}\rightarrow \mathbf{End}_{\mathfrak{C}}(\mathfrak{M})\simeq \mathbf{Bimod}_{\mathfrak{C}}(A)^{\mop}$ be the monoidal 2-functor supplying the left $\fC$-module $\mathfrak{M}$ with its compatible right $\mathfrak{D}$-module structure.
We have that $F(B)$ is a separable algebra in $\mathbf{Bimod}_{A}(\mathfrak{C})$, i.e.\ $F(B)$ is a separable algebra in $\mathfrak{C}$ equipped with an algebra 1-homomorphism $A\rightarrow F(B)$. Thenit follows from \cite{Decoppet2022;Morita} that there are equivalences of multifusion 2-categories $$\mathbf{End}_{\mathfrak{C}}(\mathfrak{M}\boxtimes_{\mathfrak{D}}\mathfrak{N})\simeq \mathbf{End}_{\mathfrak{C}}(\mathbf{Mod}_{B}(\mathfrak{M}))\simeq \mathbf{End}_{\mathfrak{C}}(\mathbf{Mod}_{F(B)}(\mathfrak{C}))\simeq \mathbf{Bimod}_{F(B)}(\mathfrak{C})^{\mop}.$$ Moreover, the monoidal 2-functor $\mathfrak{E}^{\mop}\rightarrow \mathbf{End}_{\mathfrak{C}}(\mathfrak{M}\boxtimes_{\mathfrak{D}}\mathfrak{N})$ providing $\mathfrak{M}\boxtimes_{\mathfrak{D}}\mathfrak{N}$ with its right action is identified with the monoidal 2-functor $$\mathbf{Bimod}_{B}(F):\mathbf{Bimod}_{B}(\mathfrak{D})\rightarrow \mathbf{Bimod}_{F(B)}(\mathbf{Bimod}_{A}(\mathfrak{C}))\simeq \mathbf{Bimod}_{F(B)}(\mathfrak{C}).$$
It is therefore enough to check that if $F$ is faithful on 2-morphisms, resp.\ fully faithful on 2-morphisms, resp.\ induces equivalences on $\Hom$-1-categories, then so is $\mathbf{Bimod}_{B}(F)$. But, if $F$ has any of these three properties, then $\mathbf{Bimod}_{B}(F)$ has the corresponding property between free bimodules. But, $\mathbf{Bimod}_{B}(\mathfrak{D})$ is the Cauchy completion of its full sub-2-category on the free bimodules (this is a consequence of the proof of \cite[Proposition 3.1.3]{Decoppet2022;rigid}) and all the above three properties are preserved by taking the Cauchy completion (this follows by inspecting the explicit definition of the Cauchy completion given in \cite[Appendix A]{decoppet2020;comparison}), so that $\mathbf{Bimod}_{B}(F)$ indeed has the desired property. This concludes the proof.
\end{proof}

\begin{rem}
As a consequence of Theorem \ref{thm:nDomComposesMor1}, we see that a $1$-morphism in $\Mor_2$ it is $n$-faithful, resp.\ $n$-dominant, iff its image under $\Mod$ in $\Mor^{\sss}$ is $n$-faithful, resp.\ $n$-dominant.
\end{rem}

\subsection{Passing to the Super World}\label{subsection:supergroups}
The formulation of our classification result in the case of emergent fermions necessitates passing to the super world for groups, spaces, and cohomology. 
We presently compile relevant facts 
for completeness and convenience of the reader.

\begin{defn}\label{def:fermionic_symmetry}
    A \textit{finite supergroup} is a finite group $G$ equipped with a central element $z$ of order at most 2, i.e.\ a group homomorphism $\Z/2 \to Z(G)$.
\end{defn}

    Throughout, we let $\mathrm{B} \mathbb{Z}/2$ denote the $2$-group whose underlying groupoid has a single object with automorphisms $\mathbb{Z}/2$ and with its unique  monoidal structure induced by the fact that $\mathbb{Z}/2$ is abelian. The following well-known observation was already mentioned in the introduction:
\begin{lem}
    The $2$-group $\Aut^{\br}(\sVect)$ is equivalent to the $2$-group $\mathrm{B}\mathbb{Z}/2$.
\end{lem}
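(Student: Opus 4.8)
The plan is to identify the two homotopy invariants $\pi_0$ and $\pi_1$ of the $2$-group $\Aut^{\br}(\sVect)$ and check that they agree with those of $\mathrm{B}\mathbb{Z}/2$, which has $\pi_0 = 0$ and $\pi_1 = \mathbb{Z}/2$. Recall that $\sVect$ is the pointed braided fusion category $\Vect_{\mathbb{Z}/2}$ with simple objects the even line $\mathbf 1$ and the odd line $f$ satisfying $f\otimes f\cong\mathbf 1$, equipped with the sign braiding $c_{f,f}=-1$.

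First I would compute $\pi_0$, the group of isomorphism classes of braided autoequivalences. Any monoidal autoequivalence $F$ fixes the unit $\mathbf 1$ and must send the unique nontrivial invertible simple $f$ to itself, so $F$ acts as the identity on isomorphism classes of objects; thus $F$ lies in the group of monoidal autoequivalences of $\Vect_{\mathbb{Z}/2}$ inducing the trivial automorphism of $\mathbb{Z}/2$. For a pointed fusion category this group is classified up to monoidal natural isomorphism by $H^2(\mathbb{Z}/2;\mathbb{C}^\times)$, which vanishes. Hence every monoidal — and a fortiori every braided — autoequivalence of $\sVect$ is monoidally isomorphic to the identity; transporting along such an isomorphism we may take it to be the identity monoidal functor, for which the braiding constraint is automatically the trivial one, so it is the identity as a braided autoequivalence. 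Therefore $\pi_0 = 0$ and the groupoid underlying $\Aut^{\br}(\sVect)$ is connected with distinguished object $\id_{\sVect}$.

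Next I would compute $\pi_1 = \mathrm{Aut}(\id_{\sVect})$, the braided monoidal natural automorphisms of the identity. Such a transformation is specified by a scalar $\eta_f\in\mathbb{C}^\times$ on the odd line (with $\eta_{\mathbf 1}=1$); monoidality applied to $f\otimes f\cong\mathbf 1$ forces $\eta_f^2=1$, and compatibility with the symmetric braiding is automatic. Thus $\pi_1\cong\Hom(\mathbb{Z}/2,\mathbb{C}^\times)\cong\mathbb{Z}/2$, generated precisely by the natural isomorphism acting as $-1$ on odd objects, i.e.\ the ``superelement'' $(-1)^f$ referenced elsewhere in the paper.

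Finally, since $\pi_0$ is trivial there is no action of $\pi_0$ on $\pi_1$ and the Postnikov invariant lives in $H^3(\pt;\pi_1)=0$, so the $2$-group is determined up to equivalence by $\pi_1\cong\mathbb{Z}/2$ alone, with its unique (delooping) monoidal structure — exactly the data of $\mathrm{B}\mathbb{Z}/2$. This gives the desired equivalence $\Aut^{\br}(\sVect)\simeq\mathrm{B}\mathbb{Z}/2$. The only genuinely nontrivial step is the vanishing $\pi_0=0$, i.e.\ ruling out exotic braided autoequivalences; the key input there is the standard classification of tensor autoequivalences of a pointed fusion category together with $H^2(\mathbb{Z}/2;\mathbb{C}^\times)=0$.
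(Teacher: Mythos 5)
Your proof is correct and follows essentially the same route as the paper, which likewise establishes the equivalence by noting that every braided autoequivalence of $\sVect$ is isomorphic to the identity (so $\pi_0$ is trivial) and that the monoidal natural automorphisms of the identity form $\mathbb{Z}/2$. The paper simply asserts these two facts, whereas you supply the supporting details (the pointed-fusion-category classification with $H^2(\mathbb{Z}/2;\mathbb{C}^\times)=0$, the scalar computation for $\pi_1$, and the vanishing Postnikov datum); these are all accurate.
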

\begin{proof}
    Every braided autoequivalence of $\sVect$ is equivalent to the identity and the group of monoidal natural automorphisms of the identity functor is $\mathbb{Z}/2$.
\end{proof}

\begin{lem}\label{lemma:supergroup}
Let $G$ be a finite group. Then the data of a $\mathrm{B} \mathbb{Z}/2$-action on 
the groupoid $\mathrm{B}G$ is equivalently a homomorphism $\mathbb{Z}/2 \to Z(G)$, i.e.\ a supergroup structure on $G$. 
\end{lem}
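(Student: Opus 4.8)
The plan is to unwind the definition of an action into a monoidal functor and then compute the relevant automorphism $2$-group. By Definition~\ref{def:superspaces} and the discussion preceding it, a $\mathrm{B}\mathbb{Z}/2$-action on the groupoid $\mathrm{B}G$ is a monoidal functor (equivalently, a map of $2$-groups) $\mathrm{B}\mathbb{Z}/2 \to \Aut(\mathrm{B}G)$, where $\Aut(\mathrm{B}G)$ is the $2$-group of self-equivalences of $\mathrm{B}G$ and invertible natural transformations between them. So the first step is to identify $\Aut(\mathrm{B}G)$ explicitly. Since $\mathrm{B}G$ has a single object, a self-equivalence is the same as an automorphism $\phi \in \mathrm{Aut}(G)$, while a natural isomorphism $\phi \Rightarrow \psi$ is an element $g\in G$ with $\psi(-) = g\,\phi(-)\,g^{-1}$. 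Hence $\Aut(\mathrm{B}G)$ is the automorphism $2$-group of $G$, with $\pi_0 = \mathrm{Out}(G)$ and $\pi_1 = \mathrm{Aut}(\mathrm{id}_{\mathrm{B}G}) = Z(G)$; equivalently, its delooping $\mathrm{B}\Aut(\mathrm{B}G)$ is a pointed connected $2$-type with $\pi_1 = \mathrm{Out}(G)$ and $\pi_2 = Z(G)$.

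Next I would pass to classifying spaces: monoidal functors $\mathrm{B}\mathbb{Z}/2 \to \Aut(\mathrm{B}G)$ agree with pointed maps of deloopings $\mathrm{B}^2\mathbb{Z}/2 = K(\mathbb{Z}/2,2) \to \mathrm{B}\Aut(\mathrm{B}G)$. Because the source $K(\mathbb{Z}/2,2)$ is simply connected, the composite to the first Postnikov truncation $K(\mathrm{Out}(G),1)$ is null (the set of pointed homotopy classes $[K(\mathbb{Z}/2,2), K(\mathrm{Out}(G),1)]_*$ is trivial, and in fact the whole mapping space into $K(\mathrm{Out}(G),1)$ is contractible). Thus every such map factors through the fiber $K(Z(G),2) \to \mathrm{B}\Aut(\mathrm{B}G)$, and the space of actions is equivalent to $\mathrm{Map}_*(K(\mathbb{Z}/2,2), K(Z(G),2))$. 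Taking $\pi_0$ and using the universal-coefficient computation $\mathrm{H}^2(K(\mathbb{Z}/2,2); Z(G)) = \mathrm{Hom}(\mathbb{Z}/2, Z(G))$, I conclude that the $\mathrm{B}\mathbb{Z}/2$-actions on $\mathrm{B}G$ correspond to homomorphisms $\mathbb{Z}/2 \to Z(G)$, i.e.\ to central elements of order at most $2$, which is exactly a supergroup structure.

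Alternatively, and perhaps more transparently, the same conclusion follows from a direct $2$-group computation: a monoidal functor out of the connected $2$-group $\mathrm{B}\mathbb{Z}/2$ must send its unique object to (something canonically isomorphic to) $\mathrm{id}_{\mathrm{B}G}$, so the only remaining datum is the induced homomorphism on automorphisms of the unit, $\mathbb{Z}/2 = \mathrm{Aut}_{\mathrm{B}\mathbb{Z}/2}(\ast) \to \mathrm{Aut}_{\Aut(\mathrm{B}G)}(\mathrm{id}_{\mathrm{B}G}) = Z(G)$, with monoidal coherence imposing no constraint beyond this being a homomorphism. I expect the only genuinely delicate points to be the identification $\pi_1(\Aut(\mathrm{B}G)) = Z(G)$ and the verification that the connectivity of $\mathrm{B}\mathbb{Z}/2$ forces the action to be trivial on the outer-automorphism part $\mathrm{Out}(G)$; once these are in place, the identification with supergroup structures is immediate.
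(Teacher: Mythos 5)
Your proof is correct and takes essentially the same approach as the paper's: the paper likewise unpacks the action to a pointed map $\mathrm{B}^2\mathbb{Z}/2 \to \mathrm{B}\Aut(\mathrm{B}G)$ and identifies such a map with a group homomorphism $\mathbb{Z}/2 \to \Omega\Aut(\mathrm{B}G) = Z(G)$. Your write-up simply makes explicit the steps the paper leaves implicit --- the identification of $\Aut(\mathrm{B}G)$ as the $2$-group with $\pi_0=\mathrm{Out}(G)$, $\pi_1=Z(G)$, the factorization through the simply connected cover $K(Z(G),2)$, and the universal-coefficient computation.
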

\begin{proof}
    A $\mathrm{B}\mathbb{Z}/2$-action on $\mathrm{B}G$ unpacks to a pointed map $\mathrm{B}^2\mathbb{Z}/2 \to \mathrm{B} \Aut(\mathrm{B}G)$, i.e.\ equivalently to a group homomorphism $\mathbb{Z}/2 \to \Omega \Aut(\mathrm{B}G) = Z(G)$.
\end{proof}

Generalizing Lemma~\ref{lemma:supergroup}, we may therefore define: 
\begin{defn}
 A \emph{superspace} is a space $X$ together with a $\mathrm{B}\mathbb{Z}/2$-action. We let $\textbf{sSpaces}:=\mathrm{Fun}(\mathrm{B}^2 \Z/2, \textbf{Spaces})$ denote the $\infty$-category of superspaces. We say that a superspace is \emph{$k$-truncated} if its underlying space $X$ is $k$-truncated\footnote{This is equivalent to asking it to be $k$-truncated as an object of the $\infty$-category $\textbf{sSpaces}$ of superspaces.}, i.e.\ has vanishing homotopy groups in degrees $>k$. 
\end{defn}

\begin{ex}
    A supergroup is precisely the data of a pointed connected $1$-truncated superspace. 
\end{ex}
For most superspaces we consider, the $\mathrm{B}\mathbb{Z}/2$-action arises as an action by $\Aut^{\br}(\sVect)$, whence the name.

Recall the straightening/unstraightening equivalence 
$$
\textbf{sSpaces}:= \mathrm{Fun}(\mathrm{B}^2 \mathbb{Z}/2 , \textbf{Spaces}) \simeq \textbf{Spaces}_{/\mathrm{B}^2\Z/2}
$$ 
to the over-category of spaces over $\mathrm{B}^2 \Z/2$ sending a superspace $X$ with its $\mathrm{B}\mathbb{Z}/2$-action to the homotopy quotient space $X\sslash \mathrm{B}\mathbb{Z}/2$ with its induced map to $\mathrm{B}^2\Z/2$ and conversely, sending a map $Y \to \mathrm{B}^2\mathbb{Z}/2$ to its homotopy fiber $F$ with induced $\mathrm{B}\Z/2$-action. We will frequently pass back and forth along this equivalence. In particular, the terminal superspace $\pt$ with trivial $\mathrm{B}\Z/2$-action corresponds to the map $\mathrm{B}^2\Z/2 \to \mathrm{B}^2\Z/2$ while the ``regular'' superspace $\mathrm{B}\Z/2$ with its free action corresponds to the map $\pt \to \mathrm{B}^2\Z/2$. We point out that the former object is a ``pointed object'' in \textbf{sSpaces}, i.e.\ admits a map from the terminal (it in fact is the terminal), while the latter object ``has no points,'' i.e.\ no map from the terminal object.

See Figure \ref{fig:superspaces} for a summary of how common superspaces can be viewed from both perspectives.

\begin{ex}
Let $(G,z)$ be a supergroup with $z=1$, i.e.\ with associated superspace $\mathrm{B}G$ with trivial $\mathrm{B}\Z/2$-action. Then the corresponding object in $\textbf{Spaces}_{/\mathrm{B}^2\Z/2}$ is given by the projection $\mathrm{B}G \times \mathrm{B}^2\Z/2 \to \mathrm{B}^2\Z/2$. 
In particular, we emphasize that $1$-truncatedness of a superspace $X$ does not imply $1$-truncatedness of the homotopy quotient $X/(\mathrm{B}\Z/2)$. 

If $(G,z)$ is a supergroup with $z\neq 1$, i.e.\ with associated superspace $\mathrm{B}G$ with non-trivial $\mathrm{B}\Z/2$-action, then let $G_b:= G/z$ with central extension $G$ determined by a $2$-cocycle $\kappa \in \mathrm{H}^2(G_b, \Z/2).$ Then the corresponding object in $\textbf{Spaces}_{/\mathrm{B}^2\Z/2}$ is given by the map $\mathrm{B}G_b \to \mathrm{B}^2\Z/2$ classifying $\kappa$. 
\end{ex}

For our present considerations, supergroups arise via (a special case of) Deligne's theorem \cite{deligne2002}. More precisely, every non-zero symmetric multifusion 1-category $\mathcal{E}$ admits a symmetric monoidal functor to $\mathbf{sVect}$. Further, if $\mathcal{E}$ is fusion, then any two such functors are naturally isomorphic. Said differently, the $1$-groupoid $\Spec(\mathcal{E})$ of symmetric monoidal functor $\mathcal{E}\rightarrow \mathbf{sVect}$ and symmetric natural isomorphisms is non-empty if $\mathcal{E}$ is non-zero, and it is connected if $\mathcal{E}$ is fusion. Moreover, it comes equipped with a canonical action of $\Aut^{\br}(\sVect)= \mathrm{B}\Z/2$ by postcomposition, that is, we have a functor $\Spec: \mathbf{SMF1C}^{op}\rightarrow \tau_{\leq 1}\mathbf{sSpaces}$ to the $\infty$-category of $1$-truncated superspaces. Deligne's theorem may then be re-expressed in the following convenient form, which will be used when unpacking Delphics squares. 
\begin{thm}[\cite{deligne2002}]
     The functor $\Spec:\mathbf{SMF1C}^{\op}\rightarrow \tau_{\leq 1}\mathbf{sSpaces}$ is an equivalence.
\end{thm}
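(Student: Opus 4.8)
The plan is to exhibit an explicit inverse to $\Spec$ and to check that the two composites are naturally equivalent to the respective identities, thereby reducing everything to the classical form of Deligne's theorem \cite{deligne2002} together with Tannakian reconstruction. Define the candidate inverse $R\colon \tau_{\leq 1}\mathbf{sSpaces}\to \mathbf{SMF1C}^{\op}$ by $R(X):=\mathrm{Fun}(X,\sVect)^{\mathrm{B}\Z/2}$, the category of $\mathrm{B}\Z/2$-equivariant functors from $X$ to $\sVect$; concretely, writing $X$ as a map $X\sslash\mathrm{B}\Z/2\to\mathrm{B}^2\Z/2$, this is the limit of the composite $X\sslash\mathrm{B}\Z/2\to\mathrm{B}^2\Z/2\simeq\mathrm{B}\Aut^{\br}(\sVect)\hookrightarrow\mathbf{SMF1C}$, exactly as recalled before the theorem statement. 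One first checks that $R$ lands in symmetric multifusion categories: since $\sVect$ is a symmetric fusion category and $X$ is a finite $1$-groupoid (finitely many components, each with finite automorphism group, as holds for objects in the image of $\Spec$), the equivariant functor category is finite semisimple, inherits a symmetric monoidal structure by pointwise tensor, and is multifusion because $X$ has finitely many components.

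Next I would establish the natural equivalence $\mathrm{id}_{\mathbf{SMF1C}^{\op}}\simeq R\circ\Spec$. This is precisely the reconstruction statement recalled in the excerpt, that any symmetric multifusion category $\cE$ is recovered as $\mathrm{Fun}(\Spec(\cE),\sVect)^{\mathrm{B}\Z/2}$, which is the $\infty$-categorical repackaging of Deligne's theorem: every symmetric fusion category admits an essentially unique symmetric fiber functor to $\sVect$ and is super-Tannakian. The comparison map is the evaluation functor $\cE\to\mathrm{Fun}(\Spec(\cE),\sVect)^{\mathrm{B}\Z/2}$ sending $X\in\cE$ to $(\omega\mapsto\omega(X))$, and that it is a symmetric monoidal equivalence is the content of \cite{deligne2002}; naturality in $\cE$ and compatibility with the $\mathrm{B}\Z/2$-action is formal, as both constructions are built purely by pre- and post-composition with $\sVect$-valued functors.

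I would then establish $\Spec\circ R\simeq\mathrm{id}$. Because $\Spec$ sends finite products to disjoint unions (a symmetric monoidal functor to $\sVect$ must send all but one summand of a decomposition of the unit to zero, hence factors through a single factor) and $R$ sends disjoint unions of superspaces to products, both composites respect the decomposition into connected components, so it suffices to treat a connected $1$-truncated superspace $X\simeq \mathrm{B}(G,z)$; the identification of connected $1$-truncated superspaces with finite supergroups is Lemma~\ref{lemma:supergroup}. For such $X$ the computation recalled in the excerpt gives $R(\mathrm{B}(G,z))\simeq \Rep(G,z)$, and one computes $\Spec(\Rep(G,z))\simeq \mathrm{B}(G,z)$ by Tannakian reconstruction: the forgetful fiber functor has automorphism group $G$, it is the unique fiber functor up to isomorphism by Deligne's theorem (so $\Spec$ is connected), and the postcomposition action of $\Aut^{\br}(\sVect)\simeq\mathrm{B}\Z/2$ recovers exactly the central element $z$, i.e.\ the supergroup structure. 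These identifications are natural in $X$, yielding the required equivalence of composites.

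The main obstacle is not any single computation but the homotopy coherence: one must check that $\mathrm{id}\simeq R\circ\Spec$ and $\Spec\circ R\simeq\mathrm{id}$ are natural transformations of $\infty$-functors, matching entire mapping spaces rather than merely objects. Concretely, full faithfulness of $\Spec$ amounts to the claim that symmetric monoidal functors $\cF\to\cE$ correspond to $\mathrm{B}\Z/2$-equivariant maps $\Spec(\cE)\to\Spec(\cF)$, with symmetric monoidal natural isomorphisms corresponding to equivariant homotopies; the delicate point is to verify this on all homotopy groups of the mapping spaces, in particular confirming that the $\mathrm{B}^2\Z/2$-equivariance does not introduce higher homotopy beyond what the $(2,1)$-category $\mathbf{SMF1C}$ detects. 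All of this follows once the reconstruction equivalence $\cE\simeq \mathrm{Fun}(\Spec(\cE),\sVect)^{\mathrm{B}\Z/2}$ is known to be natural, since precomposition with an equivariant map of spectra is then manifestly inverse to $\Spec$ at the level of mapping spaces, completing the proof that $\Spec$ and $R$ are mutually inverse equivalences.
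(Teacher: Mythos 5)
The paper does not actually prove this theorem: it is stated as a citation to \cite{deligne2002}, i.e.\ as a convenient repackaging of Deligne's theorem, and the surrounding text only recalls the two ingredients (existence and essential uniqueness of fiber functors to $\sVect$, and the reconstruction $\cE \simeq \mathrm{Fun}(\Spec(\cE),\sVect)^{\mathrm{B}\Z/2}$). Your proof is precisely the unpacking of that repackaging --- an inverse functor given by equivariant functors into $\sVect$, the unit equivalence from super-Tannakian reconstruction, the counit equivalence by reduction to connected superspaces via Lemma~\ref{lemma:supergroup} and the identification $\Spec(\Rep(G,z))\simeq \mathrm{B}(G,z)$ --- so in approach you are doing exactly what the paper intends, and your final paragraph correctly identifies the genuinely nontrivial remaining work, namely making the two pointwise equivalences natural as transformations of $\infty$-functors so that the mapping spaces (which are $1$-truncated on both sides) get matched coherently.

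The one genuine flaw is your treatment of finiteness. Read at face value, $\tau_{\leq 1}\mathbf{sSpaces}$ contains \emph{all} $1$-truncated superspaces, for instance $\mathrm{B}G$ with trivial action for an infinite group $G$, or an infinite discrete set; none of these is $\Spec$ of a symmetric multifusion $1$-category, since by Deligne's theorem $\Spec(\cE)$ always has finite $\pi_0$ (indexed by the fusion summands of $\cE$) and finite $\pi_1$ (the finite supergroup attached to each summand). So essential surjectivity fails for the literal statement, and your functor $R$ does not land in $\mathbf{SMF1C}$ when evaluated on such objects. Your parenthetical justification that $X$ is a finite $1$-groupoid ``as holds for objects in the image of $\Spec$'' is circular: when defining an inverse on the whole target category, and when proving essential surjectivity, you cannot assume $X$ already lies in the image. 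The repair is to restrict the target to the full subcategory of finite ($\pi$-finite) $1$-truncated superspaces --- clearly the intended reading, consistent with the finiteness hypotheses appearing in Theorem~\ref{thm:Wittlimits} --- and to say so explicitly; with that restriction in place, the rest of your outline (componentwise decomposition, uniqueness of fiber functors, recovery of $z$ from the postcomposition action) is sound, modulo the coherence work you acknowledge but defer.
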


\begin{figure}
\begin{tabular}{c|c|c}
     $\infty$\text{-category of superspaces} & \textbf{Spaces}$_{/\mathrm{B}^2\Z/2}$ &  \text{Spaces with} $\mathrm{B}\Z/2$ \text{action} \\

    \hline
    $\Spec(\sVect)$ & $\mathsf{pt} \rightarrow  \mathrm{B}^2\Z/2$
    &  \begin{tikzcd}
   \mathrm{B}\Z/2 \arrow[loop left,"\mathrm{B}\Z/2"]
    \end{tikzcd}\\
     
     \hline 
    $\Spec(\cE)$, $\mathcal{E}$ super-Tannakian & $\mathrm{B}G_b \rightarrow \mathrm{B}^2\Z/2$ &  \begin{tikzcd}
   \mathrm{B}(G,z) \arrow[loop left,"\mathrm{B}\Z/2"]
    \end{tikzcd}\\
    
    \hline
    $\Spec(\Vect)$ & $\mathrm{B}^2\Z/2 \rightarrow \mathrm{B}^2\Z/2$ & \begin{tikzcd}
  \mathsf{pt} \arrow[loop left,"\mathrm{B}\Z/2"]
    \end{tikzcd}
    \\

\hline 
    $\Spec(\cE)$, $\mathcal{E}$ Tannakian & $\mathrm{B}G\times \mathrm{B}^2\mathbb{Z}/2 \rightarrow \mathrm{B}^2\Z/2$ &  \begin{tikzcd}
   \mathrm{B}G \arrow[loop left,"\mathrm{B}\Z/2"]
    \end{tikzcd}\\
    
\end{tabular}
\caption{\label{fig:superspaces}Different models for $\Spec$ of the symmetric fusion 1-category $\mathcal{E}$}
\end{figure}

Recall the $(2,1)$-category $\mathbf{BMF1C}$ of braided multifusion $1$-categories, braided functors and monoidal natural isomorphisms. Its underlying $2$-groupoid $\ob(\mathbf{BMF1C})$ admits a functor $\cZ_{2}: \ob(\mathbf{BMF1C}) \to \ob(\mathbf{SMF1C})$ sending a braided multifusion $1$-category $\cB$ to its symmetric center and a braided equivalence to the corresponding equivalence of symmetric centers. 

\begin{defn}
We define the $2$-groupoid $\mathrm{BMF1C}^{\ndeg}(\sVect)$ as the fiber of $\ob(\mathbf{BMF1C}) \to \ob(\mathbf{SMF1C})$ at $\sVect \in \ob(\mathbf{SMF1C})$. 
\end{defn}
Unpacked, $\mathrm{BMF1C}^{\ndeg}(\sVect)$ is therefore the $2$-groupoid of braided multifusion $1$-categories $\cB$ equipped with an equivalence $\sVect \to \cZ_{2}(\cB)$ and  braided equivalences and natural isomorphisms compatible with these equivalences. 

\begin{lem}\label{lem:BMF1CsVec}
Let $\mathrm{BMF1C}^{\mathrm{slightly-deg}}$ denote the full $2$-groupoid of $\ob(\mathbf{BMF1C})$ on the slightly degenerate braided fusion categories, i.e.\ those whose symmetric center happens to be equivalent to $\sVect$ without a specified such equivalence. Then there is a fiber sequence
\[
\mathrm{BMF1C}^{\mathrm{ndeg}}(\sVect) \to \mathrm{BMF1C}^{\mathrm{slightly-deg}} \stackrel{\cZ_{2}}{\to} \mathrm{SMF1C}^{\sVect}
\]
where $\mathrm{SMF1C}^{\sVect}$ is the full sub-$2$-groupoid of $\ob(\mathbf{SMF1C})$ on those symmetric multifusion $1$-categories which happen to be equivalent to $\sVect$. 
Because
$\mathrm{SMF1C}^{\sVect} \simeq \mathrm{B} \Aut^{\br}(\sVect) \simeq \mathrm{B}^2 \Z/2$, this induces an $ \mathrm{B}\Z/2 \simeq \Aut^{\br}(\sVect)$-action on $\mathrm{BMF1C}^{\ndeg}(\sVect)$
\end{lem}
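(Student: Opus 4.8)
The plan is to realize the displayed sequence as the fiber sequence of a Cartesian square and then to extract the action as its monodromy. First I would observe that, by construction, $\mathrm{BMF1C}^{\mathrm{slightly-deg}}$ is exactly the full sub-$2$-groupoid of $\ob(\mathbf{BMF1C})$ on those $\cB$ for which $\cZ_{2}(\cB)$ lands in the full subgroupoid $\mathrm{SMF1C}^{\sVect}\subseteq \ob(\mathbf{SMF1C})$. Since $\mathrm{SMF1C}^{\sVect}\hookrightarrow \ob(\mathbf{SMF1C})$ is the inclusion of a (single) connected component, hence a $(-1)$-truncated map of spaces, pulling it back along $\cZ_{2}$ yields the full subgroupoid on the preimage, exhibiting
\[
\begin{tikzcd}
\mathrm{BMF1C}^{\mathrm{slightly-deg}} \arrow[r] \arrow[d, "\cZ_{2}"'] & \ob(\mathbf{BMF1C}) \arrow[d, "\cZ_{2}"] \\
\mathrm{SMF1C}^{\sVect} \arrow[r] & \ob(\mathbf{SMF1C})
\end{tikzcd}
\]
as a pullback square of $2$-groupoids.

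Next I would compute the fiber over $\sVect \in \mathrm{SMF1C}^{\sVect}$. Factoring the basepoint as $\pt\xrightarrow{\sVect}\mathrm{SMF1C}^{\sVect}\hookrightarrow \ob(\mathbf{SMF1C})$ and applying the pasting law for pullbacks to the square above, the fiber of the left-hand map $\cZ_{2}:\mathrm{BMF1C}^{\mathrm{slightly-deg}}\to \mathrm{SMF1C}^{\sVect}$ over $\sVect$ is identified with the fiber of $\cZ_{2}:\ob(\mathbf{BMF1C})\to \ob(\mathbf{SMF1C})$ over $\sVect$, which is $\mathrm{BMF1C}^{\ndeg}(\sVect)$ by definition. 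Because $\sVect$ is itself slightly degenerate with $\cZ_{2}(\sVect)\simeq\sVect$, the map is surjective on $\pi_0$ onto the connected base, so the sequence
\[
\mathrm{BMF1C}^{\ndeg}(\sVect)\to \mathrm{BMF1C}^{\mathrm{slightly-deg}}\xrightarrow{\cZ_{2}}\mathrm{SMF1C}^{\sVect}
\]
is a genuine fiber sequence.

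It then remains to identify the base and to read off the action. As every object of $\mathrm{SMF1C}^{\sVect}$ is equivalent to $\sVect$, this $2$-groupoid is connected, hence equivalent to the delooping of the automorphism $2$-group of $\sVect$ in $\mathbf{SMF1C}$; since a braided autoequivalence of the symmetric category $\sVect$ is automatically symmetric, this $2$-group is $\Aut^{\br}(\sVect)$, which equals $\mathrm{B}\Z/2$ by the lemma recalled above. Thus $\mathrm{SMF1C}^{\sVect}\simeq \mathrm{B}\Aut^{\br}(\sVect)\simeq \mathrm{B}^2\Z/2$. Any map of spaces to a pointed connected base endows its fiber with a monodromy action of the loop space of the base; here, because $\mathrm{B}^2\Z/2$ is simply connected, straightening $\mathrm{BMF1C}^{\mathrm{slightly-deg}}\to \mathrm{B}^2\Z/2$ produces a functor $\mathrm{B}^2\Z/2\to\mathbf{Spaces}$, which is precisely the datum of a $\mathrm{B}\Z/2\simeq\Aut^{\br}(\sVect)$-action on its value $\mathrm{BMF1C}^{\ndeg}(\sVect)$ at the basepoint. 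This realizes $\mathrm{BMF1C}^{\ndeg}(\sVect)$ as a superspace in the sense of Definition~\ref{def:superspaces}.

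The argument is essentially formal, and I do not anticipate a genuine obstacle; the only non-categorical input is the computation of the automorphism $2$-group of $\sVect$. The point requiring the most care is checking that this monodromy action is the expected one, namely precomposition of the chosen identification $\sVect\xrightarrow{\sim}\cZ_{2}(\cB)$ with braided autoequivalences of $\sVect$. This follows by unwinding a loop at $\sVect$ in $\mathrm{SMF1C}^{\sVect}$ and tracking it through the straightening, and it agrees with the $\Aut^{\br}(\sVect)$-action on $\mathrm{BMF1C}^{\ndeg}(\sVect)$ by precomposition used earlier in the paper.
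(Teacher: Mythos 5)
Your proof is correct and takes essentially the same approach as the paper's: both restrict the defining fiber sequence $\mathrm{BMF1C}^{\ndeg}(\sVect) \to \ob(\mathbf{BMF1C}) \stackrel{\cZ_{2}}{\to} \ob(\mathbf{SMF1C})$ to the full subspaces $\mathrm{BMF1C}^{\mathrm{slightly-deg}}$ and $\mathrm{SMF1C}^{\sVect} \simeq \mathrm{B}\Aut^{\br}(\sVect) \simeq \mathrm{B}^2\Z/2$, and then read off the $\mathrm{B}\Z/2$-action by straightening over the connected base. Your write-up simply makes explicit (pullback square, pasting law, identification of the monodromy with precomposition) what the paper's terse proof leaves implicit.
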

\begin{proof}
    By definition there is a fiber sequence \[\mathrm{BMF1C}^{\mathrm{ndeg}}(\sVect) \to \ob(\mathbf{BMF1C}) \to \ob(\mathbf{SMF1C}).\] Restricting the middle term to the full subspace $\mathrm{BMF1C}^{\mathrm{slightly-deg}}$ on the image of the fiber inclusion and the last term to the full subspace $\mathrm{B}\Aut^{\br}(\sVect)$ of $\ob(\mathbf{SMF1C})$ on those symmetric multifusion categories which happen to be equivalent to $\sVect$ results in the claim. 
\end{proof}
Unpacked, the induced  $\mathrm{B}\Z/2 = \Aut^{\br}(\sVect)$-action is given by precomposing the specified equivalence between $\sVect$ and the symmetric center. 

In particular, Lemma~\ref{lem:BMF1CsVec} identifies the homotopy quotient $\mathrm{BMF1C}^{\mathrm{ndeg}}(\sVect)\sslash \Aut^{\br}(\sVect)$ with $\mathrm{BMF1C}^{\mathrm{slightly-deg}}$.

\begin{lem}\label{lem:sdegfixedpoint}
The $2$-groupoid of homotopy fixed points of $\mathrm{BMF1C}^{\ndeg}(\sVect)$ under its action by $\Aut^{\br}(\sVect)$ is equivalent to the full sub-$2$-groupoid $\mathrm{BMF1C}^{\ndeg}$ of the $2$-groupoid $\ob(\BMF1C)$ on the nondegenerate braided multifusion $1$-categories. 
\end{lem}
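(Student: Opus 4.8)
The plan is to identify both sides with explicit $2$-groupoids and exhibit a mutually inverse pair of functors, using Lemma~\ref{lem:BMF1CsVec} to translate homotopy fixed points into sections of a fibration over $\mathrm{B}^2\Z/2$. Write $G:=\Aut^{\br}(\sVect)\simeq \mathrm{B}\Z/2$ and $Y:=\mathrm{BMF1C}^{\ndeg}(\sVect)$. For any group object $G$ acting on a space $Y$, the homotopy fixed points are the sections of the Borel fibration $Y\sslash G\to \mathrm{B}G$; by Lemma~\ref{lem:BMF1CsVec} this fibration is precisely $\cZ_{2}:\mathrm{BMF1C}^{\mathrm{slightly-deg}}\to \mathrm{SMF1C}^{\sVect}\simeq \mathrm{B}^2\Z/2$, so $Y^{hG}$ is the $2$-groupoid of sections of $\cZ_{2}$. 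I would then construct the comparison functor $\Phi:\mathrm{BMF1C}^{\ndeg}\to Y^{hG}$ sending a nondegenerate $\cC$ to the section $\cE\mapsto \cC\boxtimes\cE$; this is a genuine section because $\cZ_{2}(\cC\boxtimes\cE)\simeq \cZ_{2}(\cC)\boxtimes\cZ_{2}(\cE)\simeq \cE$ naturally in $\cE$, using nondegeneracy $\cZ_{2}(\cC)\simeq\Vect$. Concretely $\Phi(\cC)$ is $\cC\boxtimes\sVect$ with the evident $G$-equivariant structure acting only on the free $\sVect$-tensorand.

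Next I would unpack what a section is, exploiting that the total space $\mathrm{BMF1C}^{\mathrm{slightly-deg}}$ is a $2$-groupoid (so $\pi_{\geq 3}=0$) while the base $\mathrm{B}^2\Z/2=K(\Z/2,2)$ is simply connected with $\pi_2=\Z/2$. Evaluating a section $s$ at the basepoint yields a slightly degenerate $\cB:=s(\sVect)$ together with $\phi:\sVect\xrightarrow{\sim}\cZ_{2}(\cB)$, i.e.\ a point of $Y$; the remaining data of $s$ is a coherent lift of the generator of $\pi_2(\mathrm{B}^2\Z/2)$. Since $\pi_2$ of $\mathrm{BMF1C}^{\mathrm{slightly-deg}}$ at $\cB$ is the group $\Aut^{\otimes}(\id_{\cB})$ of monoidal natural automorphisms of the identity, and $\cZ_2$ restricts such an automorphism to $\cZ_{2}(\cB)\simeq\sVect$, the section condition forces this lift to be an $\alpha\in \Aut^{\otimes}(\id_{\cB})$ restricting to the fermionic sign on $\cZ_{2}(\cB)$; the $3$-skeleton of $K(\Z/2,2)$ (attached by degree two) forces $\alpha^{2}=\id$, and the higher cells impose no further condition because $\pi_{\geq 3}$ of the target vanishes. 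Thus a fixed point is precisely a slightly degenerate $\cB$ equipped with an order-two $\alpha\in\Aut^{\otimes}(\id_{\cB})$ acting by $-1$ on the transparent fermion $f$.

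Such an $\alpha$ is the same as a faithful $\Z/2$-grading $\cB\simeq\cB_{\bar 0}\oplus\cB_{\bar 1}$ with $f\in\cB_{\bar 1}$. Here I would invoke Müger's decomposition theorem \cite{DGNO2010braided}: the fusion subcategories $\cB_{\bar 0}$ and $\langle\mathbf 1,f\rangle\simeq\sVect$ centralize each other (as $f$ is transparent), intersect trivially, and satisfy $\FPdim(\cB_{\bar 0})\cdot\FPdim(\sVect)=\FPdim(\cB)$, hence $\cB\simeq\cB_{\bar 0}\boxtimes\sVect$ as braided categories; multiplicativity of $\cZ_2$ then gives $\cZ_{2}(\cB_{\bar 0})\simeq\Vect$, so $\cC:=\cB_{\bar 0}$ is nondegenerate. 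This defines an inverse $\Psi$ to $\Phi$ on objects, and I would check $\Psi\circ\Phi\simeq\id$ and $\Phi\circ\Psi\simeq\id$ (the latter being exactly the splitting just produced, the former because the canonical parity grading on $\cC\boxtimes\sVect$ recovers $\cC$ as its even part). Finally, full faithfulness would be verified by matching mapping spaces through the long exact sequence of the fibration $Y\to \mathrm{BMF1C}^{\mathrm{slightly-deg}}\to \mathrm{B}^2\Z/2$, together with the vanishing $H^{1}(K(\Z/2,2);-)=0$ and the control of $\pi_{\leq 2}$ of the section space by the low-degree cohomology $H^{\leq 2}(K(\Z/2,2);-)$ of the relevant coefficient groups.

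The main obstacle is the middle step: rigorously translating ``coherent section of $\cZ_2$ over $K(\Z/2,2)$'' into the algebraic datum of an order-two automorphism of the identity, i.e.\ carrying out the obstruction-theoretic identification (order two from the attaching map of the $3$-cell, no higher data from $2$-truncatedness of the total space) and confirming that the residual $G$-action on the relevant homotopy groups is trivial. Once this is in place, the genuinely categorical content---splitting off the transparent $\sVect$---is handled cleanly by Müger's theorem, and the functoriality and full faithfulness reduce to bookkeeping with the fibration's exact sequence.
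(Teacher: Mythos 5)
Your proposal is correct and follows essentially the same route as the paper's proof: both use Lemma~\ref{lem:BMF1CsVec} to recast the homotopy fixed points as sections/lifts of $\cZ_{2}$ over $\mathrm{B}^2\Z/2 \simeq \mathrm{SMF1C}^{\sVect}$, unpack such a section as a slightly degenerate $\cB$ equipped with an order-two monoidal automorphism of $\id_{\cB}$ restricting to $(-1)^f$ on $\cZ_{2}(\cB)\simeq\sVect$, and then split $\cB\simeq\cB_0\boxtimes\sVect$ via the induced $\Z/2$-grading to identify the fixed points with $\mathrm{BMF1C}^{\ndeg}$. The differences are presentational: you justify the splitting explicitly (FPdim and mutual centralization) and flag the obstruction-theoretic bookkeeping that the paper carries out implicitly, while the paper, like you, treats the $1$- and $2$-morphisms only ``similarly.''
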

\begin{proof}
By Lemma~\ref{lem:BMF1CsVec}, and from the perspective of spaces over $\mathrm{B}^2\Z/2$, the $2$-groupoid of homotopy fixed points is the 2-groupoid of lifts 
\[
\begin{tikzcd}
& \mathrm{BMF1C}^{\mathrm{slightly-deg}} \arrow[d, "\cZ_{2}"] \\
\mathrm{B}^2 \Z/2  \arrow[ur, dashed]\arrow[r, "\simeq"'] & \mathrm{SMF1C}^{\sVect}
\end{tikzcd}.
\]
Here, recall that $\mathrm{BMF1C}^{\mathrm{slightly-deg}}$ denotes the full sub-$2$-groupoid of braided monoidal multifusion $1$-categories on those whose symmetric center happens to be equivalent to $\sVect$ and where $\mathrm{SMF1C}^{\sVect}$ denotes the full sub-$2$-groupoid of the $2$-groupoid of symmetric multifusion $1$-categories on those that happen to be equivalent to $\sVect$. The horizontal equivalence is the functor selecting $\sVect$ amongst categories which happen to be equivalent to $\sVect$ and remembers its $\mathrm{B}\Z/2$-action. 

Explicitly, an object of this $2$-groupoid is a choice of slightly degenerate braided multifusion $1$-category $\cB$ together with a group homomorphism $\phi: \Z/2 \to \Omega \Aut^{\br}(\cB)$, i.e.\ an order $2$ monoidal natural automorphism of the identity of $\cB$,  (together encoding the dashed diagonal map) and a choice of braided equivalence $\cZ_{2}(\cB) \simeq \sVect$ such that if one restricts $\phi|_{\cZ_{2}(\cB)}$ and transports it along this equivalence to $\sVect$, it becomes $(-1)^f$ (this encodes the homotopy filling the triangle). 
This data of $\phi$ is equivalent to a $\Z/2$-grading on $\cB$ whose trivial part $\cB_0$ is the full subcategory on those objects $b$ with $\phi_b = \id_b$ and the data of the equivalence $\cZ_{2}(\cB) \simeq \sVect$ (together with the condition that $\phi$ becomes $(-1)^f$ under this equivalence) ensures that $\cB_0$ is nondegenerate and identifies $\cB \simeq \cB_0 \boxtimes \sVect$. 
The $1$- and $2$-morphisms may be handled similarly. 
\end{proof}

In \S\ref{subsection:Wittfunctor}, we will show that the functor $\cZ_{2}: \ob(\mathbf{BMF1C}) \to \ob(\mathbf{SMF1C})$ factors through $\cZ_{2}: \ob(\Mor_2^{\sss}) \to \ob(\mathbf{SMF1C})$ (indeed, we will even extend this to a functor on a certain subcategory of $\Mor_2^{\sss}$).
\begin{defn}\label{defn:Wittspace}
For a symmetric multifusion $1$-category $\cE$, we define the $4$-groupoid $\Witt(\cE)$ as the fiber of $\cZ_{2}: \ob(\Mor_2^{\sss}) \to \ob(\mathbf{SMF1C})$ at $\cE$. 
\end{defn}
Unpacked, $\Witt(\sVect)$ is the $4$-groupoid of braided fusion $1$-categories $\cB$ equipped with an equivalence $\sVect \to \cZ_{2}(\cB)$ and (higher) Witt equivalences compatible with these identifications of symmetric centers.

By definition,  this $4$-groupoid $\Witt(\sVect)$ inherits a canonical action by $\Aut^{\br}(\mathbf{sVect})$, and is therefore a superspace. Its homotopy groups are given by $$\begin{tabular}{|c|c|c|c|c|}
\hline
$\pi_0$ & $\pi_1$ & $\pi_2$ & $\pi_3$ & $\pi_4$ \\
\hline \\[-1em]
 $\pi_0(\Witt(\sVect))$ & $0$ & $\mathbb{Z}/2$ & $\mathbb{Z}/2$ & $\mathbb{C}^\times$ \\
\hline
\end{tabular}$$ where $\pi_0(\Witt(\sVect))$ is the super-Witt group introduced in \cite{DNO2013} (Though the group structure does not manifest directly from our construction.).

The superspace $\Witt(\sVect)$ is intimately related to the notion of supercohomology, one of the remaining ``superingredients" featured in the statement of Theorem \ref{thmalpha:EmergentFermions}. We define a superspace $\Sigma^4\SH:=\Witt(\sVect) \langle 1 \rangle$ as the connective cover of the superspace $\Witt(\sVect)$, so that $\Sigma^4\SH$ inherits the homotopy groups of $\Witt(\sVect)$ in degree 1 and above. Said differently, let $[\mathbf{sVect}]$ denote the class of $\mathbf{sVect}$ in the space $\Witt(\sVect)$. We have $\Sigma^3\mathrm{SH}=\mathcal{A}ut_{\mathbf{sVect}}([\mathbf{sVect}])$. This shows that supercohomology may also be defined as the double delooping of the Picard spectrum $\mathbf{Mod}(\sVect)^\times$, and this identification is compatible with the canonical actions by $\Aut^{\br}(\sVect)$. The superspace $\Sigma^4\SH$ was described explicitly as a space over $\mathrm{B}^2\mathbb{Z}/2$ in \cite{decoppet:extension}. We wish to point out that, in the physics literature, this also goes by the name extended supercohomology, which was introduced in \cite{Wang:2017moj}.

\begin{defn}
Let $X$ be a superspace. We use $\SH^n(X)$ to denote the group of homotopy classes of maps of superspaces (i.e.\ $\mathrm{B}\Z/2$-equivariant maps) from $X$ to $\Sigma^n\SH$.
\end{defn}

We note that supercohomology is shifted so that $\SH^0(\mathsf{pt})=\mathbb{C}^{\times}$.

It remains to define the anomaly of an action of a supergroup on a slightly degenerate braided fusion 1-category. Let $\cA$ be a slightly degenerate braided fusion 1-category, $(H,z)$ a finite super group with $z\neq 1$, and $\rho:(H,z) \rightarrow (\Aut^{\br}_{\sVect}(\cA),(-1)^f)$ an action, that is, a map of spaces $\mathrm{B}H\rightarrow \mathrm{B}\Aut^{\br}_{\sVect}(\cA)$ compatible with the actions by $\mathrm{B}\mathbb{Z}/2$. The canonical map $[-]:\ob(\mathbf{BMF1C}^{\ndeg}(\sVect))\rightarrow \mathcal{W}itt(\mathbf{sVect})$ sending a slightly degenerate braided fusion 1-category to the corresponding $\mathbf{sVect}$-Witt equivalence class is a map of superspaces. In particular, there is a corresponding map of superspaces $[-]:\mathrm{B}\Aut_{\mathbf{sVect}}^{\br}(\mathcal{A})\rightarrow \mathrm{B}\Aut_{\mathbf{sVect}}([\mathcal{A}])$.

\begin{defn}
The \textit{superanomaly} of the action $\rho:(H,z) \rightarrow (\Aut^{\br}_{\sVect}(\cA),(-1)^f)$ is the composite map of superspaces \begin{equation}\label{eq:superanomaly}\mathrm{B}(H,z)\xrightarrow{\rho}\mathrm{B}\Aut_{\mathbf{sVect}}^{\br}(\mathcal{A})\xrightarrow{[-]} \mathrm{B}\Aut_{\mathbf{sVect}}([\mathcal{A}]).\end{equation}
\end{defn}

Upon choosing a minimal nondegenerate extension for $\mathcal{A}$, which exists by \cite{JFR}, there is an equivalence of superspaces $\mathrm{B}\Aut_{\mathbf{sVect}}([\mathcal{A}])\simeq \mathrm{B}\Aut_{\mathbf{sVect}}([\mathbf{sVect}]) = \Sigma^4\SH$. Indeed, at the level of spaces, there is a canonical equivalence $\mathrm{B}\Aut_{\mathbf{sVect}}([\mathcal{A}])\simeq \mathrm{B}\Aut_{\mathbf{sVect}}([\mathbf{sVect}])$ since $[ \cA]$ is an invertible object in $\Witt(\sVect)$, but upgrading this to a $\mathrm{B}\mathbb{Z}/2$-equivariant equivalence, i.e.\ an equivalence of superspaces, requires a choice of minimal nondegenerate extension. 
Having made such a choice, the superanomaly $[\rho]$ yields a class in $\SH^{4}(H,z)$. Without making such a choice, the superanomaly lives in $\pi_0$ of the space of $\mathrm{B}\mathbb{Z}/2$-equivariant maps $\pi_0 \Map(\mathrm{B}H, \mathrm{B}\Aut_{\sVect}([\cA]))^{\mathrm{B} \mathbb{Z}/2}$, which is a torsor over $\SH^4(H,z)$.

\begin{rem}\label{rem:ENOanomaly}
In the bosonic case, that is, when $\mathcal{A}$ is nondegenerate braided fusion 1-category with a group action $\rho:H\rightarrow\mathcal{A}ut^{\br}(\mathcal{A})$, the anomaly of $\rho$ is the composite $$\mathrm{B}H\xrightarrow{\rho}\mathrm{B}\Aut^{\br}(\mathcal{A})\xrightarrow{[-]} \mathrm{B}\Aut([\mathcal{A}])\simeq \mathrm{B}^4\mathbb{C}^{\times}.$$ In particular, $[\rho]$ is a class in $H^4(H;\mathbb{C}^{\times})$. This cohomology class was denoted by $O_4(\rho)$ in \cite{ENO2010}, where it was explained that there is a fiber sequence of ordinary spaces $$\mathrm{B}\mathcal{P}ic(\cA)\rightarrow \mathrm{B}\Aut^{\br}(\mathcal{A})\xrightarrow{[-]} \mathrm{B}\Aut([\mathcal{A}]),$$ where $\mathcal{P}ic(\mathcal{A})$ denotes the space of invertible $\mathcal{A}$-module 1-categories. In particular, the anomaly $[\rho]$ measures exactly the failure of the map $\rho:\mathrm{B}H\rightarrow \mathrm{B}\Aut^{\br}(\mathcal{A})$ to lift to a map $\mathrm{B}H\rightarrow \mathrm{B}\mathcal{P}ic(\cA)$, and thereby yield an $H$-crossed braided extension of $\cA$.
\end{rem}

\section{Witt squares}\label{section:Wittsquares}

In this section, we establish $\Witt(-)$ as a functor (i.e.\ prove Proposition~\ref{thm:Wittfunctor}) and prove Theorem~\ref{thm:Wittsquare}.

\subsection{From Multifusion 2-Categories to Morita Categories and Back}\label{subsection:extractWittdata}
Recall that $\mathbf{MF2C}$ denotes the $(3,1)$-category of multifusion 2-categories and monoidal 2-functors, and $\Mor^{\sss}$ denotes the $(4,1)$-category of multifusion $2$-categories and semisimple bimodule $2$-categories.  Let us write $\Mor^{\sss,0\mhyphen\dom}$ for the (non-full) subcategory of $\Mor^{\sss}$ on the $0$-dominant 1-morphisms (Definition \ref{def:dominanceMF2C}), which exists thanks to Proposition \ref{prop:nDomComposesMor2} and Lemma~\ref{lem:invertible2faith2dom}. The canonical functor $$[-]:\mathbf{MF2C}\to \Mor^{\sss}$$ sending a multifusion 2-category to the corresponding Morita class $[\fC]$ factors through this  subcategory $\Mor^{\sss,0\mhyphen\dom}$: Namely, given a monoidal functor $F:\fC\to\fD$, the associated finite semisimple bimodule 2-category is $\fD$ with the obvious $\fC$-$\fD$-bimodule structure. Such a bimodule 2-category is always $0$-dominant. 

Let $\Mor_{\mathds{1}/}^{\sss,0\mhyphen\dom}:= \left(\Mor^{\sss,0\mhyphen\dom}\right)_{\mathds{1}/}$ denote the slice category of objects with a morphism from~$\mathds{1}$. The following proposition allows us to describe the category of multifusion $2$-categories and monoidal functors in terms of Morita categories: 
\begin{prop}\label{prop:MF2CViaMor} The canonical functor $\mathbf{MF2C} \to\Mor_{\mathds{1}/}^{\sss,0\mhyphen\dom} $
sending a multifusion $2$-category $\fC$ to the right module $\fC_{\fC}$ is an equivalence of $\infty$-categories. 
\end{prop}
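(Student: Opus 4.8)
The plan is to prove that the functor $[-]\colon\mathbf{MF2C}\to\Mor_{\mathds{1}/}^{\sss,0\mhyphen\dom}$ is both essentially surjective and fully faithful, and hence an equivalence. First I would unwind the target: an object of $\Mor_{\mathds{1}/}^{\sss,0\mhyphen\dom}$ is a $0$-dominant $1$-morphism $\fM\colon\mathds{1}\nrightarrow\fD$, which, since $\mathds{1}=\mathbf{2Vect}$, is precisely a finite semisimple right $\fD$-module $2$-category $\fM$ that is \emph{faithful} as a module (Definition~\ref{def:dominanceMF2C}). The subcategory $\Mor^{\sss,0\mhyphen\dom}$ is well defined because $0$-dominant $1$-morphisms compose (Theorem~\ref{thm:nDomComposesMor1}) and contain the equivalences (Lemma~\ref{lem:invertible2faith2dom}, noting $2$-dominant implies $0$-dominant), so the coslice and the functor $[-]$ make sense; on morphisms $[-]$ sends a monoidal $2$-functor $F\colon\fC\to\fC'$ to the graph bimodule ${}_F\fC'_{\fC'}$ together with the tautological identification $\fC_\fC\boxtimes_\fC{}_F\fC'\simeq\fC'_{\fC'}$.

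For essential surjectivity, given a faithful right $\fD$-module $\fM$, I would set $\fC:=\mathbf{End}_\fD(\fM)$; by the Morita theory of multifusion $2$-categories this is again a multifusion $2$-category, and faithfulness of $\fM$ is \emph{exactly} the condition guaranteeing that the $\fC$-$\fD$-bimodule $\fM$ is invertible in $\Mor^{\sss}$, i.e.\ that $\fM$ exhibits a Morita equivalence $\fC\simeq\fD$ \cite{Decoppet2022;Morita}. An invertible $1$-morphism is in particular $0$-dominant (Lemma~\ref{lem:invertible2faith2dom}), so it is an equivalence already in $\Mor^{\sss,0\mhyphen\dom}$, and I would transport the slice object $(\fD,\fM)$ along its inverse $\fM^{\mp}\colon\fD\to\fC$: the composite $\mathds{1}\xrightarrow{\fM}\fD\xrightarrow{\fM^{\mp}}\fC$ is $\fM\boxtimes_\fD\fM^{\mp}\simeq\fC_\fC$, the identity bimodule viewed as a right module, so $(\fD,\fM)\simeq(\fC,\fC_\fC)=[-](\fC)$ in the coslice.

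For full faithfulness I would compute mapping spaces via the standard coslice formula. For multifusion $2$-categories $\fC,\fC'$ one has
\[
\Map_{\Mor_{\mathds{1}/}^{\sss,0\mhyphen\dom}}\!\big((\fC,\fC_\fC),(\fC',\fC'_{\fC'})\big)
\;\simeq\;
\Map_{\Mor^{\sss,0\mhyphen\dom}}(\fC,\fC')
\times_{\Map_{\Mor^{\sss,0\mhyphen\dom}}(\mathds{1},\fC')}
\{\fC'_{\fC'}\},
\]
where the right-hand map precomposes with $\fC_\fC$, i.e.\ sends a bimodule $N$ to its underlying right $\fC'$-module $\fC_\fC\boxtimes_\fC N\simeq N|_{\fC'}$. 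The fiber over $\fC'_{\fC'}$ thus consists of bimodules whose underlying right module is the regular module $\fC'_{\fC'}$; since the regular module is faithful, the $0$-dominance constraint on such an $N$ is automatic, so this fiber agrees with the one computed in the ambient $\Mor^{\sss}$. By the $\infty$-categorical Eilenberg--Watts description of $1$-morphisms in $\Mor$, the remaining datum is precisely a left $\fC$-action on $\fC'_{\fC'}$ commuting with the right $\fC'$-action, that is a monoidal $2$-functor $\fC\to\mathbf{End}_{\fC'}(\fC'_{\fC'})\simeq\fC'$. Hence the fiber is naturally equivalent to $\Map_{\mathbf{MF2C}}(\fC,\fC')$, and unwinding the construction of $[-]$ shows this equivalence is exactly the map it induces.

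The main obstacle I anticipate is not any single step but keeping all of these identifications homotopy coherent: both the Morita-invertibility input in essential surjectivity and the monoidal equivalence $\mathbf{End}_{\fC'}(\fC'_{\fC'})\simeq\fC'$ must be upgraded from statements about objects to equivalences of the relevant $\infty$-groupoids, so that the naive bijection between monoidal functors and left-module structures becomes an equivalence of mapping \emph{spaces}. This is where the careful flagged/Morita bookkeeping of \S\ref{subsubsection:MorF2C} together with the Eilenberg--Watts theorem is essential, and it is the only place where I would need to be vigilant rather than merely compute.
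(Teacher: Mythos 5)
Your proof is correct, and it splits the same way the paper's does on one half while genuinely diverging on the other. For essential surjectivity you do exactly what the paper does: given a faithful right $\fD$-module $\fM$, set $\fC:=\End_{\fD}(\fM)$ and invoke \cite[Theorem 5.4.3]{Decoppet2022;Morita} to see that $\fM$ is an invertible $1$-morphism, hence an equivalence $(\fC,\fC_{\fC})\simeq(\fD,\fM)$ in the coslice. For full faithfulness the paper takes a shortcut you don't: it cites \cite[Theorem~4.8.5.11]{higheralgebra}, which says that $\Alg_{E_1}(\tKar)\to(\Mod_{\tKar})_{\tKar/}$, $\fC\mapsto\Mod_{\fC}(\tKar)$, is fully faithful, and then only has to observe that this fully faithful functor factors through the non-full subcategory $\Mor_{\mathds{1}/}^{\sss,0\mhyphen\dom}$ (so the factored functor is still fully faithful). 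Your route instead unpacks the coslice mapping spaces by hand and identifies the fiber over $\fC'_{\fC'}$ with bimodule structures extending the regular right module, which by the endomorphism-object universal property (Higher Algebra \S 4.7.1, the ``Eilenberg--Watts'' input) is the space of $E_1$-algebra maps $\fC\to\End_{\fC'}(\fC'_{\fC'})\simeq\fC'$; your observation that $0$-dominance is automatic on this fiber plays precisely the role of the paper's ``factors through the subcategory'' step. The trade-off is clear: the paper's citation packages all homotopy coherence into a black box, whereas your argument is essentially a re-derivation of that theorem in this special case --- which is why the coherence worry you flag in your last paragraph is real, and is exactly what quoting \cite[Theorem~4.8.5.11]{higheralgebra} would dissolve. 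If you want your version to stand on its own, the cleanest fix is to replace the informal ``left action commuting with the right action'' step by the precise statement that left $\fC$-module structures on a fixed object of the $\tKar$-tensored category $\mathbf{RMod}_{\fC'}(\tKar)$ are classified by algebra maps into its endomorphism object; everything else in your write-up then assembles into a complete proof.
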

\begin{proof}
By~\cite[Theorem~4.8.5.11]{higheralgebra},  the functor $\Alg_{E_1}(\tKar) \to (\Mod_{\tKar})_{\tKar/}$ sending a monoidal $2$-Karoubian category $\fC$ to the category $\Mod_{\fC}(\tKar)$ of $2$-Karoubian $\fC$-modules is fully faithful. Moreover, by definition (see~\S\ref{subsubsection:MorF2C}), it factors through the full subcategory $\Mor(\tKar)$. By the discussion in \S\ref{subsubsection:MorF2C},  the induced fully faithful functor $\Alg_{E_1}(\tKar) \to \Mor(\tKar)_{\mathds{1}/}$ sends a $\fC$ to the module $\fC_{\fC}$ and a monoidal functor $F:\fC \to \fD$ to the bimodule $_{\fC} \fD_{\fD}$ together with the bimodule equivalence $\fD\simeq \fC \boxtimes_{\fC} \fD_{\fD} $. 
It follows from this description, that the fully faithful composite $\mathbf{MF2C} \hookrightarrow \Alg_{E_1}(\tKar)\hookrightarrow \Mor(\tKar)_{\mathds{1}/}$ factors through the (a priori non-full) subcategory $\Mor_{\mathds{1}/}^{\sss,0\mhyphen\dom} \to \Mor(\tKar)_{\mathds{1}/}$ and hence results in a fully faithful functor $\mathbf{MF2C} \to \Mor_{\mathds{1}/}^{\sss,0\mhyphen\dom}$. Finally, we prove that this functor is essentially surjective on objects: Given any $0$-dominant $1$-morphism $\fM_{\fD}$ in $\Mor^{\sss}$ out of $\mathds{1}$, it suffices to construct a multifusion $1$-category $\fC$, a Morita equivalence $\fN: \fC \nrightarrow\fD$ and an equivalence $ \fC \boxtimes_{\fC}\fN_{\fD} \simeq \fM_{\fD}. $ Taking $\fN:= \fM$ and $\fC:= \End_{\fD}(\fM)$ finishes the proof by \cite[Theorem 5.4.3]{Decoppet2022;Morita}.
\end{proof}

Proposition~\ref{prop:MF2CViaMor} also allows us to make contact with braided multifusion 1-categories. More precisely, let $\Mor^{\sss}_2$ denote the (univalent) Morita $4$-category of braided multifusion 1-categories, as defined in \S\ref{subsubsection:Mor2ss}. 

Just as every multifusion 2-category $\fC$ determines, and is determined by (up to monoidal equivalence), a 1-morphism $\fC: \mathds{1} \nrightarrow \fC$ in $\Mor^{\sss}$, so too every braided multifusion 1-category $\cB$ determines, and is determined by, a 1-morphism $\cB_\cB : \mathds{1} \nrightarrow \cB$ in $\Mor^{\sss}_2$.

\begin{prop}
    \label{lem:obBMF1C}
    The functor $\cB \mapsto \cB_\cB$ defines an equivalence of $\infty$-categories $$\BMF1C\to \Mor^{\sss,1\mhyphen\dom}_{2,\mathds{1}/}.$$
\end{prop}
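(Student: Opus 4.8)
The plan is to prove that $\Phi:\cB\mapsto\cB_\cB$ is fully faithful and essentially surjective onto $\Mor^{\sss,1\mhyphen\dom}_{2,\mathds{1}/}$, mirroring the proof of Proposition~\ref{prop:MF2CViaMor}. First I would check that the functor is well-defined with the claimed target. For any braided multifusion $1$-category $\cB$, the $1$-morphism $\cB_\cB:\mathds{1}\nrightarrow\cB$ is the multifusion category $\cB$ equipped with the canonical braided functor $\cB^{\rev}\to\cZ(\cB)$ coming from the (reverse) braiding, which is fully faithful; hence by Definition~\ref{defn:domBF1}\ref{defn:domBF1:1dom} the morphism $\cB_\cB$ is $1$-dominant (note that here the source is $\mathds{1}$, so $\cZ(\cB,\mathds{1})=\cZ(\cB)$), and $\Phi$ indeed lands in $\Mor^{\sss,1\mhyphen\dom}_{2,\mathds{1}/}$.

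For fully faithfulness I would pass to the Morita $4$-category of multifusion $2$-categories. By Lemma~\ref{lem:oursketchyMorita4categories} the functor $\Mod$ induces a pointed equivalence $\Mor_2^{\sss}\simeq\Mor^{\sss}$ sending $\cB\mapsto\Mod(\cB)$ and $\cC\mapsto\Mod(\cC)$ on objects and $1$-morphisms, and by Proposition~\ref{prop:dominancethesame} this equivalence matches $n$-dominant $1$-morphisms on the two sides; in particular it restricts to an equivalence of coslices under the unit, $\Mor^{\sss,1\mhyphen\dom}_{2,\mathds{1}/}\simeq\Mor^{\sss,1\mhyphen\dom}_{\mathds{1}/}$. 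Under this identification $\Phi$ becomes $\cB\mapsto\Mod(\cB)_{\Mod(\cB)}$, which is the composite of the fully faithful functor $\Mod:\BMF1C\hookrightarrow\MF2C$ with the equivalence $\MF2C\xrightarrow{\sim}\Mor^{\sss,0\mhyphen\dom}_{\mathds{1}/}$ of Proposition~\ref{prop:MF2CViaMor}. Since the $1$-dominant coslice is a full subcategory of the $0$-dominant one (both retain all higher morphisms) and $\Phi$ already lands in it, fully faithfulness of $\Phi$ follows formally.

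It remains to prove essential surjectivity, which I would carry out directly on the braided side using the characterization of invertible $1$-morphisms. Let $\cC:\mathds{1}\nrightarrow\cB$ be a $1$-dominant $1$-morphism, so the braided functor $\cB^{\rev}\to\cZ(\cC)$ is fully faithful. Set $\cB'':=\cZ(\cC,\cB^{\rev})$, a braided multifusion $1$-category, and equip $\cC$ with the braided functor $\cB''\boxtimes\cB^{\rev}\to\cZ(\cC)$ given by the two mutually centralizing inclusions; this defines a $1$-morphism $e:\cB''\nrightarrow\cB$ whose underlying category is $\cC$. By Corollary~\ref{rem:davidStatement1} (equivalently Theorem~\ref{thm:invertibleIFFWittEq}, via the double-commutant Lemma~\ref{lem:BF1CDoubleCommutant}) the morphism $e$ is invertible. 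The composite $\mathds{1}\xrightarrow{\cB''_{\cB''}}\cB''\xrightarrow{e}\cB$ has underlying category $\cB''\boxtimes_{\cB''}\cC\cong\cC$ (the middle category acts on $\cC$ through its central embedding) and central structure the given $\cB^{\rev}\to\cZ(\cC)$, so $e$ together with this identification exhibits an equivalence $\Phi(\cB'')\simeq\cC$ in the coslice $\Mor^{\sss,1\mhyphen\dom}_{2,\mathds{1}/}$; the witnessing $e$ lies in $\Mor^{\sss,1\mhyphen\dom}_2$ since invertible $1$-morphisms are $2$-dominant, hence $1$-dominant.

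The main obstacle is essential surjectivity, and more precisely the recognition that the correct braided target is the centralizer $\cB''=\cZ(\cC,\cB^{\rev})$ rather than $\cB$ itself: a $1$-dominant $\cC$ need not be Morita equivalent to $\cB$, so the target must be allowed to change along the Witt equivalence $e$. The technical content is concentrated in Corollary~\ref{rem:davidStatement1} together with the double-commutant identities of Lemma~\ref{lem:BF1CDoubleCommutant} and Corollary~\ref{cor:centerOfCentralizers}, which guarantee both that $e$ is invertible and that $\cZ_{2}(\cB'')\cong\cZ_{2}(\cB)$, so that $e$ is genuinely a Witt equivalence over the common symmetric center. A secondary point requiring care is the bookkeeping identifying $\Phi$ with the transported functor $\cB\mapsto\Mod(\cB)_{\Mod(\cB)}$ and verifying that the dominance adjectives are preserved under $\Mod$, for which Proposition~\ref{prop:dominancethesame} is exactly what is needed.
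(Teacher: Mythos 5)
Your proof is correct and takes essentially the same route as the paper: fully faithfulness is obtained by restricting the equivalence of Proposition~\ref{prop:MF2CViaMor} along the fully faithful $\Mod:\BMF1C\hookrightarrow\MF2C$ together with $\Mor_2^{\sss}\simeq\Mor^{\sss}$, and essential surjectivity by setting $\cB'=\cZ(\cC,\cB^{\rev})$ and invoking Corollary~\ref{rem:davidStatement1} to see that $\cC:\cB'\nrightarrow\cB$ is invertible. The only quibble is your parenthetical claim that the $1$-dominant coslice is a \emph{full} subcategory of the $0$-dominant one (the paper treats it as a priori non-full), but since the inclusion is in any case a monomorphism on mapping spaces, your formal deduction of fully faithfulness of the factored functor still goes through.
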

\begin{proof}
Restrict the equivalence $\mathbf{MF2C} \to \Mor_{\mathds{1}/}^{\sss,0\mhyphen\dom}$ along the fully faithful functor $\mathbf{BMF1C}\hookrightarrow \mathbf{MF2C}$ and use the equivalence $\Mor_2^{\sss} \simeq \Mor^{\sss}$ to obtain a fully faithful functor $\mathbf{BMF1C} \hookrightarrow \Mor_{2,\mathds{1}/}^{\sss,0\mhyphen\dom}$ which sends a braided multifusion $1$-category $\cB$ to the $\cB$-central multifusion category~$\cB$, thought of as a 1-morphism $\mathds{1} \nrightarrow \cB$. In particular, this fully faithful functor factors through the (a priori non-full) subcategory  $\Mor_{2,\mathds{1}/}^{\sss,1\mhyphen\dom}$ (since for any $\cB$, the functor $\cB \to \cZ(\cB)$ is fully faithful). It then suffices to prove that the fully faithful functor $\mathbf{BMF1C} \to \Mor_{2,\mathds{1}/}^{\sss,1\mhyphen\dom}$ is essentially surjective on objects. Equivalently, for any braided multifusion $1$-category $\cB$ and any multifusion $1$-category $\cC$ with a fully faithful braided functor $\cB^{\rev} \to \cZ(\cC), $
there exists another braided multifusion category $\cB'$ and a Witt equivalence $\cD:\cB' \nrightarrow \cB$ so that $ \cB' \boxtimes_{\cB'}\cD \simeq \cC$. Take $\cB':= \cZ(\cC, \cB^{\rev})$ and as $1$-morphism $\cB' \nrightarrow \cB$  the multifusion category $\cD:=\cC$ with evident $\cB'^{\rev} \boxtimes \cB$ central structure, it follows from fully faithfulness of $\cB^{\rev} \to \cZ(\cC)$ that $\cD$ is indeed an invertible $1$-morphism in $\Mor_2^{\sss}$ by Corollary \ref{rem:davidStatement1}, finishing the proof. 
\end{proof}

Recall from \S\ref{subsubsection:Mor2ss} that $\Mod: \Mor_2^{\sss} \to \Mor^{\sss}$ defines an equivalence (after univalification); the most interesting part of this  is that every multifusion $2$-category is Morita equivalent to $\Mod(\cB)$ for a braided multifusion $1$-category, as shown in~\cite{Decoppet2022;Morita}. 

Thus, the previous statements may be summarized in the following diagram:
\begin{equation}\label{eq:BMF1CtoMor2}
    \begin{tikzcd}
        \BMF1C \arrow[r,hook] \arrow[d,"\rotatebox{90}{$\sim$}"'] & \MF2C \arrow[d,"\rotatebox{90}{$\sim$}"] \\
        \Mor_{2,\mathds{1}/}^{\sss,1\mhyphen\dom} \stackrel{\Mod}{\simeq}  \Mor_{\mathds{1}/}^{\sss,1\mhyphen\dom} \arrow[r,hook]& \Mor_{\mathds{1}/}^{\sss,0\mhyphen\dom}
    \end{tikzcd}
\end{equation}
Notice that it follows that the bottom horizontal arrow is also fully faithful, which is a priori not obvious.

\subsection{\texorpdfstring{Constructing $\Witt$ as a Functor}{Constructing Witt as a Functor}}\label{subsection:Wittfunctor}

In this section, we will construct the functor $\Witt(-)$ taking a symmetric multifusion category to its associated Witt space.

\begin{lem}\label{lem:firstleftfibrations}
    The following functors are left fibrations: $$[-] : \mathbf{MF2C}\to \Mor^{\sss,0\mhyphen\dom}$$
    $$[-]: \mathbf{BMF1C} \to \Mor_2^{\sss,1\mhyphen\dom}$$
\end{lem}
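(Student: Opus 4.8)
The plan is to recognise both functors $[-]$ as the canonical projection from an under-category, and then invoke the standard fact that such projections are left fibrations. By Proposition~\ref{prop:MF2CViaMor}, the assignment sending a multifusion $2$-category $\fC$ to the pointed object $(\mathds{1} \xrightarrow{\fC_\fC} \fC)$ is an equivalence $\mathbf{MF2C} \xrightarrow{\sim} \Mor^{\sss,0\mhyphen\dom}_{\mathds{1}/}$, and by Proposition~\ref{lem:obBMF1C} the assignment $\cB \mapsto (\mathds{1} \xrightarrow{\cB_\cB} \cB)$ is an equivalence $\mathbf{BMF1C} \xrightarrow{\sim} \Mor^{\sss,1\mhyphen\dom}_{2,\mathds{1}/}$. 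So the strategy is: transport each functor $[-]$ across these equivalences, check that it becomes the basepoint-forgetting projection, and reduce to the general statement that for any $\infty$-category $\cD$ and object $x\in\cD$ the projection $\cD_{x/}\to\cD$ is a left fibration.

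First I would verify the compatibility: in both cases $[-]$ sends an object to the \emph{target} of its defining $1$-morphism out of $\mathds{1}$ (namely $\fC = [\fC]$, resp.\ $\cB$), which is precisely what the projection $\Mor^{\sss,0\mhyphen\dom}_{\mathds{1}/}\to \Mor^{\sss,0\mhyphen\dom}$ (resp.\ $\Mor^{\sss,1\mhyphen\dom}_{2,\mathds{1}/}\to \Mor^{\sss,1\mhyphen\dom}_2$) does after forgetting the map from $\mathds{1}$. In other words, the triangle
\[
\begin{tikzcd}[column sep=small]
\mathbf{MF2C} \arrow[rr,"\sim"] \arrow[dr,"{[-]}"'] & & \Mor^{\sss,0\mhyphen\dom}_{\mathds{1}/} \arrow[dl] \\
& \Mor^{\sss,0\mhyphen\dom} &
\end{tikzcd}
\]
commutes (and similarly in the braided case); this is immediate from the constructions in~\S\ref{subsection:extractWittdata}, where $[-]$ was precisely defined to record the target Morita class. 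This compatibility check, although routine, is the only genuine point requiring care, since everything else is formal.

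Finally I would invoke the standard fact that under-category projections are left fibrations \cite[\S2.1.2]{highertopos}. This can be seen directly against the definition given above: an object of $\cD_{x/}$ over $d\in\cD$ is a morphism $x\to d$, and given an arrow $g:d\to d'$ in $\cD$, a lift along the projection is exactly a $2$-simplex with boundary $x\to d\xrightarrow{g}d'$, i.e.\ a factorisation of the composite $x\to d'$ through $g$; the space of such factorisations is contractible. Equivalently, the source map exhibits $\mathrm{ar}(\cD_{x/})$ as the pullback of $\ob(\cD_{x/})\to\ob(\cD)\leftarrow\mathrm{ar}(\cD)$, which is the pullback-square condition defining a left fibration. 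Applying this with $\cD=\Mor^{\sss,0\mhyphen\dom}$ and $\cD=\Mor^{\sss,1\mhyphen\dom}_2$ respectively, and transporting back along the equivalences of Propositions~\ref{prop:MF2CViaMor} and~\ref{lem:obBMF1C}, yields both claims. I do not anticipate a serious obstacle here; the substantive input is entirely contained in the two equivalences already established, so the lemma is essentially a formal consequence of the coslice fibration fact.
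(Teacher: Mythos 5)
Your proposal is correct and follows essentially the same route as the paper: both reduce via Propositions~\ref{prop:MF2CViaMor} and~\ref{lem:obBMF1C} to the statement that the coslice projections $\left(\Mor^{\sss,0\mhyphen\dom}\right)_{\mathds{1}/} \to \Mor^{\sss,0\mhyphen\dom}$ and $\left(\Mor^{\sss,1\mhyphen\dom}_{2}\right)_{\mathds{1}/} \to \Mor^{\sss,1\mhyphen\dom}_{2}$ are left fibrations, which the paper cites from \cite[Corollary 2.1.2.2]{highertopos}. Your additional unpacking of the compatibility triangle and of the coslice-fibration fact is fine but not a different argument.
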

\begin{proof}
    By Propositions~\ref{prop:MF2CViaMor} and~\ref{lem:obBMF1C}, it suffices to show that the forgetful functors $\left(\Mor^{\sss,0\mhyphen\dom}\right)_{\mathds{1}/} \to \Mor^{\sss,0\mhyphen\dom}$ and $\left(\Mor^{\sss,1\mhyphen\dom}\right)_{\mathds{1}/} \to \Mor^{\sss,1\mhyphen\dom}$ are left fibrations, which follows from~\cite[Corollary 2.1.2.2]{highertopos}.
\end{proof}

We will now construct $\cZ_{2}$ as a functor from a certain non-full subcategory of the Morita $(4,1)$-category $\Mor^{\sss}_2$ of braided multifusion 1-categories to the $(2,1)$-category $\mathbf{SMF1C}$ of symmetric multifusion 1-categories. Restricting further to a smaller subcategory, the functor $\mathcal{Z}_{2}$ will be a left fibration onto the category of dominant and faithful symmetric functors. It therefore follows from unstraightening that the assignment sending a symmetric multifusion 1-category $\mathcal{E}$ to the corresponding Witt space $\Witt(\cE)$ (i.e.\ the fiber of $\cZ_{2}$ at $\cE$) is functorial with respect to dominant and faithful symmetric tensor functors.

\begin{prop}\label{prop:Mugerfunctor}
There is a functor of $\infty$-categories 
$\mathcal{Z}_{2}:\mathbf{Mor}_{2}^{\sss,\str1\mhyphen\dom}\to \mathbf{SMF1C}$ from the sub-$(3,1)$-category of the 
Morita (4,1)-category of braided multifusion 1-categories with the same objects but only with the strongly 1-dominant 1-morphisms (Definition~\ref{defn:stronglydominant}), to the $(2,1)$-category of symmetric multifusion 1-categories with symmetric tensor functors.
\end{prop}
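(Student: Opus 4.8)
The plan is to build $\cZ_2$ from the data already assembled in the preceding lemmas and then devote the real effort to coherence. On objects, $\cZ_2$ sends a braided multifusion $1$-category $\cB$ to its symmetric center $\cE=\cZ_2(\cB)\in\mathbf{SMF1C}$. On a strongly $1$-dominant $1$-morphism $\cC\colon\cB_1\nrightarrow\cB_2$ it sends $\cC$ to the functor $\cZ_2(\cC)=\iota_2^*F_2^*F_1\iota_1$ of~\eqref{eq:Muegerfunctor1morphisms}, which is strong symmetric monoidal by Lemma~\ref{lem:characterizationstrongly1dominant} and hence is a genuine $1$-morphism of $\mathbf{SMF1C}$. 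The source $\Mor_2^{\sss,\str1\mhyphen\dom}$ is a well-defined (non-full) subcategory: strongly $1$-dominant $1$-morphisms compose by Lemma~\ref{lem:s1DomIFFZ2Monoidal} and contain the equivalences, which are $2$-dominant and hence factorizable and strongly $1$-dominant by Theorem~\ref{thm:invertibleIFFWittEq} and Remark~\ref{rem:factorizable1morphisms}. Finally, for composable strongly $1$-dominant $\cC_1,\cC_2$, Lemma~\ref{lem:symmetriccompose} supplies a natural isomorphism $\cZ_2(\cC_2)\circ\cZ_2(\cC_1)\cong\cZ_2(\cC_1\boxtimes_{\cB_2}\cC_2)$.

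What remains — and is the only genuine difficulty — is to promote this object/$1$-morphism data, together with the composition isomorphisms, into a coherent functor of $\infty$-categories, producing all higher coherences at once. One cannot simply define the functor by the formula $\iota_2^*F_2^*F_1\iota_1$, since the right adjoints $F_2^*$ and $\iota_2^*$ do not organise into a functor on the nose. Instead I would characterize $\cZ_2(\cC)$ by a universal property that transports functorially. Because $\cC$ is $1$-dominant, $F_2|_{\cE_2}$ identifies $\cE_2$ with a full symmetric subcategory $\Im(F_2|_{\cE_2})\hookrightarrow\cZ(\cC)$, an inclusion that is monic (a $(-1)$-truncated morphism) in $\mathbf{SMF1C}$; and strong $1$-dominance says exactly that $F_1\iota_1\colon\cE_1\to\cZ(\cC)$ factors through this subobject. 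The functor $\cZ_2(\cC)$ is this factorization, and factorizations through a monomorphism, when they exist, form a contractible space — so $\cZ_2(\cC)$ is determined up to a contractible space of choices, which is precisely what makes a coherent assembly possible.

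Concretely, I would build a single functor $\Mor_2^{\sss,\str1\mhyphen\dom}\to\mathbf{Ar}(\mathbf{SMF1C})$ recording, for each $\cC$, the arrow $F_1\iota_1\colon\cE_1\to\cZ(\cC)$ together with the monic $\Im(F_2|_{\cE_2})\hookrightarrow\cZ(\cC)$, restrict to the full subcategory on arrows admitting a factorization through the chosen monomorphism, and then take the factorization using the uniqueness (contractibility) of lifts against a monomorphism — the same orthogonality of the monic/$(-1)$-truncated factorization system already exploited in \S\ref{subsubection:flaggedcat}. Once such a functorial carrier is in place, strong monoidality, the composition isomorphisms of Lemma~\ref{lem:symmetriccompose}, and the behaviour on invertible higher morphisms are all forced, and $\cZ_2$ is determined. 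The main obstacle is therefore exactly the production of this functorial carrier: one must exhibit the assignment $\cC\mapsto\cZ(\cC)$ with its $\cB_1$- and $\cB_2$-embeddings as a genuine functor, which amounts to making coherent the étale-algebra and local-module description of the center of a relative tensor product used pointwise in Lemma~\ref{lem:symmetriccompose}; everything downstream then follows formally from uniqueness of factorizations.
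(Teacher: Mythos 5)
Your first paragraph assembles exactly the pointwise data that the paper uses (the formula~\eqref{eq:Muegerfunctor1morphisms}, strong monoidality via Lemma~\ref{lem:characterizationstrongly1dominant}, closure under composition via Lemma~\ref{lem:s1DomIFFZ2Monoidal}, and the composition isomorphisms of Lemma~\ref{lem:symmetriccompose}), but your diagnosis of where the difficulty lies is wrong, and this opens a genuine gap. The observation you are missing is that the target $\mathbf{SMF1C}$ is a $(2,1)$-category, i.e.\ a $2$-truncated $\infty$-category: its mapping spaces are $1$-truncated. Consequently any functor of $\infty$-categories $\Mor_{2}^{\sss,\str1\mhyphen\dom}\to\mathbf{SMF1C}$ factors uniquely through the homotopy $2$-category ($2$-truncation) of the source, so constructing such a functor is a \emph{finite} problem: one specifies objects, $1$-morphisms, equivalence classes of $2$-morphisms, the composition constraints, and verifies the finitely many axioms of a $2$-functor. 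This is precisely how the paper proceeds, ``by hand,'' and it is a complete proof. The ``infinite tower of higher coherences all at once'' that you declare to be the only genuine difficulty simply is not present; in particular your worry that $F_2^*$ and $\iota_2^*$ ``do not organise into a functor on the nose'' is immaterial, since only coherence up to the $2$-categorical level is required.

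Your alternative route does not close the gap you perceive; it relocates it into a step you do not carry out. The factorization-through-a-monomorphism argument is fine pointwise, but its input is what you call the ``functorial carrier'': the assignment sending a $1$-morphism $\cC$ to $\cZ(\cC)$ together with the arrow $F_1\iota_1$ and the monic $\Im(F_2|_{\cE_2})\hookrightarrow\cZ(\cC)$, as a coherent functor over $\Mor_{2}^{\sss,\str1\mhyphen\dom}$. You concede this is ``the main obstacle'' and do not construct it --- but making the Drinfeld center with its central embeddings coherently functorial along the (non-invertible) $1$-morphisms of a Morita $4$-category is itself an $\infty$-categorical coherence problem at least as hard as the proposition being proved: centers are not functorial along arbitrary morphisms, and their appearance in Morita theory as $\End(\id_{\cC})$ only yields functoriality under \emph{invertible} morphisms. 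There is also a type error in your carrier: $F_1\iota_1\colon\cE_1\to\cZ(\cC)$ is not an arrow of $\mathbf{SMF1C}$, since $\cZ(\cC)$ is braided but not symmetric, so the proposed functor to $\mathbf{Ar}(\mathbf{SMF1C})$ does not parse and would have to be replaced by a more elaborate target, compounding the unproven coherence. As written, the proposal is therefore incomplete exactly where completeness was needed, whereas the paper's truncation observation renders the hand-built data already sufficient.
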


\begin{proof}
Since the target is a $(2,1)$-category it suffices to construct this functor on the $2$-truncation of $\mathbf{Mor}_{2}^{\sss,\str1\mhyphen\dom}$ which we will do here ``by hand'' at the level of objects, $1$-morphisms, and equivalence classes of $2$-morphisms. 
On objects, $\cZ_{2}$ is defined in the obvious way, i.e., if $\cB_1$ is a braided multifusion 1-category, we let $\mathcal{Z}_{2}(\cB_1)$ be the symmetric center of $\cB_1$. Now, recall from \eqref{eq:Muegerfunctor1morphisms}, that to any strongly 1-dominant 1-morphisms $\cC_1: \cB_1 \nrightarrow \cB_2$ in $\mathbf{Mor}_{2}^{\sss}$, we can associate a symmetric tensor functor $\mathcal{Z}_{2}(\mathcal{C}_1):\cZ_{2}(\cB_1) \to \cZ_{2}(\cB_2)$, and from Lemma~\ref{lem:symmetriccompose} that these compose correctly. 
Finally, it is straightforward to define the functor $\cZ_{2}$ on isomorphism classes of invertible 2-morphisms and check functoriality.
\end{proof}

\begin{lem}\label{prop:Mugerfunctoradjectives}
The functor 
$\mathcal{Z}_{2}:\mathbf{Mor}_{2}^{\sss,\sss,2\mhyphen\dom,0\mhyphen\faith}\to \mathbf{SMF1C}^{\dom,\faith}$ sends 2-dominant and 0-faithful 1-morphisms to dominant faithful symmetric tensor functors.
\end{lem}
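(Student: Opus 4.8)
The plan is to observe that $\cZ_{2}$ has already been constructed on all strongly $1$-dominant $1$-morphisms in Proposition~\ref{prop:Mugerfunctor}, and that every $2$-dominant $1$-morphism is in particular factorizable, hence strongly $1$-dominant, by Lemma~\ref{lem:fact+ext=2dom} together with Remark~\ref{rem:factorizable1morphisms}. Thus the restriction of $\cZ_{2}$ to the subcategory $\mathbf{Mor}_{2}^{\sss,2\mhyphen\dom,0\mhyphen\faith}$ is automatically well-defined, and it remains only to check that for a $2$-dominant and $0$-faithful $1$-morphism $\cC:\cB_1\nrightarrow\cB_2$ the symmetric tensor functor $\cZ_{2}(\cC):\cE_1\to\cE_2$ is both dominant and faithful.

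Dominance is immediate from the machinery already in place. Since $\cC$ is $2$-dominant it is factorizable by Lemma~\ref{lem:fact+ext=2dom}, and Lemma~\ref{lem:mondom} then says \emph{precisely} that $\cZ_{2}(\cC)$ is strong monoidal and dominant. So the real content of the lemma is the faithfulness statement, which is where the hypothesis of $0$-faithfulness enters.

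For faithfulness I would argue that $\cZ_{2}(\cC)$ annihilates no nonzero object. Recall $\cZ_{2}(\cC)=\iota_2^*F_2^*F_1\iota_1$ from~\eqref{eq:Muegerfunctor1morphisms}, and that $\cZ_{2}(\cC)$ is strong monoidal between rigid multifusion categories. Consequently the rigidity argument of Lemma~\ref{lem:anonymous} reduces faithfulness to showing that $\cZ_{2}(\cC)$ does not kill the unit of any indecomposable summand: decomposing $\cB_1\cong\bigoplus_j\cB_{1,j}$ as in Remark~\ref{rem:mor2Decomp}, one has $\cE_1=\cZ_{2}(\cB_1)\cong\bigoplus_j\cZ_{2}(\cB_{1,j})$, and for a simple $X$ in the $j$-th component the coevaluation $\mathbf{1}_{\cB_{1,j}}\hookrightarrow X\otimes X^*$ forces $\cZ_{2}(\cC)(X)\neq 0$ as soon as $\cZ_{2}(\cC)(\mathbf{1}_{\cB_{1,j}})\neq 0$. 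Now $0$-faithfulness means exactly that every summand $\cB_{1,j}$ acts nontrivially on $\cC$ (i.e.\ $\cB_{1,0}\cong 0$ in the notation of Remark~\ref{rem:mor2Decomp}), so $F_1(\mathbf{1}_{\cB_{1,j}})\neq 0$. Since $\mathbf{1}_{\cB_{1,j}}\in\cE_1$, strong $1$-dominance gives $F_1(\mathbf{1}_{\cB_{1,j}})\in\Im(F_1|_{\cE_1})\subseteq\Im(F_2|_{\cE_2})$, so $F_1(\mathbf{1}_{\cB_{1,j}})\cong F_2\iota_2(W_j)$ for a necessarily nonzero $W_j\in\cE_2$. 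Using that $F_2$ is fully faithful ($1$-dominance) and that $\iota_2$ is fully faithful, the counits $F_2^*F_2\cong\id$ and $\iota_2^*\iota_2\cong\id$ then yield $\cZ_{2}(\cC)(\mathbf{1}_{\cB_{1,j}})=\iota_2^*F_2^*F_1(\mathbf{1}_{\cB_{1,j}})\cong W_j\neq 0$, as needed.

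The main obstacle is precisely this faithfulness step. The difficulty is that the constituents $F_2^*$ and $\iota_2^*$ of $\cZ_{2}(\cC)$ are only \emph{right adjoints} of dominant functors, hence are not faithful and genuinely do kill objects, so one cannot conclude faithfulness of the composite by naive composition of faithful functors. The point of the argument above is that strong $1$-dominance places the relevant objects $F_1(\mathbf{1}_{\cB_{1,j}})$ exactly inside the essential images of $F_2$ and $\iota_2$, where these right adjoints restrict to (quasi-)inverse equivalences; combining this with the strong monoidality supplied by factorizability is what converts the non-degeneracy input of $0$-faithfulness into honest faithfulness of $\cZ_{2}(\cC)$.
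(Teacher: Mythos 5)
Your proof is correct and takes essentially the same route as the paper's: dominance follows from 2-dominance $\Rightarrow$ factorizability (Lemma~\ref{lem:fact+ext=2dom}, Lemma~\ref{lem:mondom}), and faithfulness from $0$-faithfulness of $F_1$ combined with the key point that strong $1$-dominance places $\Im(F_1|_{\cE_1})$ inside $\Im(F_2|_{\cE_2})$, where $\iota_2^*F_2^*$ inverts the fully faithful $F_2\iota_2$. The only difference is cosmetic: the paper transfers faithfulness of $F_1\iota_1$ directly through this equivalence onto the image, whereas you reduce faithfulness to non-annihilation of the unit objects of the fusion summands via semisimplicity and rigidity --- a more explicit but equivalent execution of the same idea.
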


\begin{proof}
Firstly, recall from section \S\ref{subsection:adjectives} that every 2-dominant 1-morphism in $\Mor_2^{\sss}$ is in particular strongly 1-dominant. Now, let $\cC: \cB_1 \nrightarrow \cB_2$ be a 2-dominant 1-morphism. The fact that $\cC$ is 2-dominant implies that it is factorizable by Lemma~\ref{lem:fact+ext=2dom}, which means that $\Im(\cE_1) = \cE_2$, so the symmetric tensor functor $\mathcal{Z}_{(2)}(\cC):\cE_1 \to \cE_2$ is dominant as desired. Secondly, if $\cC: \cB_1 \nrightarrow \cB_2$ is $0$-faithful and strongly 1-dominant, then the braided tensor functor $F_1:\cB_1\rightarrow \mathcal{Z}(\cC)$ is faithful. As $\Im(\cE_1)\subseteq\cE_2$ in $\mathcal{Z}(\cC)$ by strong 1-dominance, it follows that the symmetric tensor functor $\mathcal{Z}_{(2)}(\cC):\cE_1 \to \cE_2$ is faithful.
\end{proof}

Using Lemma~\ref{prop:Mugerfunctoradjectives}, we will henceforth only consider the restriction
\[\cZ_{2}: \Mor_2^{\sss,2\mhyphen\dom,0\mhyphen\faith}  \to \mathbf{SMF1C}^{\dom, \faith}
\]
of the functor $\cZ_{2}$ to the sub-$(3,1)$-category $\Mor_2^{\sss,2\mhyphen\dom,0\mhyphen\faith}$ with the same objects but only with the 2-dominant and 0-faithful 1-morphisms.  

The functoriality of the Witt space follows directly from the following main theorem of this section:
\begin{thm}\label{thm:Mor2SMFCleftfibration}
The functor $\cZ_{2} : \Mor_2^{\sss,2\mhyphen\dom,0\mhyphen\faith} \to \mathbf{SMF1C}^{\dom,\faith}$ is a left fibration.
\end{thm}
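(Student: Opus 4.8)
The plan is to verify the pullback-square definition of a left fibration directly. Writing $p=\cZ_2$, I must show that the map
\[
\Ar\!\left(\Mor_2^{\sss,2\mhyphen\dom,0\mhyphen\faith}\right) \longrightarrow \ob\!\left(\Mor_2^{\sss,2\mhyphen\dom,0\mhyphen\faith}\right)\times_{\ob(\mathbf{SMF1C})}\Ar\!\left(\mathbf{SMF1C}^{\dom,\faith}\right),
\]
sending a $1$-morphism $\cC\colon\cB_1\nrightarrow\cB_2$ to the pair $(\cB_1,\cZ_2(\cC))$, is an equivalence of spaces. Since the fibres of $p$ are exactly the Witt spaces $\Witt(\cE)$ of Definition~\ref{defn:Wittspace}, which are $4$-groupoids and hence spaces, it suffices to prove that for each braided multifusion $1$-category $\cB_1$ with $\cZ_2(\cB_1)=\cE_1$ and each dominant faithful symmetric functor $G\colon\cE_1\to\cE_2$, the space of lifts --- i.e.\ of $2$-dominant, $0$-faithful $1$-morphisms $\cC\colon\cB_1\nrightarrow\cB_2$ equipped with an identification $\cZ_2(\cC)\simeq G$ --- is contractible.

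The first step is to reparametrise the lifts. By $2$-dominance, the canonical functor $F_2\colon\cB_2^{\rev}\to\cZ(\cC,\cB_1)$ is an equivalence, so the target $\cB_2\simeq\cZ(\cC,\cB_1)^{\rev}$ together with its central structure is recovered, up to a contractible space of choices, from the pair $(\cC, F_1\colon\cB_1\to\cZ(\cC))$; conversely any such pair with $F_1$ faithful yields a $2$-dominant, $0$-faithful $1$-morphism by setting $\cB_2:=\cZ(\cC,\cB_1)^{\rev}$. Thus the space of $2$-dominant, $0$-faithful $1$-morphisms out of $\cB_1$ is equivalent to the space of pairs $(\cC, F_1)$ with $F_1$ faithful (equivalently, of $0$-faithful $1$-morphisms $\cB_1\nrightarrow\mathds{1}$), and under this identification Corollary~\ref{cor:centerOfCentralizers} gives $\cE_2=\cZ_2(\cB_2)\simeq\cZ_2(\Im(\cB_1))$ with $\cZ_2(\cC)$ the induced symmetric functor $G$.

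Next I would construct a universal lift. After reducing to the connected, fusion case using that every braided multifusion $1$-category is Morita equivalent to a connected one together with the block decomposition of Remark~\ref{rem:mor2Decomp}, I set $\cB_2:=\cB_1\boxtimes_{\cE_1}\cE_2$, the base change of $\cB_1$ along $G$ (equivalently, via Deligne's theorem~\cite{deligne2002} and the (de)equivariantization theorem of \cite{DGNO2010braided}, the result of restricting the $\Spec(\cE_1)$-action on the de-equivariantization of $\cB_1$ along the essentially surjective, faithful functor $\Spec(G)\colon\Spec(\cE_2)\to\Spec(\cE_1)$). I take $\cC:=\cB_2$ as a multifusion $1$-category, with $F_1$ the composite $\cB_1\to\cB_2\hookrightarrow\cZ(\cB_2)$ of the (dominant, faithful) free-module functor with the canonical braided embedding, and $F_2$ the canonical embedding of $\cB_2^{\rev}$. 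A centralizer computation (using Lemma~\ref{lem:BF1CDoubleCommutant} to identify $\cZ(\cZ(\cB_2),\cB_2)\simeq\cB_2^{\rev}$) shows this morphism is $2$-dominant, faithfulness of the free-module functor shows it is $0$-faithful, and a \cite{DNO2013}-type computation of $\cZ_2(\cB_1\boxtimes_{\cE_1}\cE_2)\simeq\cE_2$ (valid precisely because $G$ is dominant and faithful) identifies $\cZ_2(\cC)$ with $G$. This construction is the $\cZ_2$-analogue of the Witt-equivalence description of invertible morphisms in Theorem~\ref{thm:invertibleIFFWittEq}: it exhibits $\cB_2$ as the $\cE_2$-Witt base change of $\cB_1$.

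The main obstacle is contractibility of the space of lifts, which is where the bulk of the work lies. Given an arbitrary lift, I would compare it with the universal one by forming the evident composite $1$-morphism between the two candidate targets over $\id_{\cE_2}$; Theorem~\ref{thm:invertibleIFFWittEq} and Corollary~\ref{rem:davidStatement1} identify this comparison as an invertible $1$-morphism, i.e.\ an $\cE_2$-Witt autoequivalence, and the rigidity of the base-change construction forces it to be canonically trivial. The delicate point is to promote this $\pi_0$-level essential uniqueness to an equivalence of the full mapping spaces at every categorical level, i.e.\ to check that the space of such comparisons, together with all higher coherences of the $4$-groupoid, is itself contractible; this requires carefully tracking the identifications through the (de)equivariantization equivalence and the DNO centralizer calculus, and verifying independence of the auxiliary reduction to the connected case. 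Once contractibility is established, the displayed map is an equivalence and $\cZ_2$ is a left fibration.
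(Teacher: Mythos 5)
Your reduction of left fibrancy to contractibility of the spaces of lifts is correct, and your universal lift --- the base change $\cB_2^0:=\cB_1\boxtimes_{\cE_1}\cE_2$, which is exactly the input \cite[Corollary~3.24]{DMNO} that the paper also relies on --- is the right one. The gap is in your comparison step, and it is not merely a matter of missing coherences: the $\pi_0$-level argument as sketched fails. You propose to compare an arbitrary lift $\cC\colon\cB_1\nrightarrow\cB_2$ with the universal lift $\cC_0\colon\cB_1\nrightarrow\cB_2^0$ via ``the evident composite'' between the targets, i.e.\ $\cC^{\mp}\boxtimes_{\cB_1}\cC_0$, and claim Theorem~\ref{thm:invertibleIFFWittEq} and Corollary~\ref{rem:davidStatement1} show it is invertible. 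But $\cC^{\mp}$ is only an \emph{adjoint} of $\cC$, not an inverse ($\cC$ is $2$-dominant and $0$-faithful, hence typically not $2$-faithful, hence not invertible), so precomposition with $\cC^{\mp}$ does not cancel $\cC$. Concretely, take $\cB_1=\cE_1=\Rep(\Gamma)$ for a nontrivial finite group $\Gamma$, $\cE_2=\Vect$, and $G$ the fiber functor; then $\cC_0=\Vect\colon\Rep(\Gamma)\nrightarrow\Vect$, and comparing the universal lift \emph{with itself} your composite is $\Vect\boxtimes_{\Rep(\Gamma)}\Vect\simeq\bigoplus_{\gamma\in\Gamma}\Vect$ (with pointwise tensor product), whose Drinfeld center is $\bigoplus_{\Gamma}\Vect\neq\Vect$; it is neither $2$-dominant nor $2$-faithful, so by Theorem~\ref{thm:invertibleIFFWittEq} it is not invertible. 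Moreover ``canonically trivial'' is the wrong target even for a correct comparison: two lifts generally have targets that are only Witt equivalent, not braided equivalent (for $\cB_1=\Vect$, $G=\id$, any nonzero multifusion $\cC$ gives a lift with target $\cZ(\cC)^{\rev}$), and connectivity of the lift space requires an invertible $1$-morphism of targets \emph{together with} an invertible $2$-morphism intertwining $\cC$ and $\cC_0$, a datum your sketch never produces. A correct $\pi_0$ argument instead runs: $2$-dominance plus the double-centralizer Lemma~\ref{lem:BF1CDoubleCommutant} give $\cZ(\cC,\cB_2^{\rev})=\Im(F_1)$; Corollary~\ref{rem:davidStatement1} applied to the full inclusion $\Im(F_1)\hookrightarrow\cZ(\cC)$ exhibits $\cC$ itself as an invertible $1$-morphism $\Im(F_1)\nrightarrow\cB_2$; the braided functor $\cB_1\to\Im(F_1)$ is dominant by construction and faithful by $0$-faithfulness, so \cite[Corollary~3.24]{DMNO} identifies it, essentially uniquely over $G$, with the base change $\cB_1\to\cB_2^0$.

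Even granting that repair, you explicitly defer the promotion of this essential uniqueness to contractibility of the full lift spaces (paths, paths-between-paths, etc.\ in a $3$-groupoid of bimodules, bimodule functors and transformations), and no method for that is offered --- yet in an $\infty$-categorical statement this is the bulk of the proof. The paper is structured precisely to avoid this hand-verification: it factors $\cZ_2$ through $[-]\colon\mathbf{BMF1C}^{\dom,\faith}\to\Mor_2^{\sss,2\mhyphen\dom,0\mhyphen\faith}$, shows $[-]$ is a left fibration (Lemma~\ref{cor:B1CtoSym}, obtained by pulling back the slice-category left fibration of Lemma~\ref{lem:firstleftfibrations}, which in turn rests on Proposition~\ref{lem:obBMF1C}), shows the composite $\cZ_2\circ[-]$ is a left fibration (Proposition~\ref{lemma:BMF1Ccomposite}, where the coherence is tractable because both sides are $2$-groupoids and the DMNO base change supplies an explicit inverse), and then concludes by the formal $2$-out-of-$3$ property of left fibrations, using that $[-]$ is essentially surjective on objects. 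Your comparison argument is, in effect, an attempt to reprove by hand the content of that left fibration $[-]$ (every $2$-dominant, $0$-faithful $1$-morphism out of $\cB_1$ comes, essentially uniquely, from an honest braided functor); until it is corrected at the level of $\pi_0$ and supplied with a mechanism for the higher coherences, the proposal does not constitute a proof.
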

We will establish the fibrancy asserted in Theorem~\ref{thm:Mor2SMFCleftfibration} by piecing together a series of fibrancy conditions, which we state in the following lemmas.
Let $\mathbf{BMF1C}^{\dom, \faith} \to \mathbf{BMF1C}$ denote the non-full subcategory on the faithful and dominant braided monoidal functors. 

\begin{lem}\label{cor:B1CtoSym}
    The induced map $[-] : \mathbf{BMF1C}^{\dom,\faith} \to \Mor_2^{\sss,2\mhyphen\dom,0\mhyphen\faith}$
    is a left fibration.
\end{lem}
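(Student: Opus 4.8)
The plan is to exhibit $[-]\colon \mathbf{BMF1C}^{\dom,\faith}\to\Mor_2^{\sss,2\mhyphen\dom,0\mhyphen\faith}$ as the pullback of the left fibration $[-]\colon \mathbf{BMF1C}\to\Mor_2^{\sss,1\mhyphen\dom}$ supplied by Lemma~\ref{lem:firstleftfibrations} along the subcategory inclusion $\Mor_2^{\sss,2\mhyphen\dom,0\mhyphen\faith}\hookrightarrow\Mor_2^{\sss,1\mhyphen\dom}$, and then to invoke the pullback lemma for fibrations. This inclusion is well defined since every $2$-dominant $1$-morphism is $1$-dominant, and it is a wide subcategory (both sides have all braided multifusion $1$-categories as objects); likewise $\mathbf{BMF1C}^{\dom,\faith}\hookrightarrow\mathbf{BMF1C}$ is wide.

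The essential point is the compatibility
\begin{equation*}
f\colon \cB_1\to\cB_2 \text{ faithful and dominant} \iff [f]\colon \cB_1\nrightarrow\cB_2 \text{ is } 0\text{-faithful and } 2\text{-dominant}.
\end{equation*}
To establish it I would unwind $[f]$: it is the multifusion category $\cB_2$ with central structure given by the two canonical braided (fully faithful) embeddings $L\colon \cB_2\hookrightarrow\cZ(\cB_2)$ and $R\colon\cB_2^{\rev}\hookrightarrow\cZ(\cB_2)$, with $F_1=L\circ f$ and $F_2=R$. Since $R$ is always fully faithful, $[f]$ is automatically $1$-dominant. The images $L(\cB_2)$ and $R(\cB_2^{\rev})$ are mutual centralizers inside the nondegenerate category $\cZ(\cB_2)$ (Lemma~\ref{lem:BF1CDoubleCommutant} together with the dimension formula for centralizers), so $\cZ([f],\cB_2^{\rev})=C_{\cZ(\cB_2)}(R(\cB_2^{\rev}))\cong L(\cB_2)$ and the induced functor $\cB_1\to\cZ([f],\cB_2^{\rev})$ is exactly $f$; hence $0$-faithfulness of $[f]$ is precisely faithfulness of $f$. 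For $2$-dominance, $F_2=R$ is an equivalence onto $\cZ([f],\cB_1)=C_{\cZ(\cB_2)}(L(f(\cB_1)))$ if and only if $R(\cB_2^{\rev})=C_{\cZ(\cB_2)}(L(f(\cB_1)))$; applying the centralizer once more and using double centralizer in $\cZ(\cB_2)$ this becomes $L(\langle f(\cB_1)\rangle)=L(\cB_2)$, i.e.\ dominance of $f$.

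With this in hand I would conclude as follows. Because $\Mor_2^{\sss,2\mhyphen\dom,0\mhyphen\faith}\hookrightarrow\Mor_2^{\sss,1\mhyphen\dom}$ is a (monic) subcategory inclusion, the $\infty$-categorical pullback $\mathbf{BMF1C}\times_{\Mor_2^{\sss,1\mhyphen\dom}}\Mor_2^{\sss,2\mhyphen\dom,0\mhyphen\faith}$ is the preimage subcategory of $[-]$, i.e.\ the wide subcategory of $\mathbf{BMF1C}$ on those morphisms $f$ with $[f]\in\Mor_2^{\sss,2\mhyphen\dom,0\mhyphen\faith}$. By the displayed compatibility these are exactly the faithful dominant braided functors, so this preimage subcategory is $\mathbf{BMF1C}^{\dom,\faith}$ and the square
\begin{equation*}
\begin{tikzcd}
\mathbf{BMF1C}^{\dom,\faith}\arrow[r]\arrow[d] & \mathbf{BMF1C}\arrow[d,"{[-]}"]\\
\Mor_2^{\sss,2\mhyphen\dom,0\mhyphen\faith}\arrow[r,hook] & \Mor_2^{\sss,1\mhyphen\dom}
\end{tikzcd}
\end{equation*}
is a pullback. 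Since the right-hand vertical map is a left fibration, so is the left-hand one, by the pullback lemma for fibrations from \S\ref{subsection:highercat}.

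The main obstacle I anticipate is the centralizer computation of the middle paragraph: correctly identifying the central structure of $[f]$ and verifying the mutual- and double-centralizer statements for the two canonical copies of $\cB_2$ inside $\cZ(\cB_2)$ in the possibly degenerate case, since this single computation is what powers both the $0$-faithful$\,\leftrightarrow\,$faithful and the $2$-dominant$\,\leftrightarrow\,$dominant halves of the key equivalence.
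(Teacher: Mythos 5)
Your proposal is correct and takes essentially the same approach as the paper: the paper likewise exhibits $\mathbf{BMF1C}^{\dom,\faith}$ as the pullback of $[-]:\mathbf{BMF1C}\to\Mor_2^{\sss,1\mhyphen\dom}$ along the inclusion $\Mor_2^{\sss,2\mhyphen\dom,0\mhyphen\faith}\hookrightarrow\Mor_2^{\sss,1\mhyphen\dom}$, reduces this to the equivalence ``$F$ faithful and dominant $\iff$ $[F]$ is $0$-faithful and $2$-dominant,'' and concludes because pullbacks of left fibrations are left fibrations. Your mutual/double-centralizer computation (after reducing to the fusion case) is exactly the content the paper delegates to \cite[Theorem 3.14]{DGNO2010braided} and \cite[Proposition 6.3.4]{EGNO}.
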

\begin{proof}
We claim that the square of $\infty$-categories
    \begin{equation*}
    \begin{tikzcd}
    \mathbf{BMF1C}^{\dom,\faith} \arrow[d] \arrow[r] & \mathbf{BMF1C} \arrow[d]\\
    \Mor_2^{\sss,2\mhyphen\dom,0\mhyphen\faith} \arrow[r] & \Mor_2^{\sss,1\mhyphen\dom}
\end{tikzcd}\end{equation*}
is a pullback. Equivalently, it suffices to show that a braided functor $F:\cB_1 \to \cB_2$ is faithful and dominant if and only if the associated $1$-morphism $\cB_1:\cB_1\nrightarrow\cB_2$ is $2$-dominant and $0$-faithful. The equivalence of the faithfulness conditions is obvious. The dominance condition amounts to the statement that $F: \cB_1 \to \cB_2$ is dominant if and only if the induced functor $\cB_2 \to \cZ(\cB_2, \cB_1)$ is an equivalence. Note that one can reduce to only having to check this statement when $\cB_1$ and $\cB_2$ are fusion.  It follows from \cite[Theorem 3.14]{DGNO2010braided} and \cite[Proposition 6.3.4]{EGNO} that the composite is an equivalence if and only if $\Im(F)=\cB_2$, that is, if and only if $F$ is dominant.

Since pullbacks of left fibrations are left fibrations, the result follows from Lemma~\ref{lem:firstleftfibrations}.
\end{proof}

The following proposition is a homotopical rephrasing of \cite[Corollary 3.24]{DMNO}, and is at the heart of our construction of $\Witt$:

\begin{prop}\label{lemma:BMF1Ccomposite}
 The composite
\[ \mathbf{BMF1C}^{\dom, \faith} \overset{[-]}\longrightarrow \Mor_2^{\sss,2\mhyphen\dom,0\mhyphen\faith} \overset{ \cZ_{2}}\longrightarrow \mathbf{SMF1C}^{\dom,\faith} \]
is a left fibration.
\end{prop}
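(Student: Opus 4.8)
The plan is to verify the defining pullback square of a left fibration objectwise. Write $P:=\cZ_{2}\circ[-]\colon \mathbf{BMF1C}^{\dom,\faith}\to \mathbf{SMF1C}^{\dom,\faith}$, which on objects sends $\cB\mapsto \cZ_{2}(\cB)$ and on morphisms sends a faithful dominant braided functor $f\colon\cB\to\cB'$ to the induced symmetric functor $\cZ_{2}(f)\colon\cZ_{2}(\cB)\to\cZ_{2}(\cB')$. Unwinding the definition of a left fibration (the pullback square relating $\mathrm{ar}$ and $\ob$ through the source maps), the fiber of $s\colon\mathrm{ar}\to\ob$ over an object is precisely its space of outgoing arrows, so it suffices to show that for every object $\cB$ the map
\[
P_{\cB}\colon \ob\big(\mathbf{BMF1C}^{\dom,\faith}_{\cB/}\big)\longrightarrow \ob\big(\mathbf{SMF1C}^{\dom,\faith}_{\cZ_{2}(\cB)/}\big)
\]
induced by $\cZ_{2}$ is an equivalence of spaces.

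I would identify both of these spaces with the groupoid of connected étale algebras in $\cE:=\cZ_{2}(\cB)$. For the target, Deligne's theorem together with the standard theory of étale algebras in \cite{EGNO} shows that a faithful dominant symmetric functor out of $\cE$ is the same datum as a connected étale algebra $A$ in $\cE$: the codomain is recovered as $\Mod_A(\cE)$ and the functor as the free-module functor, which is automatically faithful since $A$ is connected. For the source this is exactly the content of \cite[Corollary 3.24]{DMNO}: a faithful dominant braided functor $f\colon\cB\to\cB'$ is, up to equivalence, the free-module functor $\cB\to\Mod_A(\cB)$ for $A=f^R(\mathbf{1})$ a connected étale algebra, and the requirement that $\cB'$ be braided with $f$ braided forces $A$ to be transparent, i.e.\ $A\in\cZ_{2}(\cB)=\cE$; conversely every connected étale $A\in\cE$ yields the braided category $\Mod_A(\cB)$ equipped with a faithful dominant braided free functor (transparency being automatic from $A\in\cE$). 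Thus both coslices are identified with connected étale algebras in $\cE$.

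Under these two identifications $P_{\cB}$ becomes the identity: for a transparent connected étale $A\in\cE$ one has $\cZ_{2}(\Mod_A(\cB))\cong\Mod_A(\cZ_{2}(\cB))=\Mod_A(\cE)$ and $\cZ_{2}(\mathrm{Free}^{\cB}_A)\cong \mathrm{Free}^{\cE}_A$, so $\cZ_{2}$ carries the free functor over $\cB$ to the free functor over $\cE$ attached to the \emph{same} algebra $A$. This makes $P_{\cB}$ an equivalence and completes the verification of the pullback square. The one point requiring genuine care — and the main obstacle — is that \cite[Corollary 3.24]{DMNO} is stated as a bijection on equivalence classes, whereas I need the homotopical rephrasing: an equivalence of the full coslice spaces, matching automorphisms and higher coherences and not merely objects. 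I would obtain this by checking that the assignment $f\mapsto f^R(\mathbf{1})$ is natural in families and that the faithful-plus-dominant restriction rigidifies the automorphisms on both sides, so that $\cZ_{2}$ induces an isomorphism on the automorphism group of each algebra; the multifusion case would be reduced to the fusion case via the summand decomposition of Remark~\ref{rem:mor2Decomp}. Granting this upgrade, $P$ satisfies the left-fibration criterion objectwise, as desired.
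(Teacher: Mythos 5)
You follow the same skeleton as the paper's proof: unwind left fibrancy to the objectwise claim that, for each $\cB$, the composite induces an equivalence of coslice $2$-groupoids $\ob(\mathbf{BMF1C}^{\dom,\faith}_{\cB/}) \to \ob(\mathbf{SMF1C}^{\dom,\faith}_{\cZ_{2}(\cB)/})$, reduce to fusion $\cB$, and invoke \cite[Corollary~3.24]{DMNO} as the decisive input. The only methodological difference is packaging: the paper exhibits an explicit inverse functor, $\{\cZ_{2}(\cB)\to\cE'\} \mapsto \{\cB \to \cB\boxtimes_{\cZ_{2}(\cB)}\cE'\}$, whereas you identify both coslices with a groupoid of \'etale algebras in $\cE=\cZ_{2}(\cB)$; these are the same mathematics, since $\cB\boxtimes_{\cE}\Mod_A(\cE) \simeq \Mod_A(\cB)$. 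The coherence issue you flag (DMNO gives a bijection of equivalence classes, not an equivalence of $2$-groupoids) is present to exactly the same extent in the paper's own one-line justification, and your rigidification observation is the correct way to handle it: a monoidal natural automorphism of $\id_{\cB'}$ restricting to the identity on the image of a dominant functor is the identity, so both coslices are in fact $1$-truncated.

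The one genuine flaw is the word ``connected.'' The coslices are taken in categories of \emph{multi}fusion objects, and even for fusion $\cB$ they contain objects with multifusion target: the diagonal $\cE \to \cE\oplus\cE$ is a faithful dominant symmetric functor, and it corresponds to the \'etale algebra $\mathbf{1}\oplus\mathbf{1}$, which is not connected (connectedness of $A$ is equivalent to simplicity of the unit of $\Mod_A(\cE)$, i.e.\ to the target being fusion). So faithful dominant symmetric functors out of $\cE$ correspond to arbitrary nonzero \'etale algebras in $\cE$, and likewise on the braided side to nonzero \'etale algebras in $\cZ_{2}(\cB)$; your identification as stated omits all the non-connected ones. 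Your appeal to Remark~\ref{rem:mor2Decomp} does not repair this, because that remark decomposes a central multifusion $1$-category over a decomposable source and target, whereas here the source $\cB$ is fusion and it is only the target that decomposes. The fix is routine: either rerun your argument with nonzero \'etale algebras (each is a finite direct sum of connected ones, matching the component decomposition of the target compatibly on both sides of the comparison), or adopt the paper's base-change inverse, which treats fusion and multifusion targets uniformly.
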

\begin{proof}
This composite sends a braided multifusion 1-category $\cB$ to its M\"uger centre $\cZ_{2}(\cB)$ and a braided functor $F:\cB_1 \to \cB_2$ to the induced braided functor $\cZ_{2}(\cB_1) \to \cB_1 \to \cB_2$ which lands in the full subcategory $\cZ_{2}(\cB_2) \subseteq \cB_2$ since $F$ is dominant and hence defines a symmetric functor $\cZ_{2}(F): \cZ_{2}(\cB_1) \to \cZ_{2}(\cB_2)$ (which is itself dominant and faithful).

The left fibrancy unpacks to the following statement:\
Fix a braided multifusion $1$-category $\cB$.
Then the $2$-functor $\cZ_{2}(-)$ from the $2$-groupoid $\ob(\mathbf{BMF1C}^{\dom, \faith}_{\cB/})$ of braided multifusion $1$-categories $\cB'$ equipped with a dominant faithful braided functor  $\cB \to \cB'$ to the $2$-groupoid $\ob(\mathbf{SMF1C}^{\dom, \faith}_{\cZ_{2}(\cB)/})$ of symmetric multifusion $1$-categories $\cE$ with a symmetric functor $\cZ_{2}(\cB) \to \cE$ is an equivalence. 
It is enough to show this when $\cB$ is fusion in which case an explicit inverse is given by the functor sending an object $\{\cZ_{2}(\cB) \to \cE\}$ of the $2$-groupoid $\ob(\mathbf{SMF1C}^{\dom, \faith}_{\cZ_{2}(\cB)/})$  to $\{\cB \to \cB \otimes_{\cZ_{2}(\cB)} \cE\} \in \ob(\mathbf{BMF1C}^{\dom, \faith}_{\cB/})$.  That this defines an inverse follows from \cite[Corollary 3.24]{DMNO}.
\end{proof}

\begin{proof}[Proof of Theorem \ref{thm:Mor2SMFCleftfibration}]
Left fibrations satisfy a  $2$-out-of-3-property~\cite[Proposition 2.4.1.3 (3)]{highertopos}:\ For composable functors $F,G$, if $F$ and $G\circ F$ are left fibrations and $F$ is essentially surjective on objects, then $G$ is a left fibration. Applying this statement to the commutative triangle
$$\begin{tikzcd}
\mathbf{BMF1C}^{\dom,\faith} \arrow[rr] \arrow[rd,"\protect{[-]}"'] &            & \mathbf{SMF1C}^{\dom,\faith} \\
 & \Mor_2^{\sss,2\mhyphen\dom,0\mhyphen\faith} \arrow[ru, "\mathcal{Z}_{2}"'] &  \end{tikzcd}$$
\noindent
gives the desired result.
\end{proof}

Straightening the left fibration $\Mor^{\sss,2\mhyphen\dom,0\mhyphen\faith}_2 \rightarrow \mathbf{SMF1C}^{\dom,\faith}$ then immediately gives rise to our desired functor: 

\begin{cor}\label{cor:Witt}
     There is a functor $\Witt(\text{--}):\mathbf{SMF1C}^{\dom,\faith} \rightarrow \mathbf{Spaces}$ which sends a symmetric multifusion $1$-category $\cE$ to the space $\Witt(\cE)$ from Definition~\ref{defn:Wittspace}.
\end{cor}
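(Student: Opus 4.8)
The plan is to obtain the functor $\Witt(-)$ by straightening the left fibration furnished by Theorem~\ref{thm:Mor2SMFCleftfibration}. Concretely, I would invoke the straightening/unstraightening equivalence $\mathbf{LFib}(\cC) \simeq \mathrm{Fun}(\cC, \mathbf{Spaces})$ of \cite[Proposition 2.2.3.11]{highertopos}, taking $\cC = \mathbf{SMF1C}^{\dom,\faith}$. Applied to the left fibration $\cZ_{2}: \Mor_2^{\sss,2\mhyphen\dom,0\mhyphen\faith} \to \mathbf{SMF1C}^{\dom,\faith}$, this immediately produces a functor $\mathbf{SMF1C}^{\dom,\faith} \to \mathbf{Spaces}$ whose value on an object $\cE$ is, by the construction of straightening, the fiber $\cZ_{2}^{-1}(\cE) = \Mor_2^{\sss,2\mhyphen\dom,0\mhyphen\faith} \times_{\mathbf{SMF1C}^{\dom,\faith}} \{\cE\}$.

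The only substantive remaining step is to identify this fiber with the space $\Witt(\cE)$ of Definition~\ref{defn:Wittspace}. By construction the fiber is the $\infty$-groupoid whose objects are braided multifusion $1$-categories $\cB$ equipped with an identification $\cZ_{2}(\cB) \simeq \cE$, and whose $1$-morphisms are $2$-dominant, $0$-faithful $1$-morphisms $\cC: \cB_1 \nrightarrow \cB_2$ lying over $\id_\cE$ (together with the evident higher data). Since the fibers of a left fibration are $\infty$-groupoids, every such $1$-morphism admits a two-sided inverse within the fiber; as this inverse again lies over $\id_\cE$ and belongs to the subcategory of $2$-dominant $0$-faithful $1$-morphisms, the $1$-morphism is already invertible in $\Mor_2^{\sss}$. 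By Theorem~\ref{thm:invertibleIFFWittEq}, the invertible $1$-morphisms of $\Mor_2^{\sss}$ are precisely the $\cE$-Witt equivalences. Hence the fiber agrees, on objects and $1$-morphisms, with the fiber of $\cZ_{2}: \ob(\Mor_2^{\sss}) \to \ob(\mathbf{SMF1C})$ at $\cE$, which is exactly $\Witt(\cE)$.

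I expect the principal subtlety---rather than any genuine obstacle---to lie in this identification: one must verify that restricting the base to $\mathbf{SMF1C}^{\dom,\faith}$ and the total category to the $2$-dominant, $0$-faithful $1$-morphisms does not alter the fiber relative to the ``underlying space'' description of $\Witt(\cE)$. This is precisely where Theorem~\ref{thm:invertibleIFFWittEq} does the work, matching the invertible $1$-morphisms of the ambient Morita $4$-category with the $\cE$-Witt equivalences of Definition~\ref{def:WittEqOver}; the same left-fibration argument applied degreewise shows that the higher morphisms over the identity are likewise invertible, so they too are unaffected by the restriction.
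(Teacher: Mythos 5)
Your proposal is correct and is essentially the paper's own argument: the paper proves Corollary~\ref{cor:Witt} in one line by straightening the left fibration $\cZ_{2}\colon \Mor_2^{\sss,2\mhyphen\dom,0\mhyphen\faith} \to \mathbf{SMF1C}^{\dom,\faith}$ of Theorem~\ref{thm:Mor2SMFCleftfibration}, exactly as you do. Your additional verification that the fiber of this left fibration agrees with $\Witt(\cE)$ of Definition~\ref{defn:Wittspace} --- via the fact that fibers of left fibrations are $\infty$-groupoids together with Theorem~\ref{thm:invertibleIFFWittEq} --- is a correct filling-in of a step the paper treats as immediate.
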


\subsection{The Proof of Theorem~\ref{thm:Wittsquare}}\label{sec:proof}

In this section, we prove Theorem~\ref{thm:Wittsquare}. 

The idea  behind the first step of the proof is the following: 
We expect, but do not prove, that the $1$-dominant and $2$-faithful $1$-morphisms from Definitions~\ref{def:dominanceMF2C} and~\ref{defn:faithfulMF2C} form a \emph{factorization system} on $ \Mor_2^{\sss} \simeq \Mor^{\sss}$ (see e.g.~\cite[Definition 5.2.8.8]{highertopos} for a definition of factorization system in the world of $\infty$-categories). In particular, this means that it should be possible to factor any $1$-morphism in $\Mor^{\sss}$ \emph{(contractibly) uniquely} into a $1$-dominant one followed by a $2$-faithful one. 

On the other hand, by Proposition~\ref{prop:MF2CViaMor}, the data of a multifusion $2$-category is equivalent to the data of a $0$-dominant $1$-morphism out of the unit in $\Mor^{\sss}$. Thus, we may try to factor this into a $1$-dominant morphism out of the unit; corresponding by Proposition~\ref{lem:obBMF1C} to the data of a braided multifusion $1$-category, followed by a $2$-faithful (and still $0$-dominant) $1$-morphism. 
Explicitly, given a multifusion $2$-category $\fC$, this braided multifusion $1$-category will be $\Omega \fC$ and the $2$-faithful $0$-dominant $1$-morphism will be the one induced by the inclusion $\Mod(\Omega \fC) \to \fC$. 
Although we do not construct the full factorization system we now prove (contractible) uniqueness of this particular factorization:

\begin{prop}\label{prop:MF2Cpullback}
There is a pullback square of spaces: $$\begin{tikzcd}[sep=small]
\ob(\mathbf{MF2C}) \arrow[dd, "\fC \mapsto {[}\fC: ~\Mod( \Omega \fC) \nrightarrow \fC{]}"'] \arrow[rr, "\Omega"]                  &  & \ob(\mathbf{BMF1C}) \arrow[dd, "\cB \mapsto {[}\Mod(\cB){]}"]     \\
 &  & \\
{\Ar(\Mor^{\sss,0\mhyphen\dom,2\mhyphen\faith})} \arrow[rr, "s"'] &  & {\ob(\Mor^{\sss}).}
\end{tikzcd}$$
\end{prop}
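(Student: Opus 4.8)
The plan is to exhibit the comparison map from $\ob(\mathbf{MF2C})$ into the pullback and prove it is an equivalence by reducing to a fibrewise statement over $\ob(\mathbf{BMF1C})$ that is a relative form of Proposition~\ref{prop:MF2CViaMor}. First I would unwind the four corners. By Proposition~\ref{prop:MF2CViaMor} the left vertical map realises a multifusion $2$-category $\fC$ as the canonical $0$-dominant $1$-morphism $\mathds{1}\nrightarrow\fC$, here factored as $\mathds{1}\nrightarrow\Mod(\Omega\fC)\nrightarrow\fC$, while by Proposition~\ref{lem:obBMF1C} the right vertical map records the target of the $1$-dominant $1$-morphism $\mathds{1}\nrightarrow\Mod(\cB)$ attached to $\cB$. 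Absorbing the structural equivalence $[\Mod(\cB)]\simeq s(\beta)$, the pullback
\[
P:=\ob(\mathbf{BMF1C})\times_{\ob(\Mor^{\sss})}\Ar(\Mor^{\sss,0\mhyphen\dom,2\mhyphen\faith})
\]
is therefore the space of pairs $(\cB,\ \beta\colon\Mod(\cB)\nrightarrow\fD)$ with $\beta$ both $0$-dominant and $2$-faithful. The comparison map $\Phi\colon\ob(\mathbf{MF2C})\to P$ sends $\fC$ to $(\Omega\fC,\ \psi_\fC)$, where $\psi_\fC\colon\Mod(\Omega\fC)\nrightarrow\fC$ is induced by the canonical monoidal $2$-functor $\Mod(\Omega\fC)\to\fC$. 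I must check that $\psi_\fC$ is $0$-dominant and $2$-faithful: $0$-dominance is automatic, as $\psi_\fC$ is the regular right $\fC$-module, while $2$-faithfulness says that $\Mod(\Omega\fC)\to\End_\fC(\fC_\fC)\simeq\fC$ is fully faithful, i.e.\ that $\Mod(\Omega\fC)$ is the connected component of the unit of $\fC$ (for which I would cite \cite{douglas2018fusion} and \cite[Theorem~5.4.3]{Decoppet2022;Morita}). Naturality of this connected-component inclusion makes $\Phi$ a map of spaces refining the canonical $0$-dominant arrow $\mathds{1}\nrightarrow\fC$ of Proposition~\ref{prop:MF2CViaMor} into its asserted two-step factorization.

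Next I would reduce to fibres. Both $\ob(\mathbf{MF2C})$ and $P$ map to $\ob(\mathbf{BMF1C})$---the former by $\Omega$, the latter by the projection $\mathrm{pr}$ to the first factor---and by construction $\mathrm{pr}\circ\Phi=\Omega$. Since a map of spaces over a fixed base is an equivalence exactly when it is one on all homotopy fibres, it suffices to fix a braided multifusion $1$-category $\cB$ and show that
\[
\{\fC:\Omega\fC\simeq\cB\}\ \xrightarrow{\ \Phi\ }\ \{\,\beta\colon\Mod(\cB)\nrightarrow\fD\ \text{$0$-dominant, $2$-faithful}\,\}
\]
is an equivalence.

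For this fibrewise statement I would construct an explicit inverse $\Psi$. Given $\beta$ as above, set $\Psi(\beta):=\End_\fD(\beta)$; this is a multifusion $2$-category, and since $\beta$ is $0$-dominant, hence faithful as a right $\fD$-module, it is a Morita equivalence $\End_\fD(\beta)\simeq_{ME}\fD$ by \cite[Theorem~5.4.3]{Decoppet2022;Morita}. The $2$-faithfulness of $\beta$ means the canonical $2$-functor $\Mod(\cB)\to\End_\fD(\beta)$ is fully faithful; evaluating on the monoidal unit yields $\cB=\Omega\Mod(\cB)\xrightarrow{\ \sim\ }\Omega\End_\fD(\beta)$, so $\Psi(\beta)$ lies in the fibre over $\cB$. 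The composite $\Psi\circ\Phi$ returns $\End_\fC(\fC_\fC)\simeq\fC$, and $\Phi\circ\Psi$ returns $\End_\fD(\beta)$ with its canonical arrow $\psi$, which is carried back to $\beta$ under the Morita equivalence $\beta$ (using $\fC'\boxtimes_{\fC'}\beta\simeq\beta$ for $\fC'=\End_\fD(\beta)$, together with the fact that the $\Mod(\cB)$-action on $\beta$ factors through $\Mod(\cB)\to\fC'$). I expect the main obstacle to be exactly the two coherence points glossed here: verifying $2$-faithfulness of the canonical arrow (the fully faithful connected-component inclusion) and upgrading the objectwise Morita identifications in both composites to homotopies of maps of spaces, so that $\Phi$ and $\Psi$ are genuine inverse equivalences rather than merely bijective on equivalence classes. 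The underlying algebraic input---closure of the relative tensor product and the endomorphism/Morita dictionary---is supplied by \cite{Decoppet2022;Morita, Decoppet2023;dualizable}.
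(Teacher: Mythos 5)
Your pointwise verifications are all correct, and your candidate inverse is built from exactly the right ingredients: the canonical bimodule $\psi_\fC\colon \Mod(\Omega\fC)\nrightarrow\fC$ is indeed $0$-dominant and $2$-faithful, and the assignment $\beta\mapsto\End_\fD(\beta)$, together with $\End_\fD(\beta)\boxtimes_{\End_\fD(\beta)}\beta\simeq\beta$, is the same mechanism (via \cite[Theorem~5.4.3]{Decoppet2022;Morita}) that drives the essential-surjectivity step in the proof of Proposition~\ref{prop:MF2CViaMor}. The genuine gap is the coherence issue you flag at the end, and it is not a detail that the cited algebraic references can supply --- it is the entire content of the statement, which asserts an equivalence of $3$-groupoids, not a bijection of equivalence classes. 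Concretely: (i) neither $\Phi$ nor $\Psi$ is ever constructed as a map of spaces; the assignments $\fC\mapsto\psi_\fC$ (the ``naturality of the connected-component inclusion'' you invoke is precisely what needs proof) and $\beta\mapsto\End_\fD(\beta)$ must be made coherently functorial over the whole $3$-groupoid before your reduction to homotopy fibres is even available; and (ii) even granting genuine maps, checking the composites $\Psi\circ\Phi$ and $\Phi\circ\Psi$ point by point does not produce homotopies to the identity --- a self-map of a groupoid that fixes every object up to equivalence can still act nontrivially on automorphism spaces --- so your argument as written yields at most $\pi_0$-bijectivity.

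This is exactly the difficulty the paper's proof is architected to avoid: it never constructs either comparison map by hand. Instead, it observes that a monoidal $2$-functor is fully faithful if and only if its associated bimodule is $2$-faithful (Definition~\ref{defn:faithfulMF2C}), so that $\mathbf{MF2C}^{\ff}\to\Mor^{\sss,0\mhyphen\dom,2\mhyphen\faith}$ is a pullback of the left fibration $[-]\colon\mathbf{MF2C}\to\Mor^{\sss,0\mhyphen\dom}$ of Lemma~\ref{lem:firstleftfibrations} (itself a repackaging of Proposition~\ref{prop:MF2CViaMor}), hence a left fibration. The defining pullback square of this left fibration, restricted to arrows whose source is connected, \emph{is} the asserted square, once one notes that the target map $\Ar^{\mathrm{sc}}(\mathbf{MF2C}^{\ff})\to\ob(\mathbf{MF2C})$ is an equivalence: fully faithful monoidal functors into $\fC$ from connected multifusion $2$-categories form a contractible space, since any such is the inclusion of the unit component $\Mod(\Omega\fC)$. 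If you wish to keep your fibrewise framing, the repair is this same fact in disguise: for a left fibration, the space of arrows out of $\Mod(\cB)$ upstairs is canonically equivalent to the space of arrows out of $[\Mod(\cB)]$ downstairs, which (combined with the dictionary between fully faithful functors $\Mod(\cB)\to\fC$ and braided equivalences $\cB\to\Omega\fC$, as in Proposition~\ref{lem:obBMF1C}) is precisely your fibrewise equivalence with all coherences included --- no hand-made inverse or explicit Morita homotopies required.
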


In words, Proposition~\ref{prop:MF2Cpullback} asserts that the data of a multifusion $2$-category $\fC$ (up to monoidal equivalence) is completely captured by the data of the braided multifusion $1$-category $\Omega \fC$ (up to braided monoidal equivalence) together with the data of the $1$-morphism $\{\fC: \Mod(\Omega \fC) \nrightarrow \fC\}$ in $\Mor^{\sss}$ (up to equivalence in $\Mor^{\sss}$). 
\begin{proof}
    We claim that the following is a pullback square of $\infty$-categories:
\begin{equation}\begin{tikzcd}\label{eq:1dom0faisquare}
    \mathbf{MF2C}^{\ff} \arrow[d] \arrow[r] & \mathbf{MF2C} \arrow[d]\\
    \Mor^{\sss,0\mhyphen\dom, 2\mhyphen\faith} \arrow[r] & \Mor^{\sss, 0\mhyphen\dom}.
\end{tikzcd}\end{equation}
\noindent Here, $\mathbf{MF2C}^{\ff}$ denotes the sub-$(3,1)$-category of the $(3,1)$-category $\mathbf{MF2C}$ on the multifusion $2$-category and the \emph{fully faithful} monoidal $2$-functors between them, i.e.\ those monoidal $2$-functors which induce equivalences on $\Hom$-1-categories.
The square~\eqref{eq:1dom0faisquare} being a pullback is equivalent to the assertion that a monoidal $2$-functor $F: \fC \to \cD$ between multifusion $2$-categories is fully faithful if and only if its induced bimodule $_{\fC} \fD_{\fC}$ is $2$-faithful in the sense of Definition~\ref{defn:faithfulMF2C}, which is evidently true by definition. 

Now, since the right vertical functor is a left fibration, so is the left vertical one. Unpacking the definition of a left fibration, we obtain the following pullback square of spaces: 
\begin{equation}\label{eq:arrMF2C}
    \begin{tikzcd}[sep=small]
\Ar(\MF2C^{\ff}) \arrow[rr, "s"] \arrow[dd]      &  & \ob(\mathbf{MF2C}^{\ff}) =\ob(\mathbf{MF2C}) \arrow[dd]     \\
&  & \\
{\Ar(\Mor^{\sss,0\mhyphen\dom,2\mhyphen\faith})} \arrow[rr, "s"'] &  & {\ob(\Mor^{\sss,0\mhyphen\dom,2\mhyphen\faith}) = \ob(\Mor_2^{\sss}),}
\end{tikzcd}
\end{equation}
where $\mathrm{ar}$ and $\mathrm{ob}$ denote the spaces of morphisms and objects in an $\infty$-category, respectively. 

Let us now consider the fully faithful functor $\Mod: \mathbf{BMF1C}\hookrightarrow \mathbf{MF2C}$ sending a braided multifusion 1-category to the multifusion 2-category of finite semisimple module 1-categories. The image of $\Mod$ consists precisely of the full subcategory of $\mathbf{MF2C}$ on those objects $\fC$ such that the inclusion of the monoidal unit $\mathbf{1}_\fC : \mathbf{2Vect} \to \fC$ is surjective on connected components, i.e.\ the multifusion 2-category $\fC$ is connected. Hence, the map $\ob(\mathbf{BMF1C})\to \ob(\mathbf{MF2C})$ is the inclusion of a full subspace. Pulling back~\eqref{eq:arrMF2C} along this inclusion, we obtain the pullback square 
\begin{equation}
    \begin{tikzcd}[sep=small]
\Ar^{\mathrm{sc}}(\mathbf{MF2C}^{\ff}) \arrow[rr, "s"] \arrow[dd]                                         &  & \ob(\mathbf{BMF1C}) \arrow[dd] \\
&  & \\
\Ar(\mathbf{Mor}^{0\mhyphen\dom,2\mhyphen\faith}) \arrow[rr, "s"]      &  & {\ob(\Mor^{\sss})\, ,}  
\end{tikzcd}
\end{equation}
where $\Ar^{\mathrm{sc}}(\mathbf{MF2C}^{\ff}) \hookrightarrow \Ar^{\mathrm{sc}}(\mathbf{MF2C}^{\ff})$ denotes the full subspace on those fully faithful monoidal $2$-functors whose source is a connected fusion $2$-category. Since any object of $\Ar^{\mathrm{sc}}(\mathbf{MF2C}^{\ff})$ is necessarily the inclusion of the connected component of the monoidal unit, it follows that taking the target 
$$t:\Ar^{\mathrm{sc}}(\mathbf{MF2C}^{\ff})\rightarrow \ob(\mathbf{MF2C})$$ is an equivalence. 

Under this equivalence, the map $s:\Ar^{\mathrm{sc}}(\mathbf{MF2C}^{\ff})\rightarrow \ob(\mathbf{BMF1C})$ is identified with $\Omega$, the functor sending a multifusion 2-category to its associated braided multifusion 1-category of endomorphisms.
\end{proof}

We have now factored the data of a multifusion $2$-category into the data of a braided multifusion $2$-category together with a $0$-dominant, $2$-faithful $1$-morphism in $\Mor^{\sss} \simeq \Mor_2^{\sss}$. To conclude our proof of Theorem~\ref{thm:Wittsquare}, it therefore only remains to describe this space $\Ar(\Mor_2^{\sss,0\mhyphen\dom,2\mhyphen\faith})$ of $0$-dominant and $2$-faithful $1$-morphisms in $\Mor_2^{\sss}$. 
 In order to do so, it is convenient to use the equivalence \begin{equation}\label{eq:reverse}\Ar(\Mor_2^{\sss,0\mhyphen\dom,2\mhyphen\faith})\cong \Ar(\Mor_2^{\sss,2\mhyphen\dom,0\mhyphen\faith})\end{equation} given by taking adjoints of $1$-morphisms. 

By Theorem~\ref{thm:Mor2SMFCleftfibration}, we understand this $\infty$-category $\Mor_2^{\sss,2\mhyphen\dom,0\mhyphen\faith}$ well as the source of the left fibration $\cZ_{2}:\Mor_2^{\sss,2\mhyphen\dom,0\mhyphen\faith} \to \mathbf{SMF1C}^{\dom, \faith} $. Unwinding the definition of a left fibration, we immediately obtain:

\begin{cor}\label{cor:pullbackWittLeftFib}
The following is a pullback square of spaces
\[
\begin{tikzcd}  \Ar(\Mor_2^{\sss,2\mhyphen\dom,0\mhyphen\faith}) \arrow[r, "s"] \arrow[d, "\cZ_{2}"'] & \ob(\Mor_2^{\sss})\arrow[d, "\cZ_{2}"]\\
    \Ar(\mathbf{SMF1C}^{\dom, \faith}) \arrow[r, "s"]& \ob(\mathbf{SMF1C})\, ,
\end{tikzcd}
\]
where the horizontal maps take the source, and the vertical maps are given by applying the functor $\cZ_{2}$ from Proposition~\ref{prop:Mugerfunctor}.
\end{cor}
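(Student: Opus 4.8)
The plan is to read this off directly from Theorem~\ref{thm:Mor2SMFCleftfibration}, which asserts that $\cZ_{2}\colon \Mor_2^{\sss,2\mhyphen\dom,0\mhyphen\faith} \to \mathbf{SMF1C}^{\dom,\faith}$ is a left fibration. Recall that, by the very definition of a left fibration, for any such functor $p\colon\cC\to\cD$ the square whose horizontal arrows take the source of a $1$-morphism,
\[
\begin{tikzcd}
\Ar(\cC) \arrow[r,"s"] \arrow[d,"p"'] & \ob(\cC) \arrow[d,"p"] \\
\Ar(\cD) \arrow[r,"s"'] & \ob(\cD)\,,
\end{tikzcd}
\]
is a pullback square of spaces. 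First I would apply this to $p = \cZ_{2}$, which immediately produces a pullback square of exactly the shape claimed in the corollary, except that the right-hand vertical arrow a priori reads $\cZ_{2}\colon \ob(\Mor_2^{\sss,2\mhyphen\dom,0\mhyphen\faith}) \to \ob(\mathbf{SMF1C}^{\dom,\faith})$ rather than $\cZ_{2}\colon \ob(\Mor_2^{\sss})\to\ob(\mathbf{SMF1C})$.

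The only remaining point is thus to identify these object-spaces with those appearing in the statement. Here I would observe that $\Mor_2^{\sss,2\mhyphen\dom,0\mhyphen\faith}$ is, by construction, the (non-full) wide subcategory of $\Mor_2^{\sss}$ obtained by restricting the class of $1$-morphisms to the $2$-dominant $0$-faithful ones; it has the \emph{same} objects as $\Mor_2^{\sss}$, so that $\ob(\Mor_2^{\sss,2\mhyphen\dom,0\mhyphen\faith}) = \ob(\Mor_2^{\sss})$. Likewise $\mathbf{SMF1C}^{\dom,\faith}$ is the wide subcategory of $\mathbf{SMF1C}$ on the dominant faithful symmetric functors, so $\ob(\mathbf{SMF1C}^{\dom,\faith}) = \ob(\mathbf{SMF1C})$. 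For these to be genuine wide subcategories one needs the identity $1$-morphisms to lie in the relevant classes, which is immediate: the identity $1$-morphism on a braided multifusion $1$-category is $2$-dominant and $0$-faithful, and every identity symmetric functor is dominant and faithful. Under these identifications the restricted functor $\cZ_{2}$ on objects is precisely the map sending a braided multifusion $1$-category to its symmetric center, which is exactly the vertical map $\cZ_{2}$ of the corollary.

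Since the subcategory inclusions are the identity on objects, replacing the restricted object-spaces by the ambient ones does not alter the square, and the diagram of the corollary is a pullback of spaces, as desired. I do not expect any genuine obstacle in this step: all the mathematical content is concentrated in Theorem~\ref{thm:Mor2SMFCleftfibration}, and what remains is the formal unwinding of the left-fibration condition together with the elementary observation that a wide subcategory shares the object-space of its ambient category.
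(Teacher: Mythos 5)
Your proposal is correct and is essentially the paper's own proof: the paper obtains the corollary in one line by unwinding the definition of a left fibration applied to Theorem~\ref{thm:Mor2SMFCleftfibration}, together with the identification of the object spaces of the wide subcategories with those of the ambient categories.

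One imprecision is worth flagging. Since $\ob(-)$ here denotes the \emph{space} of objects (the core $\infty$-groupoid), the identification $\ob(\Mor_2^{\sss,2\mhyphen\dom,0\mhyphen\faith}) = \ob(\Mor_2^{\sss})$ requires that every \emph{equivalence} in $\Mor_2^{\sss}$ --- not merely every identity $1$-morphism --- is $2$-dominant and $0$-faithful. This is not a formality: it is the content of Theorem~\ref{thm:invertibleIFFWittEq} (invertible $1$-morphisms are $2$-faithful and $2$-dominant, and $2$-faithfulness implies $0$-faithfulness), which is precisely what the paper invokes, in \S\ref{subsection:adjectives}, to guarantee that these conditions cut out a subcategory in the paper's (isofull) sense with the same core as $\Mor_2^{\sss}$. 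The analogous point for $\mathbf{SMF1C}^{\dom,\faith}$ is harmless, since symmetric monoidal equivalences are automatically dominant and faithful. With that citation in place of your identity-morphism check, your argument coincides with the paper's.
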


Combining the pullback squares from Proposition~\ref{prop:MF2Cpullback} and Corollary~\ref{cor:pullbackWittLeftFib} results in our main result: a full description of the $3$-groupoid $\ob(\mathbf{MF2C})$ of multifusion $2$-categories:

\begin{thm}\label{thm:maintheorem}
The following is a pullback square of spaces:
\begin{equation}\label{eq:doublepullback}
\begin{tikzcd}
\ob(\mathbf{MF2C}) 
\arrow[dd, "\fC \mapsto \left(\vphantom{\frac{a}{b}}\Omega \cZ(\fC)\to \cZ_{2}(\Omega \fC){,}{[}\fC{]}\right)"']
\arrow[rr, "\Omega"] 
&  & \ob(\mathbf{BMF1C}) \arrow[dd, "{[-]}"]     \\
&  &  \\
\Ar(\mathbf{SMF1C}^{\dom,\faith})\ \underset{\ob(\mathbf{SMF1C})}{\times}\ob(\Mor_2^{\sss}) \arrow[rr, "{\left([\cB],  F:\cZ_{2}(\cB) \to \cE \right)} \mapsto  \Witt(F)({[\cB]})\in \Witt(\cE)"'{yshift=-10pt} ] &  & {\ob(\Mor_2^{\sss})}
\end{tikzcd}
\end{equation}
Here, the pullback at the bottom left is taken over \[\Ar(\mathbf{SMF1C}^{\dom,\faith}) \stackrel{s}{\to} \ob(\mathbf{SMF1C}) \stackrel{\cZ_{2}}{\leftarrow} \ob(\mathbf{Mor}_2^{\sss}),\]
the left vertical map implicitly uses the equivalence $\Mor^{\sss} \simeq \Mor_2^{\sss}$, 
and the bottom horizontal map uses functoriality of $\Witt(-)$.
\end{thm}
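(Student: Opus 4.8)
The plan is to assemble the asserted pullback square by pasting the two pullback squares already at our disposal: the one of Proposition~\ref{prop:MF2Cpullback}, which factors a multifusion $2$-category through $\Omega\fC$ and the canonical $0$-dominant $2$-faithful $1$-morphism $\fC:\Mod(\Omega\fC)\nrightarrow\fC$, and the one of Corollary~\ref{cor:pullbackWittLeftFib}, which exhibits the arrow space $\Ar(\Mor_2^{\sss,2\mhyphen\dom,0\mhyphen\faith})$ as the fibre product $P:=\Ar(\mathbf{SMF1C}^{\dom,\faith})\times_{\ob(\mathbf{SMF1C})}\ob(\Mor_2^{\sss})$. The glue between the two is the adjoint equivalence~\eqref{eq:reverse}, $\Ar(\Mor_2^{\sss,0\mhyphen\dom,2\mhyphen\faith})\simeq\Ar(\Mor_2^{\sss,2\mhyphen\dom,0\mhyphen\faith})$, which interchanges the source and target of a $1$-morphism with those of its $\mp$-adjoint.

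Concretely, I would first use Corollary~\ref{cor:pullbackWittLeftFib} together with~\eqref{eq:reverse} and the equivalence $\Mor^{\sss}\simeq\Mor_2^{\sss}$ to identify the bottom-left corner $\Ar(\Mor^{\sss,0\mhyphen\dom,2\mhyphen\faith})$ of the Proposition~\ref{prop:MF2Cpullback} square with $P$. Under the left fibration $\cZ_{2}$ of Theorem~\ref{thm:Mor2SMFCleftfibration}, a point $([\cB],F:\cZ_{2}(\cB)\to\cE)$ of $P$ straightens to the unique lift $\widetilde F:\cB\nrightarrow\cB'$ with source $\cB$, and by the very definition of $\Witt(-)$ via straightening this fibration its target is $t(\widetilde F)=\Witt(F)([\cB])$. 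Passing back along $\mp$ converts this target into the source of the corresponding $0$-dominant $2$-faithful arrow; hence under $\Ar(\Mor_2^{\sss,0\mhyphen\dom,2\mhyphen\faith})\simeq P$ the source map $s$ of the Proposition~\ref{prop:MF2Cpullback} square corresponds exactly to the map $([\cB],F)\mapsto\Witt(F)([\cB])$ running along the bottom of~\eqref{eq:doublepullback}.

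With this translation in hand, the desired square is the outer rectangle of a vertical composite whose top square is that of Proposition~\ref{prop:MF2Cpullback} and whose bottom square has vertical legs the equivalence $\Ar(\Mor_2^{\sss,0\mhyphen\dom,2\mhyphen\faith})\xrightarrow{\sim}P$ and the identity of $\ob(\Mor_2^{\sss})$, with common top edge $s$ and bottom edge $([\cB],F)\mapsto\Witt(F)([\cB])$. A commuting square both of whose vertical legs are equivalences is automatically cartesian, so the pasting lemma for pullbacks, applied to this bottom square and the pullback square of Proposition~\ref{prop:MF2Cpullback}, yields that the outer rectangle is a pullback. What remains is to recognise the edges of this rectangle as those of~\eqref{eq:doublepullback}: the top edge is $\Omega$, the right edge is $[\Mod(-)]$ which is $[-]$ after $\Mor^{\sss}\simeq\Mor_2^{\sss}$, and the left edge is the composite $\ob(\mathbf{MF2C})\xrightarrow{L}\Ar(\Mor_2^{\sss,0\mhyphen\dom,2\mhyphen\faith})\xrightarrow{\sim}P$, where $L(\fC)=[\fC:\Mod(\Omega\fC)\nrightarrow\fC]$.

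I expect the identification of this left edge with the stated map $\fC\mapsto(\Omega\cZ(\fC)\to\cZ_{2}(\Omega\fC),[\fC])$ to be the main obstacle. One must check, on the one hand, that $[\fC]\in\ob(\Mor_2^{\sss})$ is, via $\Mod:\Mor_2^{\sss}\simeq\Mor^{\sss}$ and the Morita equivalence $\fC\simeq_{ME}\Mod(\Omega\cZ(\fC))$ of \cite{Decoppet2022;centers}, the object $\Omega\cZ(\fC)$; and, on the other hand, that $\cZ_{2}$ applied (through Proposition~\ref{prop:Mugerfunctor}) to the $\mp$-adjoint $\fC^{\mp}:\fC\nrightarrow\Mod(\Omega\fC)$ of the canonical inclusion reproduces the geometrically defined canonical symmetric functor $\Omega\cZ(\fC)\to\cZ_{2}(\Omega\fC)$ of Theorem~\ref{thm:Delphic}. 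This amounts to matching the abstract symmetric-center source and target $\cZ_{2}(\Omega\cZ(\fC))=\Omega\cZ(\fC)$ and $\cZ_{2}(\Omega\fC)$ with the endpoints of the geometric functor. Once this compatibility of the two descriptions of the canonical functor is in place, commutativity of the square and the explicit form of all four edges follow, completing the proof.
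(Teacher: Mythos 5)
Your proposal is correct and follows essentially the same route as the paper: the paper's own proof is precisely the combination of the pullback of Proposition~\ref{prop:MF2Cpullback}, the equivalence $\Mor^{\sss}\simeq\Mor_2^{\sss}$, the adjoint equivalence~\eqref{eq:reverse}, and the pullback of Corollary~\ref{cor:pullbackWittLeftFib}, i.e.\ exactly your pasting argument. The identification of the left edge with $\fC\mapsto\left(\Omega\cZ(\fC)\to\cZ_{2}(\Omega\fC),[\fC]\right)$, which you flag as the main remaining point, is handled in the paper not inside this proof but in the remark following Lemma~\ref{lem:technicalsymmetrictensorfunctor}.
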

\begin{proof}
    This follows directly from combining the pullback from Proposition~\ref{prop:MF2Cpullback} with the equivalence $\Mor_2^{\sss} \simeq \Mor^{\sss}$, the equivalence~\eqref{eq:reverse} induced from taking adjoints and the pullback from Corollary~\ref{cor:pullbackWittLeftFib}.
\end{proof}

Taking fibers at a fixed symmetric monoidal functor $\cE \to \cF$ then immediately results in Theorem~\ref{thm:Wittsquare}.

\begin{proof}[Proof of Theorem~\ref{thm:Wittsquare}]
Consider the commuting diagram
\[
\begin{tikzcd}
    \Ar(\mathbf{SMF1C}^{\dom,\faith})\ \underset{\ob(\mathbf{SMF1C})}{\times}\ob(\Mor_2^{\sss}) \arrow[r ] \arrow[d] &  {\ob(\Mor_2^{\sss})}\arrow[d, "\cZ_{2}"]\\
    \Ar(\mathbf{SMF1C}^{\dom, \faith}) \arrow[r, "t"'] &\ob(\mathbf{SMF1C})
\end{tikzcd}
\]
where the top horizontal map is the bottom horizontal map from Theorem~\ref{thm:maintheorem} and the left vertical map is the projection. (Note that this is \emph{not} a pullback diagram and in particular \emph{not} the defining pullback of the top left entry since neither the top horizontal map is the projection, nor the bottom horizontal map is the ``source'' map.)
Fibering the pullback square~\ref{eq:doublepullback}  at a fixed $(F:\cE \to \cF) \in \Ar(\mathbf{SMF1C}^{\dom,\faith})$ results in a pullback square
\[
\begin{tikzcd}[sep=small]
\ob(\mathbf{MF2C})(\cE\twoheadrightarrow\cF) \arrow[d, "{[-]}"'] \arrow[r, "\Omega"] &  \ob(\mathbf{BMF1C}^{\ndeg}(\cF)) \arrow[d, "{[-]}"]     \\
 {\mathcal{W}itt(\cE)} \arrow[r] &  {\mathcal{W}itt(\cF).}
\end{tikzcd}
\]
Here, $\ob(\mathbf{MF2C})(\cE\twoheadrightarrow\cF)$  denotes the fiber of 
\[
\mathrm{ob}(\mathbf{MF2C}) \stackrel{\fC\mapsto( \Omega \cZ(\fC) \to \cZ_{2}(\Omega \fC))}{\longrightarrow} \Ar(\mathbf{SMF1C}^{\dom,\faith}),
\]
i.e.\ $\ob(\mathbf{MF2C})(\cE\twoheadrightarrow\cF)$ can be thought of as the $3$-groupoid of multifusion $2$-categories equipped with an identification $(\Omega \cZ(\fC) \to \cZ_{2}(\Omega \fC)) \simeq (\cE \to \cF)$.
Thus $\ob(\mathbf{MF2C}(\cE\twoheadrightarrow\cF))$ can be identified with the space of Witt squares of type $\cE \twoheadrightarrow \cF$ as introduced in Equation \eqref{eq:DmitriData}. This completes the proof of Theorem~\ref{thm:Wittsquare}.
\end{proof}

\subsection{Unpacking Witt Squares and Witt Data}\label{subsection:Wittdata}
Having completed the proof of Theorem \ref{thm:Wittsquare} in the previous section, we will in this section work incoherently to give a more hands on unpacking of the ingredient of the Witt squares.
We fix a faithful dominant symmetric tensor functor $F:\cE\rightarrow\cF$ between symmetric multifusion 1-categories. Recall from \eqref{eq:DmitriData} that a Witt square of type $F:\cE\twoheadrightarrow\cF$ is a commuting square of spaces of the form
$$\begin{tikzcd}
\pt
\arrow[r]
\arrow[d]
&
\ob(\mathbf{BMF1C}^{\ndeg}(\cF))
\arrow[d,"\protect{[-]}"]
\\
\mathcal{W}itt(\cE)
\arrow[ur,Rightarrow,shorten <= 1em, shorten >= 1em,"\simeq"]
\arrow[r]
&
\mathcal{W}itt(\cF)\,.
\end{tikzcd}$$
\noindent We begin by unpacking the content of such squares in the form of \textit{Witt data}. Subsequently, inspired by \cite{Decoppet2022;centers}, we give a down-to-earth account of the relation between multifusion 2-category data and Witt data.

Firstly, by construction, the space $\mathcal{W}itt(\cE)$ has objects $\cE$-nondegenerate braided multifusion 1-categories given in Definition \ref{def:nondegOver}.
Furthermore, given $\cB_1,\, \cB_2$ two $\cE$-nondegenerate braided multifusion 1-categories, a 1-morphism between them in $\mathcal{W}itt(\cE)$ is an $\cE$-Witt equivalence given in Definition \ref{def:WittEqOver}.
Said differently, an $\cE$-Witt equivalence is a 1-equivalence $\cC:\cB_1\nrightarrow\cB_2$ in $\mathbf{Mor}_2^{\sss}$ together with an identification of the two canonical braided tensor functors $\cE\rightarrow\mathcal{Z}(\cC)$. This notion was first considered in \cite{DNO2013}, and the set of connected components of $\mathcal{W}itt(\cE)$ is precisely the $\cE$-Witt group considered in this last reference. Then the content of the Witt squares of type $F:\cE\twoheadrightarrow\cF$ unpacks as in the next definition.

\begin{defn}
\label{WDdef}
A {\em Witt datum} of type $\cE \twoheadrightarrow \cF$ is a triple
$(\cB,\, \cD,\, \cC)$, where $\cB$ is an $\cF$-nondegenerate braided multifusion 1-category, $\cD$ is an $\cE$-nondegenerate braided multifusion 1-category, and
$\cC: \cD \boxtimes_{\cE} \cF \nrightarrow \cB$ is an $\cF$-Witt equivalence.
\end{defn}
The key structural difference between the datum $\cB$ and $\cD$  in Definition~\ref{WDdef} is that $\cB$ is considered \emph{up to $\cF$-compatible braided monoidal equivalence} while $\cD$ is merely given up to \emph{$\cE$-Witt equivalence}. Indeed, this distinction becomes visible when considering higher morphisms in the groupoid of Witt squares (or when considering isomorphism classes of Witt squares).

For the reader's convenience, we explicitly describe 1-morphisms.

\begin{defn}
\label{WDequiv}
An equivalence between two Witt data 
$(\cB_1,\, \cD_1,\, \cC_1)$ and $(\cB_2,\, \cD_2,\, \cC_2)$ of type $\cE \to \cF$ is a triple
$(F,\, \cU,\, \cM)$, where $F: \cB_1 \to \cB_2$
is a braided monoidal equivalence compatible with the identification of symmetric centers with $\cF$ (this is data which we suppress from notation), $\cU: \cD_1\nrightarrow\cD_2$
is an $\cE$-Witt equivalence, and 
$$
\begin{tikzcd}
\cD_1 \boxtimes_{\cE} \cF
\arrow[dd,"\cC_1"']
\arrow[rr, "\cU \boxtimes_{\cE} \cF"]
& &
\cD_2 \boxtimes_{\cE} \cF
\arrow[dd,"\cC_2"]
\\
& &
\\
\cB_1
\arrow[rr, "_{F}\cB_2"']
\arrow[uurr,Rightarrow,shorten <= 1em, shorten >= 1em,"\cM"]
& &
\cB_2
\end{tikzcd}
$$
is an equivalence between $\cF$-Witt equivalences, where the bottom horizontal arrow is the Witt equivalence $\cB_1 \nrightarrow \cB_2$ induced from the braided monoidal equivalence $F: \cB_1 \to \cB_2$.  
\end{defn}

One can similarly give explicit descriptions of 2- and 3-morphisms in the space of Witt data of type $F:\cE \twoheadrightarrow \cF$. 

Incoherently, the relation between Witt squares and multifusion $2$-categorie can be summarized as follows, starting from \cite[Remark 4.2.9]{Decoppet2022;centers}:

\begin{rem}\label{prop:thibault}
    Since $\Mor^{\sss} \simeq \Mor_2^{\sss}$, it follows that multifusion 2-categories up to monoidal equivalence consist of the following data: 
\begin{enumerate}
    \item A symmetric multifusion 1-category $\cE$ up to symmetric monoidal equivalence,
    \item A braided multifusion 1-category $\mathcal{B}$ such that $\mathcal{Z}_{2}(\cB) \simeq \cE$, up to  $\cE$-Witt equivalence,
    \item A $\cB$-central multifusion 1-category $\cC$ such that $\mathcal{B}\rightarrow \mathcal{Z}(\mathcal{C})$ is faithful, up to $\cB$-central Morita equivalence.
\end{enumerate}
Moreover, the corresponding multifusion 2-category is the Morita dual to $\mathbf{Mod}(\cB)$ with respect to $\mathbf{Mod}(\cC)$ equipped with the obvious action.
\end{rem}

We note that the faithfulness condition on the functor $\mathcal{B}\rightarrow \mathcal{Z}(\mathcal{C})$ is there to ensure that the action of $\mathbf{Mod}(\cB)$ on $\mathbf{Mod}(\cC)$ is faithful in the sense of \cite[Definition 5.4.1]{Decoppet2022;Morita}, so that $\mathbf{Mod}(\cC)$ describes a Morita equivalence.

We will shortly explain how this set of data is related to Witt data. Before doing so, we need to establish the following technical result.

\begin{lem}\label{lem:technicalsymmetrictensorfunctor}
For a multifusion 2-category $\mathfrak{C}$ parameterised according to the data of Remark \ref{prop:thibault}, the symmetric tensor functor $\Omega\mathcal{Z}(\fC)\rightarrow \mathcal{Z}_{2}(\Omega\fC)$ is identified with the restriction of $\mathcal{B}\rightarrow \mathcal{Z}(\mathcal{C})$ to $\cE\twoheadrightarrow\Im(\cE)$. In particular, the symmetric tensor functor $\Omega\mathcal{Z}(\fC)\rightarrow \mathcal{Z}_{2}(\Omega\fC)$ is always faithful and dominant.
\end{lem}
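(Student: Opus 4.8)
The plan is to compute the source and target of the canonical functor $\Omega\cZ(\fC)\to\cZ_{2}(\Omega\fC)$ separately, to identify the functor itself, and then to read off faithfulness and dominance. Write $\fC$ for the multifusion $2$-category obtained from the data $(\cE,\cB,\cC)$ of Remark~\ref{prop:thibault}, so that $\fC$ is Morita equivalent to $\mathbf{Mod}(\cB)$ via the invertible bimodule $\mathbf{Mod}(\cC)$, and let $F\colon\cB\to\cZ(\cC)$ be the faithful braided functor encoding the central structure. The first step is to recall two computations from \cite{Decoppet2022;centers}. On the one hand, $\Omega\fC\simeq\cZ(\cC,\cB)^{\rev}$, the reversed centralizer of $\Im(\cB)$ inside $\cZ(\cC)$; this is the statement underlying \cite[Lemma~3.2.1]{Decoppet2022;centers} already used in Proposition~\ref{prop:dominancethesame}. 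On the other hand, since the Drinfeld center is a Morita invariant we have $\cZ(\fC)\simeq\cZ(\mathbf{Mod}(\cB))$, and hence $\Omega\cZ(\fC)\simeq\Omega\cZ(\mathbf{Mod}(\cB))\simeq\cZ_{2}(\cB)=\cE$.

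The heart of the argument is to identify the canonical forgetful functor. The map $\Omega\cZ(\fC)\to\cZ_{2}(\Omega\fC)\hookrightarrow\Omega\fC$ is induced by the forgetful $2$-functor $\cZ(\fC)\to\fC$, which sends a central endomorphism of the unit to its underlying endomorphism; transparency of the result is exactly why this functor factors through $\cZ_{2}(\Omega\fC)$. Tracing this through the identifications of the previous step, using the explicit description of $\cZ(\mathbf{Mod}(\cB))$ and of the forgetful functor in \cite{Decoppet2022;centers} (cf.\ \cite[Remark~4.2.9]{Decoppet2022;centers}), the functor $\Omega\cZ(\fC)\to\Omega\fC$ becomes the restriction $F|_{\cE}\colon\cE\to\cZ(\cC,\cB)^{\rev}$ of $F$ along $\iota_{\cE}\colon\cE=\cZ_{2}(\cB)\hookrightarrow\cB$. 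It indeed lands in $\cZ(\cC,\cB)$ because every object of $\cE$ centralizes $\cB$, so its image centralizes $\Im(\cB)$; the passage to $\rev$ is harmless since $\Im(\cE)$ is symmetric. I expect this identification of the forgetful functor with $F|_{\cE}$ to be the main obstacle, as it requires matching the abstract Morita-invariance equivalence $\cZ(\fC)\simeq\cZ(\mathbf{Mod}(\cB))$ against the intrinsic description of $\Omega\fC$ (note that Morita equivalence does not preserve $\Omega$, so the map cannot simply be transported along it).

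With the functor identified as $F|_{\cE}$, the remaining claims follow. Faithfulness is immediate, as $F|_{\cE}$ is the restriction to a full subcategory of the faithful functor $F$, and dominance onto $\Im(\cE)$ is the definition of the image. It then remains only to check that this image exhausts the target, i.e.\ that $\Im(\cE)=\cZ_{2}(\Omega\fC)$. Applying Corollary~\ref{cor:centerOfCentralizers} to $\cZ(\cC,\cB)$ gives $\cZ_{2}(\Omega\fC)=\cZ_{2}(\cZ(\cC,\cB)^{\rev})=\cZ_{2}(\cZ(\cC,\cB))\cong\cZ_{2}(\Im(\cB))$, where the middle equality holds because transparency is insensitive to reversing the braiding. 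So it suffices to show $\cZ_{2}(\Im(\cB))=\Im(\cZ_{2}(\cB))=\Im(\cE)$. Factoring $F$ as a dominant braided functor $\cB\twoheadrightarrow\Im(\cB)$ followed by the fully faithful inclusion $\Im(\cB)\hookrightarrow\cZ(\cC)$ — the dominant part being again faithful since $F$ is — this identity is precisely the homotopical content of \cite[Corollary~3.24]{DMNO} exploited in Proposition~\ref{lemma:BMF1Ccomposite}, namely that $\cZ_{2}$ carries a dominant faithful braided functor onto the symmetric center of its target. Since \cite[Corollary~3.24]{DMNO} is phrased for fusion categories, I would first reduce to that case by decomposing $\cB$ and $\cC$ into fusion summands as in Remark~\ref{rem:mor2Decomp}, which is exactly where the $0$-faithfulness hypothesis on $F$ is used to rule out vanishing components. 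Combining, $\cZ_{2}(\Omega\fC)=\Im(\cE)$, and the canonical functor is the faithful dominant functor $\cE\twoheadrightarrow\Im(\cE)$, as claimed.
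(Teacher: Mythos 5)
Your proposal is correct and follows essentially the same route as the paper's proof: compute $\Omega\cZ(\fC)\simeq\cE$ via Morita invariance of Drinfeld centers together with $\Omega\cZ(\mathbf{Mod}(\cB))\simeq\cZ_{2}(\cB)$, compute $\Omega\fC\simeq\cZ(\cC,\cB)$ from \cite[Lemma~3.2.1]{Decoppet2022;centers}, deduce $\cZ_{2}(\Omega\fC)\simeq\Im(\cE)$, and identify the functor with $\cE\twoheadrightarrow\Im(\cE)$ by unwinding the explicit construction underlying \cite[Theorem~2.3.2]{Decoppet2022;centers}. The only difference is one of detail: you spell out the step $\cZ_{2}(\cZ(\cC,\cB))\cong\Im(\cE)$ via Corollary~\ref{cor:centerOfCentralizers} and \cite[Cor.~3.24]{DMNO}, which the paper asserts in one line, while both you and the paper defer the identification of the canonical functor itself to the cited reference's proof.
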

\begin{proof}
We have that $\Omega\mathcal{Z}(\mathbf{Mod}(\cB))\simeq\cE$ by \cite[Lemma 2.1.6]{JFR}. Moreover, Morita equivalences induce braided monoidal equivalences at the level of Drinfeld centers \cite[Theorem 2.3.2]{Decoppet2022;centers}. Further, it was shown in \cite[Lemma 3.2.1]{Decoppet2022;centers} that $\Omega\mathfrak{C}\simeq \mathcal{Z}(\cC,\cB)$ as $\mathfrak{C}$ is the Morita dual to $\mathbf{Mod}(\cB)$ with respect to $\mathbf{Mod}(\cC)$. This shows that $\mathcal{Z}_{2}(\Omega\mathfrak{C})\simeq \Im(\cE)$. Moreover, by inspecting the proof of \cite[Theorem 2.3.2]{Decoppet2022;centers}, we find that $\Omega\mathcal{Z}(\fC)\twoheadrightarrow \mathcal{Z}_{2}(\Omega\fC)$ is identified with the canonical map $\cE\twoheadrightarrow\Im(\cE)$ as claimed.
\end{proof}

\begin{rem}
The last part of Lemma \ref{lem:technicalsymmetrictensorfunctor} also follows from our previous considerations. Though the arguments are essentially identical, we give some details for the reader's convenience. Given a multifusion 2-category $\fC$, Proposition \ref{prop:MF2Cpullback} factors the canonical 1-morphism $_{\mathbf{2Vect}}\fC_{\fC}$ in $\mathbf{Mor}^{\sss}$ as the composite of $_{\mathbf{2Vect}}\mathbf{Mod}(\Omega\fC)_{\mathbf{Mod}(\Omega\fC)}$ with $_{\mathbf{Mod}(\Omega\fC)}\fC_{\fC}$. Then, thanks to Proposition \ref{prop:Mugerfunctor}, we can apply the functor $\mathcal{Z}_{2}$ to the 1-morphism $_{\fC}\fC_{\mathbf{Mod}(\Omega\fC)}$ (note the change in direction), and, so doing, obtain a faithful dominant symmetric tensor functor. But, by construction, we have $$\mathcal{Z}_{2}(_{\fC}\fC_{\mathbf{Mod}(\Omega\fC)}):\mathcal{Z}_{2}([\fC])\simeq\Omega\mathcal{Z}(\fC)\twoheadrightarrow \Omega\mathbf{End}_{\fC-\mathbf{Mod}(\Omega\fC)}(\fC)\simeq\mathcal{Z}_{2}(\Omega\fC)\xleftarrow{\simeq} \mathcal{Z}_{2}([\mathbf{Mod}(\Omega\fC)]),$$ which is identified with the canonical symmetric tensor functor $\Omega\mathcal{Z}(\fC)\rightarrow \mathcal{Z}_{2}(\Omega\fC)$.
\end{rem}

Starting from Remark~\ref{prop:thibault} and Lemma~\ref{lem:technicalsymmetrictensorfunctor} we may now sketch the equivalence between Witt squares and multifusion $2$-categories:

\begin{prop}\label{thm:metoDelphic}
Let us fix $F:\mathcal{E}\twoheadrightarrow\mathcal{F}$, a faithful and dominant symmetric monoidal functor between symmetric multifusion 1-categories.
Multifusion 2-categories $\fC$ for which $\Omega\mathcal{Z}(\fC)\twoheadrightarrow \mathcal{Z}_{2}(\Omega\cF)$ is identified with $F$ are classified by triples $([\cB],\mathcal{D},[\cC])$, where $[\cB]$ is a class in $\Witt(\cE)$, $\mathcal{D}$ is an $\cF$-nondegenerate braided multifusion 1-category, and $[\cC]$ is (the Morita class of) an $\cF$-Witt equivalence between $[\cB\boxtimes_{\cE} \cF]$ and $[\mathcal{D}]$.
\end{prop}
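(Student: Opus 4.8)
The plan is to obtain this statement as the object-level reading of the coherent classification already established in Theorem~\ref{thm:maintheorem} (equivalently Theorem~\ref{thm:Wittsquare}), and to spell out the resulting data in the hands-on language of Remark~\ref{prop:thibault} and Lemma~\ref{lem:technicalsymmetrictensorfunctor}. Concretely, I would take the fiber of the pullback square of Theorem~\ref{thm:maintheorem} over the fixed point $F\in\Ar(\mathbf{SMF1C}^{\dom,\faith})$; as recorded in the proof of Theorem~\ref{thm:Wittsquare}, this is the pullback square with corners $\ob(\mathbf{MF2C})(\cE\twoheadrightarrow\cF)$, $\ob(\mathbf{BMF1C}^{\ndeg}(\cF))$, $\Witt(\cE)$ and $\Witt(\cF)$. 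Reading this square on connected components then expresses an isomorphism class of such $\fC$ as a point of $\Witt(\cE)$ and a point of $\mathbf{BMF1C}^{\ndeg}(\cF)$ together with a homotopy class of path in $\Witt(\cF)$ identifying their images.

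The next step is to unpack the three structure maps of that square. The right-hand map $[-]\colon\mathbf{BMF1C}^{\ndeg}(\cF)\to\Witt(\cF)$ records the $\cF$-Witt class, so a point upstairs is exactly an $\cF$-nondegenerate braided multifusion category $\cD$ (up to $\cF$-compatible braided equivalence). The lower map $\Witt(\cE)\to\Witt(\cF)$ is base change along $\cE\to\cF$, sending $[\cB]\mapsto[\cB\boxtimes_{\cE}\cF]$. Hence the filling homotopy is a path in $\Witt(\cF)$ between $[\cB\boxtimes_{\cE}\cF]$ and $[\cD]$, i.e.\ exactly an $\cF$-Witt equivalence $\cC\colon\cB\boxtimes_{\cE}\cF\nrightarrow\cD$ taken up to its Morita class $[\cC]$; the top map $\Omega$ returns $\cD=\Omega\fC$ and the left map returns $[\cB]\in\Witt(\cE)$. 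This is precisely the triple $([\cB],\cD,[\cC])$, so the proof amounts to checking that these identifications are compatible and mutually inverse on $\pi_0$.

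To connect this with the Morita-theoretic picture and make the bijection transparent, I would run it through Remark~\ref{prop:thibault}: there $\fC$ corresponds to the $\cE$-Witt class $[\cB]$ of the braided category with $\fC\simeq_{\mathrm{ME}}\Mod(\cB)$, together with a $\cB$-central multifusion category $\cC$ with $\cB\to\cZ(\cC)$ faithful. Lemma~\ref{lem:technicalsymmetrictensorfunctor} turns the hypothesis ``$\Omega\cZ(\fC)\to\cZ_{2}(\Omega\fC)$ is identified with $F$'' into the statement that the dominant image $\Im(\cE)\subseteq\cZ(\cC)$ is $\cF$, and simultaneously gives $\cD=\Omega\fC\simeq\cZ(\cC,\cB)$ with $\cZ_{2}(\cD)\cong\cZ_{2}(\Im(\cB))\cong\Im(\cE)\cong\cF$ (using Corollary~\ref{cor:centerOfCentralizers}), confirming $\cF$-nondegeneracy. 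Via the double-centralizer identities of Lemma~\ref{lem:BF1CDoubleCommutant}, the category $\cC$ then presents a $2$-dominant, $0$-faithful $1$-morphism $\cB\nrightarrow\cD$ in $\Mor_2^{\sss}$ lying over $F$ (Lemma~\ref{prop:Mugerfunctoradjectives}); factoring it as the base change $\cB\nrightarrow\cB\boxtimes_{\cE}\cF$ followed by a $1$-morphism $\cB\boxtimes_{\cE}\cF\nrightarrow\cD$ over $\id_{\cF}$ exhibits this second factor as the sought $\cF$-Witt equivalence, since morphisms in the fiber $\Witt(\cF)$ of $\cZ_{2}$ are exactly $\cF$-Witt equivalences (Definition~\ref{defn:Wittspace}, Theorem~\ref{thm:invertibleIFFWittEq}).

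The main obstacle is bookkeeping rather than mathematics: the content is already contained in Theorem~\ref{thm:Wittsquare}, and the task here is to match three presentations of the same datum --- the coherent Witt square, the Morita pair of Remark~\ref{prop:thibault}, and the triple --- while remaining scrupulous about variances and reversals (the $(-)^{\rev}$ in $\cB\boxtimes\cD^{\rev}\to\cZ(\cC)$ and the adjoint flip of~\eqref{eq:reverse}) and about which datum is remembered up to which equivalence relation ($\cE$-Witt equivalence for $[\cB]$, $\cF$-compatible braided equivalence for $\cD$, and Morita equivalence of the $\cF$-Witt equivalence for $[\cC]$). The one genuinely non-formal move is extracting the $\cF$-Witt equivalence from the central category $\cC$ through the base-change factorization above; this rests on the left-fibration property of $\cZ_{2}$ (Theorem~\ref{thm:Mor2SMFCleftfibration}), which guarantees that a $2$-dominant, $0$-faithful $1$-morphism lying over $\id_{\cF}$ is automatically invertible.
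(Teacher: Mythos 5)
Your proposal is correct, but its engine differs from the paper's. The paper proves Proposition~\ref{thm:metoDelphic} as a deliberately self-contained, incoherent argument: starting from Remark~\ref{prop:thibault} and Lemma~\ref{lem:technicalsymmetrictensorfunctor} exactly as in your third paragraph, it sets $\cD := \cZ(\cC,\cB)^{\rev}$ and then exhibits the $\cF$-Witt equivalence \emph{concretely}, via the centralizer factorization $\cZ(\cC,\cF)\simeq(\cB\boxtimes_{\cE}\cF)\boxtimes_{\cF}\cD^{\rev}$ of \cite[Proposition 4.3]{DNO2013}, after which bijectivity of the correspondence is read off directly. You instead (i) frame the statement as the $\pi_0$ shadow of Theorem~\ref{thm:maintheorem}/Theorem~\ref{thm:Wittsquare}, and (ii) extract the Witt equivalence abstractly, by factoring the $2$-dominant, $0$-faithful $1$-morphism $\cC:\cB\nrightarrow\cD$ through the base-change lift $\cB\nrightarrow\cB\boxtimes_{\cE}\cF$ of $F$ supplied by the left fibration $\cZ_{2}$ (Theorem~\ref{thm:Mor2SMFCleftfibration}) and invoking conservativity of left fibrations to conclude that the residual factor over $\id_{\cF}$ is invertible. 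Both steps are sound: conservativity is a genuine property of left fibrations, and your invertibility claim can also be verified by hand ($2$-dominance plus the double-centralizer Lemma~\ref{lem:BF1CDoubleCommutant} reduce $2$-faithfulness to $\cB\to\Im(\cB)$ being an equivalence, which follows from \cite[Corollary 3.24]{DMNO} once $\cZ_{2}$ of the morphism is an equivalence --- the same input underlying Proposition~\ref{lemma:BMF1Ccomposite} and hence Theorem~\ref{thm:Mor2SMFCleftfibration}, so the two routes ultimately rest on the same centralizer technology). What the paper's route buys is an explicit identification of the Witt equivalence with the Morita class $[\cC]$ itself, in keeping with the stated purpose of \S\ref{subsection:Wittdata}: a hands-on unpacking that does not lean on the coherent machinery. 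What your route buys is economy, at the cost of relocating the content into the left-fibration formalism and into the $\pi_0$ bookkeeping (which datum is retained up to which equivalence relation) that you correctly single out as the delicate point; note in particular that $\pi_0$ of a homotopy pullback is not triples of $\pi_0$-classes but equivalence classes of honest triples, which is exactly why the proposition remembers $\cD$ on the nose while quotienting $\cB$ and $\cC$.
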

\begin{proof}
Let $\fC$ be a multifusion 2-category equipped with an identification of $\Omega\mathcal{Z}(\fC)\twoheadrightarrow \mathcal{Z}_{2}(\Omega\cF)$ with $F:\mathcal{E}\twoheadrightarrow\mathcal{F}$. With respect to the data in Remark \ref{prop:thibault}, this fixes the symmetric multifusion 1-category $\cE$, but also $\cB$ up to (pointed!)\ $\cE$-Witt equivalence. Moreover, this also prescribes the restriction of $\cB\rightarrow\cZ(\cC)$ to $\cE$. We can then consider $\cD := \mathcal{Z}(\mathcal{C},\mathcal{B})^{\rev}$, the centralizer of the image of $\cB$ in $\cZ(\cC)$, which is an $\cF$-nondegenerate braided multifusion 1-category. Observe that this does not depend on the representative $\cB$ for the class $[\cB]$ in $\Witt(\cE)$. In particular, we have by construction and \cite[Proposition 4.3]{DNO2013} that $$\mathcal{Z}(\mathcal{C},\cF)\simeq (\cB\boxtimes_{\cE}\cF)\boxtimes_{\cF}\cD^{\rev},$$ i.e.\ the Morita class $[\cC]$ provides an $\cF$-Witt equivalence between $[\cB\boxtimes_{\cE}\cF]$ and $[\cD]$. This correspondence is manifestly bijective, so that the result follows.
\end{proof}

\section{(Multi)Fusion 2-Categories and their Parametrization}\label{section:F2Cparametrization}
Having developed a coherent method for extracting a Witt square from a (multi)fusion 2-category, we will now explain the correspondence between Witt squares and Delphic squares. (This corresponds to arrow (\pesos) in Figure \ref{fig:layout}.) With these relations in place we can then unpack the data of the Delphic squares so as to obtain the statements of Theorems \ref{thmalpha:AllBosons} and \ref{thmalpha:EmergentFermions}. (This corresponds to arrow (\euro) in Figure \ref{fig:layout}.) Finally, we describe how to explicitly construct a fusion 2-category given the data in our main theorems. (This corresponds to arrow (\textsterling) in Figure \ref{fig:layout}.)

\subsection{From Witt Squares to Delphic Squares}

We explain how to go from Witt squares to Delphic squares. In order to do so, we will use the language of superspaces developed in \S\ref{subsection:supergroups}, and, in particular, the restatement of Deligne's theorem \cite{deligne2002} given therein. 

As explained in~\S\ref{subsection:supergroups}, any symmetric multifusion $1$-category $\cF$ arises by Deligne's theorem \cite{deligne2002} as a limit of a certain functor to $\mathbf{SMF1C}$ which sends every object to $\mathbf{sVect}$ (but is nontrivial as a functor if $\cF$ is non-Tannakian). 

It then follows from \cite{DGNO2010braided} that there is an equivalence of spaces $$\mathbf{BMF1C}^{\ndeg}(\cF)\simeq \sHom(\Spec(\cF),\mathbf{BMF1C}^{\ndeg}(\sVect)),$$ given by de-equivariantization, where $\sHom$ denotes the space of supermaps, i.e.\ $\mathrm{B}\Z/2$-equivariant maps. Moreover, it also follows from Theorem \ref{thm:Wittlimits} that there is an equivalence of spaces $$\mathcal{W}itt(\cF)\simeq \sHom(\Spec(\cF),\mathcal{W}itt(\sVect)).$$ Now, if $F:\cE\twoheadrightarrow \cF$ is a faithful dominant symmetric tensor functor between symmetric multifusion 1-category, there is a corresponding map of superspaces $\mathsf{Spec}(F):\mathsf{Spec}(\mathcal{F})\rightarrow \mathsf{Spec}(\mathcal{E})$. The last identification above is in fact functorial, so that the canonical map $\mathcal{W}itt(\cF)\rightarrow \mathcal{W}itt(\cE)$ induced by $F$ can be identified with $\sHom(\Spec(\cE),\mathcal{W}itt(\sVect))\rightarrow \Hom(\Spec(\cF),\mathcal{W}itt(\sVect))$ induced by pulling back along the map of superspaces $\mathsf{Spec}(F)$. Putting the above discussion together, we find that Witt squares of type $\cE\twoheadrightarrow\cF$ can be equivalently rewritten as: 
\begin{equation}
\begin{tikzcd}
\pt
\arrow[r]
\arrow[d]
&
\sHom(\Spec(\cF),\mathbf{BMF1C}^{\ndeg}(\sVect))
\arrow[d,"{[-]}"]
\\
\sHom(\Spec(\cE),\Witt(\sVect))
\arrow[ur,Rightarrow,shorten <= 1em, shorten >= 1em,"\simeq"]
\arrow[r]
&
\sHom(\Spec(\cF),\Witt(\sVect))\,.
\end{tikzcd}
\end{equation}
\noindent Upon currying, these squares are exactly the Delphic squares of type $\Spec(\cF)\hookrightarrow\Spec(\cE)$ of Theorem~\ref{thm:Delphic}. This shows that, upon assuming that Theorem \ref{thm:Wittlimits} holds, then Theorem \ref{thm:Delphic} follows from Theorem \ref{thm:Wittsquare}.

\subsection{Examples}

\begin{ex}
    As an application of Theorem \ref{thmalpha:EmergentFermions} we show how fermionic strongly fusion 2-categories \cite{JFY:2020ivj} fit into the classification thereby recovering a result of \cite{decoppet:extension}. Recall that, by definition, a fermionic strongly fusion 2-category is a fusion 2-category $\fC$ such that $\Omega \fC \simeq \sVect$.
    In this case, the $\sVect$-nondegenerate braided fusion 1-category in Theorem \ref{thmalpha:EmergentFermions}
    is simply $\mathcal{A}=\sVect$. The next piece of data is a finite supergroup $(G,z)$ with $z\neq 1$, with ``minimal'' supersubgroup $(H,z)=(\mathbb{Z}/2,z)$. In particular, the super action $\rho$ is necessarily trivial. The remaining piece of data is a class $\varpi$ in $\SH^4(G,z)$, and we write $\mathbf{2sVect}^{\varpi}_{(G,z)}$ for the corresponding fermionic strongly fusion 2-category. Namely, the homotopy between $\rho$ and $\varpi|_{(\mathbb{Z}/2,z)}$ is essentially unique as $\SH^3(\mathbb{Z}/2,z)=0$. 
    This shows that fermionic strongly fusion 2-categories up to monoidal equivalence are classified by a pair $((G,z), \varpi \in \SH^4(G,z))$ of a supergroup $(G,z)$ and a class $\varpi$ up to isomorphism $\phi: (G,z) \to (G', z')$  of supergroups and the relation $\phi^* \varpi' \sim \varpi \in \SH^4(G,z).$

In other words, the set of monoidal equivalence classes of fermionic strongly fusion $2$-categories is in bijection with the set 
\[
\bigsqcup_{[(G,z)]} \SH^4(G,z)/\Aut(G,z)
\]
where the disjoint union is over the set of all isomorphism classes of finite supergroups. 
   This recovers the classification of \cite{decoppet:extension}.
\end{ex}

It follows from our classification that the last example above, as well as its bosonic counterpart, can be generalized. More precisely, we have the following generalizations of \cite[Theorems\ A \& B]{JFY:2020ivj}.

\begin{cor}
Let $\fC$ be a fusion 2-category such that $\Omega\mathfrak{C}$ is either a nondegenerate or a slightly degenerate braided fusion 1-category. Then the set of connected components $\pi_0(\mathfrak{C})$ inherits a group structure from the monoidal product. In particular, $\fC$ is faithfully graded by its connected components.
\end{cor}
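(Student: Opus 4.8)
The plan is to derive the statement directly from the classification (Theorems~\ref{thmalpha:AllBosons} and~\ref{thmalpha:EmergentFermions}) together with the description of connected components in Corollary~\ref{cor:components}. First I would observe that the hypothesis on $\Omega\fC$ pins down the subgroup in the classification to be the minimal possible one. Indeed, $\Omega\fC$ is nondegenerate precisely when $\cZ_2(\Omega\fC)\simeq\Vect$, i.e.\ $\fC$ is bosonic with $\cZ_2(\Omega\fC)=\Rep(H)$ and $H$ trivial; and $\Omega\fC$ is slightly degenerate precisely when $\cZ_2(\Omega\fC)\simeq\sVect$, i.e.\ $\fC$ is fermionic with $\cZ_2(\Omega\fC)=\Rep(H,z)=\Rep(\mathbb{Z}/2,z)$, so that $H=\langle z\rangle\cong\mathbb{Z}/2$. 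In either case the inclusion of (super)groups furnished by the classification has source $H=\langle z\rangle$, the cyclic (super)group generated by the distinguished element $z$ (with $z=1$ in the bosonic case).

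Next I would use that $z\in Z(G)$ by the supergroup axiom (Definition~\ref{def:fermionic_symmetry}), so that $H=\langle z\rangle$ is a central, hence normal, subgroup of $G$. For such an $H$ every double coset collapses: for $g\in G$ one has $HgH=gH\cdot H=gH$, so the double coset space $H\backslash G/H$ coincides with the quotient group $G/H$, and the induced multiplication of cosets $(gH)(g'H)=gg'H$ is single-valued. By Corollary~\ref{cor:components}, $\pi_0(\fC)\cong H\backslash G/H=G/H$ as sets, compatibly with the fusion rules; since the double-coset ``product'' is in this case exactly the group law of $G/H$, the fusion of connected components is governed by the group multiplication on $G/H$. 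This supplies the desired group structure on $\pi_0(\fC)$.

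Finally I would conclude that this group structure realizes $\fC$ as a faithfully $G/H$-graded fusion 2-category: writing $\fC\simeq\bigoplus_{\gamma\in\pi_0(\fC)}\fC_\gamma$ as the (2-categorical) direct sum over its indecomposable summands, the monoidal product of an object of $\fC_\gamma$ with an object of $\fC_{\gamma'}$ lands in $\fC_{\gamma\gamma'}$, and every graded piece $\fC_\gamma$ is nonzero since every double coset occurs, so the grading is faithful. The one point requiring genuine care, and the crux of the argument, is to check that the abstract ``compatibility with the fusion rules'' of Corollary~\ref{cor:components} really identifies the fusion of components with the \emph{group} multiplication of $G/H$, rather than with the generally multivalued double-coset convolution on $H\backslash G/H$; this is precisely where the centrality of $z$, hence of $H$, is indispensable.
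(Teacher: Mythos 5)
Your proof is correct, but it follows a genuinely different route from the paper's. The paper never passes through the double-coset description: it observes that when $\cZ_{2}(\Omega\fC)$ is $\Vect$ or $\sVect$, so that $H$ is trivial or $\langle z\rangle$, the classification data of Theorems~\ref{thmalpha:AllBosons} and~\ref{thmalpha:EmergentFermions} is precisely the input for an explicit model --- the 2-Deligne product $\mathbf{2Vect}^{\pi}_{G}\boxtimes\Mod(\cA)$ of Example~\ref{ex:generalizedbosonicstronglyfusion} in the bosonic case, and a faithfully $G_b$-graded extension of $\Mod(\cA)$ built by extension theory as in Example~\ref{ex:generalizedFSF2C} in the fermionic case --- and these models are faithfully graded by the groups $G$, resp.\ $G_b=G/\langle z\rangle$, by construction. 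You instead invoke Corollary~\ref{cor:components} and note that centrality of $z$ (hence of $H$) collapses double cosets to cosets, so that the a priori multivalued double-coset hypergroup is single-valued and coincides with the group law of $G/H$; you correctly flag this as the crux, and your argument in fact proves the slightly stronger statement that $\pi_0(\fC)$ is a group whenever $H$ is normal in $G$. The trade-off is that your route depends on Corollary~\ref{cor:components}, which the paper only makes precise (as the possibilistic hypergroup isomorphism) and proves in \S\ref{sec:proofsofprofits} using Lemma~\ref{lem:doublecoset} and the reconstruction of \S\ref{subsection:reconstruction}; within the paper's ordering your proof must therefore be deferred until after that section. This is not circular --- the proof of Corollary~\ref{cor:components} rests on the explicit reconstruction, whose group of components comes from extension theory rather than from the present corollary --- but your argument sits logically downstream of the paper's, whereas the paper's approach additionally yields the explicit graded models that are used later (for instance, in the reconstruction and in computing $\Inv(\fC)$).
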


In particular, all the (necessarily bosonic) fusion 2-categories $\fC$ for which $\Omega\fC$ is nondegenerate are as described in Example \ref{ex:generalizedbosonicstronglyfusion}. That is, they are of the form $\mathbf{Mod}(\mathcal{A})\boxtimes \mathbf{2Vect}_G^{\pi}$, where $\cA$ is a nondegenerate braided fusion 1-category, $G$ is a finite group, and $\pi$ is a 4-cocycle for $G$ with coefficients in $\mathbb{C}^{\times}$. For later use, we also record the fermionic version of this result.

\begin{ex}\label{ex:generalizedFSF2C}
Fix a slightly-degenerate braided fusion $1$-category $\cA$. 

Fermionic fusion 2-categories equipped with an identification of $\Omega\mathfrak{C}$ with $\cA$ (up to monoidal equivalence fixing $\cA$) are classified by a pair of a finite supergroup $(G,z)$ and a class $\varpi$ in 
$$\sHom(\mathrm{B}(G,z),\mathrm{B}\mathcal{A}ut_{\mathbf{sVect}}([\cA]))$$
up to the evident equivalence relation. 
In fact, upon chosing a minimal nondegenerate extension for $\cA$, we get an identification $$\sHom(\mathrm{B}(G,z),\mathrm{B}\mathcal{A}ut_{\mathbf{sVect}}([\cA]))\cong \SH^4(G,z).$$
In particular, the set  of fusion $2$-categories $\fC$ with an identification $\Omega \fC \simeq \cA$ up to monoidal equivalence which fix $\cA$ is in bijection with the set 
\[
\bigsqcup_{[(G,z)]} \SH^4(G,z)/\Aut(G,z)
\]
where the disjoint union is over the set of isomorphism classes of finite supergroups.

We emphasize that unlike in the bosonic case, there is no canonical decomposition of such fermionic fusion 2-categories into a 2-Deligne tensor product. (Though such a non-canonical decomposition does exist up to Morita equivalence by \cite{Decoppet2022;centers} by picking a minimal nondegenerate extension for $\cA$.) Rather, the fermionic fusion 2-categories under consideration arise via extension theory \cite{decoppet:extension}. This is more straightforward to see using spaces over $K(\mathbb{Z}/2,2)$. Namely, in this perspective, the relevant objects are $G_b$, equipped with the map classifying the extension $(G,z)$, and $\mathcal{A}ut([\cA])$, also equipped with its canonical map. We then have $$\sHom(\mathrm{B}(G,z),\mathrm{B}\mathcal{A}ut_{\mathbf{sVect}}([\mathcal{A}]))\simeq \Hom_{/K(\mathbb{Z}/2,2)}(\mathrm{B}G_b,\mathrm{B}\mathcal{A}ut([\cA])),$$ and the $G_b$-graded extension is classified by the class in $\Hom(\mathrm{B}G_b,\mathrm{B}\mathcal{A}ut([\cA]))$.

Conversely, given a class $\varsigma$ in $\Hom(\mathrm{B}G_b,\mathrm{B}\mathcal{A}ut([\cA]))$, we can consider the corresponding fermionic fusion 2-category $\mathfrak{C}$, which is a faithfully $G_b$-graded extension of $\mathbf{Mod}(\mathcal{A})$. The extension $(G,z)$ of $G_b$ can be recovered using the canonical map $\mathrm{B}\mathcal{A}ut([\cA])\rightarrow \mathrm{B}\Aut^{\br}(\mathbf{sVect})$. For later use, we wish to describe the group $\Inv(\mathfrak{C})$ of invertible objects of $\mathfrak{C}$. To this end, recall that there is a fiber sequence
\begin{equation}\label{eq:fibseqPic}
    \mathcal{P}ic(\mathcal{A})\rightarrow \Aut^{\br}(\mathcal{A})\rightarrow \Aut([\mathcal{A}]).
\end{equation}
 This follows, for instance, from \cite[Proposition 5.4]{DN}. By definition, there is an equivalence $\Omega\Aut([\mathcal{A}])\simeq \mathcal{Z}(\mathbf{Mod}(\cA))^{\times}$. Furthermore it is shown in \cite[Theorem 4.10]{DN} that the braided fusion 2-category $\mathcal{Z}(\mathbf{Mod}(\cA))$ is identified with $\mathbf{Mod}^{\br}(\mathcal{A})$, the braided monoidal 2-category of finite semisimple braided $\cA$-module 1-categories, and $\mathbf{Mod}^{\br}(\mathcal{A})^\times= Pic^{\br}(\mathcal{A})$. By looking at the long exact sequence in homotopy groups for \eqref{eq:fibseqPic} we find: $$0\rightarrow \Inv(\cA)\rightarrow Aut^{\br}(Id_{\mathcal{A}})\rightarrow Pic^{\br}(\cA)\xrightarrow{\phi} Pic(\cA)\rightarrow Aut^{\br}(\cA),$$ where the map $\phi$ is simply forgetting the braided structure. Finally, it follows by unpacking the construction of a graded extension given in \cite{decoppet:extension} that the canonical short exact sequence $$\label{eq:InvCextesnion}0\rightarrow Pic(\cA)\rightarrow \Inv(\mathfrak{C})\rightarrow G_b\rightarrow 1$$ is classified by $\phi_*\varsigma$, a class in $\operatorname{H}^2(G_b,Pic(\cA))$.
\end{ex}

\subsection{Unfolding Delphic Squares}\label{subsection:DelphicUnfold}
We now move on to unpacking the content of the Delphic squares for fusion 2-categories. More precisely, we explain how the parametrizations in Theorem \ref{thmalpha:AllBosons} and \ref{thmalpha:EmergentFermions} arise from the Delphic squares, and therefore explain the arrow (\euro) in Figure \ref{fig:layout}.

We begin by considering the all bosons case. In particular, the faithful dominant symmetric tensor functor $\mathcal{E}\twoheadrightarrow \mathcal{F}$ goes between two Tannakian symmetric fusion 1-categories, and therefore corresponds upon taking $\mathsf{Spec}$ to an inclusion of finite supergroups $(H,z)\hookrightarrow (G,z)$ with $z=1$. In particular, it follows from Lemma \ref{lem:sdegfixedpoint} that the top horizontal map of superspaces $\mathrm{B}H\rightarrow \ob(\mathbf{BMF1C}^{\ndeg}(\sVect))$ corresponds to a map of spaces $\mathrm{B}H\rightarrow \ob(\mathbf{BMF1C}^{\ndeg}(\Vect))$. Likewise, the bottom horizontal map of superspaces $\mathrm{B}(G,1)\rightarrow \mathcal{W}itt(\sVect)$ corresponds to a map of spaces $\mathrm{B}G\rightarrow \mathcal{W}itt(\Vect)$ by Theorem \ref{thm:Wittlimits}. The simplified Delphic square is the following commutative square of spaces:
$$
\begin{tikzcd}
    \mathrm{B}H \arrow[r,"\rho"] \arrow[d, "\iota"'] & \ob(\mathbf{BMF1C}^{\ndeg}(\Vect)) \arrow[d, "{[-]}"] \\
    \mathrm{B}G \arrow[r, "\pi"'] \arrow[ur,Rightarrow,shorten <= 1em, shorten >= 1em,"\simeq"] & \mathcal{W}itt(\Vect)\,.
\end{tikzcd}
$$
Moreover, as $\mathrm{B}H$ is a connected space, we may choose a unique (up to nonunique isomorphism!)\ point in $\mathrm{B}H$ which then selects a point in the space $\ob(\mathbf{BMF1C}^{\ndeg}(\Vect))$, i.e.\ a nondegenerate braided fusion 1-category $\mathcal{A}$. (This arbitrary choice of a basepoint of $\mathrm{B}H$ leads to the subtleties encountered in Remarks~\ref{rem:vague} and~\ref{rem:subtlety}.)
Likewise, the bottom horizontal map selects a Witt class in $\mathcal{W}itt(\Vect)$, which is $[\mathcal{A}]$. But, we have $\mathcal{A}ut([\mathcal{A}])\simeq \mathrm{B}^3\mathbb{C}^{\times}$, so that the last square above may be simplified even further to
\begin{equation}\label{eq:bosonicDelphic}
\begin{tikzcd}
    \mathrm{B}H \arrow[r,"\rho"] \arrow[d, "\iota"'] & \mathrm{B}\mathcal{A}ut^{\br}(\mathcal{A}) \arrow[d, "{[-]}"] \\
    \mathrm{B}G \arrow[r, "\pi"'] \arrow[ur,Rightarrow,shorten <= 1em, shorten >= 1em,"\simeq"] & \mathrm{B}^4\mathbb{C}^{\times}\,.
\end{tikzcd}
\end{equation}
Such squares encode exactly the data of Theorem \ref{thmalpha:AllBosons}. Recall from Remark \ref{rem:ENOanomaly} that the left vertical arrow admits a description in terms of the classical extension extension theory of fusion 1-categories developed in \cite{ENO2010}. In particular, the composite $[\rho]$, the anomaly of the action $\rho$, is exactly the obstruction to lifting $\rho$ to a map $\mathrm{B}H\rightarrow \mathrm{B}\mathcal{P}ic(\cA)$. Finally, the commutativity of the square \eqref{eq:bosonicDelphic} is witnessed by a homotopy between $[\rho]$ and $\pi|_H$.

Some of the data featured in the square \eqref{eq:bosonicDelphic} above can easily be extracted from a given bosonic fusion 2-category $\mathfrak{C}$. More precisely, we have:
\begin{itemize}
    \item The Tannakian symmetric fusion 1-categroy $\Rep(H)$ is $\mathcal{Z}_{2}(\Omega \mathfrak{C})$. 
    \item The nondegenerate braided fusion 1-category $\mathcal{A}$ is given by $(\Omega \mathfrak{C})_H = \Omega \mathfrak{C}\boxtimes_{\Rep(H)}\Vect$, the de-equivariantization of $\Omega \mathfrak{C}$.
    \item The Tannakian symmetric fusion 1-category $\Rep(G)$ is $\Omega \cZ(\mathfrak{C})$, and the map $\Omega \cZ (\fC) \rightarrow \cZ_{2} (\Omega \fC)$ is identified with $\Rep(G)\twoheadrightarrow\Rep(H)$ induced by $H\subseteq G$.
\end{itemize}

\begin{rem}
The data of $\pi$ can be recovered from the Drinfeld center. More precisely, it is known by \cite{Decoppet2022;centers} that, with $\mathfrak{C}$ the bosonic fusion 2-cagegory corresponding to the square \eqref{eq:bosonicDelphic}, we have $\mathcal{Z}(\fC)\simeq \mathcal{Z}(\mathbf{2Vect}^{\pi}_G)$ as braided fusion 2-categories. Conversely, given $\mathcal{Z}(\fC)$ as a plain fusion 2-category, there is a canonical Morita equivalence between $\mathcal{Z}(\fC)$ and $\mathbf{2Vect}^{\pi}_G\boxtimes(\mathbf{2Vect}^{\pi}_G)^{\mop}$ supplied by the canonical separable algebra $\mathbf{Vect}$ in $\mathcal{Z}(\fC)^0\simeq \mathbf{2Rep}(G)$.
\end{rem}

We now unpack the Delphic square in the fermionic case. In particular, the faithful dominant symmetric tensor functor $\mathcal{E}\twoheadrightarrow \mathcal{F}$ goes between two super-Tannakian symmetric fusion 1-categories. Upon taking $\mathsf{Spec}$, it therefore corresponds to an inclusion of finite supergroups $(H,z)\subseteq (G,z)$ with $z\neq 1$. Because supergroups are connected as superspaces, we may again pick a basepoint of $B(H,z)$ (whose unique but not contractible choice again leads to the discussed subtleties). The image of this basepoint under the top horizontal map then selects a slightly degenerate braided fusion 1-category $\mathcal{A}$, and the corresponding Delphic square becomes:
\begin{equation}\label{eq:delphic2}
\begin{tikzcd}
\mathrm{B}(H,z)
\arrow[d,"\iota"']
\arrow[r, "\rho"]
&
{\mathrm{B}\mathcal{A}ut^{\br}_{\sVect}(\mathcal{A})}
\arrow[d,"{[-]}"]
\\
\mathrm{B}(G,z)
\arrow[r, "\varpi"']
\arrow[ur,Rightarrow,shorten <= 1em, shorten >= 1em,"\simeq"]
&
{\mathrm{B}\mathcal{A}ut_{\sVect}([\mathcal{A}])}\,.
\end{tikzcd}
\end{equation}

\noindent The bottom horizontal map of superspaces $\varpi$ is classified by a torsor over $\SH^4(G,z)$. Namely, as explained at the end of \S\ref{subsection:supergroups}, the superspace ${\mathrm{B}\mathcal{A}ut_{\sVect}([\mathcal{A}])}$ is non-canonically identified with ${\mathrm{B}\mathcal{A}ut_{\sVect}([\sVect])}=\Sigma^4\SH$. The commutativity of the above square of superspaces is witnessed by a homotopy between $[\rho]$, the superanomaly of the action $\rho$ as defined in \eqref{eq:superanomaly}, and $\varpi|_{(H,z)}$. The different choices for this homotopy form a torsor over $\SH^3(H,z)$. Thence, we have obtained the statement of theorem \ref{thmalpha:EmergentFermions}.

Similarly to the bosonic case, some of the above data can be effortlessly extracted from a fermionic fusion 2-category $\mathfrak{C}$. More precisely, we have:
\begin{itemize}
    \item The super-Tannakian symmetric fusion 1-category $\Rep(H,z)$ is $\mathcal{Z}_{2}(\Omega \mathfrak{C})$.
    \item The slightly degenerate braided fusion 1-category $\cA$ is $(\Omega\fC)_{H_b} = \Omega\fC\boxtimes_{\Rep(H,z)}\sVect$, the de-equivariantization of $\Omega\fC$ with respect to $H_b = H/z$.
    \item The faithful dominant symmetric tensor functor $\Rep(G,z)\twoheadrightarrow \Rep(H,z)$ is given by the inclusion $(H,z) \hookrightarrow (G,z)$ is $\Omega \cZ (\fC) \rightarrow \cZ_{2} (\Omega \fC)$ .
\end{itemize}

\subsection{Reconstructing the Fusion 2-Category}\label{subsection:reconstruction}

We now explain how to use the data of Theorems \ref{thmalpha:AllBosons} and \ref{thmalpha:EmergentFermions} to reconstruct a fusion 2-category. This is the content of the arrow (\textsterling) in Figure \ref{fig:layout}.

Firstly, consider the data from Theorem \ref{thmalpha:AllBosons}, i.e., 
a nondegenerate braided fusion 1-category $\cA$, an inclusion of finite groups $H\subset G$, an action $\rho: H\to \mathcal{A}ut^{\br}(\cA)$ (identified with a monoidal functor 
$G\to \textbf{Mod}(\cA)^\times)$, a 4-cocycle $\pi$ in $Z^4(G,\, \mathbb{C}^\times)$, and a $3$-cochain $\mu$ in $C^3(H,\, \mathbb{C}^\times)$ such that $[\rho]= \pi|_H\, d\mu$. We construct a fusion $2$-category $\mathfrak{C}$ corresponding to this data as follows:

\begin{enumerate}
    \item Consider the fusion $2$-category $\mathfrak{D}= \mathbf{2Vect}_G^{\pi} \boxtimes \mathbf{Mod}(\cA)$, and let us write $X_g$ with $g\in G$ for the simple objects of $\mathbf{2Vect}_G^{\pi}$. 
    \item We consider the diagonal map 
\begin{align*}
R: H &\to \mathbf{2Vect}_G^{\pi} \boxtimes \mathbf{Mod}(\cA) \\
h &\mapsto\ \ \ \ \ \ \,  X_h \boxtimes \rho(h)^{-1}
\end{align*}
whom the 3-cocycle $\mu$ provides with a monoidal $2$-functor structure because the anomaly $[\rho^{-1}]$ cancels out with the restriction of $\pi$ to $H$. While the map $\rho$ is defined to have the target $\mathcal{A}ut(\cA)$, we take $\rho$ in the above map as its lift to $\Mod(\cA)^\times$.
\item The direct sum $A:=\bigoplus_{h\in H}\, R(h)$ is a strongly connected rigid algebra in $\mathfrak{D}$ by construction. The fusion 2-category $\mathfrak{C}$ is given by $\mathbf{Bimod}_{\fD}(A)$,
the fusion 2-category of $A$-$A$-bimodules in $\mathfrak{D}$.
\end{enumerate}

\begin{rem}
    There is a ``lower categorical'' analogue of the data from Theorem \ref{thmalpha:AllBosons} and the corresponding reconstruction. Given a fusion 1-category $\cA$ and a group  homomorphism  $\rho: H \to \Inv(\cA)$ there is an anomaly $O_3(\rho)\in \mathrm{H}^3(H,\, \mathbb{C}^\times)$ lifted from the associativity constraint of the categorical group $\Inv(A)$. Consider the data consisting of a subgroup $H \subset G$ of a finite group, $\pi \in Z^3(G,\, \mathbb{C}^\times)$, a fusion 1-category $\cA$, a homomorphism $\rho: H \to \Inv(\cA)$, and a $2$-cochain $\mu \in Z^2(H,\, \mathbb{C}^\times)$ such that $O_3(\rho) = \pi|_H\, d\mu$. There is a diagonal monoidal functor 
    \begin{align*}
    H &\to \Vect_G^{\pi} \boxtimes \cA \\
    h &\mapsto\ \ \ \ \ \ \   h \boxtimes \rho(h)^{-1}.
    \end{align*}
    Let $A$ be the image of the group algebra $\mathbb{C}[H]$ in $\mathcal{D}=\Vect_G^{\pi} \boxtimes \cA$, we can construct the fusion 1-category $\mathbf{Bimod}_{\cD}(A)$. Let us also note that, by definition, all group-theoretical fusion 1-categories are obtained via this construction.

\end{rem}

We now turn our attention to the fermionic case. Instead of using the data in Theorem \ref{thmalpha:EmergentFermions} directly, we give a construction based on the closely related data given in Equation \eqref{eq:delphic2}, since this has the advantage of not requiring the choice of a minimal nondegenerate extension. Let $\fC$ be a fermionic fusion 2-category. The data in Equation \eqref{eq:delphic2} consists of an inclusion of supergroups $(H,z)\hookrightarrow (G,z)$, a class $\varpi$ in $\sHom(\mathrm{B}(G,z),\mathrm{B}\mathcal{A}ut_{\mathbf{sVect}}([\mathcal{A}]))$, a superaction $\rho:(H,z)\rightarrow (\mathcal{A}ut^{\br}_{\mathbf{sVect}}(\mathcal{A}), (-1)^f)$, as well as a homotopy $\mu$ between the superanomaly $[\rho]$ and the restriction of $\varpi|_{(H,z)}$. We construct a fusion $2$-category $\mathfrak{C}$ corresponding to this data as follows:

\begin{enumerate}
    \item As discussed in Example \ref{ex:generalizedFSF2C}, we can consider the fusion $2$-category $\mathfrak{D}$ that is the $G_b$-graded extension of $\mathbf{Mod}(\cA)$ classified by $\varpi$.\footnote{There is a canonical Morita equivalence between $\fC$ and $\fD$ (see \cite{Decoppet2022;centers}). This is another reason for not choosing a minimal nondegenerate extension.}
    \item It follows from the existence of the homotopy $\mu$ that the class $\phi_*(\varpi|_{(H,z)})$ in $\mathrm{H}^2(H_b,Pic(\cA))$ classifying the extension of Equation \eqref{eq:InvCextesnion} is trivial. In particular, there is a canonical section $H_b\hookrightarrow \Inv(\mathfrak{C})$ to the projection map $\Inv(\mathfrak{C})\twoheadrightarrow G_b$, which we write as $h\mapsto X_h$. We can therefore consider the diagonal map 
\begin{align*}
R: H_b &\to\ \ \ \ \, \mathfrak{D} \\
h &\mapsto X_h \boxtimes \rho(h)^{-1}
\end{align*}
whom the 3-cocycle $\mu$ provides with a monoidal structure because the anomaly $[\rho^{-1}]$ cancels out with the restriction of $\varpi$ to $H$.
\item The direct sum $A:=\bigoplus_{h\in H}\, R(h)$ is a strongly connected rigid algebra in $\mathfrak{D}$ by construction. The fusion 2-category $\mathfrak{C}$ is given by $\mathbf{Bimod}_{\fD}(A)$,
the fusion 2-category of $A$-$A$-bimodules in $\mathfrak{D}$.
\end{enumerate}

\subsection{Profits}\label{sec:proofsofprofits}

The classification of fusion 2-categories allows us to state a categorified version of rank finiteness and Ocneanu rigidity for fusion 2-categories in \S\ref{section:profits}.
We now fill in the proofs of these results. We start with a slight elaboration on \cite[Lemma 4.2.1]{Decoppet2022;centers}, which will be used throughout this subsection.

\begin{lem}\label{lem:doublecoset}
Let $\fC$ be a fusion 2-category such that $G:=\pi_0(\fC)$ forms a group. (For instance this holds if every connected component of $\fC$ contains an invertible object.) Let $A$ be a strongly connected rigid algebra  (which implies separable by \cite{Decoppet2022;centers}). Let $H$ be the support of $A$ in $\pi_0(\fC)$, which is necessarily a group. 
Then 
$$\pi_0(\mathbf{Bimod}_{\fC}(A))\cong H\backslash G /H.$$
\end{lem}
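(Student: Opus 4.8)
The plan is to compute the connected components of $\mathbf{Bimod}_{\fC}(A)$ by reducing to a counting problem over the group $G$, following the strategy of \cite[Lemma 4.2.1]{Decoppet2022;centers} but tracking the extra bookkeeping imposed by the bimodule structure. First I would recall that since $A$ is strongly connected, it is supported on the subgroup $H\subseteq G$, and the underlying object of $A$ is (up to the internal fusion 1-categorical data) a direct sum of one invertible object $X_h$ from each connected component $h\in H$. The key structural input is that $\fC$ is \emph{faithfully graded} by $G=\pi_0(\fC)$: every simple $1$-morphism lives in a single graded component, and the monoidal product adds degrees. I would use this grading to identify the simple objects of $\mathbf{Bimod}_{\fC}(A)$ with $A$-$A$-bimodule structures on objects of $\fC$, and in particular to understand when two such bimodules lie in the same connected component.

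The main computation proceeds as follows. An indecomposable $A$-$A$-bimodule, being strongly connected on both sides, is supported on a single double coset: concretely, for an object of $\fC$ in graded component $g\in G$, the left and right $A$-actions translate its degree by $H$ on each side, so the set of degrees appearing in the bimodule generated by it is precisely the double coset $HgH$. This gives a well-defined map $\pi_0(\mathbf{Bimod}_{\fC}(A)) \to H\backslash G/H$ sending an indecomposable bimodule to the double coset supporting it. I would then show this map is a bijection. For surjectivity, given $g\in G$, pick an invertible object $X_g$ in component $g$ and form the free bimodule $A\otimes X_g\otimes A$; its support is $HgH$ and it is nonzero, so it has an indecomposable summand supported in $HgH$. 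For injectivity, I would argue that any two indecomposable bimodules with the same support $HgH$ are connected — this uses that within a fixed double coset, the bimodules are governed by module-category data over a single stabilizer, and strong connectedness forces the relevant Hom-object between them to be nonzero, placing them in the same component of the $2$-category.

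The hard part will be the injectivity step, i.e.\ showing that the support-class map is actually injective rather than merely well-defined and surjective. This requires ruling out the possibility that a single double coset $HgH$ supports two genuinely distinct connected components of $\mathbf{Bimod}_{\fC}(A)$. The cleanest route is to observe that $\mathbf{Bimod}_{\fC}(A)$ restricted to a fixed double coset is equivalent to a $2$-category of modules over the algebra obtained by restricting $A$ to the subgroupoid on $H\cup g^{-1}Hg$ (or the relevant relative tensor product), which is itself strongly connected and hence has a connected $2$-category of modules by the strong connectedness of $A$ together with \cite{Decoppet2022;Morita}. Concretely, I would reduce to the statement that for a strongly connected rigid algebra the associated $\mathbf{Bimod}$ on a single graded double coset is connected, which is essentially the single-coset case and follows the same argument as the original \cite[Lemma 4.2.1]{Decoppet2022;centers}. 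Assembling the surjectivity and injectivity then yields the bijection $\pi_0(\mathbf{Bimod}_{\fC}(A))\cong H\backslash G/H$, completing the proof.
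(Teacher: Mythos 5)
Your overall scaffolding (a support map to $H\backslash G/H$, surjectivity via free bimodules, then injectivity) is reasonable, and the first two steps can be made to work --- though note two unwarranted assumptions: the lemma does \emph{not} assume that every component of $\fC$ contains an invertible object (that is only the parenthetical ``for instance''), nor that the underlying object of $A$ is a sum of invertibles $X_h$; so your surjectivity step should use an arbitrary simple object $D_g$ in component $g$ rather than $X_g$. The genuine gap is exactly where you predicted: injectivity. The principle you invoke --- that a strongly connected rigid algebra has a \emph{connected} 2-category of (bi)modules --- is false as stated. The unit algebra $I$ in $\mathbf{2Vect}_G$ is strongly connected and rigid, yet $\mathbf{Bimod}_{\fC}(I)=\fC$ has $|G|$ components; more generally, the lemma you are proving shows connectedness fails whenever $H\backslash G/H$ has more than one element. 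Connectedness of the module 2-category requires the algebra to have \emph{full} support, and your proposed reduction does not produce such an algebra: $A$ restricted to the ``subgroupoid on $H\cup g^{-1}Hg$'' is still supported only on $H$, which is a proper subgroup of $\langle H, g^{-1}Hg\rangle$ in general. The Mackey-type passage to a stabilizer $H\cap gHg^{-1}$ could presumably be developed, but it is not available off the shelf in the fusion 2-category literature, and carrying it out would amount to reproving the lemma by a longer route.

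The paper's proof avoids the support-decomposition entirely and handles injectivity almost for free. It shows (i) the free bimodule $A\Box C\Box A$ is \emph{simple} for every simple $C$, using the adjunction $\Hom_{A\text{-}A}(A\Box C\Box A,\,P)\simeq\Hom_{\fC}(C,P)$ together with strong connectedness (the identity of $A\Box C\Box A$ corresponds to $i\Box \mathrm{Id}_C\Box i$, which is simple); (ii) every simple bimodule $P$ admits a nonzero bimodule map from some $A\Box D\Box A$ (take any simple $D$ with $\Hom_{\fC}(D,P)\neq 0$ and apply the adjunction), so every component of $\mathbf{Bimod}_{\fC}(A)$ contains a free bimodule; and (iii) $A\Box C\Box A$ and $A\Box D\Box A$ lie in the same component precisely when $[C]$ and $[D]$ have the same double coset --- if the double cosets agree one produces a nonzero bimodule map, and if they differ the underlying objects have disjoint support in $\pi_0(\fC)$, so there are no nonzero 1-morphisms in $\fC$ at all, a fortiori none of bimodules. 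This last observation is what replaces your problematic connectedness-within-a-double-coset claim: one never needs to show that \emph{all} simples supported on $HgH$ are connected directly, only that each is connected to a free bimodule and that free bimodules detect double cosets. If you want to salvage your writeup, replace the third paragraph by this free-bimodule argument; steps (i)--(iii) each use only the adjunction and the fact (also needed to show $H$ is a group) that $\Box$-ing nonzero 1-morphisms with nonzero 1-morphisms stays nonzero.
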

\begin{proof}
We begin by showing that $H$, the support of $A$, is a subgroup of $G$. Given that $A$ is strongly connected, it follows that the unit $\eta^m: Id_{A\Box A}\rightarrow m^*\circ m$ of the adjunction between $m$ and $m^*$ as $A$-$A$-bimodule 1-morphisms is the inclusion of a summand. Thus, if $C$ and $D$ are simple objects of $\fC$ with non-zero 1-morphisms $f:C\rightarrow A$ and $g:D\rightarrow A$, then $f\Box g$ is non-zero. Namely, if $f\Box g=0$, then $C\Box D = 0$, which is impossible. We therefore find that $m^*\circ m\circ (f\Box g)$ is a non-zero 1-morphism in $\fC$. In particular, $m\circ (f\Box g)$ is non-zero as desired.

Recall that $\mathbf{Bimod}_{\fC}(A)$ is a finite semisimple 2-category (see \cite{Decoppet2022;rigid} and \cite{Decoppet2021;compact}). Let $C$ be a simple object of $\fC$. Then $A\Box C\Box A$ is a simple $A$-$A$-bimodule. Namely, under the canonical equivalence $$\Hom_{A\mathrm{-}A}(A\Box C\Box A,A\Box C\Box A)\simeq \Hom_{\fC}(C,A\Box C\Box A)$$ the identity 1-morphism on $A\Box C\Box A$ is sent to $i\Box Id_C\Box i$. But, as $C$ is simple and $i:I\hookrightarrow A$ is the inclusion of a simple summand by hypothesis, the second 1-morphism is necessarily simple.

Now, given any simple $A$-$A$-bimodule $P$, pick any simple object $D$ in $\fC$ together with a non-zero 1-morphism $D\rightarrow P$ in $\fC$. Then we get an induced 1-morphism of $A$-$A$-bimodules $A\Box D\Box A\rightarrow P$, which is non-zero. Moreover, $D$ was arbitrary, so that we have shown that for any two simple objects $C$ and $D$ of $\fC$, there exists a non-zero 1-morphism of $A$-$A$-bimodules between $A\Box C\Box A$ and $A\Box D\Box A$ if the classes of $C$ and $D$ in $H\backslash G /H$ agree. But, if these classes are distinct, then there is no 1-morphism in $\fC$ between $A\Box C\Box A$ and $A\Box D\Box A$, so that, a fortiori, there is non-zero 1-morphism of $A$-$A$-bimodules between. This finishes the proof.
\end{proof}

Combining the last result above together with our classification theorems yield a number of interesting corollaries, which we now explain. Firstly, recall that given a fusion 1-category $\cC$, its rank $\mathrm{rk}(\cC)$ is given by its number of equivalence classes of simple objects.

\begin{defn}
The rank of a fusion 2-category $\fC$ is given by $\mathrm{rk}(\fC):=|\pi_0(\fC)|\cdot \mathrm{rk}(\Omega\fC)$.
\end{defn}

\begin{cor}[Rank finiteness]
Up to equivalence, there exists only finitely many fusion 2-categories of a given fixed rank.
\end{cor}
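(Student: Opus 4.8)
The plan is to read off the required finiteness directly from the classification in Theorems~\ref{thmalpha:AllBosons} and~\ref{thmalpha:EmergentFermions} (using the fermionic statement with possibly trivial $z$ to treat both cases uniformly), by showing that once the rank $r=\mathrm{rk}(\fC)=|\pi_0(\fC)|\cdot \mathrm{rk}(\Omega\fC)$ is fixed, each piece of the parametrizing data $\bigl((H,z)\hookrightarrow(G,z),\ \cA,\ \rho,\ \varpi,\ \mu\bigr)$ ranges over a finite set. First I would observe that $r$ bounds both factors: since $|\pi_0(\fC)|\geq 1$ and $\mathrm{rk}(\Omega\fC)\geq 1$, we get $\mathrm{rk}(\Omega\fC)\leq r$ and, by Corollary~\ref{cor:components}, $|H\backslash G/H|=|\pi_0(\fC)|\leq r$.

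The first key step is to bound the order of $H$. The symmetric center $\mathcal{Z}_2(\Omega\fC)$ is $\Rep(H,z)$, which includes fully faithfully into $\Omega\fC$, so the number $c(H)$ of conjugacy classes of $H$ satisfies $c(H)=\mathrm{rk}(\Rep(H,z))\leq \mathrm{rk}(\Omega\fC)\leq r$. By Landau's classical theorem there are only finitely many finite groups with at most $r$ conjugacy classes; hence $|H|$ is bounded and there are finitely many possibilities for the supergroup $(H,z)$. Next I would bound the rank of $\cA$. By the (de-)equivariantization theorem of~\cite{DGNO2010braided}, $\Omega\fC$ is the $H_b$-equivariantization of $\cA=(\Omega\fC)_{H_b}$ (where $H_b=H/z$), and simple objects of an equivariantization are indexed by pairs consisting of an $H_b$-orbit of simples of $\cA$ together with a projective irreducible representation of the associated stabilizer; in particular the number of $H_b$-orbits on $\Irr(\cA)$ is at most $\mathrm{rk}(\Omega\fC)\leq r$, so $\mathrm{rk}(\cA)\leq r\cdot|H_b|$ is bounded. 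By rank finiteness for modular tensor categories, together with the existence of minimal nondegenerate extensions~\cite{JFR} in the slightly degenerate (fermionic) case, there are then only finitely many possibilities for $\cA$ of bounded rank.

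With $|H|$ bounded I can then bound $|G|$: each double coset $HgH\subseteq G$ has at most $|H|^2$ elements, so $|G|=\sum_{HgH}|HgH|\leq |H\backslash G/H|\cdot|H|^2\leq r\cdot|H|^2$, which is bounded. Therefore there are finitely many supergroups $(G,z)$ and finitely many inclusions $(H,z)\hookrightarrow(G,z)$. It remains to note that, once $(H,z)\hookrightarrow(G,z)$ and $\cA$ are chosen from these finite sets, the remaining data is finite: the $2$-group $\Aut^{\br}_{\sVect}(\cA)$ has finite homotopy groups, so there are finitely many homotopy classes of monoidal functors $\rho:(H,z)\to(\Aut^{\br}_{\sVect}(\cA),(-1)^f)$; the supercohomology group $\SH^4(\mathrm{B}(G,z))$ is finite (its Postnikov pieces are $H^2(\mathrm{B}G;\Z/2)$, $H^3(\mathrm{B}G;\Z/2)$ and $H^4(\mathrm{B}G;\mathbb{C}^\times)$, all finite), so the torsor of classes $\varpi$ is finite; and the set of homotopies $\mu$ between the anomaly $[\rho]$ and $\varpi|_{(H,z)}$ is a torsor over the finite group $\SH^3(\mathrm{B}(H,z))$. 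Assembling these finitely many choices, and passing to the quotient by the equivalence relation from Theorem~\ref{thmalpha:EmergentFermions}, yields only finitely many fusion $2$-categories of rank $r$ up to monoidal equivalence.

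The main obstacle, and the one genuinely non-categorical input, is the passage from a bound on $\mathrm{rk}(\Omega\fC)$ to a bound on $|H|$: the rank controls only the number of conjugacy classes $c(H)$, not the order $|H|$, so it is essential to invoke Landau's theorem here rather than any finiteness intrinsic to the categorical data. The other substantive ingredient is rank finiteness for modular categories, which is precisely what lets the otherwise uncontrolled datum $\cA$ be pinned down; this is the feature that makes categorified rank finiteness provable while its decategorified analogue for arbitrary fusion $1$-categories remains open.
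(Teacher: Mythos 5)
Your overall strategy is sound and genuinely different from the paper's in its key step, but as written it has a gap in the fermionic case. The paper proves rank finiteness by applying the rank-finiteness theorem for \emph{braided} fusion $1$-categories of \cite{JMNR2010rank} directly to $\Omega\fC$: since $\mathrm{rk}(\Omega\fC)\leq r$, there are only finitely many possibilities for $\Omega\fC$ up to braided equivalence, and this single invocation pins down both $(H,z)$ (as the supergroup with $\cZ_{2}(\Omega\fC)\simeq\Rep(H,z)$) and $\cA$ (by de-equivariantization) at once; the bound $|G|\leq|H\backslash G/H|\cdot|H|^{2}$ and the finiteness of the remaining cohomological data then proceed exactly as in your last two steps. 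Your replacement of the first step --- Landau's theorem to bound $|H|$ from the number of conjugacy classes, plus orbit-counting for the equivariantization $\Omega\fC\simeq\cA^{H_b}$ to bound $\mathrm{rk}(\cA)$ --- is correct as far as it goes, and is an attractive way to reduce the categorical input to the nondegenerate case.

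The gap is in your final appeal to ``rank finiteness for modular tensor categories, together with the existence of minimal nondegenerate extensions.'' If $\cA$ is slightly degenerate of rank $\leq N$ and $\tilde{\cA}=\cA\oplus\cA_{1}$ is a minimal nondegenerate extension, the best elementary bound on its rank is $\mathrm{rk}(\tilde{\cA})\leq\mathrm{rk}(\cA)+\FPdim(\cA)$: the odd part is faithfully graded against $\cA$, so its simples have Frobenius--Perron dimensions $\geq 1$ whose squares sum to $\FPdim(\cA)$. Since $\FPdim(\cA)$ is \emph{not} known to be bounded in terms of $\mathrm{rk}(\cA)$ --- controlling dimension by rank is essentially the content of rank finiteness itself --- you cannot conclude that $\tilde{\cA}$ has bounded rank, so finiteness of modular categories of each fixed rank says nothing about the set of possible $\cA$'s; the map $\cA\mapsto\tilde{\cA}$ goes the wrong way for your purposes. (A secondary issue, already in the bosonic case: a nondegenerate braided fusion category is not known to admit a spherical structure, so rank finiteness for \emph{modular} categories does not literally apply; one needs the statement for nondegenerate, or general, braided fusion $1$-categories.) Both problems are repaired by citing rank finiteness for braided fusion $1$-categories \cite{JMNR2010rank}, which covers the slightly degenerate case directly --- but once that theorem is available, you may as well apply it to $\Omega\fC$ itself, as the paper does, at which point the Landau and orbit-counting detour becomes unnecessary.
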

\begin{proof}
It was established in \cite{JMNR2010rank} that there are only finitely many equivalence classes of braided fusion 1-categories of any fixed rank. In the parameterisations of Theorems \ref{thmalpha:AllBosons} and \ref{thmalpha:EmergentFermions}, this determines $\cA$ and $H$, or $(H,z)$, completely. Now, given any finite (super) group $G$ with an embedding $H\hookrightarrow G$, we have $|G|\leq |H\backslash G /H|\cdot |H|^2$. It therefore follows from the discussion in \S\ref{subsection:reconstruction} and Lemma \ref{lem:doublecoset} that the order of $G$ is bounded above. Finally, the remaining data consists of classes in positive degree in either cohomology with $\mathbb{C}^{\times}$ coefficients or supercohomology of finite groups. It is well-known that the former are always finite groups, as for the latter, this follows from the Atiyah-Hirzebruch spectral sequence. This concludes the proof.
\end{proof}

\begin{cor}[Ocneanu rigidity]
Fusion 2-categories admit no non-trivial deformations. \qed
\end{cor}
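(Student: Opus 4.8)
The plan is to deduce Ocneanu rigidity directly from the classification, by showing that each piece of parametrizing data in Theorems~\ref{thmalpha:AllBosons} and~\ref{thmalpha:EmergentFermions} is rigid, so that any deformation of a fusion $2$-category induces only trivial deformations of its invariants. Concretely, I would model a deformation as an analytic (or formal) family $\{\fC_t\}$ of fusion $2$-categories over a connected base and invoke the equivalence of Theorem~\ref{thm:Delphic} (equivalently, the combined parametrization of Theorems~\ref{thmalpha:AllBosons} and~\ref{thmalpha:EmergentFermions}) to transport the family to a family of Delphic data. Since monoidal equivalences of fusion $2$-categories correspond to equivalences of Delphic squares, it suffices to show that each coordinate of the Delphic data is locally constant in $t$.

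I would then treat the coordinates in turn. The braided fusion $1$-category $\Omega\fC$ is Ocneanu rigid at the $1$-categorical level by~\cite{ENO}, so $\Omega\fC_t$ is, up to braided equivalence, independent of $t$; consequently its symmetric center $\cZ_{2}(\Omega\fC)\simeq\Rep(H,z)$ and its de-equivariantization $\cA=(\Omega\fC)_H$ are constant as well, and the finite (super)group $H$ is determined and hence discrete. The ambient inclusion of finite (super)groups $(H,z)\hookrightarrow(G,z)$ is likewise discrete, and therefore constant. The monoidal functor $\rho$ is a map between (higher) groups all of whose homotopy groups are finite: $\pi_0\Aut^{\br}(\cA)$ is the finite group of braided autoequivalences and $\Omega\Aut^{\br}(\cA)=\Aut^{\otimes}(\id_\cA)$ is finite (and similarly for the super-analogue $\Aut^{\br}_{\sVect}(\cA)$), so the homotopy class of $\rho$ lies in a discrete set and cannot vary. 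Finally, the class $\pi\in\mathrm{H}^4(\mathrm{B}G,\mathbb{C}^\times)$ (resp.\ the torsor class $\varpi$ over $\SH^4(\mathrm{B}(G,z))$) and the homotopy between the anomaly and $\pi|_H$ (resp.\ $\varpi|_{(H,z)}$, a torsor over $\mathrm{H}^3(\mathrm{B}H,\mathbb{C}^\times)$, resp.\ $\SH^3$) all take values in finite groups: these are finite for finite (super)groups by the very Atiyah--Hirzebruch argument already used in the proof of rank finiteness. Being valued in discrete sets, each coordinate is constant along the connected family.

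Combining these, the family of Delphic data is constant, so all the $\fC_t$ are monoidally equivalent and the deformation is trivial. More conceptually, the classification exhibits the $3$-groupoid of fusion $2$-categories as a homotopy type all of whose homotopy groups are finite --- in each fixed rank this also follows from the rank-finiteness Corollary~\ref{cor:rankfinite} together with the discreteness above --- and such a homotopy type is \'etale over a point, hence admits no nontrivial formal or analytic deformations.

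I expect the main obstacle to be formal rather than computational: one must make precise the notion of a deformation of a fusion $2$-category and verify that the equivalence of Theorem~\ref{thm:Delphic} is sufficiently natural to carry families of $2$-categories to families of Delphic data --- that is, that the extraction $\fC \rightsquigarrow (\Omega\fC,\,(H,z)\hookrightarrow(G,z),\,\rho,\,\varpi,\dots)$ is compatible with base change along the deformation parameter. Granting this naturality, the only external inputs are the $1$-categorical Ocneanu rigidity of~\cite{ENO} for $\Omega\fC$ and the finiteness of the relevant (super)cohomology groups, both of which are available, so the argument goes through.
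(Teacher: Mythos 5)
Your proposal is correct and is essentially the paper's own argument: the paper states this corollary with no proof beyond the \qed, on the grounds that the classification parametrizes fusion 2-categories by discrete data --- finite (super)groups, cohomology and supercohomology classes of finite groups (finite by the same Atiyah--Hirzebruch argument used for rank finiteness), and braided fusion 1-categories that are themselves rigid by the 1-categorical Ocneanu rigidity of \cite{ENO} --- so no nontrivial continuous or formal deformation is possible. Your write-up simply makes explicit the locally-constant-coordinates reasoning that the paper leaves implicit.
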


Finally, our classification allows one to quickly read off the inhabited fusion channels in any fusion $2$-category. We will phrase the precise result in terms of ``possibilistic hypergroups''\footnote{Hypergroups, which date to  \cite{MartyHypergroup}, axiomatize group-like structures with multi-valued composition. The literature contains many inequivalent definitions.}. 

\begin{defn}
Let $X$ be a set, and $\cP_{<\infty}(X)$ the set of finite subsets of $X$, thought of as an abelian idempotent semigroup under $+ := \cup$; we can identify $X$ with the singletons in $\cP_{<\infty}(X)$, so that every element of $\cP_{<\infty}(X)$ is a sum of elements of $X$. A \emph{possibilistic hypergroup structure} on $X$ is an semiring structure with underlying abelian semigroup $\cP_{<\infty}(X)$ such that the multiplicative unit $1$ is singleton, and such that for each $x \in X$, there is are unique (necessarily equal) $y,y' \in X$ such that $1 \in x\cdot y$ and $1 \in y' \cdot x$. 
\end{defn}

Given a fusion 2-category $\fC$, we define its \emph{fusion possibilistic hypergroup} $\cH(\fC)$ to have underlying set $\pi_0\fC$, with multiplication law
\[ \lbrack C\rbrack\cdot \lbrack D\rbrack = \{ \lbrack E\rbrack  \text{ s.t.\ } \Hom_{\fC}(C\Box D,E) \text{ is nonzero} \}.\]
We leave it to the reader to verify that this indeed is well-defined and does not depend on the choices of simples $C,D,E$ in the components $[C], [D], [E]$.
\begin{warn}
    In contrast, we point out that the set $\pi_0 \fC$ does not carry in any obvious way the structure of an `integral fusion ring'; for example, the number of simple objects in the semisimple category $\Hom_{\fC}(C \Box D, E)$ \emph{does} in general depend on the choice of simples $C,D,E$ and is not well-defined at the level of components.
\end{warn}

Likewise, to any inclusion $(H,z)\hookrightarrow (G,z)$ of supergroups, we associate a possibilistic hypergroup $\mathcal{H}(H\backslash G/H)$ on the set of double cosets $H \backslash G / H$ with multiplication law
\[ \lbrack f\rbrack\cdot \lbrack g\rbrack = \{ \lbrack k \rbrack  \text{ s.t.\ } k \in fHg \}. \]

\begin{cor}
Let $\mathfrak{C}$ be a fusion 2-category whose associated inclusion of supergroups is $(H,z)\hookrightarrow (G,z)$. Then there is an isomorphism of possibilistic hypergroups $$\mathcal{H}(\mathfrak{C})\cong \mathcal{H}(H\backslash G/H).$$
\end{cor}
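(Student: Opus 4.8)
The plan is to leverage the classification established in Theorems~\ref{thmalpha:AllBosons} and~\ref{thmalpha:EmergentFermions}, together with the reconstruction procedure of \S\ref{subsection:reconstruction} and the double coset computation of Lemma~\ref{lem:doublecoset}. First I would observe that by Corollary~\ref{cor:components}, a fusion 2-category $\fC$ with associated supergroup inclusion $(H,z) \hookrightarrow (G,z)$ has its connected components $\pi_0(\fC)$ in bijection with the double cosets $H \backslash G / H$, so the underlying sets of $\cH(\fC)$ and $\cH(H \backslash G / H)$ are canonically identified. The substance of the corollary is therefore that this bijection is an isomorphism of \emph{possibilistic hypergroups}, i.e.\ that it intertwines the two multiplication laws. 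The reconstruction of \S\ref{subsection:reconstruction} realizes $\fC$ as $\mathbf{Bimod}_{\fD}(A)$, where $\fD$ is either $\mathbf{2Vect}_G^{\pi} \boxtimes \mathbf{Mod}(\cA)$ (bosonic) or the $G_b$-graded extension of $\mathbf{Mod}(\cA)$ (fermionic), and $A = \bigoplus_{h \in H} R(h)$ is a strongly connected rigid algebra supported on (a lift of) $H \subseteq G$.

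The key point is that the fusion of components is insensitive to the braided fusion 1-category data $\cA$ and to all the cohomological/anomaly data, depending only on the group-theoretic skeleton $H \hookrightarrow G$. First I would note that $\pi_0(\fD) \cong G$, since $\fD$ is a faithfully $G$-graded (or $G_b$-graded) extension of the connected 2-category $\mathbf{Mod}(\cA)$, and the grading group governs the fusion of components: for $[C], [D]$ lying in components $g_1, g_2 \in G$, the product $C \Box D$ lies in the single component $g_1 g_2$, so $\cH(\fD)$ is simply the group $G$ (viewed as a possibilistic hypergroup with singleton products). I would then trace how passing to $A$-$A$-bimodules modifies this. By Lemma~\ref{lem:doublecoset}, the simple $A$-$A$-bimodules are represented by the objects $A \Box C \Box A$ for $C$ simple in $\fD$, and two such are isomorphic exactly when the components of $C$ agree in $H \backslash G / H$. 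The fusion product in $\mathbf{Bimod}_{\fD}(A)$ is the relative tensor product $(A \Box C \Box A) \boxtimes_A (A \Box D \Box A) \cong A \Box C \Box A \Box D \Box A$, and the component of a simple summand of this bimodule is determined by the $H$-double-coset of the product of the $G$-components of $C$, the support of $A$ (namely all of $H$), and $D$.

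Concretely, I would argue that a component $[k] \in H \backslash G / H$ appears in $[f] \cdot [g]$ (as computed in $\cH(\fC)$) if and only if there is a nonzero 1-morphism of bimodules $A \Box C_k \Box A \to A \Box C_f \Box A \Box C_g \Box A$ for representatives $C_f, C_g, C_k$ in the respective double cosets. Using that $A$ is strongly connected and supported on $H$, and that the grading on $\fD$ is faithful, this nonvanishing is equivalent to $k \in f H g$, which is exactly the multiplication law defining $\cH(H \backslash G / H)$. The main obstacle I anticipate is carefully establishing this last equivalence at the level of \emph{possibility} (nonvanishing of hom-spaces) rather than at the level of fusion coefficients — one must check that the faithful grading guarantees $A \Box C_f \Box A \Box C_g \Box A$ decomposes into bimodules supported precisely on the double cosets $[k]$ with $k \in fHg$, with each such double coset genuinely appearing (nonzero multiplicity) and none outside appearing. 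This uses that $A$ restricted to each graded component $X_h$, $h \in H$, is invertible (so multiplication by $A$ simply translates the grading through $H$ on both sides), together with the Warning that one should not expect control of integer multiplicities — but possibilistically, strong connectedness of $A$ ensures the relevant hom-spaces are nonzero exactly when the grading bookkeeping permits, which is the content needed.
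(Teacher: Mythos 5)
Your proposal is correct and takes essentially the same approach as the paper: reconstruct $\fC\simeq\mathbf{Bimod}_{\fD}(A)$ via \S\ref{subsection:reconstruction}, apply Lemma~\ref{lem:doublecoset} to identify $\pi_0(\fC)\cong H\backslash G/H$, and read off the fusion of components from $(A\Box C\Box A)\Box_A(A\Box D\Box A)\simeq A\Box(C\Box A\Box D)\Box A$ together with the fact that $A$ is supported on $H$. You spell out the final double-coset bookkeeping (and the fermionic $G_b=G/z$ subtlety) in slightly more detail than the paper, which stops at the displayed isomorphism, but the route and key inputs are identical.
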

\begin{proof}
Thanks to the discussion in \S\ref{subsection:reconstruction}, the fusion 2-category $\mathfrak{C}$ is equivalent to $\mathbf{Bimod}_{\mathfrak{D}}(A)$ where $\mathfrak{D}$ is a fusion 2-category with enough invertible objects and such that $\pi_0(\mathfrak{D})\cong G/z$, and $A$ is a strongly connected rigid algebra in $\mathfrak{D}$ supported on $H/z\subseteq G/z$. Thanks to Lemma \ref{lem:doublecoset}, we find that $\pi_0(\mathfrak{C})\cong (H/z)\backslash (G/z) / (H/z) = H\backslash G/H$. Moreover, if $D$ and $E$ are simple objects of $\mathfrak{D}$, then we have established in the proof of Lemma \ref{lem:doublecoset} that both $A\Box D\Box A$ and $A\Box E\Box A$ are simple objects of $\mathbf{Bimod}_{\mathfrak{D}}(A)$. It follows from \cite{Decoppet2022;Morita} that the monoidal structure of $\mathbf{Bimod}_{\mathfrak{D}}(A)$ is given by $\Box_A$ the relative tensor product over $A$. In particular, we have $$(A\Box D\Box A)\Box_A(A\Box E\Box A)\simeq A\Box (D\Box A\Box  E)\Box A.$$ This proves the result.
\end{proof}

\section*{Acknowledgments}
We are grateful to Christopher Douglas and Corey Jones for many discussions on various topics related to this paper, as well as Lukas Müller for discussions in Section 2.

It is our great pleasure to thank the American Institute of Mathematics for having supported this project through the SQuaREs program.
In addition:
TD, TJF, and JP were supported by the Simons Collaboration on Global Categorical Symmetries;
PH was supported by US ARO grant W911NF2310026;
TJF was supported by the NSERC grant  RGPIN-2021-02424;
DN was supported by the NSF grant DMS-2302267;
DP was supported by the NSF grant DMS-2154389;
JP was supported by the NSF Grant DMS-2146392;
DR was supported by the Deutsche Forschungsgemeinschaft under the Emmy Noether program -- 493608176, and the Collaborative Research Center (SFB) 1624 ``Higher structures, moduli spaces and integrability'' -- 506632645;
and
MY was supported by the EPSRC Open Fellowship EP/X01276X/1.

\bibliographystyle{alpha}
{\footnotesize{
\bibliography{f2c}

\newcommand{\etalchar}[1]{$^{#1}$}
\begin{thebibliography}{LMGR{\etalchar{+}}24}

\bibitem[AF18]{ayala2018flagged}
David Ayala and John Francis.
\newblock Flagged higher categories.
\newblock {\em Topology and quantum theory in interaction, Contemporary Mathematics}, 718:137--173, 2018.
\newblock \arxiv{1801.08973}.

\bibitem[AH99]{MR1686551}
Marta Asaeda and Uffe Haagerup.
\newblock Exotic subfactors of finite depth with {J}ones indices {$(5+\sqrt{13})/2$} and {$(5+\sqrt{17})/2$}.
\newblock {\em Communications in Mathematical Physics}, 202(1):1--63, 1999.
\newblock \arXiv{math.OA/9803044}.

\bibitem[AMF16]{MR3543452}
David Aasen, Roger S.~K. Mong, and Paul Fendley.
\newblock Topological defects on the lattice: {I}. {T}he {I}sing model.
\newblock {\em Journal of Physics A}, 49(35):354001, 46, 2016.
\newblock \arxiv{1601.07185}.

\bibitem[AMP23]{MR4565376}
Narjess Afzaly, Scott Morrison, and David Penneys.
\newblock The {C}lassification of {S}ubfactors with {I}ndex at {M}ost {$5\frac14$}.
\newblock {\em Memoirs of the American Mathematical Society}, 284(1405), 2023.
\newblock \arxiv{1509.00038}.

\bibitem[BBFP24]{BBFP}
Thomas Bartsch, Mathew Bullimore, Andrea E.~V. Ferrari, and Jamie Pearson.
\newblock {Non-invertible symmetries and higher representation theory {II}}.
\newblock {\em SciPost Physics}, 17:067, 2024.
\newblock \arXiv{2212.07393}.

\bibitem[BBSNT23]{BBSNT}
Lakshya Bhardwaj, Lea~E. Bottini, Sakura Schafer-Nameki, and Apoorv Tiwari.
\newblock {Non-invertible symmetry webs}.
\newblock {\em SciPost Physics}, 15(4):160, 2023.
\newblock \arxiv{2212.06842}.

\bibitem[BJS21]{BJS}
Adrien Brochier, David Jordan, and Noah Snyder.
\newblock On dualizability of braided tensor categories.
\newblock {\em Compositio Mathematica}, 3:435--483, 2021.
\newblock \arxiv{1804.07538}.

\bibitem[BM24]{ben2024naturality}
Shay Ben-Moshe.
\newblock Naturality of the $\infty$-categorical enriched yoneda embedding.
\newblock {\em Journal of Pure and Applied Algebra}, 228(6):107625, 2024.
\newblock \arXiv{2301.00601}.

\bibitem[BP24]{Bullimore:2024khm}
Mathew Bullimore and Jamie~J. Pearson.
\newblock {Towards All Categorical Symmetries in 2+1 Dimensions}.
\newblock 8 2024.
\newblock \arXiv{2408.13931}.

\bibitem[BPSN{\etalchar{+}}24]{Bhardwaj:2024qiv}
Lakshya Bhardwaj, Daniel Pajer, Sakura Schafer-Nameki, Apoorv Tiwari, Alison Warman, and Jingxiang Wu.
\newblock Gapped phases in (2+1)d with non-invertible symmetries: Part {I}.
\newblock 8 2024.
\newblock \arxiv{2408.05266}.

\bibitem[D{\'e}c21]{decoppet2021finite}
Thibault~D. D{\'e}coppet.
\newblock Finite semisimple module 2-categories.
\newblock {\em To appear in \textit{Selecta Mathematica}}, 2021.
\newblock \arxiv{2107.11037}.

\bibitem[D{\'e}c22a]{Decoppet2022;centers}
Thibault~D. D{\'e}coppet.
\newblock {D}rinfeld centers and {M}orita equivalence classes of fusion 2-categories.
\newblock {\em To appear in \textit{Compositio Mathematica}}, 2022.
\newblock \arxiv{2211.04917}.

\bibitem[D{\'e}c22b]{decoppet2020;comparison}
Thibault~D. D{\'e}coppet.
\newblock Multifusion categories and finite semisimple 2-categories.
\newblock {\em Journal of Pure and Applied Algebra}, 226(8), 2022.
\newblock \arXiv{2012.15774}.

\bibitem[D{\'e}c23a]{Decoppet2021;compact}
Thibault~D. D{\'e}coppet.
\newblock Compact semisimple 2-categories.
\newblock {\em Transactions of the American Mathematical Society}, 376(12):8309–8336, 2023.
\newblock \arXiv{2111.09080}.

\bibitem[D{\'e}c23b]{Decoppet2022;Morita}
Thibault~D. D{\'e}coppet.
\newblock The {M}orita theory of fusion 2-categories.
\newblock {\em Higher Structures}, 7(1):234--292, 2023.
\newblock \arXiv{2208.08722}.

\bibitem[D{\'e}c23c]{Decoppet2023;dualizable}
Thibault~D. D{\'e}coppet.
\newblock On the dualizability of fusion 2-categories.
\newblock {\em To appear in \textit{Quantum Topology}}, 2023.
\newblock \arxiv{2311.16827}.

\bibitem[D{\'e}c23d]{Decoppet2022;rigid}
Thibault~D. D{\'e}coppet.
\newblock Rigid and separable algebras in fusion 2-categories.
\newblock {\em Advances in Mathematics}, 419:108967, 2023.
\newblock \arXiv{2205.06453}.

\bibitem[D{\'e}c24a]{decoppet2023:2Deligne}
Thibault~D. D{\'e}coppet.
\newblock The 2-{D}eligne tensor product.
\newblock {\em Kyoto Journal of Mathematics}, 64(1):1--29, 2024.
\newblock \arXiv{2103.16880}.

\bibitem[D{\'e}c24b]{decoppet:extension}
Thibault~D. D{\'e}coppet.
\newblock Extension theory and fermionic strongly fusion 2-categories (with an appendix joint with {Theo Johnson-Freyd}).
\newblock {\em Symmetry, Integrability and Geometry: Methods and Applications (SIGMA)}, 20:092, 2024.
\newblock \arXiv{2403.03211}.

\bibitem[Del02]{deligne2002}
Pierre Deligne.
\newblock Cat\'egories tensorielles.
\newblock {\em Moscow Mathematical Journal}, 2(2):227--248, 2002.

\bibitem[DGNO10]{DGNO2010braided}
Vladimir Drinfeld, Shlomo Gelaki, Dmitri Nikshych, and Victor Ostrik.
\newblock On braided fusion categories {I}.
\newblock {\em Selecta Mathematica}, 16:1--119, 2010.
\newblock \arXiv{0906.0620}.

\bibitem[DHJF{\etalchar{+}}]{DHJFPPRNY2}
Thibault~D. Décoppet, Peter Huston, Theo Johnson-Freyd, David Penneys, Julia Plavnik, David Reutter, Dmitri Nikshych, and Matthew Yu.
\newblock ({D}e-)equivariantization for higher categories.
\newblock In preparation.

\bibitem[DMNO13]{DMNO}
Alexei Davydov, Michael M{\"u}ger, Dmitri Nikshych, and Victor Ostrik.
\newblock The {W}itt group of non-degenerate braided fusion categories.
\newblock {\em Journal für die reine und angewandte Mathematik}, 2013(667), 2013.
\newblock \arXiv{1009.2117}.

\bibitem[DN21]{DN}
Alexei Davydov and Dmitri Nikshych.
\newblock Braided {P}icard groups and graded extensions of braided tensor categories.
\newblock {\em Selecta Mathematica}, 27(65), 2021.
\newblock \arXiv{2006.08022}.

\bibitem[DNO13]{DNO2013}
Alexei Davydov, Dmitri Nikshych, and Victor Ostrik.
\newblock On the structure of the {W}itt group of braided busion categories.
\newblock {\em Selecta Mathematica}, 19(1):237--269, 2013.
\newblock \arXiv{1109.5558 }.

\bibitem[DR18]{douglas2018fusion}
Christopher~L. Douglas and David~J. Reutter.
\newblock Fusion 2-categories and a state-sum invariant for 4-manifolds.
\newblock 2018.
\newblock \arXiv{1812.11933}.

\bibitem[DSPS20]{DSPS13}
Christopher~L. Douglas, Christopher Schommer-Pries, and Noah Snyder.
\newblock Dualizable tensor categories.
\newblock {\em Memoirs of the American Mathematical Society}, 268(1308):vii+88, 2020.
\newblock \arXiv{1312.7188}.

\bibitem[DT24]{Delcamp:2023kew}
Clement Delcamp and Apoorv Tiwari.
\newblock {Higher categorical symmetries and gauging in two-dimensional spin systems}.
\newblock {\em SciPost Physics}, 16:110, 2024.
\newblock \arXiv{2301.01259}.

\bibitem[DY23a]{DY;grouptheoretical}
Thibault~D. D{\'e}coppet and Matthew Yu.
\newblock Fiber 2-functors and {T}ambara--{Y}amagami fusion 2-categories.
\newblock 2023.
\newblock \arXiv{2306.08117}.

\bibitem[DY23b]{Decoppet:2022dnz}
Thibault~D. D\'ecoppet and Matthew Yu.
\newblock {Gauging noninvertible defects: a 2-categorical perspective}.
\newblock {\em Letters in Mathematical Physics}, 113(2):36, 2023.
\newblock \arxiv{2211.08436}.

\bibitem[EG14]{MR3167494}
David~E. Evans and Terry Gannon.
\newblock Near-group fusion categories and their doubles.
\newblock {\em Advances in Mathematics}, 255:586--640, 2014.
\newblock \arxiv{1208.1500}.

\bibitem[EGNO15]{EGNO}
Pavel Etingof, Shlomo Gelaki, Dmitri Nikshych, and Victor Ostrik.
\newblock {\em Tensor categories}, volume 205 of {\em Mathematical Surveys and Monographs}.
\newblock American Mathematical Society, Providence, RI, 2015.

\bibitem[EGO04]{etingof2004classification}
Pavel Etingof, Shlomo Gelaki, and Viktor Ostrik.
\newblock Classification of fusion categories of dimension {$pq$}.
\newblock {\em International Mathematics Research Notices}, 2004(57):3041--3056, 2004.
\newblock \arXiv{math/0304194}.

\bibitem[Elg07]{Elgueta}
Josep Elgueta.
\newblock Representation theory of 2-groups on {K}apranov and {V}oevodsky’s 2-vector spaces.
\newblock {\em Advances in Mathematics}, 213(1):53–92, 2007.
\newblock \arXiv{math/0408120}.

\bibitem[ENO05]{ENO}
Pavel Etingof, Dmitri Nikshych, and Viktor Ostrik.
\newblock On fusion categories.
\newblock {\em Annals of Mathematics (2)}, 162(2):581--642, 2005.
\newblock \arXiv{math/0203060}.

\bibitem[ENO10]{ENO2010}
Pavel Etingof, Dmitri Nikshych, and Victor Ostrik.
\newblock Fusion categories and homotopy theory.
\newblock {\em Quantum topology}, 1(3):209--273, 2010.
\newblock \arXiv{0909.3140}.

\bibitem[FFRS04]{KWdefect}
J\"urg Fr\"ohlich, J\"urgen Fuchs, Ingo Runkel, and Christoph Schweigert.
\newblock Kramers--{W}annier duality from conformal defects.
\newblock {\em Physical Review Letters}, 93:070601, Aug 2004.
\newblock \arXiv{cond-mat/0404051}.

\bibitem[FHJF{\etalchar{+}}24]{ferrer2024dagger}
Giovanni Ferrer, Brett Hungar, Theo Johnson-Freyd, Cameron Krulewski, Lukas M{\"u}ller, David Penneys, David Reutter, Claudia Scheimbauer, Luuk Stehouwer, Chetan Vuppulury, et~al.
\newblock Dagger $ n $-categories.
\newblock 2024.
\newblock \arxiv{2403.01651}.

\bibitem[GH15]{gepner2015enriched}
David Gepner and Rune Haugseng.
\newblock Enriched $\infty$-categories via non-symmetric $\infty$-operads.
\newblock {\em Advances in mathematics}, 279:575--716, 2015.
\newblock \arxiv{1312.3178}.

\bibitem[GMP{\etalchar{+}}23]{MR4598730}
Pinhas Grossman, Scott Morrison, David Penneys, Emily Peters, and Noah Snyder.
\newblock The {E}xtended {H}aagerup fusion categories.
\newblock {\em Annales Scientifiques de l'\'{E}cole Normale Sup\'{e}rieure. (4)}, 56(2):589--664, 2023.
\newblock \arxiv{1810.06076}.

\bibitem[GS18]{GwilliamScheimbauer}
Owen Gwilliam and Claudia Scheimbauer.
\newblock Duals and adjoints in higher {M}orita categories.
\newblock 2018.
\newblock \arXiv{1804.10924}.

\bibitem[Hau15]{1312.3881}
Rune Haugseng.
\newblock Rectification of enriched $\infty${\textendash}categories.
\newblock {\em Algebraic \& Geometric Topology}, 15(4):1931--1982, 2015.
\newblock \arXiv{1312.3881}.

\bibitem[Hau17]{haugseng2017higher}
Rune Haugseng.
\newblock The higher morita category of {$E_n$}--algebras.
\newblock {\em Geometry \& Topology}, 21(3):1631--1730, 2017.
\newblock \arXiv{1412.8459}.

\bibitem[Hei23]{heine}
Hadrian Heine.
\newblock An equivalence between enriched {$\infty$}-categories and {$\infty$}-categories with weak action.
\newblock {\em Advances in Mathematics}, 417:Paper No. 108941, 140, 2023.
\newblock \arXiv{2009.02428}.

\bibitem[Hin20]{hinich}
Vladimir Hinich.
\newblock {Yoneda} lemma for enriched $\infty$-categories.
\newblock {\em Advances in Mathematics}, 367:107129, 2020.

\bibitem[HL13]{hopkins2013ambidexterity}
Michael Hopkins and Jacob Lurie.
\newblock Ambidexterity in {$K(n)$}-local stable homotopy theory.
\newblock 2013.
\newblock Available at \url{https://people.math.harvard.edu/~lurie/papers/Ambidexterity.pdf}.

\bibitem[IO24]{Inamura:2023qzl}
Kansei Inamura and Kantaro Ohmori.
\newblock {Fusion surface models: 2+1d lattice models from fusion 2-categories}.
\newblock {\em SciPost Physics}, 16:143, 2024.
\newblock \arxiv{2305.05774}.

\bibitem[Izu01]{MR1832764}
Masaki Izumi.
\newblock The structure of sectors associated with {L}ongo--{R}ehren inclusions. {II}. {E}xamples.
\newblock {\em Reviews in Mathematical Physics}, 13(5):603--674, 2001.
\newblock \doi{10.1142/S0129055X01000818}.

\bibitem[JFR24]{JFR}
Theo Johnson-Freyd and David Reutter.
\newblock Minimal non-degenerate extensions.
\newblock {\em Journal of the American Mathematical Society}, 37(1):81--150, 2024.
\newblock \arXiv{2105.15167}.

\bibitem[JFS17]{JFS2017}
Theo Johnson-Freyd and Claudia Scheimbauer.
\newblock {(Op)lax natural transformations, twisted quantum field theories, and “even higher” {M}orita categories}.
\newblock {\em Advances in Mathematics}, 307:147--223, 2017.
\newblock \arXiv{1502.06526}.

\bibitem[JFY21]{JFY:2020ivj}
Theo Johnson-Freyd and Matthew Yu.
\newblock {Fusion 2-categories with no line operators are grouplike}.
\newblock {\em Bulletin of the Australian Mathematical Society}, 104(3):434--442, 2021.
\newblock \arxiv{2010.07950}.

\bibitem[JFY22]{JFY:2021tbq}
Theo Johnson-Freyd and Matthew Yu.
\newblock {Topological Orders in (4+1)-Dimensions}.
\newblock {\em SciPost Physics}, 13(3):068, 2022.
\newblock \arxiv{2104.04534}.

\bibitem[JMNR21]{JMNR2010rank}
Corey Jones, Scott Morrison, Dmitri Nikshych, and Eric~C. Rowell.
\newblock Rank-finiteness for {$G$}-crossed braided fusion categories.
\newblock {\em Transformation Groups}, 26:915–927, 2021.
\newblock \arXiv{1902.06165}.

\bibitem[JMPP22]{MR4498161}
Corey Jones, Scott Morrison, David Penneys, and Julia Plavnik.
\newblock Extension theory for braided-enriched fusion categories.
\newblock {\em International Mathematics Research Notices. IMRN}, (20):15632--15683, 2022.
\newblock \arxiv{1910.03178}.

\bibitem[JMS14]{MR3166042}
Vaughan F.~R. Jones, Scott Morrison, and Noah Snyder.
\newblock The classification of subfactors of index at most 5.
\newblock {\em Bulletin of the American Mathematical Society (N.S.)}, 51(2):277--327, 2014.
\newblock \arxiv{1304.6141}.

\bibitem[Jon83]{MR0696688}
Vaughan F.~R. Jones.
\newblock Index for subfactors.
\newblock {\em Inventiones Mathematicae}, 72(1):1--25, 1983.
\newblock \doi{10.1007/BF01389127}.

\bibitem[JSW24]{2410.08884}
Corey Jones, Kylan Schatz, and Dominic~J. Williamson.
\newblock Quantum cellular automata and categorical duality of spin chains.
\newblock \arXiv{2410.08884}, 2024.

\bibitem[Kam24]{Kammermeier}
Tessa Kammermeier.
\newblock Higher idempotent completion.
\newblock Master's thesis, University of Hamburg, Department of Mathematics, 2024.

\bibitem[LMGR{\etalchar{+}}24]{soergel}
Yu~Leon Liu, Aaron Mazel-Gee, David Reutter, Catharina Stroppel, and Paul Wedrich.
\newblock A braided $(\infty, 2)$-category of {Soergel} bimodules.
\newblock 2024.
\newblock \arXiv{2401.02956}.

\bibitem[Lur09]{highertopos}
Jacob Lurie.
\newblock {\em Higher Topos Theory}.
\newblock Annals of Mathematics Studies. Princeton Unviersity Press, 2009.

\bibitem[Lur17]{higheralgebra}
Jacob Lurie.
\newblock Higher algebra.
\newblock 2017.
\newblock \href{https://www.math.ias.edu/~lurie/papers/HA.pdf}{https://www.math.ias.edu/~lurie/papers/HA.pdf}.

\bibitem[Mar35]{MartyHypergroup}
Fr\'ed\'eric Marty.
\newblock Sur une g\'en\'eralisation de la notion de groupe.
\newblock In {\em Comptes rendus du huitieme Congres des mathematiciens scandinaves tenu a Stockholm 14-18 aout 1934}, pages 45--49, 1935.

\bibitem[MGS24]{mazel2021universal}
Aaron Mazel-Gee and Reuben Stern.
\newblock A universal characterization of noncommutative motives and cecondary algebraic {$K$}-theory.
\newblock {\em Annals of K-Theory}, 9:369--445, 2024.
\newblock \arXiv{2104.04021}.

\bibitem[ML98]{maclane}
Saunders Mac~Lane.
\newblock {\em Categories for the working mathematician}, volume~5 of {\em Graduate Texts in Mathematics}.
\newblock Springer-Verlag, New York, second edition, 1998.
\newblock \mathscinet{MR1712872}.

\bibitem[Nat18]{natale2018classification}
Sonia Natale.
\newblock On the classification of fusion categories.
\newblock In {\em Proceedings of the {I}nternational {C}ongress of {M}athematicians---{R}io de {J}aneiro 2018. {V}ol. {II}. {I}nvited lectures}, pages 173--200. World Scientific Publishing, Hackensack, NJ, 2018.

\bibitem[Ost03]{ostrik2003module}
Victor Ostrik.
\newblock Module categories, weak {H}opf algebras and modular invariants.
\newblock {\em Transformation groups}, 8:177--206, 2003.
\newblock \arXiv{math/0111139}.

\bibitem[RV16]{MR3415698}
Emily Riehl and Dominic Verity.
\newblock Homotopy coherent adjunctions and the formal theory of monads.
\newblock {\em Advances in Mathematics}, 286:802--888, 2016.
\newblock \arXiv{1310.8279}.

\bibitem[Sch15]{Scheimbauer2015}
Claudia Scheimbauer.
\newblock {\em Factorization Homology as a Fully Extended Topological Field Theory}.
\newblock Phd thesis, ETH Zurich, 2015.

\bibitem[Til98]{Tillmann}
Ulrike Tillmann.
\newblock {$\mathscr{S}$-Structures for $k$-linear categories and the definition of a modular functor}.
\newblock {\em Journal of the London Mathematical Society}, 58:208–228, 1998.
\newblock \arXiv{math/9802089}.

\bibitem[TW24]{MR4737369}
Ryan Thorngren and Yifan Wang.
\newblock Fusion category symmetry. {P}art {I}. {A}nomaly in-flow and gapped phases.
\newblock {\em Journal of High Energy Physics}, (4):Paper No. 132, 41, 2024.
\newblock \arxiv{1912.02817}.

\bibitem[VBW{\etalchar{+}}18]{PhysRevLett.121.177203}
Robijn Vanhove, Matthias Bal, Dominic~J. Williamson, Nick Bultinck, Jutho Haegeman, and Frank Verstraete.
\newblock Mapping topological to conformal field theories through strange correlators.
\newblock {\em Physical Review Letters}, 121:177203, Oct 2018.
\newblock \arxiv{1801.05959}.

\bibitem[Wen88]{MR936086}
Hans Wenzl.
\newblock Hecke algebras of type {$A_n$} and subfactors.
\newblock {\em Inventiones Mathematicae}, 92(2):349--383, 1988.
\newblock \doi{10.1007/BF01404457}.

\bibitem[Wen90]{MR1090432}
Hans Wenzl.
\newblock Quantum groups and subfactors of type {$B$}, {$C$}, and {$D$}.
\newblock {\em Communications in Mathematical Physics}, 133(2):383--432, 1990.
\newblock \doi{10.1007/BF02097374}.

\bibitem[WG18]{Wang:2017moj}
Qing-Rui Wang and Zheng-Cheng Gu.
\newblock Towards a complete classification of symmetry-protected topological phases for interacting fermions in three dimensions and a general group supercohomology theory.
\newblock {\em Physical Review X}, 8(1):011055, 2018.
\newblock \arXiv{1703.10937}.

\bibitem[Xu98]{MR1660937}
Feng Xu.
\newblock Standard {$\lambda$}-lattices from quantum groups.
\newblock {\em Inventiones Mathematicae}, 134(3):455--487, 1998.
\newblock \doi{10.1007/s002220050271}.

\end{thebibliography}
}}
\end{document}